\documentclass[hidelinks, 11pt]{article}

\usepackage[UKenglish]{babel}
\usepackage[utf8]{inputenc}
\usepackage[T1]{fontenc}
\usepackage{erewhon}
\usepackage{amsmath}
\usepackage{mathrsfs}
\usepackage{amsthm}
\usepackage{amsfonts}
\usepackage{amssymb}
\usepackage{mathtools}
\usepackage{dsfont} 

\usepackage{quiver}
\usetikzlibrary{nfold}
\usepackage{adjustbox}

\usepackage[dvipsnames]{xcolor}
\usepackage{hyperref}
\usepackage{fullpage}
\usepackage{microtype}
\usepackage{booktabs}
\usepackage{enumitem} 
\usepackage{comment}
\usepackage{authblk} 
\usepackage{bussproofs}

\usepackage[autostyle,italian=guillemets]{csquotes}
\usepackage[backend=biber]{biblatex}
\tikzset{node distance=2cm, auto}
\tikzcdset{row sep/normal=2.7em,column sep/normal=3.5em}

\allowdisplaybreaks

\setlist[description]{font=\normalfont\bfseries\:\!}

\renewcommand\labelenumi{\normalfont\textbf{[\arabic{enumi}]}}
\renewcommand\theenumi\labelenumi

\hypersetup{
    colorlinks=true,
    linkcolor=gray,
    filecolor=gray,      
    urlcolor=gray,
    citecolor=gray
    }
\numberwithin{equation}{section}

\newtheorem{theorem}{Theorem}[section]
\newtheorem{proposition}[theorem]{Proposition}
\newtheorem{lemma}[theorem]{Lemma}
\newtheorem{corollary}[theorem]{Corollary}

\theoremstyle{definition}
\newtheorem{definition}[theorem]{Definition}
\newtheorem{example}[theorem]{Example}

\newtheorem{remark}[theorem]{Remark}

\newenvironment{contentX}[1][blank]{\noindent{\large\bfseries\scshape\MakeLowercase{#1}}.\thinspace\noindent}{}
\newenvironment{content}[1][blank]{\vskip .5cm\begin{contentX}[#1]}{\end{contentX}}
\newenvironment{acknowledgements}{\begin{content}[Acknowledgements]}{\end{content}}

\newenvironment{conventions}{\begin{content}[Conventions]}{\end{content}}

\renewenvironment{abstract}{\begin{contentX}[Abstract]}{\end{contentX}}

\DeclareSymbolFont{sfoperators}{OT1}{cmss}{m}{n}

\makeatletter
\def\operator@font{\mathgroup\symsfoperators}
\makeatother

\newcommand{\CategoryFont}[1]{\mathsf{\uppercase{#1}}}
\newcommand{\FunctorFont}[1]{\mathsf{#1}}

\DeclareMathOperator{\tr}{tr}
\DeclareMathOperator{\LHS}{LHS}
\DeclareMathOperator{\RHS}{RHS}

\newcommand{\x}{\otimes}

\renewcommand{\.}{\cdot}

\newcommand{\1}{\mathds{1}}

\renewcommand{\=}{\mathrel{\mathop:}=}
\renewcommand{\epsilon}{\varepsilon}
\renewcommand{\o}{\circ}
\renewcommand{\b}{\bullet}
\newcommand{\<}{\langle}
\renewcommand{\>}{\rangle}
\newcommand{\op}{\FunctorFont{op}}

\renewcommand{\*}{\ast}
\newcommand{\nto}{\mathrel{\mkern3mu\vcenter{\hbox{$\scriptscriptstyle|\:$}}\mkern-12mu{\to}}}

\newcommand{\Tag}[1]{\tag{$#1$}}

\newcommand{\Z}{\mathbb{Z}}

\newcommand{\R}{\mathbb R}

\newcommand{\Alg}{\CategoryFont{Alg}}
\newcommand{\cAlg}{\CategoryFont{cAlg}}
\newcommand{\Mod}{\CategoryFont{Mod}}

\newcommand{\KL}{\FunctorFont{KL}}
\newcommand{\id}{\FunctorFont{id}}

\newcommand{\X}{\mathbb{X}}

\newcommand{\End}{\FunctorFont{End}}

\newcommand{\Set}{\CategoryFont{Set}}
\newcommand{\Cat}{\CategoryFont{Cat}}

\DeclareMathOperator{\VF}{VF}
\DeclareMathOperator{\DO}{DO}
\DeclareMathOperator{\DB}{DB}

\newcommand{\p}{\mathbf{p}}

\renewcommand{\d}{\FunctorFont{d}}
\newcommand{\T}{\mathrm{T}}
\newcommand{\TT}{\mathbb{T}}

\newcommand{\SMan}{\CategoryFont{SMan}}

\newcommand{\Aff}{\CategoryFont{Aff}}

\newcommand{\TngCat}{\CategoryFont{TngCat}}

\DeclareMathOperator{\LIVF}{LIVF}
\DeclareMathOperator{\LI}{LI}
\DeclareMathOperator{\Lie}{Lie}
\DeclareMathOperator{\GL}{GL}
\DeclareMathOperator{\SL}{SL}

\DeclareMathOperator{\LIDer}{LIDer}
\DeclareMathOperator{\Der}{Der}
\DeclareMathOperator{\Sym}{Sym}

\DeclareMathOperator{\AD}{AD}
\DeclareMathOperator{\Ad}{Ad}
\DeclareMathOperator{\ad}{ad}

\newcommand{\gl}{\mathfrak{gl}}
\newcommand{\lnr}{\mathfrak{lnr}}
\renewcommand{\sl}{\mathfrak{sl}}
\renewcommand{\L}{\mathsf{L}}
\newcommand{\Simple}{\FunctorFont{S}}

\newcommand{\g}{\mathfrak{g}}
\newcommand{\h}{\mathfrak{h}}
\newcommand{\LieAlg}{\CategoryFont{LieAlg}}
\newcommand{\LieGr}{\CategoryFont{LieGrp}}
\newcommand{\mc}{\FunctorFont{mc}}
\newcommand{\e}{\FunctorFont{e}}
\newcommand{\m}{\FunctorFont{m}}
\renewcommand{\i}{\FunctorFont{i}}
\newcommand{\Gr}{\CategoryFont{Grp}}
\newcommand{\SQ}[1]{[\![#1]\!]}
\newcommand{\PLIVF}{PLIVF }
\newcommand{\PLIVFs}{PLIVFs }
\renewcommand{\P}{\mathscr{P}}

\title{The Lie group-Lie algebra correspondence in tangent categories}
\author{\uppercase{Marcello Lanfranchi}}
\affil{\normalsize\textit{Macquarie University, School of Mathematical and Physical Sciences}}
\date{}

\begin{document}

\maketitle

\begin{abstract}
Classic Lie theory establishes a correspondence between Lie groups, which are internal group objects in the category of smooth manifolds, and Lie algebras. An analogous correspondence exists also for group objects in affine schemes. Both smooth manifolds and affine schemes form tangent categories, which provide a categorical context for differential geometry. Therefore, it is natural to ask whether the Lie correspondence can be constructed entirely from the tangent structure.

Building on work of Cockett and Schwarz, we develop an internal Lie group-Lie algebra correspondence in tangent categories. We introduce Lie group objects as group objects in a tangent category which admit a tangent space at the unit. For any group object we construct a Lie functor and show it is tangentially representable exactly when the group object is a Lie group. Representability then yields an internal Lie algebra whose underlying object is the tangent space at the unit. We prove this Lie algebra is a differential Lie algebra, with a bilinear Lie bracket induced by the adjoint representation, and we extend the construction to a functor from Lie groups to differential Lie algebras. Finally, we recover the usual Lie correspondences in both differential and algebraic geometry.
\end{abstract}

\begin{acknowledgements}
Mathematics is an emergent phenomenon that arises from the collaboration between researchers and is not the achievement of single individuals. Even though the main ideas of this paper are part of the author's original work, they would not exist without the numerous discussions and suggestions of various people of the growing community of tangent and differential categories. In particular, we would like to thank JS Lemay for corrections, suggestions and support. We are also grateful to Geoff Cruttwell, Richard Garner, Robin Cockett, Florian Schwarz, and Edmund Heng for useful discussions and suggestions. For this work, the author was supported by the AFOSR under award number FA9550-24-1-0008.
\end{acknowledgements}

\par\noindent\rule{\textwidth}{1pt}


\tableofcontents

\section{Introduction}
\label{section:introduction}
The mathematical language to study the symmetries of a system is the language of groups. While discrete symmetries, such as the symmetries of a regular polygon, are captured by finite groups, continuous and smooth symmetries, such as the rotation of a spherical object, are captured by Lie groups. A Lie group consists of a group whose underlying set is equipped with a topology and a differential structure that turns it into a smooth manifold. Furthermore, the multiplication and the inverse maps of a Lie group are smooth functions.

The smooth structure enables the application of powerful tools of differential geometry to study the algebraic properties of the group. One of the most important consequences of this is the fundamental theorem of Lie theory, which establishes a correspondence between Lie groups and Lie algebras: \textbf{every Lie group admits an associated Lie algebra}.

Classically, this construction is presented in at least two equivalent ways. On the one hand, the Lie algebra $\g$ of a Lie group $G$ is the Lie algebra of \textbf{left-invariant vector fields} of $G$, which are sections $X\colon G\to\T G$ of the tangent bundle $p_G\colon\T G\to G$ of $G$, invariant under the left action of the group, that is, $X_{gh}=\d L_gX_h$, where $L_g(h)=gh$. In this definition, the Lie bracket is the restriction of the usual Lie bracket of vector fields.

On the other hand, $\g$ is equivalently defined as the tangent space $\T_\e G$ of $G$ at the unit $\e\in G$. In this second definition, the Lie bracket is defined by the differential $\d_\e\hat\Ad_G\colon\g\to\lnr_\g$ at the unit of the adjoint representation $\hat\Ad_G\colon G\to\GL_\g$ of $G$ onto $\g$.

The Lie algebra of a Lie group retains local information about the group itself. In particular, if two Lie groups have isomorphic Lie algebras, they are \emph{locally} isomorphic. Furthermore, the reconstruction theorem establishes that every finite-dimensional Lie algebra $\g$ is in fact the Lie algebra of a simply-connected Lie group $\Gamma(\g)$, so every (connected and locally connected) Lie group $G$ is locally isomorphic to a simply connected Lie group $\Gamma(\g)$.

Categorically, a Lie group corresponds to an internal group object of the category $\SMan$ of finite-dimensional smooth manifolds and smooth maps. The category $\SMan$ carries a \textbf{tangent structure}.

\begin{content}[Tangent categories]
Tangent categories (Definition~\ref{definition:tangent-category}) are a well-established categorical framework for differential geometry. Introduced by Rosick\'y in~\cite{rosicky:tangent-cats} and later generalized by Cockett and Cruttwell in~\cite{cockett:tangent-cats}, a tangent category consists of a collection of objects, interpreted as generalized geometric spaces, a collection of morphisms, intepreted as smooth functions, and an assignment $\T$, which sends each object $M$ to a new object $\T M$ and each morphism $f\colon M\to N$ to a new morphism $\T f\colon\T M\to\T N$.

The object $\T M$ should be regarded as the \textbf{tangent bundle} of $M$, while $\T f$ is interpreted as the smooth function which sends each tangent vector $u\in\T M$ of $M$ at $x$ to the differential $\d_xf$ of $f$ at $x$ applied to $u$, that is, to $\d_xf(u)$. The assignment $\T$ is functorial, that is, $\T$ preserves identities and composition of morphisms.

The tangent bundle $\T M$ comes equipped with an \textbf{additive structure}, given by the following natural transformations:
\begin{align*}
&p_M\colon\T M\to M     &&z_M\colon M\to\T M        &&s_M\colon\T_2M\to\T M
\end{align*}
$p_M$ is interpreted as the map that sends a tangent vector to its base point, $z_M$ is the map that sends each point to its zero tangent vector, and $s_M$ is the sum of two tangent vectors with the same base point, $\T_2M$ denoting the space of pairs of tangent vectors that share the same base point.

A tangent structure has two other natural transformations: the \textbf{vertical lift}, $l_M\colon\T M\to\T\T M$ and the \textbf{canonical flip} $c_M\colon\T\T M\to\T\T M$. The vertical lift satisfies a universal property which is interpreted as follows. Each fibre $\T_xM\=p^{-1}(x)$ of the tangent bundle $\T M$ of $M$ is \textit{linear} in that the tangent bundle $\T\T_xM$ of $\T_xM$ is isomorphic to the Cartesian product $\T_xM\times\T_xM$. This linearity condition generalizes the property enjoyed by Euclidean spaces for which $\T\R^n\cong\R^n\times\R^n$. This expresses the idea that an object in a tangent category is a \emph{locally linear} space and a morphism is a \emph{locally linear} function.

Finally, the canonical flip, $c_M\colon\T\T M\to\T\T M$, is an isomorphism of the double tangent bundle, which encodes the symmetry of partial derivatives, that is, $\partial_i\partial_jf=\partial_j\partial_if$.

The category $\SMan$ is the archetypal example of a tangent category, in which the tangent bundle functor corresponds to the usual notion. However, tangent categories capture geometry well beyond smooth manifolds. In particular, the category $\Aff_R$ of \textbf{affine schemes} carries a canonical tangent structure, as described in~\cite{cruttwell:algebraic-geometry}. By identifying the category $\Aff_R$ with the opposite category $\cAlg_R^\op$ of commutative and unital $R$-algebras, the tangent bundle functor sends each algebra $A$ to the symmetric algebra of the module of K\"ahler differentials, that is, $\T^\Omega A\=\Sym_A\Omega_A$.
\end{content}

\begin{content}[Affine group schemes]
The correspondence between Lie groups and Lie algebras extends beyond differential geometry. In particular, the work of Demazure and Grothendieck~\cite{grothendieck:group-schemes} extended the Lie correspondence to algebraic geometry. An affine group scheme is an internal object in the category $\Aff_R$ of affine schemes. This corresponds to a commutative Hopf algebra $H$. The Lie algebra of an affine group scheme is the Lie algebra of \textbf{left-invariant derivations} of the corresponding Hopf algebra.

Concretely, a left-invariant derivation of $H$ consists of an $R$-linear morphism $\delta\colon H\to H$ subject to the Leibniz rule, that is, $\delta(xy)=x\delta(y)+y\delta(x)$, and invariant under the left-action of the comultiplication $\Delta\colon H\to H\x H$ of $H$.
\end{content}

\vskip .5cm

Therefore, on the one hand, the Lie correspondence is not only a feature of differential geometry, but rather a phenomenon that appears in other geometric contexts, including algebraic geometry. On the other hand, both the category $\SMan$ of smooth manifolds and the category $\Aff_R$ of affine schemes are examples of tangent categories.

Numerous concepts of differential geometry have already been internalized to tangent categories: vector fields and their Lie bracket~\cite{rosicky:tangent-cats,cockett:tangent-cats,cockett:jacobi}, Euclidean spaces and vector bundles~\cite{cockett:differential-bundles}, Koszul and Ehresmann connections~\cite{cockett:connections,cruttwell:ehresmann-connections}, ordinary differential equations~\cite{cockett:differential-equations}, etc.

Thus, one is left to wonder if the tangent structure suffices to construct the Lie correspondence. In this paper, we answer this question affirmatively.

\begin{content}[The external Lie algebra of a Lie group]
Some work in this direction has already been done. Cockett and Schwarz in~\cite{cockett:lie-groups-tangent-cats} studied group objects in tangent categories and constructed for such a group $G$, the \emph{external} Lie algebra of left-invariant vector fields, which is the \emph{set} of vector fields $X\colon G\to\T G$ which are invariant under the left action of the group. They also showed that, when $G$ admits the tangent space $\T_\e G$ at the unit, defined as the (tangent) pullback of the tangent bundle $p_G\colon\T G\to G$ along the unit map $\e\colon\*\to G$, left-invariant vector fields are in bijective correspondence with the global elements $\*\to\T_\e G$ of the tangent space.

However, Cockett and Schwarz's paper does not provide an \emph{internal} Lie algebra, that is, a Lie algebra object internal to the tangent category itself. In fact, the Lie algebra of left-invariant vector fields lives \emph{outside} the tangent category, since it is defined as a set of vector fields. To fully compare the Lie algebra of Cockett and Schwarz with the tangent space $\T_\e G$, one needs to \emph{internalize} this construction.
\end{content}

\begin{content}[Main goal of the paper]
In this paper, we provide a good notion of \textbf{internal Lie algebra}. There are a few good reasons that push us to pursue this objective.

The first reason is purely theoretical: achieving a full Lie correspondence in the context of tangent categories, which generalizes both the classical correspondence of differential geometry and the one of algebraic geometry. This will prove that the Lie correspondence is fully determined by the tangent structure and not by the intrinsic properties of the geometric spaces underlying the Lie groups. This will allow us to explore the Lie correspondence in new territories, beyond the classical contexts of differential and algebraic geometry.

The second reason is more practical. This is an important stepping stone towards an internal language of Lie groups in tangent categories that can be used in future developments to study other geometric constructions, such as principal bundles, principal connections, and their curvature form. This will also extend the Cartan calculus, developed for tangent categories in~\cite{aintablian:cartan-calculus-tangent-cats} by Aintablian and Blohmann.
\end{content}

\begin{content}[Lie groups in tangent categories]
We define a \textbf{Lie group} in a Cartesian tangent category as a group object $G$ which admits the tangent space at the unit (Definition~\ref{definition:lie-group}), that is, for which the tangent pullback of the tangent bundle $p_G\colon\T G\to G$ along the unit map $\e\colon\*\to G$ exists.

At first glance, this definition does not give an explicit description of the Lie algebra of $G$. To make it explicit, we first construct the \textbf{Lie functor} of a group object with negatives (Definition~\ref{definition:lie-functor}). This functor sends each object $A$ of the tangent category to the Lie algebra of $A$-parametrized left-invariant vector fields of $G$ (Definition~\ref{definition:parametrized-left-invariant-vector-field}). These are morphisms $X\colon G\times A\to\T G$ invariant under the left action of the group and such that, composed with the projection $p_G\colon\T G\to G$, give $\pi_1\colon G\times M\to G$.

To prove that parametrized left-invariant vector fields form in fact a Lie algebra, we characterize them as left-invariant vector fields in a new tangent category, denoted by $\Simple_A[\X,\TT]$, which is the fibre over $A$ of the simple tangent fibration of $(\X,\TT)$ (Propositions~\ref{proposition:simple-tangent-fibration} and~\ref{proposition:simple-fibration-fibres}).

With Proposition~\ref{proposition:lie-functor-is-representable}, we prove that the Lie functor $\g(-)$ of a Lie group is \textbf{tangently representable}, that is, there is a natural isomorphism $\varphi\colon\g(A)\to\X(A,\T_\e G)$ between the Lie functor and the Hom-functor $\X(-,\T_\e G)$. Furthermore, we show that the representability of $\g(-)$ is compatible with the tangent structure (Definition~\ref{definition:tangently-representable}).

With Proposition~\ref{proposition:representable-implies-lie}, we also prove the converse: the tangent representability of the Lie functor of a group object with negatives $G$ implies that $G$ is a Lie group. We also prove that a Lie functor is tangently representable if and only if it is \textbf{differentiably representable}, which means that the representing object $\g$ is a differential object and the representing isomorphism $\varphi$ preserves multilinearity (Definition~\ref{definition:differentiably-representable}).

This establishes a full characterization of Lie groups, as stated by Theorem~\ref{theorem:first-fundamental-lie}, as an equivalence between the following conditions for a group object $G$:
\begin{enumerate}
\item $G$ is a Lie group object;

\item $G$ admits negatives and its Lie functor is tangently representable;

\item $G$ admits negatives and its Lie functor is differentiably representable;

\item $G$ is parallelizable.
\end{enumerate}
\end{content}

\begin{content}[The differential Lie algebra of a Lie group]
In Section~\ref{section:lie-algebras}, we exploit the representability of the Lie functor $\g(-)$ to construct the internal Lie algebra of a Lie group (Theorem~\ref{theorem:lie-algebra-of-lie-group}). This is defined as the representing object $\g$ of the Lie functor, that is, the tangent space $\T_\e G$ at the unit, equipped with a Lie algebra structure $0_\g\colon\*\to\g$, $+_\g\colon\g\times\g\to\g$, $[,]_\g\colon\g\times\g\to\g$, induced by the Lie algebra structure of $\g(A)$ via the Yoneda lemma.

We also show that the Lie algebra of a Lie group is in fact a \textbf{differential Lie algebra}, that is, a Lie algebra object $\g$ over a differential object, whose additive structure coincides with the one of the underlying differential object and whose Lie bracket $[,]_\g\colon\g\times\g\to\g$ is bilinear (Definition~\ref{definition:differential-lie-algebra} and Theorem~\ref{theorem:second-fundamental-lie}).

Finally, we provide a fully internal formula for the Lie algebra structure of $\g$ as the differential $\ad_\g\colon\g\times\g\to\g$ of the adjoint representation $\Ad_G\colon G\times\g\to\g$, extending to the general settings of tangent categories the well-known formula $\ad(x)(y)=[x,y]$ of differential geometry (Theorem~\ref{theorem:adjoint-formula}).
\end{content}

\begin{content}[The Lie correspondence in tangent categories]
We use this formula to prove in Theorem~\ref{theorem:functoriality} that the assignment for a Lie group $G$ of its internal differential Lie algebra $\g$ extends to a functor:
\begin{align*}
&\Lie\colon\LieGr(\X,\TT)\to\LieAlg(\X,\TT)
\end{align*}
With this functor, we establish that the classical Lie correspondence generalizes to an adjunction $\Gamma\dashv\Lie$ between the functor $\Lie$ and a left adjoint $\Gamma\colon\LieAlg(\X,\TT)\to\LieGr(\X,\TT)$. A tangent category which exhibits $\Lie$ as a right adjoint is called a \textbf{Lie tangent category} (Definition~\ref{definition:lie-tangent-category}).

In Section~\ref{section:functoriality}, we prove that our Lie correspondence generalizes both the classic differential geometry one and the one of algebraic geometry (Theorem~\ref{theorem:lie-algebra-of-affine-group-schemes}).
\end{content}

\begin{conventions}
Throughout the paper, we denote the composition of two morphisms $f\colon A\to B$ and $g\colon B\to C$ using the diagrammatic convention, that is, we denote by $fg$ the composition of $f$ followed by $g$. This aligns with the tangent categorical literature. We will use functional notation for the application of functions and functors to elements, objects, and morphisms. So, for example, we write $\T\T M$ for the application of the functor $\T$ to $\T$ to $M$. We do the same for the application of a functor to a morphism, so that $\T f$ denotes the application of the functor $\T$ to the morphism $f$. Similarly, in the examples, we often refer to actual functions. In that case, we write $f(g(x))$ for the application of $f$ to $g$ to $x$.

The Hom-set of a category $\X$ between two objects $A$ and $B$ is denoted by $\X(A,B)$.
\end{conventions}


\section{Lie Groups in tangent categories}
\label{section:lie-groups}
In this section, we introduce Lie groups in tangent categories. We begin by recalling the definition of a group object in an arbitrary \textbf{Cartesian category}, that is, a category equipped with a terminal object $\*$ and binary products, denoted by $A\times B$, whose projections are denoted by $\pi_1\colon A\times B\to A$ and $\pi_2\colon A\times B\to B$.

\subsection{Background}
\label{subsection:background}
A group object in a Cartesian category consists of an object $G$ equipped with three morphisms
\begin{align*}
&\e_G\colon\*\to G     &&\i_G\colon G\to G        &&\m_G\colon G\times G\to G
\end{align*}
respectively called the \textbf{unit}, the \textbf{inverse map}, and the \textbf{multiplication}, subject to the familiar conditions satisfied by a group, that is, unitality, associativity, and the inverse axiom:
\begin{equation*}
\adjustbox{width=\linewidth}{
\begin{tikzcd}
G & {G\times G} & G \\
& G
\arrow["{\<\id_G,\e\>}", from=1-1, to=1-2]
\arrow[equals, from=1-1, to=2-2]
\arrow["\m", from=1-2, to=2-2]
\arrow["{\<\e,\id_G\>}"', from=1-3, to=1-2]
\arrow[equals, from=1-3, to=2-2]
\end{tikzcd}\quad
\begin{tikzcd}
{G\times G\times G} & {G\times G} \\
{G\times G} & G
\arrow["{\m\times\id_G}", from=1-1, to=1-2]
\arrow["{\id_G\times\m}"', from=1-1, to=2-1]
\arrow["\m", from=1-2, to=2-2]
\arrow["\m"', from=2-1, to=2-2]
\end{tikzcd}\quad
\begin{tikzcd}
G & {G\times G} & G \\
\* & G & \*
\arrow["{\<\id_G,\i\>}", from=1-1, to=1-2]
\arrow[from=1-1, to=2-1]
\arrow["\m", from=1-2, to=2-2]
\arrow["{\<\i,\id_G\>}"', from=1-3, to=1-2]
\arrow[from=1-3, to=2-3]
\arrow["\e"', from=2-1, to=2-2]
\arrow["\e", from=2-3, to=2-2]
\end{tikzcd}
}
\end{equation*}

\begin{example}
\label{example:ordinary-groups}
Ordinary groups are group objects in the Cartesian category of sets.
\end{example}

\begin{example}
\label{example:abelian-groups}
The category $\Gr$ of groups is also Cartesian, with terminal object the trivial group $\{\e\}$ containing a single element, and Cartesian products are products of groups. Therefore, one can look at group objects in $\Gr$. By an Eckmann-Hilton argument, group objects in $\Gr$ are Abelian groups.
\end{example}

\begin{example}
\label{example:lie-groups}
The category $\SMan$ of finitely-dimensional smooth manifolds is a Cartesian category and group objects of $\SMan$ correspond to ordinary Lie groups.
\end{example}

\begin{example}
\label{example:affine-group-schemes}
The category $\Aff_R$ of affine schemes over a commutative and unital ring $R$ is dual to the category $\cAlg_R$ of commutative and unital $R$-algebras, via the classical equivalence given by the $\mathsf{Spec}$ functor. Thus, $\Aff_R$ is complete and cocomplete and therefore Cartesian. Under this equivalence, group objects in $\Aff_R$ correspond to commutative Hopf algebras. A commutative Hopf algebra consists of a commutative and unital $R$-algebra $H$, equipped with a coassociative and counital comultiplication $\Delta\colon H\to H\x H$ and a counit $\epsilon\colon H\to R$ making $H$ into a bialgebra. Furthermore, a commutative Hopf algebra carries an $R$-linear morphism $S\colon H\to H$, known as the antipode, subject to the following conditions. $\Delta(\id_H\x S)\m=\Delta(S\x\id_H)\m=\epsilon\eta$, where $\m$ denotes the multiplication of $H$ and $\eta\colon R\to H$ sends $1$ to $1$.

As a concrete example of an affine group scheme, we consider the group $\SL_2$. For starters, consider the functor $\SL_2(-)\colon\cAlg_R^\op\to\Mod_R$ which sends each $R$-algebra $A$ to the special linear group $\SL_2(A)$ with coefficients in $A$. This is the group of invertible $(2\times 2)$-matrices $K\in\SL_2(A)$ with coefficients in $A$ and whose determinant is equal to $1$, that is, $\det(K)=1$. It turns out that the functor $\SL_2(-)$ is in fact representable, represented by the $R$-algebra:
\begin{align*}
&\SL_2\=\frac{R[x_{1,1},x_{1,2},x_{2,1},x_{2,2}]}{\left(\det(x_{i,j})-1\right)}
\end{align*}
where $\det(x_{i,j})=x_{1,1}x_{2,2}-x_{1,2}x_{2,1}$. To see why $\SL_2$ represents the functor $\SL_2(-)$, consider a morphism $A\to\SL_2$ in $\cAlg_R^\op$, which corresponds to a morphism $K\colon\SL_2\to A$ in $\cAlg_R$. However, such a morphism is fully determined by its value at the variables $x_{i,j}$, that is, $K(x_{i,j})$. However, since in $\SL_2$, $\det(x_{i,j})-1=0$, $K$ is an algebra map if and only if $\det(K(x_{i,j}))=1$.

By representability, the group structure of each $\SL_2(A)$ induces a group structure on $\SL_2$ in $\Aff_R$. Concretely, the counit sends $x_{1,1}$ and $x_{2,2}$ to $1$ and $x_{1,2}$ and $x_{2,1}$ to $0$ and the comultiplication $\Delta$ is defined on the generators as follows:
\begin{align*}
&\Delta(x_{1,1})=x_{1,1}\x x_{1,1}+x_{1,2}\x x_{2,1}    &&\Delta(x_{1,2})=x_{1,1}\x x_{1,2}+x_{1,2}\x x_{2,2}\\
&\Delta(x_{2,1})=x_{2,1}\x x_{11}+x_{2,2}\x x_{2,1}      &&\Delta(x_{2,2})=x_{2,1}\x x_{1,2}+x_{2,2}\x x_{2,2}
\end{align*}
Finally, the antipode $S$ is defined by:
\begin{align*}
&S(x_{1,1})=x_{2,2}        &&S(x_{1,2})=-x_{1,2}       &&S(x_{2,1})=-x_{2,1}       &&S(x_{2,2})=x_{1,1}
\end{align*}
We suggest the reader to see~\cite{waterhouse:affine-group-schemes} for an introduction on affine group schemes and more examples.
\end{example}

In this paper, we study group objects in a \emph{Cartesian tangent category}. Tangent categories were introduced by Rosick\'y in~\cite{rosicky:tangent-cats} and later revisited and generalized by Cockett and Cruttwell~\cite{cockett:tangent-cats}. Tangent categories are now a well-established field of research which offers a categorical framework for differential geometry that extends well beyond standard differential geometry.

We begin by recalling the definition of a tangent category as given by Cockett and Cruttwell. We start with the notion of additive bundles.

\begin{definition}[{\cite[Definitions~2.1 and~2.2]{cockett:tangent-cats}}]
\label{definition:additive-bundles}
In a category $\X$, an \textbf{additive bundle} consists of a triple of morphisms
\begin{align*}
&q\colon E\to M     &&z_q\colon M\to E       &&s_q\colon E_2\to E
\end{align*}
respectively called the \textbf{projection}, the \textbf{zero}, and the \textbf{sum}, where $E_n$ denotes the $n$-fold pullback
\begin{equation*}
\begin{tikzcd}
{E_n} & E \\
E & M
\arrow["{\pi_n}", from=1-1, to=1-2]
\arrow["{\pi_1}"', from=1-1, to=2-1]
\arrow["\lrcorner"{anchor=center, pos=0.125}, draw=none, from=1-1, to=2-2]
\arrow["q", from=1-2, to=2-2]
\arrow["\dots"{marking, allow upside down}, shift left=4, draw=none, from=2-1, to=1-2]
\arrow["q"', from=2-1, to=2-2]
\end{tikzcd}
\end{equation*}
of $q$ along itself. Furthermore, these morphisms are subject to the following conditions:
\begin{equation*}
\begin{tikzcd}
E & \\
M & M
\arrow["q", from=1-1, to=2-2]
\arrow["{z_q}", from=2-1, to=1-1]
\arrow[equals, nfold, from=2-1, to=2-2]
\end{tikzcd}\qquad
\begin{tikzcd}
{E_2} & E \\
E & M
\arrow["{s_q}", from=1-1, to=1-2]
\arrow["{\pi_1}"', from=1-1, to=2-1]
\arrow["q", from=1-2, to=2-2]
\arrow["q"', from=2-1, to=2-2]
\end{tikzcd}
\end{equation*}
\begin{equation*}
\begin{tikzcd}
{E_2} & \\
{E_2} & E
\arrow["{s_q}", from=1-1, to=2-2]
\arrow["\tau", from=2-1, to=1-1]
\arrow["{s_q}"', from=2-1, to=2-2]
\end{tikzcd}\qquad
\begin{tikzcd}
{E_3} & {E_2} \\
{E_2} & E
\arrow["{s_q\times\id_E}", from=1-1, to=1-2]
\arrow["{\id_E\times s_q}"', from=1-1, to=2-1]
\arrow["{s_q}", from=1-2, to=2-2]
\arrow["{s_q}"', from=2-1, to=2-2]
\end{tikzcd}\qquad
\begin{tikzcd}
{E_2} & E \\
E & E
\arrow["{s_q}", from=1-1, to=2-2]
\arrow["{\<\id_E,qz_q\>}"', from=1-2, to=1-1]
\arrow[equals, nfold, from=1-2, to=2-2]
\arrow["{\<qz_q,\id_E\>}", from=2-1, to=1-1]
\arrow[equals, nfold, from=2-1, to=2-2]
\end{tikzcd}
\end{equation*}
establishing that $z_q$ and $s_q$ are bundle morphisms and encoding commutativity, associativity, and unitality of $s_q$, respectively, where $\tau\colon E_2\to E_2$ denotes the canonical symmetry.

Furthermore, an \textbf{Abelian bundle} consists of an additive bundle $q\colon E\to M$ equipped with an extra morphism
\begin{align*}
&n_q\colon E\to E
\end{align*}
called \textbf{negation}, which is an inverse map for the sum, in that it satisfies the following conditions:
\begin{equation*}
\begin{tikzcd}
E & E \\
& M
\arrow["{n_q}", from=1-1, to=1-2]
\arrow["q"', from=1-1, to=2-2]
\arrow["q", from=1-2, to=2-2]
\end{tikzcd}\qquad
\begin{tikzcd}
E & {E_2} \\
M & E
\arrow["{\<\id_E,n_q\>}", from=1-1, to=1-2]
\arrow["q"', from=1-1, to=2-1]
\arrow["{s_q}", from=1-2, to=2-2]
\arrow["{z_q}"', from=2-1, to=2-2]
\end{tikzcd}
\end{equation*}

An \textbf{additive bundle morphism} consists of a pair $(f,g)\colon q\to q'$ of morphisms $f\colon M\to M'$ and $g\colon E\to E'$ that commute with the projections, the zero, and the sum, as follows:
\begin{equation*}
\begin{tikzcd}
E & {E'} \\
M & {M'}
\arrow["g", from=1-1, to=1-2]
\arrow["q"', from=1-1, to=2-1]
\arrow["{q'}", from=1-2, to=2-2]
\arrow["f"', from=2-1, to=2-2]
\end{tikzcd}\qquad
\begin{tikzcd}
E & {E'} \\
M & {M'}
\arrow["g", from=1-1, to=1-2]
\arrow["{z_q}", from=2-1, to=1-1]
\arrow["f"', from=2-1, to=2-2]
\arrow["{z_q'}"', from=2-2, to=1-2]
\end{tikzcd}\qquad
\begin{tikzcd}
{E_2} & {E_2'} \\
E & {E'}
\arrow["{g\times_fg}", from=1-1, to=1-2]
\arrow["{s_q}"', from=1-1, to=2-1]
\arrow["{s_q'}", from=1-2, to=2-2]
\arrow["g"', from=2-1, to=2-2]
\end{tikzcd}
\end{equation*}
\end{definition}

We can now recall the definition of a tangent category.

\begin{definition}[{\cite[Definitions~2.3 and~2.8]{cockett:tangent-cats}}]
\label{definition:tangent-category}
A \textbf{tangent structure} on a category $\X$ consists of a tuple $\TT\=(\T,p,z,s,l,c)$ formed by an endofunctor
\begin{align*}
&\T\colon\X\to\X
\end{align*}
called the \textbf{tangent bundle functor} and by five natural transformations
\begin{align*}
&p_M\colon\T M\to M     &&z_M\colon M\to\T M    &&s_M\colon\T_2M\to\T M\\
&l_M\colon\T M\to\T\T M                         &&&&c_M\colon\T\T M\to\T\T M
\end{align*}
natural in $M$, respectively called the \textbf{projection}, the \textbf{zero}, the \textbf{sum}, the \textbf{vertical lift}, and the \textbf{canonical flip}, where $\T_nM$ denotes the $n$-fold pullback
\begin{equation}
\label{equation:n-fold-pullback-Tn}
\begin{tikzcd}
{\T_nM} & {\T M} \\
{\T M} & M
\arrow["{\pi_n}", from=1-1, to=1-2]
\arrow["{\pi_1}"', from=1-1, to=2-1]
\arrow["\lrcorner"{anchor=center, pos=0.125}, draw=none, from=1-1, to=2-2]
\arrow["{p_M}", from=1-2, to=2-2]
\arrow["\dots"{marking, allow upside down}, shift left=4, draw=none, from=2-1, to=1-2]
\arrow["{p_M}"', from=2-1, to=2-2]
\end{tikzcd}
\end{equation}
of the projection along itself, subject to the following conditions:
\begin{description}
\item[TNG.1] The $n$-fold pullback of Equation~\ref{equation:n-fold-pullback-Tn} is a \textbf{tangent $n$-fold pullback}, in that it is preserved by all iterates $\T^n\=\T\T{\dots}\T$ of the tangent bundle functor;

\item[TNG.2] For each $M$, the triple $\p_M\=(p_M,z_M,s_M)$ is an additive bundle;

\item[TNG.3] The pair $(z_M,l_M)\colon(p_M,z_M,s_M)\to(\T p_M,\T z_M,\T s_M)$ is an additive bundle morphism: 
\begin{equation*}
\begin{tikzcd}
{\T M} & {\T^2M} \\
M & {\T M}
\arrow["l_M", from=1-1, to=1-2]
\arrow["p_M"', from=1-1, to=2-1]
\arrow["{\T p_M}", from=1-2, to=2-2]
\arrow["z_M"', from=2-1, to=2-2]
\end{tikzcd}\qquad
\begin{tikzcd}
{\T M} & {\T^2M} \\
M & {\T M}
\arrow["l_M", from=1-1, to=1-2]
\arrow["z_M", from=2-1, to=1-1]
\arrow["z_M"', from=2-1, to=2-2]
\arrow["{\T z_M}"', from=2-2, to=1-2]
\end{tikzcd}\qquad
\begin{tikzcd}
{\T_2M} & {\T\T_2M} \\
{\T M} & {\T^2M}
\arrow["{l_M\times_{z_M}l_M}", from=1-1, to=1-2]
\arrow["s_M"', from=1-1, to=2-1]
\arrow["{\T s_M}", from=1-2, to=2-2]
\arrow["l_M"', from=2-1, to=2-2]
\end{tikzcd}
\end{equation*}

\item[TNG.4] The pair $(\id_M,c_M)\colon(p_M,z_M,s_M)\to(\T p_M,\T z_M,\T s_M)$ is an additive bundle morphism:
\begin{equation*}
\begin{tikzcd}
{\T^2M} & {\T^2M} \\
{\T M} & {\T M}
\arrow["c_M", from=1-1, to=1-2]
\arrow["{p_{\T M}}"', from=1-1, to=2-1]
\arrow["{\T p_M}", from=1-2, to=2-2]
\arrow[equals, from=2-1, to=2-2]
\end{tikzcd}\quad
\begin{tikzcd}
{\T^2M} & {\T^2M} \\
{\T M} & {\T M}
\arrow["c_M", from=1-1, to=1-2]
\arrow["{z_{\T M}}", from=2-1, to=1-1]
\arrow[equals, from=2-1, to=2-2]
\arrow["{\T z_M}"', from=2-2, to=1-2]
\end{tikzcd}\quad
\begin{tikzcd}
{\T_2\T M} && {\T\T_2M} \\
{\T^2M} && {\T^2M}
\arrow["{c_M\times_{\T M}c_M}", from=1-1, to=1-3]
\arrow["{s_{\T M}}"', from=1-1, to=2-1]
\arrow["{\T s_M}", from=1-3, to=2-3]
\arrow["c_M"', from=2-1, to=2-3]
\end{tikzcd}
\end{equation*}

\item[TNG.5] The following diagrams commute:
\begin{equation*}
\begin{tikzcd}
{\T M} & {\T^2M} \\
{\T^2M} & {\T^3M}
\arrow["l_M", from=1-1, to=1-2]
\arrow["l_M"', from=1-1, to=2-1]
\arrow["{\T l_M}", from=1-2, to=2-2]
\arrow["{l_{\T M}}"', from=2-1, to=2-2]
\end{tikzcd}\quad
\begin{tikzcd}
{\T^2M} & {\T^2M} \\
& {\T^2M}
\arrow["c_M", from=1-1, to=1-2]
\arrow[equals, from=1-1, to=2-2]
\arrow["c_M", from=1-2, to=2-2]
\end{tikzcd}\quad
\begin{tikzcd}
{\T^3M} & {\T^3M} & {\T^3M} \\
{\T^3M} & {\T^3M} & {\T^3M}
\arrow["{\T c_M}", from=1-1, to=1-2]
\arrow["{c_{\T M}}"', from=1-1, to=2-1]
\arrow["{c_{\T M}}", from=1-2, to=1-3]
\arrow["{\T c_M}", from=1-3, to=2-3]
\arrow["{\T c_M}"', from=2-1, to=2-2]
\arrow["{c_{\T M}}"', from=2-2, to=2-3]
\end{tikzcd}
\end{equation*}

\item[TNG.6] The following diagrams commute:
\begin{equation*}
\begin{tikzcd}
{\T M} & {\T^2M} \\
& {\T^2M}
\arrow["l_M", from=1-1, to=1-2]
\arrow["l_M"', from=1-1, to=2-2]
\arrow["c_M", from=1-2, to=2-2]
\end{tikzcd}\qquad\hfill
\begin{tikzcd}
{\T^2M} & {\T^3M} & {\T^3M} \\
{\T^2M} && {\T^3M}
\arrow["{l_{\T M}}", from=1-1, to=1-2]
\arrow["c_M"', from=1-1, to=2-1]
\arrow["{\T c_M}", from=1-2, to=1-3]
\arrow["{c_{\T M}}", from=1-3, to=2-3]
\arrow["{\T l_M}"', from=2-1, to=2-3]
\end{tikzcd}
\end{equation*}

\item[TNG.7] The tangent bundle is locally linear, that is, the following diagram
\begin{equation*}
\begin{tikzcd}
{\T_2M} & {\T\T M} \\
M & {\T M}
\arrow["{\xi_M}", from=1-1, to=1-2]
\arrow["{\pi_1p_M}"', from=1-1, to=2-1]
\arrow["{\T p_M}", from=1-2, to=2-2]
\arrow["{z_M}"', from=2-1, to=2-2]
\end{tikzcd}
\end{equation*}
is a pullback diagram, where\footnote{In~\cite{cockett:tangent-cats}, tangent bundle $\T M$ of $M$ is interpreted as the space of pairs $(u,x)$ formed by a vector $u$ and by a point $x$. We use the opposite convention, in that we think of the elements of $\T M$ as pairs $(x,u)$ formed by a point and a vector. In particular, in the original paper the map $\xi_M$ was defined as $(l_M\times z_{\T M})\T s_M$.}:
\begin{align*}
&\xi_M\colon\T_2M\xrightarrow{z_{\T M}\times l_M}\T\T_2M\xrightarrow{\T s_M}\T\T M
\end{align*}
\end{description}
A \textbf{tangent category} is a category $\X$ equipped with a tangent structure $\TT$. Furthermore, a \textbf{Cartesian tangent category} is a Cartesian category $\X$ equipped with a tangent structure $\TT$ whose tangent bundle functor $\T$ preserves the terminal object and the binary products. Finally, in a tangent category, an object $M$ \textbf{admits negatives} if there is a \textbf{negation} morphism $n_M\colon\T M\to\T M$, natural in $M$, which makes each $\p_M$ into an Abelian bundle. 
\end{definition}

The objects of a tangent category should be regarded as \emph{generalized smooth spaces} and morphisms as \emph{smooth functions}. The object $\T M$, called the \textbf{tangent bundle} of $M$, should be interpreted as the space of tangent vectors of $M$ at every point, so, a generic element of $\T M$ should be thought as a pair $(x,u)$ formed by a generic point $x$ of $M$ and a tangent vector $u$ at $x$.

The projection $p_M$ is regarded as the map that sends each tangent vector $(x,u)$ to its base point $x$, the zero, as sending each $x$ to its zero tangent vector $(x,0)$, the sum, as summing two tangent vectors $(x,u)$ and $(x,v)$ with same base point to their sum $(x,u+v)$.

While the notation $(x,u)$ for the generic element of $\T M$ is suggestive, it only makes sense \emph{locally}. In fact, in general, the tangent bundle $\T M$ fails globally to be the Cartesian product $M\times M$. Even though $\T M$ is not \emph{globally} trivial, it exhibits a \emph{local linear} behaviour. This is expressed through the universal property of the vertical lift (\textbf{[TNG.7]}). This axiom establishes that the fibres $\T_xM$ of the tangent bundle are \emph{linear spaces} in that their tangent bundle $\T\T_xM$ is isomorphic to $\T_xM\times\T_xM$ in a way that preserves the additivity structure. Finally, the canonical flip $c_M$ encodes the symmetry of partial derivatives, that is, $\partial_i\partial_jf=\partial_j\partial_if$, which is often referred to in the literature as the symmetry of the Hessian matrix.

\begin{example}
\label{example:tangent-categories-trivial}
Every category comes with a trivial tangent structure, whose tangent bundle functor is the identity functor and whose structural natural transformations are also the identity.
\end{example}

\begin{example}
\label{example:tangent-categories-sman}
The category $\SMan$ of finitely-dimensional smooth manifolds is the archetypal example of a tangent category, whose tangent bundle functor coincides with the usual notion, whose additive structure is provided by the additive structure of the tangent spaces, whose vertical lift is induced by the local linearity of the spaces, and whose canonical flip is induced by the symmetry of partial derivatives. $\SMan$ is also Cartesian.
\end{example}

\begin{example}
\label{example:tangent-categories-c-alg}
The category $\cAlg_R$ of commutative and unital algebras over a unital and commutative base ring $R$ also comes with a tangent structure denoted by $\TT^\epsilon$. The tangent bundle $\T^\epsilon A$ of an algebra $A$ is the algebra $A[\epsilon]$, where $\epsilon^2=0$. The projection $p^\epsilon_A\colon A[\epsilon]\to A$ sends $\epsilon$ to $0$, the zero $z^\epsilon_A\colon A\to A[\epsilon]$ sends $1$ to $1$, the sum $s^\epsilon_A\colon A[\epsilon_1,\epsilon_2]\to A[\epsilon]$, where $\epsilon_1^2=\epsilon_1\epsilon_2=\epsilon_2^2=0$, sends both $\epsilon_1$ and $\epsilon_2$ to $\epsilon$, the lift $l_A^\epsilon\colon A[\epsilon]\to A[\epsilon][\epsilon']$ sends $\epsilon$ to $\epsilon\epsilon'$ and finally, the canonical flip $c_A^\epsilon\colon A[\epsilon][\epsilon']\to A[\epsilon][\epsilon']$ sends $\epsilon$ to $\epsilon'$ and viceversa. This example was initially introduced by Cockett and Cruttwell in~\cite[Section~5.4]{cockett:tangent-cats} and studied in detail by Cruttwell and Lemay in~\cite{cruttwell:algebraic-geometry}. $(\cAlg_R,\TT^\epsilon)$ is also Cartesian.
\end{example}

\begin{example}
\label{example:tangent-categories-affine}
The category $\Aff_R$ of affine schemes over $R$ is also a Cartesian tangent category. Via the $\mathsf{Spec}$ functor, $\Aff_R$ is in fact equivalent to the category $\cAlg_R^\op$. By identifying these two categories, the tangent bundle of a commutative and unital $R$-algebra $A$ is the symmetric algebra of the module of K\"alher differentials of $A$, that is, $\T^\Omega A\=\Sym_A\Omega_A$. Concretely, this algebra is generated by the elements $a$ of $A$ and by symbols $\d a$, for each $a\in A$, subject to the following relations:
\begin{align*}
&\d1=0          &&\d(ra+sb)=r\d a+s\d b     &&\d(ab)=a\d b+b\d a
\end{align*}
for each $r,s\in R$, $a,b\in A$. Furthermore, the unit of $\T^\Omega A$ coincides with $1\in A$, and the multiplication of two elements of $A$ is as in $A$. The structural natural transformations, regarded as algebra morphisms, are defined on generators as follows. The projection $p_A^\Omega\colon A\to\T^\Omega A$ sends each $a$ to itself, the zero $z_A^\Omega\colon\T^\Omega A\to A$ sends $a$ to itself and $\d a$ to $0$, the sum $s_A^\Omega\colon\T^\Omega A\to\T^\Omega A\x\T^\Omega A$ sends $a$ to itself and $\d a$ to $\d a\x 1+1\x\d a$, the vertical lift $l_A^\Omega\colon\T^\Omega\T^\Omega A\to\T^\Omega A$ sends $a$ to itself, $\d a$ and $\d' a$ to $0$ and $\d'\d a$ to $\d a$. Finally, the canonical flip $c_A^\Omega\colon\T^\Omega\T^\Omega A\to\T^\Omega\T^\Omega A$ sends $a$ and $\d'\d a$ to themselves and it exchanges $\d a$ with $\d'a$ and viceversa. This tangent category was initially introduced by Cockett and Cruttwell in~\cite[Section~5.4]{cockett:tangent-cats} and studied in detail by Cruttwell and Lemay in~\cite{cruttwell:algebraic-geometry}.
\end{example}

\begin{example}
\label{example:tangent-categories-operadic-affine-schemes}
The category $\Alg_\P$ of algebras of a symmetric operad $\P$ over the symmetric monoidal category $\Mod_R$ of $R$-modules is also a Cartesian tangent category. The tangent structure generalizes the one of Example~\ref{example:tangent-categories-c-alg}. In particular, it coincides with that one for $\P$, the operad that encodes commutative and unital $R$-algebras. Furthermore, the category $\Aff_\P$ of \emph{operadic affine schemes}, opposite to $\Alg_\P^\op$, is also a tangent category. This construction generalizes Example~\ref{example:tangent-categories-affine} for arbitrary operads over $\Mod_R$. We refer to the original paper~\cite{ikonicoff:operadic-algebras-tangent-cats} for details on these constructions.
\end{example}

\subsection{Lie groups and their linear Generalized Cartesian Differential Category}
\label{subsection:lie-groups}
The classic notions of Lie groups of differential geometry (Example~\ref{example:lie-groups}) and of affine group schemes (Example~\ref{example:affine-group-schemes}) are both examples of group objects in Cartesian tangent categories (Examples~\ref{example:tangent-categories-sman} and~\ref{example:tangent-categories-affine}).

The main goal of this paper is to extend the construction of the Lie algebra of a Lie group to tangent categories, which is defined as the tangent space $\T_\e G$ at the unit of a Lie group $G$. In arbitrary tangent categories, tangent spaces are a derived notion, while the tangent bundle is taken as primitive.

\begin{definition}[{\cite[Definition~]{cockett:differential-bundles}}]
\label{definition:tangent-space}
We say that an object $M$ is a Cartesian tangent category, equipped with a global element $x\colon\*\to M$, \textbf{admits the tangent space} at $x$ provided that the pullback
\begin{equation*}
\begin{tikzcd}
{\T_xM} & {\T M} \\
{\*} & M
\arrow["{\iota_x}", from=1-1, to=1-2]
\arrow[from=1-1, to=2-1]
\arrow["\lrcorner"{anchor=center, pos=0.125}, draw=none, from=1-1, to=2-2]
\arrow["{p_M}", from=1-2, to=2-2]
\arrow["x"', from=2-1, to=2-2]
\end{tikzcd}
\end{equation*}
of the tangent bundle $p_M\colon\T M\to M$ along $x$ exists and is a \textbf{tangent pullback}, in that it is preserved by all iterates $\T^n$ of the tangent bundle functor. In this case, the object $\T_xM$ is called the \textbf{tangent space} of $M$ at $x$.
\end{definition}

Cockett and Schwarz in~\cite{cockett:lie-groups-tangent-cats} already considered group objects in a Cartesian tangent category which admit the tangent space $\T_\e G$ at the unit $\e\colon\*\to G$. In this paper, we show that these are not only a reasonably well-behaved class of group objects but they are the correct notion of Lie groups in a Cartesian tangent category. Moved by this consideration, we suggest the following definition.

\begin{definition}
\label{definition:lie-group}
A \textbf{Lie group} in a Cartesian tangent category is a group object $G$ which admits a tangent space at the unit.
\end{definition}

An important achievement of Cockett and Schwarz's paper~\cite{cockett:lie-groups-tangent-cats} was to prove that the tangent bundle $\T G$ of a Lie group object $G$ is always trivial, in that $\T G\cong G\times\T_\e G$.

In this section, we review this result through a structural lens: after proving that arbitrary group objects form a Cartesian tangent category, we use Cockett and Schwarz's result to prove that the assignment $G\mapsto\T_\e G$, which sends each Lie group object $G$ to its tangent space at the unit, is in fact a linear assignment.

Linear assignments were introduced and studied by Ikonicoff, Lemay, and Van Der Linden in~\cite{ikonicoff:abelianization-tangent-cats}. Here, we recall this definition.

\begin{definition}[{\cite[Definition~2.1]{ikonicoff:abelianization-tangent-cats}}]
\label{definition:linear-assignment}
A \textbf{linear assignment} consists of an endofunctor $\L\colon\X\to\X$ of a Cartesian category $\X$ that preserves the terminal object and binary products, equipped with natural transformations
\begin{align*}
&0_A\colon\*\to\L A     &&+_A\colon\L A\times\L A\to\L A        &&\nu_A\colon\L A\to\L\L A
\end{align*}
such that each $(\L A,0_A,+_A)$ becomes a commutative monoid of $\X$, $\nu_A$ is a natural isomorphism, and $\nu_{\L A}=\L\nu_A$.
\end{definition}

As proved by~\cite[Theorem~3.5]{ikonicoff:abelianization-tangent-cats}, a Cartesian category equipped with a linear assignment becomes a Cartesian tangent category whose tangent bundle functor $\T^\L$ is defined by:
\begin{align}
\label{equation:linear-GCDC}
&\T^\L A\= A\times\L A      &&\T^\L(f\colon A\to B)\=f\times\L f
\end{align}
The projection $p^\L_A\colon A\times\L A\to A$ is the first projection, the zero and the sum are induced by the commutative monoid structure $(\L A,0_A,+_A)$, the vertical lift $l^\L_A\colon A\times\L A\to(A\times\L A)\times(\L A\times\L\L A)$ sends the component $A$ to itself and $\L A$ to $\L\L A$ using $\nu_A$. Finally, the canonical flip $c^\L_A\colon(A\times\L A)\times(\L A\times\L\L A)\to(A\times\L A)\times(\L A\times\L\L A)$ exchanges the internal $\L A$ components. We suggest the reader consult~\cite{ikonicoff:abelianization-tangent-cats} for details.

\begin{definition}[Ikonicoff, Lemay, Van Der Linden\footnote{This definition was made implicit in~\cite{ikonicoff:abelianization-tangent-cats}.}]
\label{definition:linear-GCDC}
A \textbf{linear Generalized Cartesian Differential Category} (\textbf{linear GCDC}) consists of a Cartesian tangent category whose tangent structure is fully determined by a linear assignment $\L$ as in Equation~\eqref{equation:linear-GCDC}.
\end{definition}

\begin{remark}
\label{remark:linear-GCDC}
Generalized Cartesian Differential Categories (GCDCs, for short) were introduced by Cruttwell in~\cite{cruttwell:generalized-CDC} as a generalization of Cartesian Differential Categories (CDCs). A linear GCDC is in fact a special case of a GCDC.
\end{remark}

Following Cockett and Schwarz's construction, for a Lie group $G$ we define the \textbf{Maurer-Cartan form} as the unique morphism $\mc_G\colon\T G\to\T_\e G$ which makes the diagram
\begin{equation*}
\begin{tikzcd}
{\T G} && {\T(G\times G)} & {\T(G\times G)} \\
&& {\T_\e G} & {\T G} \\
&& \1 & G
\arrow["{\<p_Gz_G,\id_{\T G}\>}", from=1-1, to=1-3]
\arrow["{\mc_G}", dashed, from=1-1, to=2-3]
\arrow[curve={height=24pt}, from=1-1, to=3-3]
\arrow["{\T(\i_G\times\id_G)}", from=1-3, to=1-4]
\arrow["{\T\m_G}", from=1-4, to=2-4]
\arrow["{\iota_\e}", from=2-3, to=2-4]
\arrow[from=2-3, to=3-3]
\arrow["\lrcorner"{anchor=center, pos=0.125}, draw=none, from=2-3, to=3-4]
\arrow["{p_G}", from=2-4, to=3-4]
\arrow["{\e_G}"', from=3-3, to=3-4]
\end{tikzcd}
\end{equation*}
commutative. In the category of smooth manifolds, $\mc_G$ is precisely the Maurer-Cartan form, which, in local coordinates, sends $(g,x)\in\T G$ to $\d_gL_{g^{-1}}(x)$, where $L_h\colon G\to G$ denotes the left action, that is, $L_h(a)=ha$ and $\d_gL_{g^{-1}}\colon\T_gG\to\T_\e G$ is the differential of $L_{g^{-1}}$ at $g$.

Intuitively, $\mc_G$ moves the tangent vectors at a point $g$ over the unit. Crucially, this operation is invertible, that is, tangent vectors at a point $g$ are in bijective correspondence with the tangent vectors at the unit. This is expressed by Cockett and Schwarz's theorem, which we recall here.

\begin{theorem}[{\cite[Theorem~1]{cockett:lie-groups-tangent-cats}}]
\label{theorem:trivial-tangent-bundle-groups}
The tangent bundle $\T G$ of a Lie group object $G$ is trivial, in that the following diagram
\begin{equation*}
\begin{tikzcd}
G & {\T G} & {\T_\e G}
\arrow["{p_G}"', from=1-2, to=1-1]
\arrow["\mc_G", from=1-2, to=1-3]
\end{tikzcd}
\end{equation*}
is a product diagram.
\end{theorem}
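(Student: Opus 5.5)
The plan is to build an explicit inverse $\psi\colon G\times\T_\e G\to\T G$ to $\<p_G,\mc_G\>\colon\T G\to G\times\T_\e G$. That suffices, since a diagram $G\leftarrow\T G\rightarrow\T_\e G$ is a product diagram exactly when the induced map into $G\times\T_\e G$ is an isomorphism. Concretely, $\psi$ should be ``left translation of a vector at the unit to the point $g$'', i.e. $(g,v)\mapsto g\cdot v$. Internally I would take
\begin{align*}
&\psi\colon G\times\T_\e G\xrightarrow{z_G\times\iota_\e}\T G\times\T G\cong\T(G\times G)\xrightarrow{\T\m_G}\T G,
\end{align*}
where the middle isomorphism uses that $\T$ preserves products (the tangent category is Cartesian). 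The map $\mc_G$ itself is exactly the analogous formula $(g,u)\mapsto g^{-1}\cdot u$, so it lands over $\e$ and factors through the pullback $\T_\e G$ by its universal property.

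The first check is $\psi\,p_G=\pi_1$. Naturality of $p$ together with $\T(G\times G)\cong\T G\times\T G$ gives $\T\m_G\,p_G=(p_G\times p_G)\m_G$. Then $z_Gp_G=\id_G$ and $\iota_\e p_G=!\,\e$, so $\psi p_G=\<\pi_1,!\e\>\m_G=\pi_1$ by unitality.

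Next I check $\psi\,\mc_G=\pi_2$. Since $\iota_\e$ is a monomorphism (it is a pullback of a map out of the terminal object), it suffices to show $\psi\,\mc_G\,\iota_\e=\pi_2\iota_\e$. Unfolding, the left-hand side is $\T$ applied to the composite
\begin{align*}
&(g,v)\mapsto g^{-1}(g v)
\end{align*}
formed at the level of $\T(G\times G)$, with $g$ lifted through $z_G$. Equivalently, the map $G\times\T G\to\T G$ given by $\<\pi_1 z_G,\pi_2\>$ followed by $\T(\i\times\id)\T\m$ is precomposed with $\psi$-type data. Associativity and the inverse axiom give the equation $\<\pi_1\i,\<\pi_1,\pi_2\>\m\>\m=\pi_2$ of maps $G\times G\to G$. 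Applying $\T$ to it and precomposing with $z_G\times\iota_\e$ yields the claim, because $\T$ of a projection is a projection under the product isomorphism and $\T z$-naturality handles the $z_G$ components.

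Finally I check $\<p_G,\mc_G\>\psi=\id_{\T G}$, which is the same argument with the identity $\<\pi_1,\<\pi_1\i,\pi_2\>\m\>\m=\pi_2$ applied under $\T$ to $\<p_Gz_G,\id_{\T G}\>$. Here I must use that $p_Gz_G$ composed into the first slot is compatible: $(p_Gz_G,\ (p_G z_G)\,?)$. The needed fact is $\T\pi_1\circ\<p_Gz_G,\id\>$-type bookkeeping, namely that $\<p_G z_G,\ldots\>$ feeds $z_G(g)$ consistently into both multiplications.

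I expect the main obstacle to be this bookkeeping. Every equation holds ``pointwise'', but translating it into diagrams requires repeatedly using the canonical iso $\T(A\times B)\cong\T A\times\T B$, the naturality of $z$ with respect to $\i$ and $\m$, and the fact that $z_G$ in a slot commutes with $\T$ of maps in that slot. To keep it clean, I would first prove a small lemma: for any $f\colon G\times G\to G$, the composite $(z_G\times\id_{\T G})\T f$ equals ``$\T$ of $f(g,-)$'' in the fibred sense. I would then derive all the identities above from the group axioms by applying $\T$. The tangent-pullback hypothesis (preservation by $\T^n$) is not needed here beyond the existence of $\T_\e G$; it matters only for later results.
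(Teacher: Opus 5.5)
Your proof is correct and follows the standard route. The paper gives no proof of its own here (it cites Cockett and Schwarz), but your $\psi$ is exactly the isomorphism $\gamma_G=(z_G\times\iota_\e)\T\m_G\colon G\times\T_\e G\cong\T G$ that the paper uses later, and each of your three checks correctly reduces to applying $\T$ to a group identity. The bookkeeping you worry about in the last step is just $\<p_Gz_G,\id_{\T G}\>\T\pi_1=p_Gz_G$, which gives $\<p_G,\mc_G\>\psi=\<p_Gz_G,\id_{\T G}\>\T\bigl(\<\pi_1,\<\pi_1\i_G,\pi_2\>\m_G\>\m_G\bigr)=\<p_Gz_G,\id_{\T G}\>\T\pi_2=\id_{\T G}$.
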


Structurally, Theorem~\ref{theorem:trivial-tangent-bundle-groups} tells us that the Cartesian tangent category of Lie group objects is a linear GCDC. To make this statement precise, we begin by showing that the category of group objects forms a Cartesian tangent category.

Let $\Gr(\X,\TT)$ denote the category of group objects internal to $(\X,\TT)$ and group homomorphisms. Furthermore, let $\LieGr(\X,\TT)$ denote the full subcategory of $\Gr(\X,\TT)$ spanned by Lie group objects of $(\X,\TT)$.

\begin{lemma}
\label{lemma:groups-to-base-creates-limits}
The forgetful functor $\Gr(\X,\TT)\to(\X,\TT)$ which forgets the group structure reflects limits. 
\end{lemma}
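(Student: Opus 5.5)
The forgetful functor $U\colon\Gr(\X,\TT)\to\X$ is faithful and conservative. Here "reflects limits" means the following: if $D\colon J\to\Gr(\X,\TT)$ is a diagram and $(G,(\lambda_j\colon G\to G_j)_j)$ is a cone in $\Gr(\X,\TT)$ whose image under $U$ is a limit cone in $\X$, then the original cone is a limit cone in $\Gr(\X,\TT)$. I will argue this directly from the universal property in $\X$. (The title suggests the stronger statement that $U$ creates limits. I will also sketch how to lift limits, since the same computation does both.)

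Let $(H,(\mu_j)_j)$ be another cone over $D$ in $\Gr(\X,\TT)$, so each $\mu_j\colon H\to G_j$ is a group homomorphism commuting with the transition maps $D(\alpha)$. Applying $U$ gives a cone in $\X$, so there is a unique morphism $h\colon H\to G$ with $h\lambda_j=\mu_j$ for all $j$; uniqueness in $\Gr(\X,\TT)$ then follows from uniqueness in $\X$. It remains to show that $h$ is a group homomorphism, namely $\m_H h=(h\times h)\m_G$, $\e_H h=\e_G$, and $\i_H h=h\i_G$.

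The key observation is that the limit cone $(\lambda_j)$ is jointly monic, so it suffices to check each equation after postcomposing with every $\lambda_j$. For multiplication,
\[
(h\times h)\m_G\lambda_j=(h\times h)(\lambda_j\times\lambda_j)\m_{G_j}=(\mu_j\times\mu_j)\m_{G_j}=\m_H\mu_j=\m_H h\lambda_j .
\]
The first step uses that $\lambda_j$ is a homomorphism, the second uses $h\lambda_j=\mu_j$, and the third uses that $\mu_j$ is a homomorphism. The unit and inverse equations are handled the same way.

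For creation, given a limit $L$ of $UD$ in $\X$, I would define $\m_L$, $\e_L$, $\i_L$ as the unique maps induced by the cones $(\pi_1\lambda_j\times\pi_2\lambda_j)\m_{G_j}$, $\e_{G_j}$ and $\lambda_j\i_{G_j}$ respectively. These are cones because the $D(\alpha)$ are homomorphisms. The group axioms on $L$ then follow from joint monicity of $(\lambda_j)$ together with the axioms on each $G_j$. Since products in $\X$ are needed for $L\times L$, I use that $\X$ is Cartesian.

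The only real obstacle is bookkeeping: checking that the induced maps form cones, and verifying associativity through $L\times L\times L$. Both reduce to the same joint-monicity argument. No tangent structure is involved at this stage; it only becomes relevant later, when one wants $U$ to reflect tangent limits, i.e. limits preserved by $\T^n$. That follows because $\T$ preserves products and so sends group objects to group objects.
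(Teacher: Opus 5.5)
Your argument is correct and is essentially the paper's proof: you factor the competing cone uniquely through the limit in $\X$, then verify the homomorphism equations by postcomposing with the jointly monic limit cone $\lambda_j$. The paper checks only unit and multiplication, so your extra inverse check is harmless; in your optional creation sketch, the cone inducing $\m_L$ should read $(\lambda_j\times\lambda_j)\m_{G_j}$, that is $\langle\pi_1\lambda_j,\pi_2\lambda_j\rangle\m_{G_j}$, since $\pi_1\lambda_j\times\pi_2\lambda_j$ does not have domain $L\times L$.
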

\begin{proof}
Consider a diagram $D\colon I\to\Gr(\X,\TT)$ and a cone $\lambda_c\colon L\to D(c)$ of $D$ which is sent to a limit cone in $(\X,\TT)$ by the forgetful functor. Since the components of the cone $\lambda_c$ are group homomorphisms, $\e_L\lambda_c=\e_{D(c)}$, for each $c\in I$. Moreover, $\m_L\lambda_c=(\lambda_c\times\lambda_c)\m_{D(c)}$. Now, consider a cone $\gamma_c\colon X\to D(c)$ of $D$ in $\Gr(\X,\TT)$. Since $\lambda_c$ is a limit cone in $(\X,\TT)$, there exists a unique morphism $\theta\colon X\to L$ of cones in $(\X,\TT)$. Our goal is to show that $\theta$ lifts to $\Gr(\X,\TT)$. However, $\e_X\theta\lambda_c=\e_X\gamma_c=\e_{D(c)}$. Therefore, by the universal property of $\lambda_c$, $\e_X\theta=\e_L$. Finally, $\m_X\theta\lambda_c=\m_X\gamma_c=(\gamma_c\times\gamma_c)\m_{D(c)}=(\theta_c\times\theta_c)(\lambda_c\times\lambda_c)\m_{D(c)}=(\theta_c\times\theta_c)\m_L\lambda_c$. Using the universal property of $\lambda_c$ again, we conclude that $\theta$ is in fact a group homomorphism, proving that the forgetful functor reflects limits.
\end{proof}

\begin{lemma}
\label{lemma:group-objects-form-tangent-category}
Group objects of a Cartesian tangent category $(\X,\TT)$ form a Cartesian tangent category $\Gr(\X,\TT)$. Moreover, $\LieGr(\X,\TT)$ becomes a Cartesian tangent subcategory of $\Gr(\X,\TT)$.
\end{lemma}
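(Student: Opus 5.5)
The plan is to lift the tangent structure of $(\X,\TT)$ along the forgetful functor $U\colon\Gr(\X,\TT)\to(\X,\TT)$, exploiting that $\T$ preserves finite products. For a group object $G$, I equip $\T G$ with the structure $\e_{\T G}\=\e_G\T$ (precomposed with the canonical isomorphism $\*\cong\T\*$), $\i_{\T G}\=\T\i_G$, and $\m_{\T G}\=\T\m_G$ (precomposed with the canonical isomorphism $\T G\times\T G\cong\T(G\times G)$). The group axioms for $\T G$ are the images under $\T$ of those for $G$, because $\T$ is a functor preserving products, and so it sends group objects to group objects. The same reasoning shows that $\T$ sends homomorphisms to homomorphisms, which gives an endofunctor $\T$ on $\Gr(\X,\TT)$ lifting the original one.

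Next I handle the pullbacks $\T_nG$ and the structural transformations. Since $\X$ is Cartesian and the product of group objects carries the componentwise group structure, $\Gr(\X,\TT)$ is Cartesian and $U$ preserves products. For the $n$-fold pullback $\T_nG$ of $p_G$ along itself, I use that $p_G$ is a homomorphism; this follows from naturality of $p$ together with the fact that $p$ commutes with the product comparison maps, since $\T$ preserves products. Given that, $\T_nG$ inherits a group structure computed componentwise, which I induce through the universal property of the pullback applied to $\T_n(G\times G)\cong\T_nG\times\T_nG$. By Lemma~\ref{lemma:groups-to-base-creates-limits}, this cone is a limit in $\Gr(\X,\TT)$. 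Its preservation by $\T^n$ holds because $U$ commutes with $\T$. Each structural map $p,z,s,l,c$ (and $n$, when present) is a group homomorphism by the same naturality-plus-product-compatibility argument; for $s$ one uses the group structure on $\T_2G$ just described. All the equational axioms \textbf{[TNG.2]}--\textbf{[TNG.6]} hold because they hold after applying the faithful functor $U$.

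The step I expect to be the main obstacle is \textbf{[TNG.7]}. Here the pullback square for $\xi_G$ must be a pullback in $\Gr(\X,\TT)$, and the pullback property cannot simply be transferred along the faithful functor $U$. I would argue as follows: $\xi_G$ is a homomorphism, and the square is a pullback in $\X$ whose vertices all carry group structures with homomorphic legs. Then Lemma~\ref{lemma:groups-to-base-creates-limits}, which says that $U$ reflects limits, gives the universal property in $\Gr(\X,\TT)$. The point requiring care is checking that the group structure on $\T_2G$ used here is exactly the one making the cone consist of homomorphisms.

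Finally, for $\LieGr(\X,\TT)$ I must show that if $G$ admits a tangent space at the unit, then so do $\T G$ and $G\times H$. For products, I take $\T_\e(G\times H)\=\T_\e G\times\T_\e H$, using that $\T^n$ preserves products and that products of tangent pullbacks are tangent pullbacks. For $\T G$, the unit is $\e_G\T z$ up to isomorphism. I would take $\T_{\e}(\T G)\=\T\T_\e G$: applying $\T$ to the defining pullback gives a tangent pullback of $\T p_G$ along $\T\e_G$, and composing with the canonical flip $c_G$, which swaps $\T p_G$ and $p_{\T G}$, converts it into the pullback of $p_{\T G}$ along $\T\e_G$. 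A further pullback along $z$, obtained by pasting, would then give the required tangent space. Alternatively, Theorem~\ref{theorem:trivial-tangent-bundle-groups} gives $\T G\cong G\times\T_\e G$, reducing the existence question to products. Full subcategories closed under $\T$, products and the $\T_n$ are tangent subcategories, which completes the proof.
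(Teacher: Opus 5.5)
Your proposal is correct and follows essentially the paper's own argument: $\T$ lifts to $\Gr(\X,\TT)$ because it is Cartesian, the structural transformations are homomorphisms by naturality, the universal properties of $\T_nG$ and of the vertical lift come from Lemma~\ref{lemma:groups-to-base-creates-limits}, and $\T G$ has tangent space $\T\T_\e G$ at its unit via $\T\iota_\e c_G$ (no extra pullback along $z$ is needed, since the unit of $\T G$ is just $\T\e_G$ up to $\*\cong\T\*$). The one point you assert without checking, which the paper also leaves implicit, is that $\LieGr(\X,\TT)$ is closed under the $\T_n$; this follows from Theorem~\ref{theorem:trivial-tangent-bundle-groups}, which gives $\T_nG\cong G\times(\T_\e G)^n$ as objects pointed by the unit, so the tangent space at the unit exists as a product of tangent pullbacks.
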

\begin{proof}
For starters, notice that, since the tangent bundle functor of the base Cartesian tangent category is a Cartesian functor, it preserves group objects and group homomorphisms. Furthermore, the structural natural transformations are group homomorphisms because of naturality. Therefore, to prove the statement, it remains to show the universal property of the $n$-fold pullback $\T_nM$ of the tangent bundle and the one of the vertical lift. However, these universal properties are a consequence of the forgetful functor $\Gr(\X,\TT)\to(\X,\TT)$ reflecting limits. To prove that $\LieGr(\X,\TT)$ is a Cartesian tangent subcategory, notice that if $G$ is a Lie group object, then since the pullback that defines $\T_\e G$ is preserved by the tangent bundle functor, $\T G$ has tangent space at the unit given by $\T\T_\e G$.
\end{proof}

Our goal is to use Theorem~\ref{theorem:trivial-tangent-bundle-groups} to show that the tangent category $\LieGr(\X,\TT)$ is in fact a linear GCDC.

\begin{lemma}
\label{lemma:tangent-space-linear-assignment}
There is a Cartesian endofunctor $\T_\e$ of $\LieGr(\X,\TT)$ which sends a group object $G$ to its tangent space $\T_\e G$ at the unit.
\end{lemma}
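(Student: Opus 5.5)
We need to show three things: $\T_\e G$ carries a group-object structure, $\T_\e$ acts functorially on homomorphisms, and it preserves the terminal object and binary products. Moreover, $\T_\e G$ must itself be a Lie group, so that $\T_\e$ is an endofunctor of $\LieGr(\X,\TT)$ and not just of $\Gr(\X,\TT)$.

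\textbf{Group structure on $\T_\e G$.} Since $\T$ preserves products, $\T G$ is a group object with unit $\e\,z_G$ (using $\T\*\cong\*$), multiplication $\T\m_G$ and inverse $\T\i_G$. The naturality of $p$ shows that $p_G\colon\T G\to G$ is a group homomorphism. By Lemma~\ref{lemma:groups-to-base-creates-limits}, together with the standard fact that pullbacks of group homomorphisms are computed underlying, the pullback $\T_\e G$ of $p_G$ along the homomorphism $\e\colon\*\to G$ inherits a group structure making $\iota_\e$ a homomorphism. Concretely, the multiplication $\T_\e G\times\T_\e G\to\T_\e G$ is induced by the universal property from $(\iota_\e\times\iota_\e)\T\m_G$, using $\T\m_G\,p_G=(p_G\times p_G)\m_G$ and $\e\e=\e$ up to the canonical isomorphism. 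The unit is induced by $\e z_G$, and the inverse by $\iota_\e\T\i_G$. The group axioms follow because $\iota_\e$ is monic, being a pullback of the mono $\*\to G$.

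\textbf{$\T_\e G$ is a Lie group.} Write $\e'$ for the unit of $\T_\e G$. We need the tangent space of $\T_\e G$ at $\e'$. Since the defining pullback of $\T_\e G$ is preserved by $\T$, the object $\T\T_\e G$ is the pullback of $\T p_G$ along $\T\e=z$ over $\T\*\cong\*$. Pasting pullbacks and using the naturality of $p$, the tangent space of $\T_\e G$ at $\e'$ can be computed as the pullback of $p_{\T G}\colon\T\T G\to\T G$ along a point. A cleaner route is to use Theorem~\ref{theorem:trivial-tangent-bundle-groups}. It gives $\T G\cong G\times\T_\e G$, hence $\T\T_\e G\cong\T_\e G\times(\text{fibre})$. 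The fibre over the unit is then obtained as an iterated pullback of tangent pullbacks, and it is again a tangent pullback because all the pieces are preserved by $\T^n$.

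\textbf{Functoriality.} For a homomorphism $f\colon G\to H$, we have $\e_G f=\e_H$, so $\iota_\e\T f$ lies over $\e_H$ and induces $\T_\e f\colon\T_\e G\to\T_\e H$. This map is a homomorphism by monicity of $\iota_\e$, since $\T f$ is a homomorphism. Preservation of identities and composites also follows from monicity.

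\textbf{Cartesianness.} Because $\T(G\times H)\cong\T G\times\T H$ and $p_{G\times H}=p_G\times p_H$, the pullback along $\e_{G\times H}=\<\e_G,\e_H\>$ is the product of the two pullbacks. Hence $\T_\e(G\times H)\cong\T_\e G\times\T_\e H$, and likewise $\T_\e\*\cong\*$.

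\textbf{Main obstacle.} I expect the hardest step to be verifying that $\T_\e G$ again admits a \emph{tangent} pullback at its unit. This is where the preservation of limits by all iterates $\T^n$ has to be tracked carefully through pasting arguments.
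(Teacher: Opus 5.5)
Your construction of $\T_\e f$, its functoriality, product preservation, and the group structure on $\T_\e G$ are correct and essentially the paper's argument. The paper also defines $\T_\e f$ by the universal property of the pullback and gets functoriality from uniqueness. It then obtains the group operations by applying the product-preserving $\T_\e$ to $\e_G,\m_G,\i_G$. You instead build them directly from $\e z_G$, $(\iota_\e\times\iota_\e)\T\m_G$ and $\iota_\e\T\i_G$, and check the axioms using that $\iota_\e$ is monic. This gives the same structure and is slightly more careful: $\m_G$ is only unit-preserving, not a homomorphism, so the paper tacitly extends $\T_\e$ to unit-preserving maps. You also verify explicitly that $\T_\e f$ is a homomorphism.

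The gap is in the step you flag yourself: showing that $\T_\e G$ is again a Lie group. The paper's proof leaves this implicit.
\begin{itemize}
\item Your ``cleaner route'' does not work. Theorem~\ref{theorem:trivial-tangent-bundle-groups} trivializes $\T G$ but gives no decomposition of $\T\T_\e G$, and applying it to $\T_\e G$ itself would be circular.
\item In your first route, the tangent space of $\T_\e G$ at $\e'$ is not the pullback of $p_{\T G}$ along a point. That pullback is the tangent space of $\T G$ at $\e z_G$, which in $\SMan$ is $\T_\e G\times\T_\e G$. What pasting actually gives is the joint fibre of $p_{\T G}$ and $\T p_G$ over $\e z_G$.
\item The principle ``built from tangent pullbacks, hence tangent'' is not valid. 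The pullback of $p_{\T_\e G}$ along $\e'$ is a new limit, and nothing in your argument forces $\T^n$ to preserve it.
\end{itemize}

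The missing ingredient is local linearity. Paste \textbf{[TNG.7]} with the pullback $\T_\e G\times\T_\e G$ of $\T_2G\to G$ along $\e$. This shows that $\<\pi_1\iota_\e,\pi_2\iota_\e\>\xi_G\colon\T_\e G\times\T_\e G\to\T\T G$ is a pullback of $\T p_G$ along $\e z_G$. Since $\T\iota_\e$ is another such pullback, you get an isomorphism $\phi\colon\T_\e G\times\T_\e G\cong\T\T_\e G$. Then $\xi_G p_{\T G}=\pi_1$ together with monicity of $\iota_\e$ gives $\phi\, p_{\T_\e G}=\pi_1$. Equivalently, use that tangent spaces are differential objects, as recalled right after the lemma, together with \textbf{[DO.5]}. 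Hence the tangent space of $\T_\e G$ at $\e'$ (indeed at any point) is $\T_\e G$ via $\<!\e',\id\>\phi$. This square is isomorphic to a product-type pullback, so every $\T^n$ preserves it, because $\T^n$ preserves finite products.
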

\begin{proof}
Consider a group homomorphism $f\colon G\to H$. By using the universal property of the tangent pullback that defines $\T_\e G$ and by using that $f$ preserves the units of the groups, it is not hard to construct $\T_\e f\colon\T_\e G\to\T_{\e_H}H$ as the unique morphism such that $\T_\e f\iota_\e=\iota_\e\T f$. By uniqueness of $\T_\e f$, it is also easy to see that $\T_\e\id_G=\id_{\T_\e G}$ and that $\T_\e(fg)=\T_\e f\T_\e g$, making $\T_\e$ into a functor $\T_\e\colon\LieGr(\X,\TT)\to(\X,\TT)$. Furthermore, since $\T_\e G$ is defined via a pullback and limits commute with other limits, it is also immediate that $\T_\e$ preserves finite products. This proves that $\T_\e$ is Cartesian. Thus, we can apply $\T_\e$ to the structure maps $\e_G,\m_G,\i_G$ of a group object $G$ in $(\X,\TT)$ and obtain the structure maps:
\begin{align*}
&\*\cong\T_\e\*\xrightarrow{\T_\e\e_G}\T_\e G   &&\T_\e G\times\T_\e G\cong\T_\e(G\times G)\xrightarrow{\T_\e\m_G}\T_\e G&&\T_\e G\xrightarrow{\T_\e\i_G}\T_\e G
\end{align*}
Since functors preserve equations, $\T_\e G$ is a new group object.
\end{proof}

Tangent spaces are always differential objects (see~\cite[Corollary~3.5]{cockett:differential-bundles}). Since it will appear a few times throughout the paper, we may recall this definition.

\begin{definition}[{\cite[Defitinition~3.1]{cockett:tangent-cats}}]
A \textbf{differential object} in a Cartesian tangent category consists of an object $A$ equipped with three morphisms
\begin{align*}
&0_A\colon\*\to A       &&+_A\colon A\times A\to A      &&\hat p_A\colon\T A\to A
\end{align*}
respectively called, the \textbf{zero}, the \textbf{sum}, and the \textbf{differential projection} of $A$, subject to the following conditions:
\begin{description}
\item[DO.1] The triple $(A,0_A,+_A)$ is a commutative monoid, that is, the terminal map $!_A\colon A\to\*$ is an additive bundle;

\item[DO.2] The pair $(!_A,\hat p_A)\colon\p_A\to!_A$ is a additive bundle morphism:
\begin{equation*}
\begin{tikzcd}
{\T A} & A \\
A & {*}
\arrow["{\hat p_A}", from=1-1, to=1-2]
\arrow["{z_A}", from=2-1, to=1-1]
\arrow["{!_A}"', from=2-1, to=2-2]
\arrow["{0_A}"', from=2-2, to=1-2]
\end{tikzcd}\qquad
\begin{tikzcd}
{\T_2A} & {A\times A} \\
{\T A} & A
\arrow["{\hat p_A\times_{!_A}\hat p_A}", from=1-1, to=1-2]
\arrow["{s_A}"', from=1-1, to=2-1]
\arrow["{+_A}", from=1-2, to=2-2]
\arrow["{\hat p_A}"', from=2-1, to=2-2]
\end{tikzcd}
\end{equation*}

\item[DO.3] The pair $(!_{\T A},\hat p_A)\colon(\T!_A,\T0_A,\T+_A)\to(!_A,0_A,+_A)$ is an additive bundle morphism:
\begin{equation*}
\begin{tikzcd}
{\T A} & A \\
{\T\*} & {*}
\arrow["{\hat p_A}", from=1-1, to=1-2]
\arrow["{\T0_A}", from=2-1, to=1-1]
\arrow["\cong"', from=2-1, to=2-2]
\arrow["{0_A}"', from=2-2, to=1-2]
\end{tikzcd}\qquad
\begin{tikzcd}
{\T(A\times A)} & {A\times A} \\
{\T A} & A
\arrow["{\hat p_A\times\hat p_A}", from=1-1, to=1-2]
\arrow["{\T+_A}"', from=1-1, to=2-1]
\arrow["{+_A}", from=1-2, to=2-2]
\arrow["{\hat p_A}"', from=2-1, to=2-2]
\end{tikzcd}
\end{equation*}

\item[DO.4] The differential projection is linear, that is:
\begin{equation*}
\begin{tikzcd}
{\T\T A} & {\T A} \\
{\T A} & A
\arrow["{\T\hat p_A}", from=1-1, to=1-2]
\arrow["{\hat p_A}", from=1-2, to=2-2]
\arrow["{l_A}", from=2-1, to=1-1]
\arrow["{\hat p_A}"', from=2-1, to=2-2]
\end{tikzcd}
\end{equation*}

\item[DO.5] The differential projection exhibit $\T A$ as the Cartesian product $A\times A$ that is, the following diagram
\begin{equation*}
\begin{tikzcd}
A & {\T A} & A
\arrow["{p_A}"', from=1-2, to=1-1]
\arrow["{\hat p_A}", from=1-2, to=1-3]
\end{tikzcd}
\end{equation*}
is a product diagram.
\end{description}
A \textbf{linear morphism} of differential objects consists of a morphism $f\colon A\to B$ that commutes with the differential projections, that is:
\begin{equation*}
\begin{tikzcd}
{\T A} & {\T B} \\
A & B
\arrow["{\T f}", from=1-1, to=1-2]
\arrow["{\hat p_A}"', from=1-1, to=2-1]
\arrow["{\hat p_B}", from=1-2, to=2-2]
\arrow["f"', from=2-1, to=2-2]
\end{tikzcd}
\end{equation*}
\end{definition}

\begin{example}
\label{example:differential-objects-sman}
In $(\SMan,\TT)$ (Example~\ref{example:tangent-categories-sman}), differential objects are precisely the Euclidean spaces, that is, $\R^n$, for each $n$.
\end{example}

\begin{example}
\label{example:differential-objects-c-alg}
In $(\cAlg_R,\TT^\epsilon)$ (Example~\ref{example:tangent-categories-c-alg}), the only differential object is the terminal object, that is, the zero algebra $\{0\}$ (\cite[Corollary~3.12]{cruttwell:algebraic-geometry}).
\end{example}

\begin{example}
\label{example:differential-objects-affine}
In $(\Aff_R,\TT^\Omega)$ (Example~\ref{example:tangent-categories-affine}), differential objects are equivalent to $R$-modules. In particular, there is an equivalence between $\DO[\Aff_R]$ and $\Mod_R^\op$ (\cite[Corollary~4.18]{cruttwell:algebraic-geometry}\footnote{In the paper, they look at the case in which $R=\Z$ but then, in Remark~4.20 they clarify that the same proof applies for the arbitrary case with $R$ any commutative and unital ring.}).
\end{example}

We will make recurrent use of an equivalent presentation for differential objects, established by~\cite[Proposition~3.4]{cockett:differential-bundles}. According to this presentation, a differential object consists of a commutative monoid $(A,0_A,+_A)$ equipped with a \textbf{vertical lift} $\lambda_A\colon A\to\T A$, compatible with the additive structure and the vertical lift, and subject to the following condition. The morphism
\begin{align*}
&\hat\xi_A\colon A\times A\xrightarrow{z_A\times\lambda_A}\T(A\times A)\xrightarrow{\T+_A}\T A
\end{align*}
is an isomorphism. In particular, $\lambda_A$ is uniquely defined as the morphism that satisfies $\lambda_Ap_A=!_A0_A$ and $\lambda_A\hat p_A=\id_A$. Furthermore, $\lambda_A$ satisfies the following identity:
\begin{align}
\label{equation:lambda-hat-p}
&\hat p_A=\T\lambda_Ac_A\T\hat p_A
\end{align}

By applying $\T_\e$ to the structure maps of a group object $G$, we deduce that $\T_\e G$ comes equipped with a group structure. Cockett and Schwarz showed in~\cite[Theorem~2]{cockett:lie-groups-tangent-cats} that the unit and the multiplication of this group structure on $\T_\e G$ coincide with the zero $0_{\T_\e G}$ and the sum $+_{\T_\e G}$ of the commutative monoid structure of $\T_\e G$, as a differential object.

\begin{lemma}[{\cite[Theorem~2]{cockett:lie-groups-tangent-cats}}]
\label{lemma:additive-structure-tangent-spaces}
A Lie group object $G$ admits negatives. Moreover, the group structure of $\T_\e G$ coincides with its additive structure induced by $\T_\e G$ being a differential object.
\end{lemma}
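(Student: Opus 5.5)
We prove the two claims separately, using Theorem~\ref{theorem:trivial-tangent-bundle-groups} to transport structure along the isomorphism $\<p_G,\mc_G\>\colon\T G\cong G\times\T_\e G$.

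\textbf{Step 1: $\T_\e G$ is a differential object.} By~\cite[Corollary~3.5]{cockett:differential-bundles}, $\T_\e G$ is a differential object with zero $0$, sum $+$ and differential projection $\hat p$. Note $0$ is $\*\to\T_\e G$ given by $\e z_G$, and $+$ is the map induced by $s_G$ on the fibre over $\e$.

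\textbf{Step 2: the group structure on $\T_\e G$ equals the additive one.} The group structure is $(\T_\e\e_G,\T_\e\m_G,\T_\e\i_G)$ from Lemma~\ref{lemma:tangent-space-linear-assignment}. The units agree at once: $\T_\e\e_G$ is induced by $\T\e_G=\e_G z_G$ (naturality of $z$ and $\T\*\cong\*$), which is exactly $0$. For the multiplication I will use an Eckmann--Hilton argument. By naturality, $s$ and $z$ commute with $\T\m_G$ and with the canonical iso $\T(G\times G)\cong\T G\times\T G$. Restricting to fibres over $\e$, $\T_\e\m_G\colon\T_\e G\times\T_\e G\to\T_\e G$ is therefore a monoid morphism for $(0,+)$. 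So $+$ and $\T_\e\m_G$ are two binary operations with a common unit $0$ that interchange, and Eckmann--Hilton gives $\T_\e\m_G=+$. The remaining technical point is that $\T_\e(G\times G)\cong\T_\e G\times\T_\e G$ carries $+_{\T_\e(G\times G)}$ to the componentwise sum. This follows because $\T$ preserves products and the pullbacks defining tangent spaces commute with products. Consequently $\T_\e\i_G$ is an additive inverse for $+$: from $\<\id,\i_G\>\m_G=!\e_G$, applying $\T_\e$ gives $\<\id,\T_\e\i_G\>+=!0$.

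\textbf{Step 3: $G$ admits negatives.} Define
\[
n_G\colon\T G\xrightarrow{\<p_G,\mc_G\>}G\times\T_\e G\xrightarrow{\id\times\T_\e\i_G}G\times\T_\e G\xrightarrow{\<p_G,\mc_G\>^{-1}}\T G .
\]
This construction relies on the fact that under the trivialization, $p_G$, $z_G$ and $s_G$ correspond to $\pi_1$, $\<\id,!0\>$ and $\id\times+$. Equivalently, $\mc_G$ is fibrewise additive, which follows from naturality of $s$, $z$ and the definition of $\mc_G$ via $\T\m_G$ and $\T(\i_G\times\id)$, using that $\T\m_G$ is additive in the second variable after zeroing the first. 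Given that, the Abelian bundle axioms for $n_G$ reduce to Step 2.

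The main obstacle is the naturality of negation, which is required for \emph{negatives} in the sense of Definition~\ref{definition:tangent-category}. I would interpret it as: each $\p_G$ is an Abelian bundle, so negation is unique and hence automatically compatible with bundle morphisms. The key lemma is that an inverse in a commutative monoid bundle is unique, and that additive bundle morphisms preserve it. For a group homomorphism $f$, $(f,\T f)$ is additive by naturality of $z,s$, and so commutes with $n$. The same uniqueness argument also gives naturality with respect to $\T$-iterates.
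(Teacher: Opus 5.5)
Your proof is correct. The paper gives no argument of its own for this lemma; it is imported from Cockett--Schwarz~\cite[Theorem~2]{cockett:lie-groups-tangent-cats}, so there is nothing to match step by step. Your Step~2 is exactly the Eckmann--Hilton device the paper itself uses later, in the proof of Theorem~\ref{theorem:second-fundamental-lie}. Your uniqueness argument (negations are unique, and additive bundle morphisms between Abelian bundles preserve them) correctly handles the ``natural in $M$'' clause. One small precision: $\m_G$, $\i_G$ and $\<\id_G,\i_G\>$ are not homomorphisms, so $\T_\e$ must be read as acting on unit-preserving maps. That is all its construction in Lemma~\ref{lemma:tangent-space-linear-assignment} actually uses.

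Step~3 is where your route differs from what is strictly needed. Transporting $\id_G\times\T_\e\i_G$ through the trivialization of Theorem~\ref{theorem:trivial-tangent-bundle-groups} works. However, the interchange law you already use in Step~2, applied to $\T\m_G$ on $\T_2(G\times G)\cong\T_2G\times\T_2G$, gives something more direct. For $u,v$ over a common base point $g$, one gets $u+v=\T\m_G\bigl(u,\T\m_G(z_G(g^{-1}),v)\bigr)$. To see this, write $u=\T\m_G(u,z_G(\e))$ and $v=\T\m_G(z_G(g),w)$ with $w=\T\m_G(z_G(g^{-1}),v)$, and add. From this identity, using only the group laws of $\T G$ and naturality of $z$, one checks that $n_G\=\<p_Gz_G,\<\T\i_G,p_Gz_G\>\T\m_G\>\T\m_G$ is a negation for $\p_G$. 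Informally, $n_G$ is $v\mapsto z_G(g)\,\T\i_G(v)\,z_G(g)$.

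That version uses neither $\T_\e G$ nor $\mc_G$. So the first half of the lemma in fact holds for every group object in a Cartesian tangent category, which makes Remark~\ref{remark:negatives-group-objects} overly cautious; only the second half needs the Lie hypothesis. Your version, in exchange, identifies the negation with $\id_G\times\T_\e\i_G$ under the trivialization, which ties the two halves of the lemma together.
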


\begin{remark}
\label{remark:negatives-group-objects}
While it is immediate for Lie groups, in general one should not expect a group object to have negatives. In the following, we will always assume that a group object has negatives.
\end{remark}

\begin{theorem}
\label{theorem:Grp-linear-GCDC}
The Cartesian tangent category $\LieGr(\X,\TT)$ of Lie group objects is a linear GCDC whose linear assignment is given by the Cartesian endofunctor $\T_\e$ of Lemma~\ref{lemma:tangent-space-linear-assignment}. Furthermore, $\LieGr(\X,\TT)$ admits negatives.
\end{theorem}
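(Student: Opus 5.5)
To prove the theorem, I will equip the Cartesian endofunctor $\T_\e$ of Lemma~\ref{lemma:tangent-space-linear-assignment} with the structure of a linear assignment on $\LieGr(\X,\TT)$. I will then show that the tangent structure of $\LieGr(\X,\TT)$ inherited from $(\X,\TT)$ (Lemma~\ref{lemma:group-objects-form-tangent-category}) is isomorphic to the tangent structure $\T^{\T_\e}$ of Equation~\eqref{equation:linear-GCDC}.

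The first step concerns the monoid structure. By Lemma~\ref{lemma:additive-structure-tangent-spaces}, the group structure on $\T_\e G$ obtained by applying $\T_\e$ to $\e_G,\m_G,\i_G$ coincides with the differential-object structure $(0_{\T_\e G},+_{\T_\e G})$. In particular it is commutative, so each $\T_\e G$ is a commutative group object, hence itself a Lie group: its tangent bundle splits as a product with second factor $\T_\e G$ by DO.5. I take $0_G\colon\*\to\T_\e G$ and $+_G$ to be these maps. They are group homomorphisms because $\T_\e G$ is commutative. They are natural in $G$ because, for a homomorphism $f$, $\T_\e f$ is the restriction of $\T f$, and $\T f$ preserves $z$ and $s$.

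The second step is the isomorphism $\nu_G\colon\T_\e G\to\T_\e\T_\e G$. Since $\T_\e G$ is a differential object, its vertical lift $\lambda_{\T_\e G}\colon\T_\e G\to\T\T_\e G$ lands over $0=\e_{\T_\e G}$. It therefore factors through $\T_{0}\T_\e G=\T_\e\T_\e G$. I define $\nu_G$ to be this factorization, and I identify $\T_\e\T_\e G\cong\T_\e G$ using DO.5: the product decomposition of $\T\T_\e G$ restricts over $0$ to $\hat p$. Then $\nu_G$ is an isomorphism with inverse induced by $\hat p_{\T_\e G}$. Equivalently, $\nu_G$ is the restriction of $l_G$ to the fibre over the unit, so naturality follows from naturality of $l$. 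The equation $\nu_{\T_\e G}=\T_\e\nu_G$ should follow from TNG.5 and TNG.6, that is, from $l\T l=l l_{\T}$ and the compatibility of $c$ with $l$, restricted to unit fibres.

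The third step compares the tangent structures. Theorem~\ref{theorem:trivial-tangent-bundle-groups} gives an isomorphism $\<p_G,\mc_G\>\colon\T G\to G\times\T_\e G$. This isomorphism is natural in group homomorphisms, since $\mc$ is built from $\m,\i$ and $\T$. I then need to check that under it:
\begin{itemize}
\item $p$ goes to $\pi_1$;
\item $z$ goes to $\<\id,!0\>$;
\item $s$ goes to fibrewise $+$;
\item $l$ goes to $\id\times\nu$;
\item $c$ goes to the swap.
\end{itemize}
The claims for $p$ and $z$ are immediate from the definition of $\mc$. For $s$, I use that $\T(\i\times\id)\T\m$ is additive over the unit fibre, which gives $\mc$ fibrewise linear. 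For $l$ and $c$, I compute $\T\mc$ on $\T\T G\cong G\times\T_\e G\times\T_\e G\times\T_\e\T_\e G$ and use naturality of $l$ and $c$ with respect to $\T\m$ and $\T\i$. Once the structures match, $\LieGr(\X,\TT)$ is the linear GCDC determined by $\T_\e$. Negatives then come from the group inverse on $\T_\e G$: set $n_G=\<p_G,\mc_G\i_{\T_\e G}\>$ transported back, or simply invoke Lemma~\ref{lemma:additive-structure-tangent-spaces} objectwise and check that $n_G$ is a homomorphism, which holds by naturality.

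I expect the main obstacle to be verifying that the canonical flip $c_G$ corresponds to the swap of the two middle $\T_\e G$ factors. This requires an explicit description of $\T\mc_G$, and it mixes the group multiplication on $\T G$ with the differential-object structure. To handle it, I would first prove that $\mc_{\T G}$ corresponds to $\T\mc_G$ under $\T_\e\T G\cong\T\T_\e G$ (Lemma~\ref{lemma:group-objects-form-tangent-category}). I would then reduce the flip identity, via naturality of $c$, to the identity $\hat p=\T\lambda\, c\,\T\hat p$ of Equation~\eqref{equation:lambda-hat-p}.
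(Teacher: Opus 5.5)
Your Steps~1 and~2 are sound. The identity you hedge on, $\nu_{\T_\e G}=\T_\e\nu_G$, holds: after composing with the monics $\iota_0$ and $\T\iota_0$ it reduces to $\iota_0 l=\iota_0\T\lambda$ on the zero fibre. That follows from $\iota_0=\iota_0\hat p\lambda$ together with $\lambda l=\lambda\T\lambda$.

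\textbf{The gap is in Step~3, and it is not the flip.} In $\LieGr(\X,\TT)$ the object $\T G$ carries the multiplication $\T\m_G$ (Lemma~\ref{lemma:group-objects-form-tangent-category}). The object $G\times\T_\e G$ is the \emph{direct} product of groups (Lemma~\ref{lemma:groups-to-base-creates-limits}). So to identify the inherited tangent structure with $\T^{\T_\e}$, you need $\<p_G,\mc_G\>$ to be an isomorphism \emph{in} $\LieGr(\X,\TT)$. That is, $\mc_G$ must be a homomorphism $(\T G,\T\m_G)\to(\T_\e G,+)$. Naturality of $\mc$ in group homomorphisms does not give this, and matching $p,z,s,l,c$ only compares the two structures in $\X$. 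In fact $\mc_G$ is not a homomorphism. In $\SMan$, for $u\in\T_gG$ and $v\in\T_hG$ one has $\mc_G(\T\m_G(u,v))=\Ad(h^{-1},\mc_G(u))+\mc_G(v)$. Taking $u=\iota_\e(X)$ and $v=z_G(h)$ gives $\Ad(h^{-1},X)$ on one side and $X$ on the other. So $\<p_G,\mc_G\>$ identifies $\T G$ with the semidirect product $G\ltimes_{\Ad}\T_\e G$, not with $G\times\T_\e G$.

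No other choice of isomorphism repairs this, because the statement is false as written. Take $G=\mathrm{SO}(3)$ in $\SMan$. The group $\T G\cong G\ltimes_{\Ad}\mathfrak{so}(3)$ has trivial centre, since $G$ has trivial centre and $\Ad$ fixes no nonzero vector. By contrast, $G\times\mathfrak{so}(3)$ has centre $\mathfrak{so}(3)$. More intrinsically, suppose $\T G\cong G\times\L G$ over $G$ in $\LieGr(\X,\TT)$ with $\L G$ abelian. Then the fibre over the unit would be central in $(\T G,\T\m_G)$. But conjugation by $z_G(h)$ acts on that fibre by $\Ad(h,-)$.

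The paper's own proof has exactly the same gap. It cites Theorem~\ref{theorem:trivial-tangent-bundle-groups}, but the Cockett--Schwarz product diagram is a product in $\X$, not in $\Gr(\X,\TT)$.

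Your argument does establish the following:
\begin{enumerate}
\item $\T_\e$ is a linear assignment on $\LieGr(\X,\TT)$, so $G\mapsto G\times\T_\e G$ is \emph{a} tangent structure there, different from the inherited one.
\item The two structures agree on underlying objects of $\X$.
\item They agree on abelian Lie groups, where $\i_G$ and $\m_G$ are homomorphisms and hence so is $\mc_G$.
\end{enumerate}
The claim about negatives is fine. $\T\m_G$ is an additive bundle morphism between Abelian bundles, so it preserves negation, which means $n_G$ is a homomorphism.
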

\begin{proof}
Theorem~\ref{theorem:trivial-tangent-bundle-groups} shows that $\T G\cong G\times\T_\e G$ and Lemma~\ref{lemma:tangent-space-linear-assignment} proves that $\T_\e$ is functorial and Cartesian. It remains to prove that $\T_\e$ has the structure of a linear assignment. For starters, the additive structure of $\T_\e G$ is given by the additive structure of differential objects. Since $\T_\e G$ is a tangent space, it inherits such a structure. However, by Lemma~\ref{lemma:additive-structure-tangent-spaces}, the additive structure coincides with the group structure of $\T_\e G$. Therefore, it lifts to $\LieGr(\X,\TT)$. Finally, to construct the isomorphism $\nu_G\colon\T_\e G\to\T_\e\T_\e G$, we use again that $\T_\e G$ is a differential object and that the tangent space at zero of a differential object is isomorphic to itself, since $\T\T_\e G\cong\T_\e G\times\T_\e G$. Negatives are induced by the inverse map $\T_\e\i_G$.
\end{proof}

Thanks to~\cite[Theorem~4.13]{ikonicoff:abelianization-tangent-cats}, we can also classify differential objects of $\LieGr(\X,\TT)$ as \textbf{linear algebras}. A linear algebra consists of an object $A$ together with an isomorphism $\upsilon_A\colon A\to\L A$ such that $\L(\upsilon_A)=\nu_A$ (\cite[Definition~4.1]{ikonicoff:abelianization-tangent-cats}).

Let $\DO(\X,\TT)$ denote the Cartesian differential category of differential objects and linear morphisms of $(\X,\TT)$.

\begin{corollary}
\label{corollary:differential-objects-group-objects}
The category of differential objects $\DO(\LieGr(\X,\TT))$ of $\LieGr(\X,\TT)$ is equivalent to the category of $\T_\e$-algebras. In particular, a group object $A$ is a differential object in $\LieGr(\X,\TT)$ if and only if $\upsilon_A\colon A\cong\T_\e A$ and $\T_\e\upsilon_A=\nu_A$.
\end{corollary}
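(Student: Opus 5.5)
The plan is to obtain the corollary by applying \cite[Theorem~4.13]{ikonicoff:abelianization-tangent-cats} to the linear GCDC $\LieGr(\X,\TT)$ of Theorem~\ref{theorem:Grp-linear-GCDC}. That theorem says that in a linear GCDC with linear assignment $\L$, differential objects correspond to $\L$-linear algebras, that is, objects $A$ with an isomorphism $\upsilon_A\colon A\to\L A$ satisfying $\L\upsilon_A=\nu_A$. So the main job is to check that its hypotheses hold here and that its conclusion reads as stated when $\L=\T_\e$.

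First I will recall, from Theorem~\ref{theorem:Grp-linear-GCDC}, that the tangent structure on $\LieGr(\X,\TT)$ is the one induced by $\L=\T_\e$ via Equation~\eqref{equation:linear-GCDC}. The relevant data are:
\begin{itemize}
\item the isomorphism $\T G\cong G\times\T_\e G$ given by $\<p_G,\mc_G\>$ (Theorem~\ref{theorem:trivial-tangent-bundle-groups});
\item the monoid structure $(0,+)$ on $\T_\e G$ from Lemma~\ref{lemma:additive-structure-tangent-spaces};
\item the isomorphism $\nu_G\colon\T_\e G\to\T_\e\T_\e G$.
\end{itemize}
The subtle point is that the tangent structure of $\LieGr(\X,\TT)$ is a priori the one inherited from $(\X,\TT)$ (Lemma~\ref{lemma:group-objects-form-tangent-category}). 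I must confirm that transporting it along $\<p_G,\mc_G\>$ yields exactly $\T^{\L}$, including the lift and the flip.

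The condition $\nu_{\L A}=\L\nu_A$ should follow from naturality of $\nu$ and its construction via the differential object structure of $\T_\e G$. Concretely, $\nu_G$ is the inverse of the identification $\T_0(\T_\e G)\cong\T_\e G$ coming from $\T\T_\e G\cong\T_\e G\times\T_\e G$.

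Given this, the equivalence itself follows from \cite[Theorem~4.13]{ikonicoff:abelianization-tangent-cats}. In one direction, a linear algebra $(A,\upsilon_A)$ becomes a differential object as follows:
\begin{itemize}
\item the monoid structure is transported from $\L A$ along $\upsilon_A$;
\item the differential projection is $\hat p_A\colon A\times\L A\xrightarrow{\pi_2}\L A\xrightarrow{\upsilon_A^{-1}}A$.
\end{itemize}
In the other direction, a differential object $A$ gives $\upsilon_A$ as the composite $A\xrightarrow{\lambda_A}\T A\cong A\times\T_\e A\xrightarrow{\pi_2}\T_\e A$, using the vertical lift $\lambda_A$ of the alternative presentation, which is an isomorphism by DO.5.

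The "in particular" clause is then just the object-level statement with $\L=\T_\e$. Morphisms match because linear morphisms $f$, which satisfy $\T f\hat p_B=\hat p_Af$, correspond under $\upsilon$ to those with $f\upsilon_B=\upsilon_A\T_\e f$.

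The main obstacle I expect is verifying that the tangent structure of $\LieGr(\X,\TT)$ genuinely coincides with the linear GCDC structure, especially that the vertical lift $l_G$ corresponds to $\nu_G$ under the trivialization. I would try to check this directly from the defining property of $\mc_G$ and the universal property of the tangent pullback $\T_\e G$ preserved by $\T$. If that succeeds, I would then cite Theorem~\ref{theorem:Grp-linear-GCDC} as already having established it.
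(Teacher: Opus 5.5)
Your route is the paper's route: there the corollary is obtained, with no further argument, by applying \cite[Theorem~4.13]{ikonicoff:abelianization-tangent-cats} to the linear GCDC of Theorem~\ref{theorem:Grp-linear-GCDC}. However, the step you single out as the main obstacle does not merely need checking. It fails for non-Abelian $G$, and citing Theorem~\ref{theorem:Grp-linear-GCDC} does not rescue it, because that proof only produces the isomorphism of Theorem~\ref{theorem:trivial-tangent-bundle-groups} in $\X$.

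In $\LieGr(\X,\TT)$, with the tangent structure of Lemma~\ref{lemma:group-objects-form-tangent-category}, the object $\T G$ carries the multiplication $\T\m_G$. The product $G\times\T_\e G$ instead carries the direct-product multiplication, and $\<p_G,\mc_G\>$ is not a homomorphism between the two. Already in $\SMan$, after left trivialization, $\T\m_G$ reads $(g,u)\cdot(h,v)=(gh,\Ad_{h^{-1}}u+v)$. So $\T G\cong\g\rtimes G$ is a semidirect product, and $\mc_G$ is only a crossed homomorphism. For $G=SO(3)$ this group has trivial centre, while $SO(3)\times\mathfrak{so}(3)$ has centre $\mathfrak{so}(3)$. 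Hence there is no isomorphism $\T G\cong G\times\T_\e G$ in $\LieGr(\SMan,\TT)$ at all. With its inherited tangent structure, $\LieGr(\X,\TT)$ is not a linear GCDC in general, so neither your direct check nor the citation can go through as stated.

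The statement itself survives, because both sides only involve Abelian groups.
\begin{itemize}
\item A commutative monoid $(A,0,+)$ in $\LieGr(\X,\TT)$ has $0$ and $+$ group homomorphisms. By Eckmann--Hilton, $0=\e_A$ and $+=\m_A$, so $A$ is Abelian.
\item A $\T_\e$-algebra is Abelian, since $A\cong\T_\e A$ and the group structure of $\T_\e A$ is its additive one (Lemma~\ref{lemma:additive-structure-tangent-spaces}).
\end{itemize}
The full subcategory of Abelian Lie groups is closed under $\T$, products and $\T_\e$. On it, $\gamma_G=(z_G\times\iota_\e)\T\m_G$ is a group isomorphism $G\times\T_\e G\cong\T G$. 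To see this, use commutativity and associativity of $\T\m_G$, naturality of $z$, and $(\iota_\e\times\iota_\e)\T\m_G=\T_\e\m_G\,\iota_\e$ with $\T_\e\m_G=+$. Together these turn $(\gamma_G\times\gamma_G)\T\m_G$ into the direct-product multiplication of $G\times\T_\e G$ followed by $\gamma_G$.

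So run your argument on that subcategory. There the lift and flip comparison you planned is meaningful; once it is done, apply the cited theorem. Since the differential objects, linear maps and $\T_\e$-algebras of $\LieGr(\X,\TT)$ all lie in this subcategory, this yields the stated equivalence.

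A minor point: it is the composite $A\to\T A\cong A\times\T_\e A\to\T_\e A$, not $\lambda_A$ itself, that DO.5 makes invertible.
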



\section{The Lie functor of a group object}
\label{section:lie-functors}
The first step to construct the internal Lie algebra of a Lie group is to associate to each group object $G$ (with negatives) a \emph{Lie functor}, that is, a functor $\g(-)\colon(\X,\TT)\to\LieAlg^\op$. In Section~\ref{section:representability}, we will show that the Lie functor of a Lie group object is always representable. To construct this functor, we rely on a fundamental result of tangent category theory: the set $\VF_M(\X,\TT)$ of vector fields over an object $M$ forms a Lie algebra. This result was initially mentioned by Rosick\'y in his seminal paper~\cite{rosicky:tangent-cats}. Later, a complete proof was given by Cockett and Cruttwell in~\cite{cockett:jacobi}.

We begin by recalling the notion of a vector field in a tangent category.

\begin{definition}[{\cite{rosicky:tangent-cats}}]
\label{definition:vector-field}
A \textbf{vector field} in a tangent category over an object $M$ consists of a section $X\colon M\to\T M$ of the projection, in that $Xp_M=\id_M$.
\end{definition}

The zero of the Lie algebra $\VF_M(\X,\TT)$ is the zero vector field $0_M\=z_M\colon M\to\T M$. Given two vector fields $X,Y\colon M\to\T M$, their sum is defined by:
\begin{align*}
&X+Y\colon M\xrightarrow{\<X,Y\>}\T_2M\xrightarrow{s_M}\T M
\end{align*}
When $M$ has negatives, $\VF_M(\X,\TT)$ becomes a Lie algebra. The following formula defines the Lie bracket of two vector fields $X,Y$:
\begin{align*}
&[X,Y]\=\{X\T Y-Y\T Xc_M\}
\end{align*}
where $-X\=Xn_M$ and for a given morphism $f\colon X\to\T\T M$ satisfying $f\T p_M=fp_{\T M}p_Mz_M$, $\{f\}\colon X\to\T M$ is the unique morphism, defined by the universal property of the vertical lift, such that $f=\<fp_{\T M}z_{\T M},\{f\}l_M\>\T s_M$ (see~\cite[Lemma~2.12]{cockett:tangent-cats}).

Throughout the section, we shall assume that group objects have negatives (see Lemma~\ref{lemma:tangent-space-linear-assignment} and Remark~\ref{remark:negatives-group-objects}).

\subsection{Parametrized vector fields}
\label{subsection:parametrized-vector-fields}
The first step towards constructing the Lie functor $\g(-)$ of a group object with negatives $G$ is to introduce parametrized vector fields.

\begin{definition}
\label{definition:parametrized-vector-fields}
For a given object $A$, an \textbf{$A$-parametrized vector field} in a Cartesian tangent category consists a morphism $X\colon M\times A\to\T M$ satisfying the following commutativity condition:
\begin{equation*}
\begin{tikzcd}
{M\times A} & {\T M} \\
& M
\arrow["X", from=1-1, to=1-2]
\arrow["{\pi_1}"', from=1-1, to=2-2]
\arrow["{p_M}", from=1-2, to=2-2]
\end{tikzcd}
\end{equation*}
\end{definition}

Intuitively, a parametrized vector field can be seen as a family of vector fields $X_a\colon M\to\T M$, parametrized by the parameters $a\in A$. We shall prove that parametrized vector fields are in fact proper vector fields in another tangent category. Since parametrized vector fields play an important role in the story of this paper, we spend a bit of time exploring this characterization.

It is a well-known fact of category theory that the objects $A$ of a Cartesian category $\X$ come equipped with a cocommutative comonoid whose counit is given by the terminal map $!_A\colon A\to\*$ and whose comultiplication is the diagonal map $\Delta_A\colon A\to A\times A$. It is also well-known that each comonoid $A$ induces a comonad $S_A\=(-)\times A$ on $\X$. We may refer to $S_A$ as the \textbf{simple comonad} of $A$. The coKleisli category $\KL[S_A]$ of $S_A$ is the category defined as follows. The objects of $\KL[S_A]$ are all the objects of $\X$, however, a morphism $f\colon M\nto N$ in $\KL[S_A]$ corresponds to a morphism $\SQ{f}\colon M\times A\to N$. The identity $\id_M$ is the projection $\pi_1\colon M\times A\to M$ and composition of two morphisms $f\colon M\nto N$ and $g\colon N\nto P$ is defined by the coKleisli formula:
\begin{align*}
\SQ{fg}&\colon M\times A\xrightarrow{\id_M\times\Delta_A}M\times A\times A\xrightarrow{\SQ{f}\times\id_A}N\times A\xrightarrow{\SQ g}P
\end{align*}

In~\cite[Definition~19]{cockett:tangent-monads}, Cockett, Lemay, and Lucyshyn-Wright introduced tangent monads, which are monads $S$ over a tangent category $(\X,\TT)$ together with a lax distributive law $\alpha_M\colon S\T M\to\T SM$, compatible with the monad structure and the tangent structure. This turns out to be precisely the same as a monad in the $2$-category $\TngCat$ of tangent categories, lax tangent morphisms, and tangent natural transformations.

Similarly, one can introduce tangent comonads.

\begin{definition}
\label{definition:lax-tangent-comonad}
A (\textbf{lax}) \textbf{tangent comonad} is a comonad in the $2$-category $\TngCat$.
\end{definition}

By unwrapping this definition, a tangent comonad consists of a comonad $S$ with counit denoted by $\epsilon_M\colon SM\to M$ and comultiplication by $\delta_M\colon SM\to SSM$, together with a lax distributive law $\alpha_M\colon S\T M\to\T SM$, compatible with the comonad structure
\begin{equation*}
\begin{tikzcd}
{S\T M} & {\T SM} \\
& {\T M}
\arrow["{\alpha_M}", from=1-1, to=1-2]
\arrow["{\epsilon_{\T M}}"', from=1-1, to=2-2]
\arrow["{\T\epsilon_M}", from=1-2, to=2-2]
\end{tikzcd}\qquad
\begin{tikzcd}
{S\T M} && {\T SM} \\
{SS\T M} & {S\T SM} & {\T SSM}
\arrow["{\alpha_M}", from=1-1, to=1-3]
\arrow["{\delta_{\T M}}"', from=1-1, to=2-1]
\arrow["{\T\delta_M}", from=1-3, to=2-3]
\arrow["{S\alpha_M}"', from=2-1, to=2-2]
\arrow["{\alpha_{SM}}"', from=2-2, to=2-3]
\end{tikzcd}
\end{equation*}
and the tangent structure:
\begin{equation*}
\begin{tikzcd}
{S\T M} & {\T SM} \\
& SM
\arrow["{\alpha_M}", from=1-1, to=1-2]
\arrow["{Sp_M}"', from=1-1, to=2-2]
\arrow["{p_{SM}}", from=1-2, to=2-2]
\end{tikzcd}\qquad
\begin{tikzcd}
{S\T M} & {\T SM} \\
& SM
\arrow["{\alpha_M}", from=1-1, to=1-2]
\arrow["{Sz_M}", from=2-2, to=1-1]
\arrow["{z_{SM}}"', from=2-2, to=1-2]
\end{tikzcd}\qquad
\begin{tikzcd}
{S\T_2M} & {\T_2SM} \\
{S\T M} & {\T SM}
\arrow["{\<\pi_1\alpha_M,\pi_2\alpha_M\>}", from=1-1, to=1-2]
\arrow["{Ss_M}"', from=1-1, to=2-1]
\arrow["{s_{SM}}", from=1-2, to=2-2]
\arrow["{\alpha_M}"', from=2-1, to=2-2]
\end{tikzcd}
\end{equation*}
\begin{equation*}
\begin{tikzcd}
{S\T M} && {\T SM} \\
{S\T\T M} & {\T S\T M} & {\T\T SM}
\arrow["{\alpha_M}", from=1-1, to=1-3]
\arrow["{Sl_M}"', from=1-1, to=2-1]
\arrow["{l_{SM}}", from=1-3, to=2-3]
\arrow["{\alpha_{\T M}}"', from=2-1, to=2-2]
\arrow["{\T\alpha_M}"', from=2-2, to=2-3]
\end{tikzcd}\qquad
\begin{tikzcd}
{S\T\T M} & {\T S\T M} & {\T\T SM} \\
{S\T\T M} & {\T S\T M} & {\T\T SM}
\arrow["{\alpha_{\T M}}", from=1-1, to=1-2]
\arrow["{Sc_M}"', from=1-1, to=2-1]
\arrow["{\T\alpha_M}", from=1-2, to=1-3]
\arrow["{c_{SM}}", from=1-3, to=2-3]
\arrow["{\alpha_{\T M}}"', from=2-1, to=2-2]
\arrow["{\T\alpha_M}"', from=2-2, to=2-3]
\end{tikzcd}
\end{equation*}

\begin{lemma}
\label{lemma:simple-comonad}
The simple comonad $S_A$ of an object $A$ in a Cartesian tangent category $(\X,\TT)$ becomes a tangent comonad over $(\X,\TT)$.
\end{lemma}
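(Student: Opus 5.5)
The plan is to define the distributive law explicitly and then check each axiom componentwise on products, using that $\T$ is Cartesian. Since $\T$ preserves binary products and the terminal object, we have a canonical isomorphism $\T(M\times A)\cong\T M\times\T A$. We take
\begin{align*}
&\alpha_M\colon\T M\times A\xrightarrow{\id_{\T M}\times z_A}\T M\times\T A\cong\T(M\times A),
\end{align*}
that is, $\alpha_M=\<\pi_1,\pi_2z_A\>$ read through the Cartesian isomorphism. Here $\<\T\pi_1,\T\pi_2\>$ is inverted and $\T\pi_i$ are the components. Naturality in $M$ is immediate, because $\alpha_M$ acts as the identity on the $\T M$ component and $z_A$ does not depend on $M$. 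This is the expected law: a parameter $a\in A$ is moved into $\T A$ as the zero vector at $a$.

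All verifications reduce to statements about the two components $\T\pi_1$ and $\T\pi_2$, since morphisms into $\T(M\times A)$, $\T\T(M\times A)$ and $\T_2(M\times A)$ are determined by these components, again because $\T$, $\T^2$ and $\T_2$ preserve products. On the $M$-component every diagram commutes trivially, since $\alpha_M$ is the identity there. On the $A$-component each diagram becomes a statement about $z_A$ and the structure maps.

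\begin{itemize}
\item The counit axiom $\alpha_M\T\pi_1=\pi_1$ and the comultiplication axiom both follow because $\Delta_A$ and $\Delta_{\T A}$ are compatible with $z_A$: we have $\Delta_Az_{A\times A}=z_A\Delta_{\T A}$, which is naturality of $z$ together with the product preservation of $\T$.
\item Compatibility with $p$ and $z$ reduces to $z_Ap_A=\id_A$ and to the equality $z_A=z_A$.
\item Compatibility with $s$ reduces to $\<z_A,z_A\>s_A=z_A$, which is the unitality axiom of the additive bundle $\p_A$.
\item Compatibility with $l$ reduces to $z_Al_A=z_Az_{\T A}$. On the other side it reads $z_A\T z_A$, which equals $z_Az_{\T A}$ by naturality of $z$ (TNG.3).
\item Compatibility with $c$ reduces to $z_A\T z_Ac_A=z_A\T z_A$. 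This follows from $z_{\T A}c_A=\T z_A$ (TNG.4) together with $z_A\T z_A=z_Az_{\T A}$.
\end{itemize}

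The main obstacle is purely bookkeeping. We must check that the composites $\alpha_{\T M}\T\alpha_M$ and $S\alpha_M$ really have the claimed components, tracking the interchange isomorphisms $\T(X\times Y)\cong\T X\times\T Y$ and their naturality with respect to $l$, $c$ and $s$. This is the fact that the canonical comparison isomorphisms are compatible with the tangent natural transformations, which holds because all structure maps are natural and so commute with the projections $\pi_i$. The second compatibility diagram for the comultiplication involves $\alpha$ twice, so it is the longest check. We carry it out by postcomposing with each of the three projections $\T\T\pi_i$ out of $\T SSM$, one at a time.
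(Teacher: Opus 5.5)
Your proposal is correct and takes the paper's route: the paper defines the same distributive law $\alpha_M=\id_{\T M}\times z_A$ (composed with $\T M\times\T A\cong\T(M\times A)$) and only asserts that the comonad and tangent compatibilities are easy, and your componentwise checks on the $A$-factor correctly carry out those verifications. Two small slips: for the lift axiom the identity actually needed is $z_Al_A=z_A\T z_A$, which is exactly the second square of TNG.3, so naturality of $z$ is not needed there; and the projections out of $\T SSM=\T(M\times A\times A)$ are $\T\pi_i$, not $\T\T\pi_i$.
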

\begin{proof}
To make $S_A$ into a tangent comonad, we need to provide a distributive law between $S_A$ and the tangent bundle functor. This can be done with the zero morphism $z_A\colon A\to\T A$ as follows:
\begin{align*}
&(\alpha_A)_M\colon S_A(\T M)=\T M\times A\xrightarrow{\id_{\T M}\times z_A}\T(M\times A)=\T S_A(M)
\end{align*}
It is not hard to show that $\alpha_A$ is in fact compatible with the comonad structure of $S_A$ and with the tangent structure.
\end{proof}

As shown by~\cite[Proposition~20]{cockett:tangent-monads}, the Eilenberg-Moore category of a tangent monad comes equipped with a tangent structure strictly preserved by the forgetful functor. In~\cite[Corollary~4.6]{lanfranchi:tangentads-I}, the author also showed that this tangent category is in fact the Eilenberg-Moore object of a tangent monad seen as a monad internal to $\TngCat$.

Now, we prove that the coKleisli category of a tangent comonad is also a tangent category. Unfortunately, this result does not come from a formal argument of dualization, since the dual of a tangent category fails, in general, to be a tangent category.

\begin{proposition}
\label{proposition:cokleisli-tangent-category}
The coKleisli category $\KL[S,\alpha]$ of a tangent comonad $(S,\alpha)$ comes equipped with a tangent structure. Moreover, the inclusion functor $(\X,\TT)\to\KL[S,\alpha]$ becomes a strict tangent morphism.
\end{proposition}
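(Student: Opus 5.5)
The plan is to lift the tangent structure along the coKleisli adjunction using the distributive law $\alpha$. First, define the tangent bundle functor $\T^S$ on $\KL[S,\alpha]$ by setting $\T^S M\=\T M$ on objects. A coKleisli morphism $f\colon M\nto N$, that is $\SQ f\colon SM\to N$, is sent to
\begin{align*}
&\SQ{\T^S f}\colon S\T M\xrightarrow{\alpha_M}\T SM\xrightarrow{\T\SQ f}\T N.
\end{align*}
Functoriality follows directly from the comonad compatibility axioms of $\alpha$. Preservation of identities is the counit triangle $\alpha_M\T\epsilon_M=\epsilon_{\T M}$. Preservation of composites uses the comultiplication hexagon together with naturality of $\alpha$, since $\SQ{fg}=\delta_M S\SQ f\SQ g$.

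Next, each structural transformation is transported through the inclusion $J\colon\X\to\KL[S,\alpha]$, $\SQ{Jh}\=\epsilon h$. We set $p^S_M\=Jp_M$, $z^S_M\=Jz_M$, $s^S_M\=Js_M$, $l^S_M\=Jl_M$ and $c^S_M\=Jc_M$. To check naturality in $\KL[S,\alpha]$ against a coKleisli map $f$, we use the remaining compatibility squares of $\alpha$. For example, naturality of $p^S$ reduces to $\alpha_M p_{SM}\SQ f=Sp_M\SQ f$, which is the $p$-triangle of $\alpha$. The same pattern handles $z$, $s$, $l$, $c$ via their respective diagrams. All the equational axioms TNG.2–TNG.6 then hold because $J$ is a functor, is identity on objects, and these axioms hold in $\X$. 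The same observation shows that $J$ commutes strictly with $\T$: $\T^S Jh=J\T h$ follows from $\alpha_M\T\epsilon_M=\epsilon_{\T M}$ and naturality of $\epsilon$. This gives the strict tangent morphism claim once the tangent structure is established.

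The main obstacle is the universal properties: TNG.1, meaning that the $n$-fold pullbacks $\T_nM$ and their $\T^{S}$-iterates remain pullbacks in $\KL[S,\alpha]$, and TNG.7, the vertical lift pullback. Pullbacks are not generally preserved by $J$. The strategy is to use that a coKleisli cone into $\T_nM$ is a cone $SX\to\T M$ in $\X$ with vertex $SX$. The pullback property in $\X$ then gives a unique map $SX\to\T_nM$, which is exactly the required unique coKleisli map. The same argument works for any pullback in $\X$ whose legs are in the image of $J$.

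For the iterates $(\T^S)^k$, the key observation is that on maps of the form $Jh$ they agree with $J\T^k h$. Thus the relevant diagrams are images under $J$ of $\T^k$ applied to the original tangent pullbacks, and these are pullbacks in $\X$ by TNG.1 and TNG.7. The same argument for the vertical-lift square $\xi_M$ yields TNG.7. The delicate point to verify carefully is that the mediating map $SX\to\T_nM$ is indeed the coKleisli composite, that is, that the cone legs compose correctly with $J$: $\SQ{gJh}=\SQ g h$. This holds by the counit law, so the argument should go through without extra hypotheses on $S$.
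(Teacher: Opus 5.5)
Your proposal is correct and follows the paper's proof: the same $\T^S$ built from $\alpha$, the structure maps transported along $J$, the equational axioms from functoriality of $J$, and the universal axioms from pullback preservation by $J$. You are in fact more careful than the paper in checking functoriality of $\T^S$ and naturality of the transported maps via the compatibility axioms of $\alpha$. The only slip is your remark that pullbacks are not generally preserved by $J$: $J$ is right adjoint to $M\mapsto SM$, so it preserves all limits, which is exactly what the paper invokes, and your direct cone argument is just a hand-made proof of that fact.
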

\begin{proof}
Define the tangent bundle functor $\T_S\colon\KL[S]\to\KL[S]$ as the functor that sends an object $M$ to $\T M$ and a morphism $f\colon M\nto N$ to:
\begin{align*}
&\SQ{\T_Sf}\colon S\T M\xrightarrow{\alpha_M}\T SM\xrightarrow{\T\SQ f}\T N
\end{align*}
The structural natural transformations are defined by applying the inclusion functor $\X\to\KL[S]$ to their components. Therefore, by functoriality of this inclusion functor, all the equational axioms of a tangent category automatically hold. Finally, since the inclusion functor is a right adjoint, it preserves limits. This proves that also the universal axioms are satisfied.
\end{proof}

Thanks to Proposition~\ref{proposition:cokleisli-tangent-category}, the base tangent structure lifts to the coKleisli category of the simple tangent comonad $(S_A,\alpha_A)$. We may denote this tangent category by $\Simple_A[\X,\TT]$. This notation will be clear in a moment.

\begin{proposition}
\label{proposition:parametrized-vector-fields}
$A$-parametrized vector fields are equivalent to vector fields in $\Simple_A[\X,\TT]$.
\end{proposition}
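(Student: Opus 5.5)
The plan is to unwind the definition of a vector field in $\Simple_A[\X,\TT]=\KL[S_A,\alpha_A]$ and compare it term by term with Definition~\ref{definition:parametrized-vector-fields}. By Proposition~\ref{proposition:cokleisli-tangent-category}, the tangent bundle of $M$ in the coKleisli category is still $\T M$. The projection of $\Simple_A[\X,\TT]$ is the image of $p_M$ under the inclusion $\X\to\KL[S_A]$, so its underlying morphism is $\SQ{p_M}=\pi_1p_M\colon\T M\times A\to M$. Likewise, the identity on $M$ in $\KL[S_A]$ has underlying morphism $\pi_1\colon M\times A\to M$.

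A vector field in $\Simple_A[\X,\TT]$ is therefore a coKleisli morphism $X\colon M\nto\T M$, given by $\SQ X\colon M\times A\to\T M$, subject to $\SQ{Xp_M}=\SQ{\id_M}=\pi_1$. First I compute the coKleisli composite explicitly:
\begin{align*}
\SQ{Xp_M}=(\id_M\times\Delta_A)(\SQ X\times\id_A)\pi_1p_M.
\end{align*}
Then I simplify. Since $(\SQ X\times\id_A)\pi_1=\pi_1\SQ X$, where the $\pi_1$ on the right is the projection $M\times A\times A\to M\times A$, and since $(\id_M\times\Delta_A)$ followed by this projection is the identity of $M\times A$, the composite reduces to $\SQ X p_M$. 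The vector field condition thus becomes exactly $\SQ Xp_M=\pi_1$, which is the defining condition of an $A$-parametrized vector field.

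The assignment $X\mapsto\SQ X$ is a bijection on hom-sets by the definition of the coKleisli category. It therefore restricts to a bijection between vector fields over $M$ in $\Simple_A[\X,\TT]$ and $A$-parametrized vector fields over $M$, and the two inverse assignments are the identity on underlying morphisms.

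The only step that needs care is checking that the structural maps of $\Simple_A[\X,\TT]$ really are the included maps precomposed with $\pi_1$, rather than something involving $\alpha_A$. That follows from how the tangent structure was defined in Proposition~\ref{proposition:cokleisli-tangent-category}. I would also remark, though it is not needed for the statement, that the additive structure transfers in the same way: sums and zeros of vector fields in $\Simple_A[\X,\TT]$ correspond to pointwise sums $\<X,Y\>s_M$ and to $\pi_1z_M$.
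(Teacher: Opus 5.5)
Your proposal is correct and follows the paper's proof essentially verbatim. Like the paper, you unwind $Xp_M=\id_M$ in the coKleisli category into $(\id_M\times\Delta_A)(\SQ X\times\id_A)\pi_1p_M=\pi_1$ and simplify the left side to $\SQ Xp_M$, and your extra checks (that the structure maps are the included ones with no $\alpha_A$ involved, and that the hom-set bijection restricts) only make explicit what the paper leaves implicit.
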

\begin{proof}
A vector field in $\Simple_A[\X,\TT]$ on an object $M$ consists of a coKleisli morphism $X\colon M\nto\T M$, such that, $Xp_M=\id_{M}$ in $\Simple_A[\X,\TT]$. This data corresponds to a morphism $\SQ X\colon M\times A\to\T M$, such that:
\begin{align*}
&\pi_1=(\id_M\times\Delta_A)(\SQ X\times\id_A)\pi_1p_M=\SQ Xp_M
\end{align*}
However, this is precisely the condition satisfied by an $A$-parametrized vector field.
\end{proof}

There is an alternative way to construct the tangent category $\Simple_A[\X,\TT]$, which is worth mentioning. For starters, recall that the \textbf{simple fibration} of a Cartesian category $\X$ consists of a category $\Simple[\X]$ whose objects are pairs $(A,M)$ of objects of $\X$ and morphisms $(f,\varphi)\colon(A,M)\to(B,N)$ are pairs of morphisms $f\colon A\to B$ and $\varphi\colon M\times A\to N$ of $\X$. Identities are given by $(\id_A,\pi_1)$ and composition is similar to the coKleisli composition. The fibration functor sends a pair $(A,M)$ to $A$ and a morphism $(f,\varphi)$ to $f$.

It turns out that the simple fibration of a Cartesian tangent category is a tangent fibration. The subject of tangent fibrations is not central to this paper, so we leave it to the reader to consult~\cite[Section~5]{cockett:differential-bundles} for details. In a nutshell, a (cloven) tangent fibration consists of a (cloven) fibration $\Pi\colon(\X',\TT')\to(\X,\TT)$ between two tangent categories, whose underlying functor is a strict tangent morphism and such that the tangent bundle functors $\T,\T'$ preserve the (cloven) Cartesian lifts.

\begin{proposition}
\label{proposition:simple-tangent-fibration}
The simple fibration of a Cartesian tangent category is a cloven tangent fibration.
\end{proposition}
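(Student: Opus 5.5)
To prove Proposition~\ref{proposition:simple-tangent-fibration}, I would first equip the total category $\Simple[\X]$ with a tangent structure $\TT'$, then check that the projection $\Pi\colon\Simple[\X]\to\X$ is a strict tangent morphism, and finally check that $\T'$ preserves the chosen Cartesian lifts.

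\textbf{The tangent structure on $\Simple[\X]$.} Define $\T'(A,M)\=(\T A,\T M)$. On a morphism $(f,\varphi)\colon(A,M)\to(B,N)$, with $\varphi\colon M\times A\to N$, set $\T'(f,\varphi)\=(\T f,\T\varphi)$, reading $\T\varphi$ as a map $\T M\times\T A\cong\T(M\times A)\to\T N$. This is legitimate because $\T$ preserves products. Functoriality reduces to functoriality of $\T$, using that $\T$ preserves diagonals and projections, so it respects the coKleisli-style composition.

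Each structural transformation $\beta\in\{p,z,s,l,c\}$ is defined componentwise as $\beta'_{(A,M)}\=(\beta_A,\pi_1\beta_M)$. Because all such morphisms factor through the inclusion $\X\times\X\to\Simple[\X]$, $(f,g)\mapsto(f,\pi_1 g)$, naturality and every equational axiom [TNG.2]--[TNG.6] follow from the corresponding axioms in $\X$.

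The pullbacks $\T'_n(A,M)=(\T_nA,\T_nM)$ and the vertical-lift pullback are the universal conditions. For these I would show that pullbacks in $\Simple[\X]$ of diagrams whose morphisms lie in the image of $\X\times\X$ are computed componentwise whenever the componentwise pullbacks exist. Given a cone $(h,\psi_i)$ with $\psi_i\colon Q\times C\to M_i$, the $A$-components determine the base map by the pullback in $\X$. The $M$-components $\psi_i$ are compatible over the base via $\pi_1$-type maps, so they induce a unique $Q\times C\to\T_nM$. Preservation by $\T'^m$ follows because $\T^m$ preserves the original pullbacks in each component.

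\textbf{Strictness and Cartesian lifts.} $\Pi$ is strict by construction, since $\Pi\T'=\T\Pi$ and $\Pi\beta'=\beta$. The cleavage is given by: for $f\colon B\to A$ and an object $(A,M)$, take the lift $(f,\pi_1)\colon(B,M)\to(A,M)$. We then have $\T'(f,\pi_1)=(\T f,\T\pi_1)=(\T f,\pi_1)$, which is exactly the chosen lift of $\T f$ at $(\T A,\T M)$. So $\T'$ preserves the cleavage on the nose.

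\textbf{Main obstacle.} I expect the bookkeeping in the universal property [TNG.7] to be the main obstacle. It requires verifying that a morphism into $\T'\T'(A,M)$, which carries a parameter $C$, factors uniquely through $\xi'$. This can be reduced to [TNG.7] in $\X$ applied separately on the $A$-component and on the $M$-component with parameter $C$. The reduction works because the $M$-part of a morphism is just a morphism $Q\times C\to\T\T M$ in $\X$, and $\xi'$ acts on it by postcomposition with $\xi_M$.
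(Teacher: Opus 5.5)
Your proposal is correct and follows the paper's proof essentially verbatim. It uses the same $\T'(A,M)=(\T A,\T M)$, the same structure maps $(\beta_A,\pi_1\beta_M)$, the same cleavage $(f,\pi_1)$, and the same observation $\T'(f,\pi_1)=(\T f,\pi_1)$. Your componentwise treatment of the pullbacks and of [TNG.7] fills in what the paper leaves as an exercise, with one small correction: naturality of $\beta'$ against a general $(f,\varphi)$ does not follow from the image-of-$\X\times\X$ argument, but it does follow from naturality of $\beta$ at $\varphi\colon M\times A\to N$ together with $\beta_{M\times A}\cong\beta_M\times\beta_A$.
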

\begin{proof}
We begin by turning the total category $\Simple[\X,\TT]$ of the simple fibration into a tangent category. The tangent bundle functor $\T^\Simple$ sends a pair $(A,M)$ to $(\T A,\T M)$ and a morphism $(f,\varphi)$ to $(\T f,\T\varphi)$. The structural natural transformations are constructed by using the same morphism in the first component and by precomposing with $\pi_1$ for the second one, e.g., the projection is defined by $(p_A,\pi_1p_M)$. The proof that this defines a tangent structure is a straightforward exercise that we leave to the reader to spell out. It is also immediate to see that the fibration preserves the tangent structure strictly. To conclude, it is left to show that the tangent bundle functors preserve the cloven Cartesian lifts. First, recall that the cloven Cartesian lift of a morphism $f\colon A\to B$ along an object $(B,N)$ over $B$ is given by the morphism $(f,\pi_1)\colon(A,N)\to(B,N)$. From this, using that the tangent bundle functor preserves finite products, it follows immediately that $\T^\Simple(f,\pi_1)=(\T f,\T\pi_1)=(\T f,\pi_1)$.
\end{proof}

\begin{remark}
\label{remark:simple-tangent-fibration}
In~\cite[Example~5.6(i)]{cockett:differential-bundles}, Cockett and Cruttwell already showed that the simple fibration of a Cartesian differential category is a tangent fibration. Proposition~\ref{proposition:simple-tangent-fibration} extends this result to any Cartesian tangent category.
\end{remark}

Cockett and Cruttwell's~\cite[Theorem~5.3]{cockett:differential-bundles} proved that the fibres of a (cloven) tangent fibration are in fact tangent categories, whose tangent bundle functor is constructed by pulling back the tangent bundle functor of the total tangent category along the zero morphism $z_M\colon M\to\T M$ of the base.

\begin{proposition}
\label{proposition:simple-fibration-fibres}
The coKleisli tangent category $\Simple_A[\X,\TT]$ of the simple tangent comonad $(S_A,\alpha_A)$ is isomorphic to the fibre tangent category of the simple tangent fibration $\Pi\colon\Simple[\X,\TT]\to(\X,\TT)$ over $A$.
\end{proposition}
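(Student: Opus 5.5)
We compare the two tangent categories directly: first as categories, then their tangent bundle functors, then the structural transformations.

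\textbf{The fibre as a category.} The fibre $\Simple[\X,\TT]_A$ of $\Pi$ over $A$ has objects the pairs $(A,M)$ and morphisms the pairs $(\id_A,\varphi)$ with $\varphi\colon M\times A\to N$. Identities are $(\id_A,\pi_1)$, and composition is the simple-fibration composition restricted to $f=\id_A$. That composition is $\<\pi_1\varphi,\pi_2\>\psi$, which equals $(\id_M\times\Delta_A)(\varphi\times\id_A)\psi$, i.e.\ exactly the coKleisli composite in $\KL[S_A]$. So the assignment $(A,M)\mapsto M$, $(\id_A,\varphi)\mapsto\varphi$ is an isomorphism of categories between the fibre and $\KL[S_A]$. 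Checking this reduces to writing the composite in both cases.

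\textbf{Comparing tangent bundle functors.} Following \cite[Theorem~5.3]{cockett:differential-bundles}, the fibre tangent bundle of $(A,M)$ is obtained from $\T^\Simple(A,M)=(\T A,\T M)$ by reindexing along $z_A\colon A\to\T A$. Using the cloven lifts $(f,\pi_1)$ from Proposition~\ref{proposition:simple-tangent-fibration}, this reindexing is again $(A,\T M)$. On a morphism $(\id_A,\varphi)$, the fibre tangent functor gives the unique fibre morphism whose composite with the cartesian lift $(z_A,\pi_1)$ equals $\T^\Simple(\id_A,\varphi)=(\id_{\T A},\T\varphi)$. I expect this to be the morphism with second component $(\id_{\T M}\times z_A)\T\varphi$, which is precisely $\SQ{\T_{S_A}\varphi}=(\alpha_A)_M\T\varphi$ from Proposition~\ref{proposition:cokleisli-tangent-category}. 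To verify it, compose $(\id_A,(\id_{\T M}\times z_A)\T\varphi)$ with $(z_A,\pi_1)$ in $\Simple[\X]$ and compute. The first component is $z_A$. The second is $\<\pi_1(\id\times z_A)\T\varphi,\pi_2 z_A\>\pi_1=(\id\times z_A)\T\varphi$. This agrees with the composite of $(z_A,\pi_1)$ (the lift at the source) followed by $(\id_{\T A},\T\varphi)$. Uniqueness of factorizations through cartesian lifts then identifies the two functors.

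\textbf{Comparing the structural transformations.} In the fibre, each structural transformation (for example $p$) is induced from the total one $(p_A,\pi_1p_M)$ by factoring through the reindexing along $z_A$. Since $z_Ap_A=\id_A$, the result is $(\id_A,\pi_1p_M)$, which is the image of $p_M$ under the inclusion $\X\to\KL[S_A]$. The same computation applies to $z,s,l,c$, using $z_A z_{\T A}=z_Al_A$ and $z_Az_{\T A}c_A=z_A\T z_A$ for $l$ and $c$. The universal pullbacks need no separate check: the isomorphism of categories carries them over, and both tangent structures are determined by these data.

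\textbf{Main obstacle.} The hardest part is being precise about how \cite[Theorem~5.3]{cockett:differential-bundles} defines the fibre tangent structure, particularly for $l$ and $c$, whose components live over $\T\T A$. There I would rely on the naturality identities of $z$ listed above together with the cleavage $(f,\pi_1)$, and check that the factorizations reduce to the inclusions of $l_M$ and $c_M$.
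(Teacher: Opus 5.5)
Your proposal is correct and is the argument the paper only sketches: you identify the fibre over $A$ with $\KL[S_A]$, then observe that reindexing $\T^{\Simple}$ along $z_A$ via the cleavage $(f,\pi_1)$ gives $(\id_{\T M}\times z_A)\T\varphi=(\alpha_A)_M\T\varphi$ on morphisms and turns $p,z,s,l,c$ into the inclusions of the base structure maps, which fills in the details the paper leaves to the reader. The only slips are notational: the vertical composite in the fibre is $\langle\varphi,\pi_2\rangle\psi$, with no $\pi_1$ in front of $\varphi$, and likewise the second component of $(\id_A,\psi)(z_A,\pi_1)$ is $\langle\psi,\pi_2\rangle\pi_1=\psi$, which is all your verification needs.
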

\begin{proof}
This is a simple exercise. In particular, notice that the tangent bundle functor on the fibre is obtained by pulling back the total tangent bundle functor along the zero morphism. The zero in fact appears in the distributive law $\alpha_A$ of the tangent comonad $S_A$. We leave it to the reader to complete the details of this comparison.
\end{proof}

Thanks to Proposition~\ref{proposition:parametrized-vector-fields}, parametrized vector fields over an object $M$ whose tangent bundle $p_M$ has negatives form a Lie algebra.

\begin{corollary}
\label{corollary:parametrized-lie-algebra}
If $M$ admits negatives, $A$-parametrized vector fields over $M$ form a Lie algebra $\VF^A_M(\X,\TT)$.
\end{corollary}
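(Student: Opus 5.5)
The plan is to reduce the statement to the known result of Cockett and Cruttwell~\cite{cockett:jacobi}: for any object $N$ with negatives in a tangent category, the set $\VF_N$ of vector fields on $N$ is a Lie algebra under the bracket $[X,Y]=\{X\T Y-Y\T Xc_N\}$. We apply this result in the tangent category $\Simple_A[\X,\TT]$ of Proposition~\ref{proposition:cokleisli-tangent-category}. By Proposition~\ref{proposition:parametrized-vector-fields}, $A$-parametrized vector fields on $M$ are in bijection with vector fields on $M$ in $\Simple_A[\X,\TT]$. We then define $\VF^A_M(\X,\TT)$ by transporting the Lie algebra structure of $\VF_M(\Simple_A[\X,\TT])$ along this bijection. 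The only thing left to check is that $M$, viewed as an object of $\Simple_A[\X,\TT]$, admits negatives.

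For that, I would take the negation of $\Simple_A[\X,\TT]$ at $M$ to be the image of $n_M$ under the inclusion functor $(\X,\TT)\to\Simple_A[\X,\TT]$, that is, the coKleisli morphism $\pi_1n_M\colon\T M\times A\to\T M$. The inclusion is a strict tangent morphism and a functor, so it preserves the projection, zero, sum and the pullbacks $\T_2M$. The Abelian bundle axioms $n_Mp_M=p_M$ and $\<\id,n_M\>s_M=p_Mz_M$ are equations in $\X$, so they are carried to the corresponding equations in $\Simple_A[\X,\TT]$. Naturality of this negation in $M$ only matters along coKleisli morphisms $f\colon M\nto N$. It follows from the definition $\SQ{\T_Sf}=\alpha_M\T\SQ f$ together with $\alpha_M=\id\times z_A$ and naturality of $n$ in $\X$: since $\T(M\times A)\cong\T M\times\T A$ and $z_A$ is a bundle zero, $n_{M\times A}$ restricted along $\id\times z_A$ equals $(n_M\times\id_A)(\id\times z_A)$. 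This last step, where negation is shown to commute with the distributive law through the zero $z_A$, is where I expect the only real friction.

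Finally, I would spell out the transported structure in the base category so that it matches intuition. The zero is $\pi_1z_M$. The sum is $\<X,Y\>s_M$, which is well defined because both $X$ and $Y$ lie over $\pi_1$. The bracket is computed with coKleisli composition, where $\SQ{X\,\T_SY}=\<X,\pi_2z_A\>\T Y$ up to the product isomorphism; the universal property of the vertical lift used for $\{-\}$ is inherited from $\X$, again because the inclusion preserves limits. None of this explicit form is needed for the Lie axioms themselves, since bilinearity, antisymmetry and Jacobi all come for free from~\cite{cockett:jacobi}.
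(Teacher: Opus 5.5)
Your proposal is correct and follows the paper's route. The paper derives the corollary in one line from Proposition~\ref{proposition:parametrized-vector-fields} and the Cockett--Cruttwell theorem applied in $\Simple_A[\X,\TT]$; you additionally check, which the paper leaves implicit, that the image $\pi_1n_M$ of the negation under the strict tangent inclusion gives $M$ negatives in the coKleisli category. Your naturality step uses $n$ at $A$ and at $M\times A$ ($n_{M\times A}$ corresponds to $n_M\times n_A$, and $z_An_A=z_A$), i.e.\ negation as a natural transformation, which matches the paper's ``natural in $M$'' definition of admitting negatives.
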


\subsection{Left-invariant vector fields}
\label{subsection:left-invariant-vector-fields}
The next step of our construction is to consider left-invariant vector fields. These are vector fields over a group object $G$ which are invariant under the left action of the group. They were previously introduced and studied by Cockett and Schwarz in the context of tangent categories in~\cite[Section~4.4]{cockett:lie-groups-tangent-cats}. In this section, we briefly recall this notion and prove some results.

\begin{definition}[{\cite[Section~4.4]{cockett:lie-groups-tangent-cats}}]
\label{definition:left-invariant-vector-fields}
A \textbf{left-invariant vector field} on a group object $G$ in a Cartesian tangent category consists of a vector field $X\colon G\to\T G$ subject to the following condition:
\begin{equation*}
\begin{tikzcd}
{G\times G} & G \\
{\T(G\times G)} & {\T G}
\arrow["{\m_G}", from=1-1, to=1-2]
\arrow["{z_G\times X}"', from=1-1, to=2-1]
\arrow["X", from=1-2, to=2-2]
\arrow["{\T\m_G}"', from=2-1, to=2-2]
\end{tikzcd}
\end{equation*}
\end{definition}

An important property of left-invariant vector fields is that they are fully determined by their value at the unit. This statement is made precise by the next lemma that was already proved by Cockett and Schwarz.

\begin{lemma}[{\cite[Proposition~8]{cockett:lie-groups-tangent-cats}}]
\label{lemma:correspondence-livf-global-elements}
There is a bijective correspondence between left-invariant vector fields of a Lie group object $G$ and global elements $\*\to\T_\e G$ of the tangent space of $G$ at the unit, as follows:
\begin{enumerate}
\item Each global element $x\colon\*\to\T_\e G$ corresponds to the left-invariant vector field:
\begin{align*}
&X_x\colon G\xrightarrow{\<\id_G,x\>}G\times\T_\e G\cong\T G
\end{align*}

\item Each left-invariant vector field $X\colon G\to\T G$ corresponds to the global element $X_\e$ defined by:
\begin{equation*}
\begin{tikzcd}
{\*} && G \\
& {\T_\e G} & {\T G} \\
& {\*} & G
\arrow["{\e_G}", from=1-1, to=1-3]
\arrow["{X_\e}", dashed, from=1-1, to=2-2]
\arrow[curve={height=18pt}, from=1-1, to=3-2]
\arrow["X", from=1-3, to=2-3]
\arrow["{\iota_\e}", from=2-2, to=2-3]
\arrow[from=2-2, to=3-2]
\arrow["\lrcorner"{anchor=center, pos=0.125}, draw=none, from=2-2, to=3-3]
\arrow["{p_G}", from=2-3, to=3-3]
\arrow["{\e_G}"', from=3-2, to=3-3]
\end{tikzcd}
\end{equation*}
\end{enumerate}
\end{lemma}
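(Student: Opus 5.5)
To prove Lemma~\ref{lemma:correspondence-livf-global-elements}, I will check four things: both assignments are well defined, $X_x$ is a vector field, $X_x$ is left-invariant, and the two assignments are mutually inverse. Throughout I write $\theta\colon\T G\to G\times\T_\e G$ for the isomorphism $\<p_G,\mc_G\>$ of Theorem~\ref{theorem:trivial-tangent-bundle-groups}. So $X_x=\<\id_G,!_Gx\>\theta^{-1}$.

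\emph{Well-definedness.} The element $X_\e$ is well defined because $\e_GXp_G=\e_G$, so the pair $(\e_GX,\id_\*)$ factors uniquely through the pullback defining $\T_\e G$.

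\emph{$X_x$ is a vector field.} We have $\theta^{-1}p_G=\pi_1$, so $X_xp_G=\id_G$.

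\emph{Left-invariance of $X_x$.} A map into $\T G$ is determined by its composites with $p_G$ and $\mc_G$. The $p_G$-components of both sides equal $\m_G$. For the $\mc_G$-components, I need $(z_G\times X_x)\T\m_G\mc_G=\pi_2 X_x\mc_G=!x$. By definition, $\mc_G$ applied to $\T\m_G$ of $(z_G(g),v)$ unfolds as $\T\m_G$ of $(z(g^{-1}h^{-1}),z(g),v)$ type terms. Using associativity and inverse axioms, transported through the Cartesian functor $\T$, this should reduce to $\mc_G(v)$. So the key identity is
\begin{align*}
(z_G\times\id_{\T G})\T\m_G\mc_G=\pi_2\mc_G,
\end{align*}
that is, $\mc_G$ is invariant under left translation. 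I would prove it by postcomposing with $\iota_\e$ and using the defining diagram of $\mc_G$: $\T$ of the group identity $\<\i\pi_1\pi_2\ldots\>$, i.e. $(gh)^{-1}gh'=h^{-1}h'$ in $G$, applied with the first slot made constant via $z$. This equation is the main technical step, since it requires carefully writing the group identity as an equation of morphisms $G\times G\times G\to G$ before applying $\T$.

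\emph{Mutual inverses.} Starting from $x$, the composite $\e_GX_x\mc_G$ equals $x$. Since $\mc_G$ restricted along $\iota_\e$ is the identity (from $\e^{-1}\e=\e$ and the pullback's uniqueness), we get $(X_x)_\e=x$. Conversely, given left-invariant $X$, I compare $X$ and $X_{X_\e}$ through $\theta$. The $p_G$-components agree. For the $\mc_G$-components, $X\mc_G$ must be the constant $!X_\e$. To see this, precompose the invariance square with $\<\id_G,\e_G\>$. This gives $X=(z_G\times\e X)\T\m_G$ restricted appropriately. Applying $\mc_G$ and the translation-invariance identity above yields $X\mc_G=!\,\e_GX\mc_G=!X_\e$, using that $\iota_\e\mc_G=\id$. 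Hence $X=X_{X_\e}$, completing the bijection.
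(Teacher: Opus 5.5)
Your proof is correct, but it takes a different route from the paper's.

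\textbf{The paper's route.} The paper never uses $\mc_G$. It takes the isomorphism in the statement to be the explicit map $\gamma_G=(z_G\times\iota_\e)\T\m_G$, so that $X_x=\<z_G,!x\iota_\e\>\T\m_G$. Each required fact then reduces to a group axiom transported through $\T$:
\begin{itemize}
\item the vector-field condition is unitality, after naturality of $p$;
\item left-invariance is $\T$ of associativity, plus naturality of $z$;
\item $\e_GX_x=x\iota_\e$ is $\T$ of unitality, plus naturality of $z$;
\item $X_{X_\e}=\<z_G,!\e_GX\>\T\m_G=\<\id_G,!\e_G\>(z_G\times X)\T\m_G=\<\id_G,!\e_G\>\m_GX=X$.
\end{itemize}
The last computation is the same step you perform when you precompose the invariance square with $\<\id_G,!\e_G\>$. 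The paper then concludes at once, because $\e_GX=X_\e\iota_\e$.

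\textbf{Your route.} You keep the isomorphism abstract as $\<p_G,\mc_G\>^{-1}$ and check equalities componentwise. This forces you to prove the left-translation invariance $(z_G\times\id_{\T G})\T\m_G\mc_G=\pi_2\mc_G$, using the identity $(gh)^{-1}(gh')=h^{-1}h'$ and hence the inverse axiom. Your sketch of this does go through: postcompose with $\iota_\e$, and rewrite the $p_Gz_G$ slot by naturality as $(z_G\times p_Gz_G)\T\m_G$.

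\textbf{What each approach buys.} The paper's argument is shorter and uses only associativity and unitality. It also never uses that $\gamma_G$ is invertible, nor that the pullback defining $\T_\e G$ is preserved by $\T$. So it establishes the bijection for any group object with a mere pullback of $p_G$ along $\e_G$. Yours relies on Theorem~\ref{theorem:trivial-tangent-bundle-groups} as a black box, but it gives more in return. It shows that left-invariant vector fields are exactly the sections $X$ with $X\mc_G$ constant. Moreover, your key identity, together with $\iota_\e\mc_G=\id_{\T_\e G}$, yields $\gamma_G\<p_G,\mc_G\>=\id_{G\times\T_\e G}$. That is precisely what identifies your $X_x$ with the paper's.

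\textbf{Minor slip.} Where you write $\<\id_G,\e_G\>$ you mean $\<\id_G,!\e_G\>$.
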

\begin{proof}
For completeness, we give an explicit proof of this statement. We start by considering a global element $x\colon\*\to\T_\e G$. So, the corresponding vector field is $X_x\=\<\id_G,x\>\gamma_G$, where $\gamma_G\colon G\times\T_\e G\cong\T G$ is defined by the following formula:
\begin{align*}
&\gamma_G\colon G\times\T_\e G\xrightarrow{z_G\times\iota_\e}\T(G\times G)\xrightarrow{\T\m_G}\T G
\end{align*}
We shall prove that $X_x$ is in fact a left-invariant vector field of $G$. For starters, let us show that $X_x$ is a vector field:
\begin{align*}
X_xp_G&=~\<\id_G,x\>\gamma_Gp_G                                \tag{Definition of $X_x$}\\
&=~\<\id_G,x\>(z_G\times\iota_\e)\T\m_Gp_G                   \tag{Definition of $\gamma_G$}\\
&=~\<\id_G,x\>(z_Gp_G\times\iota_\e p_G)\m_G                  \tag{Naturality of $p$}\\
&=~\<\id_G,x\>(\id_G\times!\e)\m_G                        \tag{$z_Gp_G=\id_G$, $\iota_\e p_G=!\e$}\\
&=~\<\id_G,!\e\>\m_G\\
&=~\id_G                                                    \tag{Unitality}
\end{align*}
Now, let us show that $X_x$ is left-invariant:
\begin{align*}
(z_G\times X_x)\T\m_G&=~(z_G\times\<z_G,x\iota_\e\>\T\m_G)\T\m_G     \tag{Definition of $X_x$}\\
&=~\<(z_G\times z_G)\T\m_G,x\iota_\e\>\T\m_G                        \tag{Associativity}\\
&=~\<\m_Gz_G,x\iota_\e\>\T\m_G                                       \tag{Naturality of $z$}\\
&=~\m_G\<z_G,x\iota_\e\>\T\m_G\\
&=~\m_GX_x                                                          \tag{Definition of $X_x$}
\end{align*}
This proves that $X_x$ is a left-invariant vector field. Now, let us prove that the corresponding global element $(X_x)_\e$ is again $x$. To see this, let us compute:
\begin{align*}
\e X_x&=~\e\<z_G,x\iota_\e\>\T\m_G                                \tag{Definition of $X_x$}\\
&=~\<z_G\T\e,x\iota_\e\>\T\m_G                                     \tag{Naturality of $z$}\\
&=~\<z_G,x\iota_\e\>\T((\e\times\id_G)\m_G)\\
&=~\<z_G,x\iota_\e\>\pi_2                                            \tag{Unitality}\\
&=~x\iota_\e
\end{align*}
Therefore, by definition, $(X_x)_\e=x$. Conversely, consider a left-invariant vector field $X\colon G\to\T G$. The corresponding global element $X_\e$ is uniquely determined by the equation $X_\e\iota_\e=\e X$. So, the corresponding left-invariant vector field is $X_{X_\e}$ defined as follows:
\begin{align*}
X_{X_\e}&=~\<z_G,X_\e\iota_\e\>\T\m_G                            \tag{Definition of $X_{X_\e}$}\\
&=~\<z_G,\e X\>\T\m_G                                           \tag{Definition of $X_\e$}\\
&=~\<\id_G,!\e\>(z_G\times X)\T\m_G\\
&=~\<\id_G,!\e\>\m_GX                                           \tag{Left-invariance}\\
&=~X                                                            \tag{Unitality}
\end{align*}
This proves the statement.
\end{proof}

By Lemma~\ref{lemma:additive-structure-tangent-spaces}, a group object $G$ always admits negatives. Therefore, vector fields over a group object form a Lie algebra $\VF_G(\X,\TT)$. Cockett and Schwarz showed that left-invariant vector fields form a Lie subalgebra of the Lie algebra of vector fields over a group object.

\begin{proposition}[{\cite[Proposition~9 and Lemma~8]{cockett:lie-groups-tangent-cats}}]
\label{proposition:left-invariant-vector-fields}
Left-invariant vector fields of a group object $G$ form a Lie subalgebra $\LIVF_G(\X,\TT)$ of the Lie algebra $\VF_G(\X,\TT)$ of vector fields over $G$.
\end{proposition}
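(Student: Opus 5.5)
To prove that left-invariant vector fields form a Lie subalgebra of $\VF_G(\X,\TT)$, I would check closure under each piece of structure separately: the zero, the sum, the negation, and the Lie bracket. The basic tool is a reformulation of left-invariance. By Definition~\ref{definition:left-invariant-vector-fields}, $X$ is left-invariant exactly when
\begin{align*}
&\m_GX=(z_G\times X)\T\m_G.
\end{align*}
In words, $X$ is a morphism from the vector field $0\times X$ on $G\times G$ to $X$ itself, in the sense of vector field morphisms (``$\m_G$-related'' fields).

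The additive structure is handled directly.
\begin{itemize}
\item[] \emph{Zero.} For $z_G$, naturality of $z$ gives $(z_G\times z_G)\T\m_G=z_{G\times G}\T\m_G=\m_Gz_G$.
\item[] \emph{Sum.} For $X+Y=\<X,Y\>s_G$, I would use that $\T$ preserves products, so $z_G\times(X+Y)$ factors as $\<z_G\times X,z_G\times Y\>s_{G\times G}$, using $z_G=\<z_G,z_G\>s_G$ in the first component. Naturality of $s$ with respect to $\m_G$ then gives $(z_G\times(X+Y))\T\m_G=\<(z_G\times X)\T\m_G,(z_G\times Y)\T\m_G\>s_G$. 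By left-invariance of $X$ and $Y$ this equals $\m_G\<X,Y\>s_G$.
\item[] \emph{Negation.} Naturality of $n$ together with $z_Gn_G=z_G$ gives closure under $-X=Xn_G$ in the same way.
\end{itemize}

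The bracket is the main step. I would invoke the naturality of the Lie bracket with respect to related vector fields, which Cockett and Cruttwell establish in~\cite{cockett:jacobi}. The statement is: if $f\colon M\to N$, $X,Y$ are vector fields on $M$ and $X',Y'$ on $N$ with $X\T f=fX'$ and $Y\T f=fY'$, then $[X,Y]\T f=f[X',Y']$. I would apply this with $f=\m_G$, taking $0\times X$ and $0\times Y$ on $G\times G$ for the source fields.

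It then remains to show that $[0\times X,0\times Y]=0\times[X,Y]$ as vector fields on $G\times G$. For this, I would use that the bracket computed on a product is computed componentwise. Indeed, $\T$ preserves products, $c_{G\times G}=c_G\times c_G$, and the operations $\{-\}$, $s$, $n$ all split componentwise. The first component then reduces to $[z_G,z_G]=0$ (via $z_G\T z_G=z_Gz_{\T G}$ and $c$ fixing it), and the second component is $[X,Y]$.

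The main obstacle I expect is the relatedness lemma itself, in case it is not citable in the needed form. If so, I would prove it directly. Since $\T^2\m_G$ commutes with $c$ and with the sums and negation, we get $(X\T Y-Y\T Xc)\T^2f=f(X'\T Y'-Y'\T X'c)$. Then uniqueness in the universal property of the vertical lift gives $\{h\}\T f=\{h\T^2f\}$, using naturality of $l$ and $s$. The verticality hypothesis required by $\{-\}$ is preserved because $f$ commutes with $p$ and $z$.
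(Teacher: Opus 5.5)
Your proof is correct, but it takes a different route from the paper's.

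\textbf{Your route.} You work directly in $(\X,\TT)$. Left-invariance says that $X$ is $\m_G$-related to the vector field $z_G\times X$ on $G\times G$. Closure under the bracket then follows from two facts:
\begin{itemize}
\item[] naturality of the bracket along related fields;
\item[] the computation $[z_G\times X,z_G\times Y]=z_G\times[X,Y]$ on $G\times G$.
\end{itemize}
The second fact also follows from relatedness along the projections $\pi_1,\pi_2$, since $\T(G\times G)$ is a product. Your verification of the relatedness lemma is sound. One small point: $\T^2 f$ commutes with $n$ automatically, because additive bundle maps between Abelian bundles preserve inverses. So you only need $G$ to have negatives.

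\textbf{The paper's route.} The paper repackages both ingredients structurally.
\begin{itemize}
\item[] It passes to the coKleisli tangent category $\Simple_G[\X,\TT]$. There the two-variable map $\m_G$ becomes an endomorphism $\theta_G\colon G\nto G$, and $z_G\times X$ becomes the lifted field $\dot X$. Because the inclusion functor is a strict tangent morphism, this absorbs your product computation.
\item[] It then uses the endomorphism tangent category $\End(\Simple_G[\X,\TT])$, which absorbs relatedness: a vector field over $(G,\theta_G)$ is exactly a field commuting with $\theta_G$.
\item[] Left-invariant fields are then the vector fields over $(G,\theta_G)$ in $\LI_G[\X,\TT]$. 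The forgetful strict tangent morphism carries their Lie algebra injectively into $\VF_G(\X,\TT)$.
\end{itemize}

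\textbf{What each approach buys.} Your version is more elementary and makes the one non-formal step explicit. The uniqueness argument giving $\{h\}\T f=\{h\T^2 f\}$ is exactly what is needed for $\End$ and $\LI_G[\X,\TT]$ to inherit the universal property of the vertical lift, and for strict tangent morphisms to preserve brackets. The paper leaves this implicit, checking only closure of $\LI_G[\X,\TT]$ under $\T$. The paper's packaging pays off later: the same construction in $\Simple_A[\X,\TT]$ immediately makes $A$-parametrized left-invariant fields a Lie algebra. Via Proposition~\ref{proposition:left-invariant-vector-fields-tangent-morphisms}, it also gives their functoriality in $A$ and their compatibility with $\T$, which the Lie functor requires. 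With your approach, each of these would mean redoing the relatedness computation, for instance with $\m_G\times\id_A$.
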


Cockett and Schwarz proved this result directly. Instead, we would like to give a more structural perspective. In particular, we shall prove that left-invariant vector fields are proper vector fields in another tangent category. To this end, fix a group object $G$ in a Cartesian tangent category. For each object $A$ of a Cartesian tangent category, we have previously constructed an associated tangent comonad $(S_A,\alpha_A)$. Therefore, in particular, we can consider $(S_G,\alpha_G)$. Consider also the following map:
\begin{align*}
&\theta_G\colon G\times G\xrightarrow{\tau}G\times G\xrightarrow{\m_G}G
\end{align*}
where $\tau$ is the canonical symmetry. $\theta_G$ can be regarded as a coKleisli map $\theta_G\colon G\nto G$, that is, a morphism in $\Simple_G[\X,\TT]$. Now, consider a vector field $X\colon G\to\T G$ in the base tangent category. By Proposition~\ref{proposition:cokleisli-tangent-category}, the inclusion functor $(\X,\TT)\to\Simple_G[\X,\TT]$ is a strict tangent morphism. Therefore, it sends vector fields to vector fields. Let $\dot X\colon G\nto\T_GG$ denote the corresponding vector field in $\Simple_G[\X,\TT]$. Concretely, $\SQ{\dot X}=\pi_1X$.

Now that we have made both the vector field $X$ and the multiplication map $\m_G$ into coKleisli maps, we can state the left-invariance condition as a commutative diagram in $\Simple_G[\X,\TT]$.

\begin{lemma}
\label{lemma:left-invariance-cokleisli}
A vector field $X\colon G\to\T G$ on a group object $G$ is left-invariant if and only if the lifted vector field $\dot X$ in $\Simple_G[\X,\TT]$ commutes with $\theta_G$ as follows:
\begin{equation*}
\begin{tikzcd}
G & G \\
{\T_GG} & {\T_GG}
\arrow["{\theta_G}"{inner sep=.8ex}, "\shortmid"{marking}, from=1-1, to=1-2]
\arrow["{\dot X}"'{inner sep=.8ex}, "\shortmid"{marking}, from=1-1, to=2-1]
\arrow["{\dot X}"{inner sep=.8ex}, "\shortmid"{marking}, from=1-2, to=2-2]
\arrow["{\T_G\theta_G}"'{inner sep=.8ex}, "\shortmid"{marking}, from=2-1, to=2-2]
\end{tikzcd}
\end{equation*}
\end{lemma}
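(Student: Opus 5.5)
The plan is to unwind both composites of the square in $\Simple_G[\X,\TT]$ into ordinary morphisms $G\times G\to\T G$ of $\X$ using the coKleisli composition formula. Then we compare the resulting equation with the left-invariance condition of Definition~\ref{definition:left-invariant-vector-fields}. Everything reduces to computing $\SQ{-}$ of each composite and using the explicit form of $\SQ{\dot X}=\pi_1X$, of $\SQ{\theta_G}=\tau\m_G$, and of the distributive law $(\alpha_G)_G=\id_{\T G}\times z_G$ from Lemma~\ref{lemma:simple-comonad}.

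First, the upper-right composite $\theta_G\dot X$. Its coKleisli representative is
\begin{align*}
G\times G\xrightarrow{\id_G\times\Delta_G}G\times G\times G\xrightarrow{\SQ{\theta_G}\times\id_G}G\times G\xrightarrow{\pi_1X}\T G .
\end{align*}
Since $\SQ{\dot X}$ discards the parameter, this equals $\SQ{\theta_G}X=\tau\m_GX$. On generalized elements $(g,a)$ it sends $(g,a)\mapsto X(ag)$.

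Second, the lower-left composite $\dot X\,\T_G\theta_G$. By Proposition~\ref{proposition:cokleisli-tangent-category}, $\SQ{\T_G\theta_G}=(\id_{\T G}\times z_G)\T(\tau\m_G)$. Hence the composite is
\begin{align*}
G\times G\xrightarrow{\<\pi_1X,\pi_2\>}\T G\times G\xrightarrow{\id_{\T G}\times z_G}\T(G\times G)\xrightarrow{\T\tau}\T(G\times G)\xrightarrow{\T\m_G}\T G ,
\end{align*}
using that $\T$ preserves products so $\T\tau$ is the symmetry. This simplifies to $\<\pi_2z_G,\pi_1X\>\T\m_G=\tau(z_G\times X)\T\m_G$. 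So commutativity of the square is equivalent to
\begin{align*}
\tau\m_GX=\tau(z_G\times X)\T\m_G ,
\end{align*}
and, since $\tau$ is an isomorphism, to $\m_GX=(z_G\times X)\T\m_G$. That is precisely left-invariance, which gives both directions at once.

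The only delicate point is bookkeeping. We must check the orientation of $\theta_G$ (parameter acting on the left, hence the twist $\tau$), and we must check that in the coKleisli composite the duplicated parameter is fed consistently into both the lifted tangent map and $z_G$. I expect to verify this carefully with the explicit formula for $\T_S$ and the identity $\T\pi_i=\pi_i$. No universal properties are needed; the whole argument is equational.
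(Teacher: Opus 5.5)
Your proposal is correct and follows the same route as the paper's proof. Both unwind the two coKleisli composites into $\tau\m_GX$ and $\tau(z_G\times X)\T\m_G$, then cancel the symmetry $\tau$ to get exactly the left-invariance condition. Your version simply makes explicit the bookkeeping the paper calls ``rearranging'': the distributive law $\id_{\T G}\times z_G$, and the fact that $\T\tau$ is the symmetry.
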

\begin{proof}
In the base category $\X$, the commutativity condition expressed by the above diagram reads as follows:
\begin{align*}
&(\id_G\times\Delta_G)(\tau\times\id_G)(\m_G\times\id_G)(\pi_1X)=(\id_G\times\Delta)(\pi_1X\times\id_G)(\id_{\T G}\times z_G)\T\tau\T\m_G
\end{align*}
However, by rearranging both the left and the right terms, we obtain the equation:
\begin{align*}
&\tau\m_GX=(X\times z_G)\T\tau\T\m_G=\tau(z_G\times X)\T\m_G
\end{align*}
However, since $\tau$ is a symmetry, this equation is equivalent to $\m_GX=(z_G\times X)\T\m_G$, that is, the left-invariance condition of $X$.
\end{proof}

This lemma allows us to construct a tangent category whose vector fields over a given object are precisely the left-invariant vector fields of a group object. To this end, recall that for each tangent category $(\X,\TT)$, the arrow category $\X^\to$ comes with a tangent structure, obtained by applying the base tangent structure pointwise. Let us denote this tangent category by $(\X,\TT)^\to$. There is a tangent subcategory $\End(\X,\TT)$ of $(\X,\TT)^\to$ whose objects are pairs $(M,m)$ formed by an object $M$ of $(\X,\TT)$ and an endomorphim $m\colon M\to M$ and whose morphisms $f\colon(M,m)\to(N,n)$ are morphisms that commute with the endomorphisms, that is, $fn=mf$. Notice that, $(G,\theta_G)$ becomes an object of $\End(\Simple_G[\X,\TT])$.

Another way to construct $\End(\X,\TT)$ is to look at inserters in the $2$-category $\TngCat_{\cong}$ of tangent categories, strong tangent morphisms, and tangent natural transformations. Then, $\End(\X,\TT)$ is precisely the inserter between the identity tangent morphisms over $(\X,\TT)$. We suggest the reader consult~\cite[Section~4]{lanfranchi:tangentads-II} for details on inserters and equifiers in the $2$-category $\TngCat_{\cong}$.

Now, we do not want to consider every morphism of $\End(\Simple_G[\X,\TT])$, but only those morphisms that come from $(\X,\TT)$. So, we consider the subcategory denoted by $\LI_G[\X,\TT]$ of $\End(\Simple_G[\X,\TT])$ with the same objects but morphisms are those morphisms $f\colon(M,m)\nto(N,n)$ whose underlying morphism $f\colon M\nto N$ of $\Simple_G[\X,\TT]$ is given by $\SQ f=\pi_1f$.

\begin{proposition}
\label{proposition:left-invariant-vector-fields-characterization}
Given a group object $G$ in $(\X,\TT)$, there is a tangent category $\LI_G[\X,\TT]$ with an object $(G,\theta_G)$, whose vector fields over $(G,\theta_G)$ are precisely all the left-invariant vector fields of $G$ in $(\X,\TT)$.
\end{proposition}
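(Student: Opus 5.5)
The plan has three steps: show that $\LI_G[\X,\TT]$ is a tangent category, unwind what a vector field on $(G,\theta_G)$ is, and then apply Lemma~\ref{lemma:left-invariance-cokleisli}.

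\textbf{Step 1: the tangent structure.} We use that $\End(\Simple_G[\X,\TT])$ is a tangent category. It is the tangent subcategory of the arrow tangent category $(\Simple_G[\X,\TT])^\to$, equivalently the inserter in $\TngCat_\cong$. The tangent bundle functor is $\T(M,m)=(\T_G M,\T_G m)$ on objects and acts as $\T_G$ on morphisms, and the structural transformations are those of $\Simple_G[\X,\TT]$ applied componentwise. We then check that the wide subcategory $\LI_G[\X,\TT]$ is closed under all of this.

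Closure under $\T$ comes from the formula $\SQ{\T_G f}=\alpha\,\T\SQ f$. For $\SQ f=\pi_1 f$ this gives
\begin{align*}
(\id_{\T M}\times z_G)\T(\pi_1 f)=\pi_1\T f,
\end{align*}
so $\T_G$ sends morphisms coming from $\X$ to morphisms coming from $\X$. The structural maps $p,z,s,l,c$ of $\Simple_G[\X,\TT]$ are images of $\X$-maps under the inclusion (Proposition~\ref{proposition:cokleisli-tangent-category}), so they have the form $\pi_1(-)$. By naturality in $\X$ they commute with the endomorphisms, and hence they lie in $\LI_G[\X,\TT]$.

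Next we need the pullbacks $\T_n$ and the universality of the vertical lift. In $\End(\Simple_G[\X,\TT])$ these are computed in $\Simple_G[\X,\TT]$, and there they are images of limits from $\X$. The induced comparison maps are pairings and composites of $\pi_1$-maps, so they again have the form $\pi_1(-)$ and are equivariant by uniqueness. Therefore $\LI_G[\X,\TT]$ is a tangent subcategory.

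\textbf{Step 2: vector fields on $(G,\theta_G)$.} Such a vector field is a morphism $X\colon(G,\theta_G)\nto(\T_GG,\T_G\theta_G)$ of $\LI_G[\X,\TT]$ that is a section of the projection. Being a morphism of $\LI_G$ means two things: $\SQ X=\pi_1X_0$ for some $X_0\colon G\to\T G$ in $\X$, that is, $X=\dot X_0$; and $X$ commutes with the endomorphisms, $\theta_G\dot X_0=\dot X_0\,\T_G\theta_G$ in $\Simple_G[\X,\TT]$. Since the inclusion is a faithful strict tangent morphism, the section condition $\dot X_0 p=\id$ is equivalent to $X_0p_G=\id_G$ in $\X$. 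Faithfulness holds because $\pi_1$ is epi in the presence of the global element $\e$, which gives a section $\<\id,!\e\>$.

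\textbf{Step 3: conclusion.} By Lemma~\ref{lemma:left-invariance-cokleisli}, the commutation condition holds exactly when $X_0$ is left-invariant. The assignment $X_0\mapsto\dot X_0$ is therefore a bijection between left-invariant vector fields of $G$ and vector fields on $(G,\theta_G)$ in $\LI_G[\X,\TT]$.

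The main obstacle is the universal axioms in Step 1, specifically that the universal maps into $\T_n$ and into the vertical-lift pullback stay inside the subcategory. I would handle this by noting that each universal map is built from the given components via the universal property in $\X$, precomposed with $\pi_1$. Uniqueness in $\Simple_G$ then forces it to equal that $\pi_1$-form, and equivariance follows from uniqueness applied to both composites.
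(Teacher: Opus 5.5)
Your proposal is correct and follows the paper's proof. It uses the same ingredients: closure of $\LI_G[\X,\TT]$ under $\T_G$ via $\SQ{\T_Gf}=\pi_1\T f$, the fact that vector fields in $\End$ are exactly those commuting with the endomorphism, Lemma~\ref{lemma:left-invariance-cokleisli}, and the identification of $\pi_1$-form maps with $\dot X$. Your treatment of the universal maps and of faithfulness via the split epi $\pi_1$ (with section $\<\id,!\e\>$) spells out what the paper leaves implicit.

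One small correction: the structural maps commute with endomorphisms such as $\theta_G$ by naturality in $\Simple_G[\X,\TT]$ (Proposition~\ref{proposition:cokleisli-tangent-category}), not merely naturality in $\X$. This is already covered by your assumption that $\End(\Simple_G[\X,\TT])$ is a tangent category.
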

\begin{proof}
For starters, since the tangent bundle functor preserves Cartesian products, if $\SQ f=\pi_1f$, then $\SQ{\T_Gf}=(\id_{\T M}\times z_G)\T(\pi_1f)=\pi_1\T f$. Therefore, $\LI_G[\X,\TT]$ becomes a tangent subcategory of $\End(\Simple_G[\X,\TT])$. By Lemma~\ref{lemma:left-invariance-cokleisli}, a map $X\colon G\to\T G$ is a left-invariant vector field of $G$ if and only if the corresponding vector field $\dot X$ in $\Simple_G[\X,\TT]$ commutes with $\theta_G$. However, if $M$ is an object in a tangent category $(\X,\TT)$, $X$ is a vector field of $M$, and $m\colon M\to M$ is an endomorphism, then $X$ becomes a vector field $X\colon(M,m)\to\T(M,m)$ in $\End(\X,\TT)$ if and only if $X$ commutes with $m$, that is, if $mX=X\T m$. Thus, $X$ is a left-invariant vector field if and only if $\dot X$ becomes a vector field of $(G,\theta_G)$ in $\End(\Simple_G[\X,\TT])$. However, a vector field $Y$ of this tangent category is equal to $\dot X$ for some $X\colon G\to\T G$ if and only if its interpretation $\SQ{Y}=\pi_1X$.
\end{proof}

We can now give a structural proof of Proposition~\ref{proposition:left-invariant-vector-fields}.

\begin{proof}[Proof of Proposition~\ref{proposition:left-invariant-vector-fields}]
There is a forgetful functor $\LI_G[\X,\TT]\to(\X,\TT)$ that sends an object $(M,m)$ to $M$ and each morphism $\SQ{f}=\pi_1f$ to $f=\<\id_H,\e_G\>\SQ{f}$. Furthermore, this functor preserves the tangent structure strictly. Thus, it sends vector fields of $\LI_G[\X,\TT]$ over $(M,m)$ to vector fields of $(\X,\TT)$ over $M$, and it preserves the Lie algebra structure. So, in particular, it sends vector fields over $(G,\theta_G)$ to vector fields over $G$. However, by Proposition~\ref{proposition:left-invariant-vector-fields-characterization}, vector fields over $(G,\theta_G)$ in $\LI_G[\X,\TT]$ are precisely left-invariant vector fields which form the Lie algebra $\LIVF_G(\X,\TT)$. Thus, this functor becomes an inclusion of $\LIVF_G(\X,\TT)$ into the Lie algebra $\VF_G(\X,\TT)$ of ordinary vector fields of $G$.
\end{proof}

\begin{proposition}
\label{proposition:left-invariant-vector-fields-tangent-morphisms}
A lax tangent morphism $(F,\alpha)\colon(\X,\TT)\to(\X',\TT')$ of Cartesian tangent categories that preserves finite products preserves left-invariant vector fields.
\end{proposition}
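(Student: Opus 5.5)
The plan is to first fix what ``preserves left-invariant vector fields'' means. Since $F$ preserves finite products, it sends a group object $G=(G,\e_G,\m_G,\i_G)$ to a group object $FG$ with unit $\*\cong F\*\xrightarrow{F\e_G}FG$, multiplication $FG\times FG\cong F(G\times G)\xrightarrow{F\m_G}FG$ and inverse $F\i_G$; the group axioms hold because $F$ preserves equations and the comparison isomorphisms. A vector field $X\colon G\to\T G$ is sent to
\begin{align*}
&X^F\colon FG\xrightarrow{FX}F\T G\xrightarrow{\alpha_G}\T'FG,
\end{align*}
and we must show that $X^F$ is a left-invariant vector field of $FG$ whenever $X$ is one for $G$.

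First, $X^F$ is a vector field: by compatibility of the lax distributive law with projections, $\alpha_Gp'_{FG}=Fp_G$, so $X^Fp'_{FG}=F(Xp_G)=\id_{FG}$. Second, we rewrite both sides of the left-invariance square for $X^F$. By naturality of $\alpha$, $F\m_G\,X^F=F(\m_GX)\alpha_G$. On the other side, $(z'_{FG}\times X^F)\T'(F\m_G)$ is handled by three facts: (i) compatibility of $\alpha$ with zeros gives $z'_{FG}=Fz_G\,\alpha_G$; (ii) $\alpha$ is compatible with products, in that $\alpha_{G\times G}$, modulo the isomorphisms $F(G\times G)\cong FG\times FG$, $\T(G\times G)\cong\T G\times\T G$ and $\T'(FG\times FG)\cong\T'FG\times\T'FG$, equals $\alpha_G\times\alpha_G$; (iii) naturality gives $\alpha_{G\times G}\T'F\m_G=F\T\m_G\,\alpha_G$. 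Combining these yields $(z'_{FG}\times X^F)\T'F\m_G\cong F\bigl((z_G\times X)\T\m_G\bigr)\alpha_G$, and applying left-invariance of $X$ inside $F$ closes the square.

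Because everything else is naturality plus the axioms of a lax tangent morphism, I expect the main obstacle to be point (ii). Product preservation of $F$ alone does not literally give it; one has to argue that $\alpha_{G\times G}$ composed with the projections $\T'F\pi_i$ equals $F\T\pi_i\,\alpha_G$ by naturality of $\alpha$ with respect to $\pi_i$. Since $\T'$ preserves products, the morphisms $\T'F\pi_i$ jointly form the product cone, so (ii) follows from the universal property. I would state this as a short auxiliary observation and then spell out the chain of equalities, inserting the canonical isomorphisms explicitly. If desired, I would finally note that $X\mapsto X^F$ restricts to a map $\LIVF_G(\X,\TT)\to\LIVF_{FG}(\X',\TT')$.
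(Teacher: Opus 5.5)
Your argument is correct, but it takes a different route from the paper. The paper argues structurally. It lifts $(F,\alpha)$ to a lax tangent morphism $\KL[F,\alpha]\colon\Simple_G[\X,\TT]\to\KL[S_{FG},\alpha_{FG}]$ between coKleisli tangent categories, and notes that this lift commutes with the inclusions, so that $\dot X\mapsto\dot{(FX)}$. It then applies the $2$-functor $\End$. Finally, it uses that lax tangent morphisms preserve vector fields, together with the characterization (Lemma~\ref{lemma:left-invariance-cokleisli} and Proposition~\ref{proposition:left-invariant-vector-fields-characterization}) of left-invariant vector fields as vector fields on $(G,\theta_G)$.

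You instead check the left-invariance square for $FX\,\alpha_G$ directly. Your route is elementary and self-contained, and it shows exactly what is used: naturality of $\alpha$ and its compatibility with $p$ and $z$ only, not with $s$, $l$, $c$. It also relies on your observation (ii) that $\alpha_{G\times G}\cong\alpha_G\times\alpha_G$, which follows from naturality with respect to the projections as you say. That observation is precisely what the paper leaves implicit when it asserts that $(F,\alpha)$ lifts to the coKleisli categories and that the lift sends $\theta_G$ to $\theta_{FG}$. In exchange, the paper's route presents the result as an instance of the general principle that lax tangent morphisms preserve vector fields, in line with its theme of realizing left-invariant vector fields as ordinary vector fields in an auxiliary tangent category.

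One small slip: $F\m_G\,FX\,\alpha_G=F(\m_GX)\alpha_G$ is just functoriality of $F$, not naturality of $\alpha$. Naturality is only needed in your steps (ii) and (iii).
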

\begin{proof}
If $G$ is a group object of $(\X,\TT)$ and $F$ preserves finite products, then $FG$ becomes a group object in $(\X',\TT')$. Furthermore, using the coKleisli construction, we can lift $(F,\alpha)$ to a lax tangent morphism $\KL[F,\alpha]\colon\Simple_G[\X,\TT]\to\KL[S_{FG},\alpha_{FG}]$. Moreover, it is also easy to see that $\KL[F,\alpha]$ commutes with the inclusion functors from the base tangent categories to the coKleisli. This implies that if $X\colon G\to\T G$ is a vector field in $(\X,\TT)$, then $\KL[F,\alpha](\dot X)=\dot{FX}$. Next, we can apply the $2$-functor $\End$ and obtain another lax tangent morphism $\End(\KL[F,\alpha])\colon\End(\Simple_G[\X,\TT])\to\End(\KL[S_{FG},\alpha_{FG}])$. However, since lax tangent morphisms preserve vector fields, $\End(\KL[F,\alpha])$ sends vector fields over $(G,\theta_G)$ to vector fields over $(FG,\theta_{FG})$. Finally, since $\KL[F,\alpha](\dot X)=\dot{FX}$ for each vector field $X$, we conclude that $(F,\alpha)$ preserves left-invariant vector fields.
\end{proof}

\subsection{The Lie functor of a group object}
\label{subsection:lie-functor}
It is time to construct the Lie functor of a group object. To this end, consider a group object $G$ which admits negatives in a Cartesian tangent category $(\X,\TT)$. Notice that $G$ is not assumed to have a tangent space at the unit. Given an object $A$ of $(\X,\TT)$, we can consider the coKleisli tangent category $\Simple_A[\X,\TT]$.

\begin{lemma}
\label{lemma:group-objects-in-cokleisli}
Every group object $G$ becomes a group object in the coKleisli category $\Simple_A[\X,\TT]$.
\end{lemma}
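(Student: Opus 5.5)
The plan is to transport the group structure along the inclusion functor $J\colon\X\to\Simple_A[\X,\TT]$, which sends $M$ to $M$ and $f$ to $\pi_1f$. First I will check that $\Simple_A[\X,\TT]$ is Cartesian and that $J$ preserves finite products. The terminal object is $\*$, since a coKleisli map $M\nto\*$ is a morphism $M\times A\to\*$, and there is exactly one. The product of $M$ and $N$ is $M\times N$ with projections $J\pi_1=\pi_1\pi_1$ and $J\pi_2=\pi_1\pi_2$. The universal property reduces to that of $\X$: a pair of coKleisli maps $\SQ f\colon P\times A\to M$ and $\SQ g\colon P\times A\to N$ corresponds to the map $\<\SQ f,\SQ g\>\colon P\times A\to M\times N$. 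One can also get this abstractly: $J$ is a right adjoint, since it is the cofree functor of the comonad $S_A$, as already noted in the proof of Proposition~\ref{proposition:cokleisli-tangent-category}, so it preserves limits, and in particular finite products.

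Next, since $J$ is a functor preserving the terminal object and binary products, it carries internal group objects to internal group objects. I will define the structure maps of $G$ in $\Simple_A[\X,\TT]$ as follows:
\begin{align*}
&\SQ{\e}\=\pi_2!_A\e_G\colon\*\times A\to G     &&\SQ{\m}\=\pi_1\m_G\colon (G\times G)\times A\to G     &&\SQ{\i}\=\pi_1\i_G\colon G\times A\to G
\end{align*}
The unitality, associativity, and inverse axioms are equations between composites of these maps, pairings, projections, and terminal maps. Since $J$ preserves composition and the Cartesian structure, each axiom in $\Simple_A[\X,\TT]$ is the image under $J$ of the corresponding axiom in $\X$, and therefore holds.

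I expect the only real care to be needed in checking that pairing maps in the coKleisli category agree with $J$ applied to the pairings of $\X$. For instance, I must verify that $\<\id_G,\e\>$ formed in $\Simple_A[\X,\TT]$ has interpretation $\pi_1\<\id_G,!\e_G\>$. This follows from the coKleisli composition formula $\SQ{fg}=(\id\times\Delta_A)(\SQ f\times\id_A)\SQ g$: when $\SQ g=\pi_1 g$, this simplifies to $\SQ f g$. All composites involved have a second factor in the image of $J$, so the check is routine.

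I would also remark that negatives of $G$ transfer as well. By Proposition~\ref{proposition:cokleisli-tangent-category}, $J$ is a strict tangent morphism, so $Jn_G$ makes $\p_G$ an Abelian bundle in $\Simple_A[\X,\TT]$. This is the fact needed later for the Lie algebra of vector fields.
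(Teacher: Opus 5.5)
Your proposal is correct and follows the same route as the paper. The paper's proof is the one-line observation that the inclusion $\X\to\Simple_A[\X,\TT]$ is a right adjoint, so it preserves finite products and hence carries group objects to group objects; your explicit description of the coKleisli products, and of how coKleisli composition simplifies when the second factor lies in the image of the inclusion, just unpacks that observation. Your closing remark that the negation of $G$ also transfers along this strict tangent morphism is a worthwhile addition, since the paper later uses negatives of $G$ in $\Simple_A[\X,\TT]$ without comment.
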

\begin{proof}
Since the inclusion functor preserves limits, it sends group objects to group objects.
\end{proof}

We can use this simple result to define the next concept.

\begin{definition}
\label{definition:parametrized-left-invariant-vector-field}
An \textbf{$A$-Parametrized Left-Invariant Vector Field} (\textbf{$A$-\PLIVF} for short) on a group object $G$ consists of a left-invariant vector field over $G$ in $\Simple_A[\X,\TT]$.
\end{definition}

By unpacking this definition, an $A$-\PLIVF over $G$ consists of a map $X\colon G\times A\to\T G$ subject to the following two conditions:
\begin{equation*}
\begin{tikzcd}
{G\times A} & {\T G} \\
& G
\arrow["X", from=1-1, to=1-2]
\arrow["{\pi_1}"', from=1-1, to=2-2]
\arrow["{p_G}", from=1-2, to=2-2]
\end{tikzcd}\qquad
\begin{tikzcd}
{G\times G\times A} & {G\times A} \\
{\T(G\times G)} & {\T G}
\arrow["{\m_G\times\id_A}", from=1-1, to=1-2]
\arrow["{z_G\times X}"', from=1-1, to=2-1]
\arrow["X", from=1-2, to=2-2]
\arrow["{\T\m_G}"', from=2-1, to=2-2]
\end{tikzcd}
\end{equation*}

By definition, $A$-\PLIVFs are left-invariant vector fields in the coKleisli category. Therefore, by Proposition~\ref{proposition:left-invariant-vector-fields}, they form a Lie algebra. We denote this Lie algebra by $\g(A)$.

Furthermore, thanks to Proposition~\ref{proposition:left-invariant-vector-fields-tangent-morphisms}, lax tangent morphisms preserve left-invariant vector fields. Therefore, given a morphism $f\colon A\to B$, the induced lax tangent morphism $\Simple_B[\X,\TT]\to\Simple_A[\X,\TT]$ sends $B$-\PLIVFs over $G$ to $A$-\PLIVFs over $G$. The induced Lie algebra homomorphism is denoted by $\g(f)\colon\g(B)\to\g(A)$.

\begin{lemma}
\label{lemma:lie-functor}
For each group object $G$ with negatives in a Cartesian tangent category, there is a functor
\begin{align*}
&\g(-)\colon\X\to\LieAlg^\op
\end{align*}
which sends every object $A$ to the Lie algebra $\g(A)$ of $A$-\PLIVFs over $G$ and each morphism $f\colon A\to B$ to the Lie algebra homomorphism $\g(f)\colon\g(B)\to\g(A)$.
\end{lemma}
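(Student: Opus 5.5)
The plan is to make the action on morphisms explicit, check that it lands in $\g(A)$ and is a Lie algebra homomorphism, and then verify functoriality. For $f\colon A\to B$, the lax tangent morphism $\Simple_B[\X,\TT]\to\Simple_A[\X,\TT]$ is the reindexing (substitution) functor $f^\*$ of the simple tangent fibration of Proposition~\ref{proposition:simple-tangent-fibration}. Through Proposition~\ref{proposition:simple-fibration-fibres} it becomes a functor between coKleisli categories. On objects it is the identity, and it sends a coKleisli map $\SQ\varphi\colon M\times B\to N$ to $(\id_M\times f)\SQ\varphi\colon M\times A\to N$. Concretely, for a $B$-\PLIVF $X\colon G\times B\to\T G$, I would therefore set
\begin{align*}
&\g(f)(X)\=(\id_G\times f)X\colon G\times A\to\T G.
\end{align*}

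First I will check that $f^\*$ is a functor. It preserves identities because $(\id_M\times f)\pi_1=\pi_1$. It preserves coKleisli composition because $f$ commutes with diagonals, so $(\id\times f)(\id\times\Delta_B)=(\id\times\Delta_A)(\id\times f\times f)$. Next, $f^\*$ is a strict tangent morphism. Since the tangent bundle on both fibres is the base $\T$ twisted by $\alpha$, this reduces to naturality of $z$, namely $(\id_{\T M}\times f)(\id\times z_B)=(\id\times z_A)(\id\times\T f)$, together with the fact that the structural transformations are inclusions of base maps and so are fixed by $f^\*$. In addition, $f^\*$ preserves finite products, because products in the coKleisli category are computed as in $\X$ and $f^\*$ is the identity on objects. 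It also sends the group structure of $G$ in $\Simple_B$, the inclusion of base maps, to that in $\Simple_A$. Proposition~\ref{proposition:left-invariant-vector-fields-tangent-morphisms} then gives that $f^\*$ maps left-invariant vector fields to left-invariant vector fields. Alternatively, this can be checked directly: precomposing the two defining diagrams of a $B$-\PLIVF with $\id\times f$ yields the defining diagrams of an $A$-\PLIVF.

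The step I expect to need the most care is showing that $\g(f)$ is a Lie algebra homomorphism. The route I would use is that a strict tangent morphism preserves the zero vector field, the sum $\<X,Y\>s$, the negation $Xn$, and the bracket $\{X\T Y-Y\T Xc\}$. The reason is that each of these is built from composition, $\T$, the structural maps, and the universal property of the vertical lift, which $f^\*$ preserves strictly. Strictness also ensures that the unique map $\{-\}$ is sent to the corresponding unique map. For this last point I must confirm that the pullback of [TNG.7] in $\Simple_B$ is sent to that in $\Simple_A$. Both are images of the base pullback under inclusion functors, and the inclusions satisfy $\X\to\Simple_B\xrightarrow{f^\*}\Simple_A=\X\to\Simple_A$, so the universal lifts agree. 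Since brackets of left-invariant fields are computed in $\VF$ by Proposition~\ref{proposition:left-invariant-vector-fields}, $\g(f)$ is a homomorphism.

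Finally, functoriality follows from the formula. We have $\g(\id_A)(X)=(\id\times\id)X=X$. For $f\colon A\to B$ and $g\colon B\to C$, we get $\g(fg)(X)=(\id\times fg)X=(\id\times f)(\id\times g)X=\g(f)(\g(g)(X))$, which is exactly the composite in $\LieAlg^\op$ in diagrammatic order. This gives the functor $\g(-)\colon\X\to\LieAlg^\op$.
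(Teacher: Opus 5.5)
Your proposal is correct and follows the same route as the paper: the paper defines $\g(f)$ by pushing $B$-\PLIVFs along the induced tangent morphism $\Simple_B[\X,\TT]\to\Simple_A[\X,\TT]$ and invokes Proposition~\ref{proposition:left-invariant-vector-fields-tangent-morphisms}, exactly as you do. You also fill in points the paper leaves implicit: that this reindexing functor is the strict tangent morphism $(\id\times f)(-)$, that it preserves the bracket through uniqueness of $\{-\}$, and that functoriality follows from $(\id\times fg)X=(\id\times f)(\id\times g)X$.
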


\begin{definition}
\label{definition:lie-functor}
The \textbf{Lie functor} of a group object $G$ in a Cartesian tangent category is the functor $\g(-)\colon\X\to\Set^\op$ of Lemma~\ref{lemma:lie-functor}, where we forget the Lie algebra structure.
\end{definition}

Recall that, by Proposition~\ref{proposition:simple-fibration-fibres}, the coKleisli tangent category $\Simple_A[\X,\TT]$ coincides with the fibre tangent category over $A$ of the simple tangent fibration $\Pi\colon\Simple[\X,\TT]\to(\X,\TT)$. We also defined $\g(A)$ as the Lie algebra of left-invariant vector fields over $G$ in $\Simple_A[\X,\TT]$.

It is immediate to see that a group object $G$ in the base tangent category is also a group object in $\Simple[\X,\TT]$ and that left-invariant vector fields over $G$ in $\Simple[\X,\TT]$ are precisely parametrized left-invariant vector fields over $G$. One of the advantages of this point of view is that the parameter space $A$ is now left free to change.

The tangent bundle functor $\T^\Simple\colon\Simple[\X,\TT]\to\Simple[\X,\TT]$ becomes a strong tangent morphism with the canonical flip $(\T^\Simple,c^\Simple)$ and, as we mentioned earlier, (lax and in particular strong) tangent morphisms preserve vector fields. In particular, an $A$-parametrized vector field $X\colon G\times A\to\T G$ over $G$ is sent to the $\T A$-parametrized vector field:
\begin{align*}
&X_\T\colon\T G\times\T A\xrightarrow{\cong}\T(G\times A)\xrightarrow{\T X}\T\T G\xrightarrow{c_G}\T\T G
\end{align*}
Moreover, by Proposition~\ref{proposition:left-invariant-vector-fields-tangent-morphisms}, tangent morphisms also preserve left invariance and the Lie algebra structure of vector fields. Therefore, we obtain a Lie algebra homomorphism:
\begin{align*}
&\tau_A^G\colon\g(A)\to(\T\g)(\T A)     &&(X\colon G\times A\to\T G)\mapsto(X_\T\colon\T G\times\T A\to\T\T G)
\end{align*}
where $(\T\g)(-)$ denotes the Lie functor of $\T G$.

\begin{lemma}
\label{lemma:tau-lie-algebra-homomorphism}
For a group object $G$, there is a natural transformation
\begin{align*}
&\tau_A^G\colon\g(A)\to(\T\g)(\T A)
\end{align*}
natural in $A$, whose components are Lie algebra homomorphisms.
\end{lemma}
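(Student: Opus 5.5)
The proof has three parts. First, $X_\T$ must be a $\T A$-\PLIVF over $\T G$. Second, $\tau^G_A$ must be a Lie algebra homomorphism. Third, the components must be natural in $A$.

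For the first two parts, I would regard $(\T^\Simple,c^\Simple)\colon\Simple[\X,\TT]\to\Simple[\X,\TT]$ as a strong tangent morphism that preserves finite products. It sends the group object $G$, viewed in $\Simple[\X,\TT]$, to the group object $\T G$. It also restricts over the base functor $\T$, mapping the fibre over $A$ into the fibre over $\T A$. Concretely, I would write down the induced lax tangent morphism $\Simple_A[\X,\TT]\to\Simple_{\T A}[\X,\TT]$. On objects it is $M\mapsto\T M$. A coKleisli map $\SQ f\colon M\times A\to N$ goes to $\T M\times\T A\cong\T(M\times A)\xrightarrow{\T\SQ f}\T N$. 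The distributive law is the flip $c$.

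I would then check three things directly:
\begin{itemize}
\item This assignment is functorial on coKleisli composition. This holds because $\T$ preserves products and diagonals, so $\T\Delta_A=\Delta_{\T A}$.
\item The flip satisfies the lax tangent morphism axioms with respect to the coKleisli tangent structures. These reduce to the axioms of $c$ in the base, precomposed with the distributive law $\id\times z_A$. Here one uses that $z$ commutes with $c$ in the sense $z_{\T A}c_A=\T z_A$ ([TNG.4]).
\item The vector field $X$ is sent to exactly $X_\T=\T X\,c_G$.
\end{itemize}
Proposition~\ref{proposition:left-invariant-vector-fields-tangent-morphisms} then gives that $X_\T$ is left-invariant over $\T G$. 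The general fact that (lax, here strong) tangent morphisms preserve the Lie algebra of vector fields, via the Cockett--Cruttwell bracket, gives that $\tau^G_A$ preserves $0$, $+$ and $[-,-]$. This step uses that $c$ is compatible with $s$, $l$ and negation, so that the bracket $\{X\T Y-Y\T Xc\}$ is carried to the corresponding bracket.

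For naturality, fix $f\colon A\to B$. The map $\g(f)$ sends $X\colon G\times B\to\T G$ to $(\id_G\times f)X$. Likewise $(\T\g)(\T f)$ sends $Y$ to $(\id_{\T G}\times\T f)Y$. Both composites in the naturality square therefore produce
\[
\T G\times\T A\xrightarrow{\id\times\T f}\T G\times\T B\cong\T(G\times B)\xrightarrow{\T X}\T\T G\xrightarrow{c_G}\T\T G .
\]
They agree because the product comparison isomorphism $\T(G\times A)\cong\T G\times\T A$ is natural in $A$ and $\T$ is a functor.

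I expect the main obstacle to be the middle step of the first part: verifying cleanly that the restriction of $\T^\Simple$ to fibres is a genuine lax tangent morphism between the coKleisli tangent categories. In particular, the coKleisli tangent bundle on $\Simple_{\T A}$ involves $z_{\T A}$, while applying $\T$ to the one on $\Simple_A$ produces $\T z_A$. These two must be mediated by the flip. To avoid a long diagram chase, I would first try to sidestep the fibres altogether. The idea is to work in the total tangent category $\Simple[\X,\TT]$, where left-invariance and the bracket are defined by the same formulas, and to use Proposition~\ref{proposition:simple-fibration-fibres} to identify fibrewise vector fields with vertical ones.
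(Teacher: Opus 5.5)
Your proposal is correct and follows essentially the paper's argument. Like you, the paper obtains $\tau^G_A$, its left-invariance and its compatibility with $0$, $+$ and $[-,-]$ from the strong tangent morphism $(\T^\Simple,c^\Simple)$ together with Proposition~\ref{proposition:left-invariant-vector-fields-tangent-morphisms}, and it proves naturality by exactly your computation $(\id_{\T G}\times\T f)\T X\,c_G=\T((\id_G\times f)X)\,c_G$; your explicit fibrewise check, where $\T z_A c_A=z_{\T A}$ mediates between the two coKleisli tangent bundles, just spells out a step the paper leaves implicit.
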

\begin{proof}
It remains to prove that $\tau_A^G$ is natural. To this end, consider a morphism $f\colon A\to B$. We can compute:
\begin{align*}
&((\T\g)(\T f))(\tau_B^G(X))=((\T\g)(\T f))(X_\T)=(\id_{\T G}\times\T f)X_\T=\T(\id_G\times f)(\T Xc_G)=\\
&\qquad=\T((\id_G)\times f)X)c_G=\T(\g(f)(X))c_G=\tau_A^G(\g(f)(X))\qedhere
\end{align*}
\end{proof}

We use the natural transformation $\tau$ to construct an infinite sequence of Lie algebras as follows:
\begin{align*}
&\g(A)\xrightarrow{\tau_A^G}(\T\g)(\T A)\xrightarrow{\tau_{\T A}^{\T G}}(\T^2\g)(\T^2A)\to{\dots}\to(\T^n\g)(\T^nA)\xrightarrow{\tau_{\T^nA}^{\T^nG}}(\T^{n+1}\g)(\T^{n+1}G)\to{\dots}
\end{align*}
We will refer to this as the \textbf{Lie sequence} of a group object $G$ with coefficients in $A$.


\section{The representability of the Lie functor}
\label{section:representability}
Proposition~\ref{proposition:left-invariant-vector-fields} establishes that the set $\LIVF_G(\X,\TT)$ of left-invariant vector fields of a group object $G$ in a Cartesian tangent category $(\X,\TT)$ carries the structure of a Lie algebra. This construction defines the \emph{external} Lie algebra of a group object $G$, namely, a Lie algebra which, however, does not live in the same category as the group object $G$. Our goal is to \emph{internalize} this Lie algebra by constructing the \emph{internal} Lie algebra of a Lie group.

Our approach is based on the \emph{representability} of the Lie functor. Recall that a functor $F\colon\X\to\Set^\op$ is \textbf{representable} if there is an object $\mathfrak{f}$ of $\X$ and a natural isomorphism $F(A)\to\X(A,\mathfrak{f})$, natural in $A$.

We begin by showing that the Lie functor of a Lie group object is always representable. This means that there is an object $\g$ of the tangent category and a natural isomorphism:
\begin{align*}
&\varphi_A^G\colon\g(A)\xrightarrow{\cong}\X(A,\g)
\end{align*}
Furthermore, we show that $\g$ is $\T_\e G$ and that the representability condition is compatible with the tangent structure in that the Lie functor of each $\T^nG$ is also representable, represented by $\T^n\g$, and the representing isomorphism is fully determined by the one of $\g(-)$.

We also prove the converse: each group object $G$ whose Lie functor is representable and whose representability is compatible with the tangent bundle functor becomes a Lie group object. Finally, we give a third equivalent characterization of Lie groups as group objects $G$ whose Lie functor is representable, the representing object $\g$ is a differential object, and the representing isomorphism preserves multilinearity.

In the next section, we will use the representability of the Lie functor of a Lie group $G$ to construct the internal Lie algebra of $G$.

\subsection{Lie implies representability}
\label{subsection:lie-to-representability}
We begin by showing that the Lie functor $\g(-)$ of a Lie group object is always representable. Recall that, as pointed out in Remark~\ref{remark:negatives-group-objects}, a Lie group object always has negatives and thus, the Lie functor of $G$ is well-defined.

\begin{proposition}
\label{proposition:lie-functor-is-representable}
The Lie functor $\g(-)$ of a Lie group object $G$ is representable, and is represented by the tangent space $\T_\e G$ at the unit.
\end{proposition}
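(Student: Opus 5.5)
The plan is to generalize the proof of Lemma~\ref{lemma:correspondence-livf-global-elements} from global elements to generalized elements $A\to\T_\e G$. For each object $A$ we define
\begin{align*}
&\varphi_A^G\colon\g(A)\to\X(A,\T_\e G)     &&\psi_A^G\colon\X(A,\T_\e G)\to\g(A).
\end{align*}
Given an $A$-\PLIVF $X\colon G\times A\to\T G$, the composite $\<!_A\e_G,\id_A\>X\colon A\to\T G$ satisfies $\<!_A\e_G,\id_A\>Xp_G=\<!_A\e_G,\id_A\>\pi_1=!_A\e_G$. By the universal property of the pullback defining $\T_\e G$, it factors uniquely as $\varphi_A^G(X)\iota_\e$. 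Conversely, given $x\colon A\to\T_\e G$, we set
\begin{align*}
&\psi_A^G(x)\colon G\times A\xrightarrow{\id_G\times x}G\times\T_\e G\xrightarrow{\gamma_G}\T G
\end{align*}
where $\gamma_G=(z_G\times\iota_\e)\T\m_G$ is the trivialization of Theorem~\ref{theorem:trivial-tangent-bundle-groups}.

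The first step is to check that $\psi_A^G(x)$ is an $A$-\PLIVF. The section condition $\psi_A^G(x)p_G=\pi_1$ follows exactly as in the computation for $X_xp_G$: use naturality of $p$, $z_Gp_G=\id$, $\iota_\e p_G=!\e_G$, and unitality. Left-invariance, i.e.\ $(\m_G\times\id_A)\psi_A^G(x)=(z_G\times\psi_A^G(x))\T\m_G$, follows from associativity of $\m_G$ together with $(z_G\times z_G)\T\m_G=\m_Gz_G$, exactly as in the proof of Lemma~\ref{lemma:correspondence-livf-global-elements}, now carrying the extra parameter $A$ along.

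The second step is to prove that $\varphi$ and $\psi$ are mutually inverse. For $\psi$ followed by $\varphi$, we compute $\<!_A\e_G,\id_A\>(\id_G\times x)(z_G\times\iota_\e)\T\m_G=\<!_A\e_Gz_G,x\iota_\e\>\T\m_G=x\iota_\e$. This uses naturality of $z$ to rewrite $\e_Gz_G=z_{\*}\T\e_G$ and then the unit law $\T((\e_G\times\id_G)\m_G)=\id$. By uniqueness in the pullback, $\varphi_A^G(\psi_A^G(x))=x$. For $\varphi$ followed by $\psi$, we write
\begin{align*}
&\psi_A^G(\varphi_A^G(X))=\<z_G\pi_1,\<!\e_G,\id_A\>X\>\T\m_G,
\end{align*}
rewrite this as $\<\pi_1,!\e_G,\pi_2\>(z_G\times X)\T\m_G$, apply parametrized left-invariance to obtain $\<\pi_1,!\e_G,\pi_2\>(\m_G\times\id_A)X$, and conclude with unitality to get $X$. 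Here Theorem~\ref{theorem:trivial-tangent-bundle-groups} is not even strictly needed: only the pullback $\T_\e G$ is used.

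The third step is naturality in $A$. For $f\colon A\to B$, $\g(f)(X)=(\id_G\times f)X$ by the description of the lax tangent morphism $\Simple_B\to\Simple_A$, which is precomposition with $\id\times f$. Then $\<!_A\e_G,\id_A\>(\id_G\times f)X=f\<!_B\e_G,\id_B\>X$, so $\varphi_A^G(\g(f)X)=f\varphi_B^G(X)$ by uniqueness.

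I expect the main obstacle to be purely bookkeeping. It consists of verifying the concrete description of $\g(f)$ as precomposition with $\id_G\times f$, and of keeping the coKleisli composites straight when unpacking left-invariance in $\Simple_A[\X,\TT]$ (the diagonal on $A$ must be handled correctly). I would first establish the explicit form of $A$-\PLIVFs given after Definition~\ref{definition:parametrized-left-invariant-vector-field}, and then work entirely in $\X$.
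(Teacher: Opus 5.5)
Your proof is correct and is essentially the paper's argument. The paper gets the bijection by applying Lemma~\ref{lemma:correspondence-livf-global-elements} inside $\Simple_A[\X,\TT]$, where global elements of $\T_\e G$ are exactly morphisms $A\to\T_\e G$ in $\X$, and then checks naturality of $\varphi^{-1}(f)=(\id_G\times f)\gamma_G$ as you do; your direct unfolding in $\X$ avoids arguing that $\T_\e G$ is still the tangent space at the unit in $\Simple_A[\X,\TT]$, and makes visible that only the ordinary pullback, not its preservation by $\T$, is used.
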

\begin{proof}
We need to construct a natural isomorphism $\varphi_A^G\colon\g(A)\to\X(A,\T_\e G)$. However, by definition $\g(A)$ is the Lie algebra of left-invariant vector fields over $G$ in $\Simple_A[\X,\TT]$. Thus, we can apply Lemma~\ref{lemma:correspondence-livf-global-elements} and obtain a bijection:
\begin{align*}
&\g(A)=\LIVF_G(\Simple_A[\X,\TT])\xrightarrow{\cong}\Simple_A[\X,\TT](\*,\T_\e G)
\end{align*}
However, a morphism from $\*$ to $\T_\e G$ in $\Simple_A[\X,\TT]$ consists of a morphism $\*\times A\to\T_\e G$ in the base category. So, morphisms $\*\to\T_\e G$ in $\Simple_A[\X,\TT]$ are in bijection with morphisms $A\to\T_\e G$ in $\X$. Therefore, we have a bijection:
\begin{align*}
&\varphi\colon\g(A)\xrightarrow{\cong}\X(A,\T_\e G)
\end{align*}
In particular, a morphism $f\colon A\to\T_eG$ corresponds to the $A$-\PLIVF:
\begin{align*}
&\varphi^{-1}(f)=X_f\colon G\times A\xrightarrow{\id_G\times f}G\times\T_\e G\xrightarrow{\gamma_G}\T G
\end{align*}
To prove that this bijection is natural in $A$, consider a morphism $g\colon A\to B$ in $\X$ and a morphism $f\colon A\to\T_\e G$. Thus:
\begin{align*}
&\g(g)[\varphi^{-1}(f)]=\g(g)[X_f]=(\id_G\times g)X_f=(\id_G\times g)(\id_G\times f)\gamma_G\\
&\qquad=(\id_G\times(gf))\gamma_G=X_{gf}=\varphi^{-1}(gf)=\varphi^{-1}(\X(g,\T_\e G)[f])
\end{align*}
where $\gamma_G\colon G\times\T_\e G\cong\T G$. This proves that $\varphi^{-1}$ is natural and that $\varphi$ is a natural isomorphism.
\end{proof}

The representability of the Lie functor of a Lie group is also compatible with the tangent structure. In fact, since $\LieGr(\X,\TT)$ is a tangent category, the tangent bundle $\T G$ is a Lie group is again a Lie group. By Proposition~\ref{proposition:lie-functor-is-representable}, this also means that the Lie functor $(\T\g)(-)$ of $\T G$ is representable and represented by the tangent space of $\T G$ at $\T\e$, that is, $\T\T_\e G$. Moreover, since $\T_\e G$ is a differential object, $\T\T_\e G\cong\T_\e G\times\T_\e G$.

\begin{proposition}
\label{proposition:lie-functor-representable-but-tangent}
If $G$ is a Lie group then, for each $n\geq0$, the Lie functor $(\T^n\g)(-)$ of $\T^nG$ is representable, represented by $\T^n\T_\e G$. Furthermore, for each $n\geq0$, the following diagram
\begin{equation*}
\begin{tikzcd}
{(\T^n\g)(A)} & {\X(A,\T^n\T_\e G)} \\
{(\T^{n+1}\g)(\T A)} & {\X(\T A,\T^{n+1}\T_\e G)}
\arrow["{\varphi_A^{\T^nG}}", from=1-1, to=1-2]
\arrow["{\tau_A^{\T^nG}}"', from=1-1, to=2-1]
\arrow["{\T(-)}", from=1-2, to=2-2]
\arrow["{\varphi_{\T A}^{\T^{n+1}G}}"', from=2-1, to=2-2]
\end{tikzcd}
\end{equation*}
commutes, where $\varphi_A^{\T^nG}$ is the representing natural isomorphism of $(\T^n\g)(-)$ and $\tau_A^{\T^nG}$ is the natural transformation of Lemma~\ref{lemma:tau-lie-algebra-homomorphism}.
\end{proposition}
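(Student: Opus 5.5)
The proof has two parts. First we show that each $(\T^n\g)(-)$ is representable. Then we check that the square commutes.

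For representability, note that $\LieGr(\X,\TT)$ is a tangent subcategory of $\Gr(\X,\TT)$ by Lemma~\ref{lemma:group-objects-form-tangent-category}. Hence each $\T^nG$ is again a Lie group, and its tangent space at the unit $\T^n\e$ is $\T^n\T_\e G$. This holds because the pullback defining $\T_\e G$ is a tangent pullback, so $\T^n$ preserves it and $\T^n\iota_\e$ is the induced inclusion. Proposition~\ref{proposition:lie-functor-is-representable} applied to $\T^nG$ then gives the natural isomorphism
\begin{align*}
&\varphi_A^{\T^nG}\colon(\T^n\g)(A)\to\X(A,\T^n\T_\e G),
\end{align*}
whose inverse sends $f$ to $(\id_{\T^nG}\times f)\gamma_{\T^nG}$. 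The one thing to record here is that $\gamma_{\T^nG}=\T^n\gamma_G$ up to the canonical isomorphisms $\T^n(G\times\T_\e G)\cong\T^nG\times\T^n\T_\e G$. This holds because $\gamma_{\T^nG}$ is built from $z_{\T^nG}$, $\T^n\iota_\e$ and $\T\T^n\m_G$. Comparing it with $\T^n\gamma_G=\T^n(z_G\times\iota_\e)\T^{n+1}\m_G$ requires commuting $z$ past $\T^n$, which is where the canonical flips enter. Because of this, I would organise the argument by induction on $n$, treating $G':=\T^nG$ as a Lie group in its own right. It then suffices to prove the square for $n=0$ with an arbitrary Lie group in place of $G$.

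For $n=0$, take $f\colon A\to\T_\e G$ and set $X_f=\varphi^{-1}(f)=(\id_G\times f)\gamma_G$. We must show
\begin{align*}
&\tau_A^G(X_f)=\varphi_{\T A}^{\T G}{}^{-1}(\T f),
\end{align*}
that is,
\begin{align*}
&\T(\id_G\times f)\T\gamma_G c_G=(\id_{\T G}\times\T f)\gamma_{\T G}.
\end{align*}
Since $\T(\id_G\times f)=\id_{\T G}\times\T f$ under the product comparison, it is enough to prove the single identity
\begin{align*}
&\T\gamma_G c_G=\gamma_{\T G}\colon\T G\times\T\T_\e G\to\T\T G.
\end{align*}
Unfolding, the left side is $(\T z_G\times\T\iota_\e)\T\T\m_G c_G$. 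Naturality of $c$ rewrites this as $(\T z_G c_G\times\T\iota_\e c_G)\T\T\m_G$. The right side is $(z_{\T G}\times\T\iota_\e)\T\T\m_G$. By TNG.4, $\T z_G c_G=z_{\T G}$, so the first components already agree.

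The second components are the main obstacle, since $\T\iota_\e c_G\neq\T\iota_\e$ in general. I would first verify that after composing with $\T\T\m_G$ only the restriction matters. Then I would use the fact that $\T_\e G$ is a differential object: its tangent bundle decomposes as $\T\T_\e G\cong\T_\e G\times\T_\e G$ via $\hat\xi$, which is generated by $z$ and the vertical lift $\lambda$. I would check the identity separately on the two generators. On the $z$-part, $z_{\T_\e G}\T\iota_\e c_G=\iota_e z_G c_G$, and this equals $\iota_\e\T z_G$ by TNG.4. On the $\lambda$-part, I would use the characterisation of $\lambda$ through the vertical lift $l$, together with $lc=l$ (TNG.6).

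If this generator-wise check becomes messy, the fallback is to avoid $\gamma$ altogether and compare elements at the unit. By Lemma~\ref{lemma:correspondence-livf-global-elements} in $\Simple_{\T A}[\X,\TT]$, both sides are left-invariant vector fields of $\T G$. Hence they agree as soon as their restrictions along $\T\e$ agree. The restriction of the left side is $\T(\e X_f)c_G=\T(f\iota_\e)c_G$, while that of the right side is $\T f\,\T\iota_\e$. This reduces the whole problem to the same flip identity, but only on the image of $\T_\e G$.
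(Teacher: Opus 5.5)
\textbf{There is a genuine gap: you use the wrong inclusion for the tangent space of $\T G$ at its unit.} Your reduction to $n=0$ for an arbitrary Lie group is fine. So is your target identity $\T\gamma_Gc_G=\gamma_{\T G}$, which is exactly the paper's route. The error is the claim that $\T\iota_\e$ is the induced inclusion.

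The tangent pullback that $\T$ preserves is the square with $\T\iota_\e\colon\T\T_\e G\to\T\T G$ over $\T\e$ along $\T p_G$, not along $p_{\T G}$. So $\T\iota_\e$ does not land in the fibre of $p_{\T G}$ over $\T\e$. Since $c_G$ is an involution with $c_G\T p_G=p_{\T G}$ (TNG.4), composing the $\T$-image of the square with $c_G$ gives the correct inclusion $\iota_{\T\e}=\T\iota_\e c_G$. Indeed, $\T\iota_\e c_Gp_{\T G}=\T\iota_\e\T p_G=!\T\e$. Hence $\gamma_{\T G}=(z_{\T G}\times\T\iota_\e c_G)\T\T\m_G$, and more generally $\gamma_{\T^{n+1}G}=\T\gamma_{\T^nG}c_{\T^nG}$, not $\T^n\gamma_G$.

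With the correct inclusion there is no obstacle. Naturality of $c$ and $\T z_Gc_G=z_{\T G}$ give $\T\gamma_Gc_G=(z_{\T G}\times\T\iota_\e c_G)\T\T\m_G=\gamma_{\T G}$ in one line, which is the paper's proof.

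The identity you set out to prove instead is false. The map $(z_{\T G}\times\T\iota_\e)\T\T\m_G$ is not even a parametrized vector field of $\T G$. Postcomposing it with $p_{\T G}$ gives $(\id_{\T G}\times p_{\T_\e G}\iota_\e)\T\m_G\neq\pi_1$. For example, with $G=\R$ in $\SMan$, this composite sends $((x,u),(a,b))$ to $(x,u+a)$.

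Your own generator check shows the failure. On the $z$-part, the left side gives $z_{\T_\e G}\T\iota_\e c_G=\iota_\e z_{\T G}c_G=\iota_\e\T z_G$. Your proposed $\T\iota_\e$ instead gives $z_{\T_\e G}\T\iota_\e=\iota_\e z_{\T G}$, and these differ.

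Your fallback via Lemma~\ref{lemma:correspondence-livf-global-elements} would be a valid alternative once $\iota_{\T\e}$ is corrected. Restricting $\tau(X_f)=\T X_fc_G$ along $\T\e$ gives $\T(f\iota_\e)c_G=\T f\,\iota_{\T\e}$, so $\varphi(\tau(X_f))=\T f$ immediately, with nothing left to check. As written, however, it also reduces to the false equation $\T\iota_\e c_G=\T\iota_\e$.
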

\begin{proof}
We already proved that the Lie functor of a Lie group is representable, and we also commented that if $G$ is a Lie group, so is each $\T^nG$. It remains to show the commutativity of the representing isomorphisms with the tangent bundle functor. Recall that the inverse $\varphi^{-1}\colon\X(A,\T_\e G)\to\g(A)$ sends a morphism $f\colon A\to\T_\e G$ to $X_f=(\id_G\times A)\gamma_G$, where $\gamma_G\colon G\times\T_\e G\cong\T G$. Consider a morphism $f\colon A\to\T^n\T_\e G$. Thus, $\tau(X_f)$ is the \PLIVF so defined:
\begin{align*}
&\tau(X_f)=\T(X_f)c_{\T^nG}=(\id_{\T\T^nG}\times\T f)\T\gamma_Gc_{\T^nG}
\end{align*}
However, $\gamma_G=(z_G\times\iota_\e)\T\m_G$. Therefore, for any Lie group $G$:
\begin{align*}
&\T\gamma_Gc_G=(\T z_G\times\T\iota_\e)\T\T\m_Gc_G=(\T z_Gc_G\times\T\iota_\e c_G)\T\T\m_G=(z_{\T G}\times\T\iota_\e c_G)\T\T\m_G
\end{align*}
Lastly, $\iota_{\T\e}=\T\iota_\e c_G$, since $p_{\T G}=c_G\T p_G$. Therefore, $\T\gamma_Gc_G=\gamma_{\T G}$. From this, we conclude that $(\id_{\T\T^nG}\times\T f)\gamma_{\T\T^nG}=X_{\T f}=\varphi^{-1}(\T f)$. Therefore, $\varphi^{-1}\tau=\T(-)\varphi^{-1}$. By applying $\varphi$ on both sides, we finally obtain that $\tau\varphi=\varphi\T(-)$.
\end{proof}

\subsection{Representability implies Lie}
\label{subsection:representability-to-lie}
Previously, we showed that the Lie functor of a Lie group is representable and that the representability is compatible with the tangent structure, as established by Proposition~\ref{proposition:lie-functor-representable-but-tangent}. In this section, we prove that also the converse holds, in that if the Lie group of a group object with negatives $G$ is representable and the representability is compatible with the tangent structure, then $G$ is necessarily a Lie group. We also prove that the representing object $\g$ of the Lie functor is the tangent space at the unit of $G$.

In order to prove this result, we first reinterpret the representability of the Lie functor via the existence of a universal \PLIVF\!.

\begin{proposition}
\label{proposition:universal-vector-field}
Consider a group object with negatives $G$. The Lie functor $\g(-)$ of $G$ is representable if and only if there exists an object $\g\in(\X,\TT)$ together with a $\g$-\PLIVF $\omega_G\colon G\times\g\to\T G$ over $G$, subject to the following universal property. For any object $A$ and any $A$-\PLIVF $X\colon G\times A\to\T G$ there exists a unique morphism $\hat X\colon A\to\g$ such that, the following diagram
\begin{equation*}
\begin{tikzcd}
{G\times A} & {\T G} \\
{G\times\g}
\arrow["X", from=1-1, to=1-2]
\arrow["{\id_G\times\hat X}"', from=1-1, to=2-1]
\arrow["{\omega_G}"', from=2-1, to=1-2]
\end{tikzcd}
\end{equation*}
commutes.
\end{proposition}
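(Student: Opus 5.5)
This is the Yoneda lemma applied to the contravariant functor $\g(-)\colon\X\to\Set^\op$ of Definition~\ref{definition:lie-functor}. The plan is to pass between a representing natural isomorphism and its universal element. The only non-formal input is an explicit description of how $\g(f)$ acts: it is reparametrization, $\g(f)(Y)=(\id_G\times f)Y$ for $Y\in\g(B)$ and $f\colon A\to B$.

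I would first justify this formula for $\g(f)$. It was already used in the proofs of Lemma~\ref{lemma:tau-lie-algebra-homomorphism} and Proposition~\ref{proposition:lie-functor-is-representable}. It follows by unwinding the lax tangent morphism $\Simple_B[\X,\TT]\to\Simple_A[\X,\TT]$ induced by $f$: it precomposes coKleisli maps with $\id\times f$. On a $B$-\PLIVF $Y\colon G\times B\to\T G$, this gives exactly $(\id_G\times f)Y$.

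\emph{Representable implies universal \PLIVF.} Suppose $\varphi_A\colon\g(A)\cong\X(A,\g)$ is natural in $A$. I would set $\omega_G\=\varphi_\g^{-1}(\id_\g)\in\g(\g)$, so $\omega_G$ is a $\g$-\PLIVF. Given an $A$-\PLIVF $X$, put $\hat X\=\varphi_A(X)$. Naturality applied to $\hat X\colon A\to\g$ gives
\begin{align*}
\varphi_A^{-1}(\hat X)=\varphi_A^{-1}(\X(\hat X,\g)[\id_\g])=\g(\hat X)[\varphi_\g^{-1}(\id_\g)]=(\id_G\times\hat X)\omega_G .
\end{align*}
Hence $X=(\id_G\times\hat X)\omega_G$. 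For uniqueness, suppose $h\colon A\to\g$ satisfies $(\id_G\times h)\omega_G=X$. The same computation shows $\varphi_A^{-1}(h)=X$, so $h=\varphi_A(X)=\hat X$ by injectivity of $\varphi_A^{-1}$.

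\emph{Universal \PLIVF implies representable.} Given $(\g,\omega_G)$, define $\psi_A\colon\X(A,\g)\to\g(A)$ by $h\mapsto(\id_G\times h)\omega_G=\g(h)(\omega_G)$. This lands in $\g(A)$ because $\g(h)$ maps \PLIVFs to \PLIVFs by Proposition~\ref{proposition:left-invariant-vector-fields-tangent-morphisms}. One can also check directly that both \PLIVF conditions are stable under precomposition with $\id_G\times h$, respectively $\id_G\times\id_G\times h$. The universal property says precisely that each $\psi_A$ is bijective. Naturality in $A$ is the identity $(\id_G\times g)(\id_G\times h)\omega_G=(\id_G\times gh)\omega_G$ for $g\colon A'\to A$, that is, functoriality of $\g(-)$. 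Therefore $\varphi_A\=\psi_A^{-1}$ is the required natural isomorphism.

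No step is a real obstacle. The one point needing care is the identification of $\g(f)$ with precomposition by $\id_G\times f$, including the check that this preserves the left-invariance square for \PLIVFs. Everything else is the standard Yoneda argument.
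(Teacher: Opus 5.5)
Your proposal is correct and is essentially the paper's own proof. In one direction the universal PLIVF is $\omega_G=\varphi_{\g}^{-1}(\id_{\g})$, and naturality of $\varphi^{-1}$ gives both existence and uniqueness of $\hat X$; in the other, $h\mapsto\g(h)[\omega_G]=(\id_G\times h)\omega_G$ is shown to be a natural bijection. Your explicit check that $\g(f)$ acts by precomposition with $\id_G\times f$, and that this preserves left-invariance, just makes precise a step the paper uses without comment.
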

\begin{proof}
Let us start by assuming that $\g(-)$ is representable and let $\g$ be the representing object. Via the isomorphism $\varphi\colon\g(\g)\to\X(\g,\g)$, the identity over $\g$ corresponds to a $\g$-\PLIVF $\omega_G\colon G\times\g\to\T G$ over $G$. Now, consider an $A$-\PLIVF $X\colon G\times A\to\T G$ over $G$. By the representability of $\g(-)$, it corresponds to a morphism $\hat X\=\varphi_A(X)\colon A\to\g$. However, since $\varphi_A$ is natural, the following diagram also commutes:
\begin{equation*}
\begin{tikzcd}
{\X(\g,\g)} & {\g(\g)} \\
{\X(A,\g)} & {\g(A)}
\arrow["{\varphi_\g^{-1}}", from=1-1, to=1-2]
\arrow["{\X(\hat X,\g)}"', from=1-1, to=2-1]
\arrow["{\g(\hat X)}", from=1-2, to=2-2]
\arrow["{\varphi_A^{-1}}"', from=2-1, to=2-2]
\end{tikzcd}
\end{equation*}
On the one hand, for a given morphism $f\colon A\to B$, $\g(f)\colon\g(B)\to\g(A)$ sends a $B$-\PLIVF $Y\colon G\times B\to\T G$ of $G$ to $(f\times\id_G)Y\colon G\times A\to\T G$. Therefore, the identity $\id_\g$, is sent by $\varphi^{-1}_\g$ to $\omega_G$, which is sent by $\g(\hat X)$ to $(\hat X\times\id_G)\omega_G$. On the other hand, the identity $\id_\g$ is sent to $\hat X$ by $\X(\hat X,\g)$, which is sent to $X$ by $\varphi_A^{-1}$, by definition of $\hat X$. This proves that $X=(\hat X\times\id_G)\omega_G$. Now, suppose that $f\colon A\to\g$ satisfies the same equation, that is, $(f\times\id_G)\omega_G=X$. Then, $\g(f)[\varphi_\g^{-1}(\id_\g)]=\g(f)[\omega_G]=X$. Using naturality again, we can rewrite:
\begin{align*}
&\varphi_A^{-1}(\hat X)=X=\g(f)[\varphi_\g^{-1}(\id_\g)]=\varphi_A^{-1}(f)
\end{align*}
which implies that $f=\hat X$.

Conversely, let us assume that $G$ comes with an object $\g$ and a $\g$-\PLIVF $\omega_G$ over $G$ satisfying the above universal property. This universal property induces a function $\varphi_A\colon\g(A)\to\X(A,\g)$, which sends each $A$-\PLIVF $X$ over $G$ to the unique morphism $\hat X\colon A\to\g$. Furthermore, every morphism $f\colon A\to\g$ defines an $A$-\PLIVF $X_f\=\g(f)[\omega_G]$, since $\g(f)\colon\g(\g)\to\g(A)$. Let us prove that these two operations invert each other. Take a morphism $f\colon A\to\g$. Thus, $\hat X_f$ is the unique morphism satisfying $(f\times\id_G)\omega_G=X_f$. However, by definition of $X_f=\g(f)[\omega_G]=(f\times\id_G)\omega_G$. Therefore, $\hat X_f=f$. Conversely, for an $A$-\PLIVF $X$, $X_{\hat X}=\g(\hat X)[\omega_G]=(\hat X\times\id_G)\omega_G=X$. Finally, let us prove that the transformation $\psi_A\colon\X(A,\g)\to\g(A)$, that sends each $f\in\X(A,\g)$ to $X_f=\g(f)[\omega_G]$ is natural. Consider $h\colon A\to B$. Thus, $\X(h,\g)\psi_A$ sends each $g\colon B\to\g$ to:
\begin{align*}
&\g(hg)[\omega_G]=(hg\times\id_G)\omega_G=(h\times\id_G)(g\times\id_G)\omega_G=\g(h)\left(\g(g)[\omega_G]\right)=\g(h)\psi_A(g)
\end{align*}
This proves that $\varphi_A$ is a natural isomorphism and therefore that the Lie functor of $G$ is representable.
\end{proof}

Thanks to this characterization, we can prove the following lemma.

\begin{lemma}
\label{lemma:representable-implies-pullback}
Suppose that $G$ is a group object with negatives whose Lie functor $\g(-)$ is representable, represented by $\g$. Then, the following diagram
\begin{equation*}
\begin{tikzcd}
\g & {\T G} \\
{\*} & G
\arrow["{\iota_\e}", from=1-1, to=1-2]
\arrow[from=1-1, to=2-1]
\arrow["{p_G}", from=1-2, to=2-2]
\arrow["\e_G"', from=2-1, to=2-2]
\end{tikzcd}
\end{equation*}
is a pullback diagram, where
\begin{align*}
&\iota_\e\colon\g\xrightarrow{\<!\e,\id_\g\>}G\times\g\xrightarrow{\omega}\T G
\end{align*}
and $\omega$ is the universal \PLIVF of $G$ of Proposition~\ref{proposition:universal-vector-field}.
\end{lemma}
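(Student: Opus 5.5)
We first check that the square commutes, and then prove the universal property by building, from a cone over the cospan, a parametrized left-invariant vector field and applying the universal property of $\omega$ from Proposition~\ref{proposition:universal-vector-field}.

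Commutativity is immediate. Since $\omega$ is a $\g$-\PLIVF, we have $\omega p_G=\pi_1$. Hence $\iota_\e p_G=\<!\e,\id_\g\>\pi_1=!\e$, which is the square.

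Now take an object $A$ and a morphism $v\colon A\to\T G$ with $vp_G=!_A\e_G$. We define a candidate $A$-\PLIVF by left-translating $v$:
\begin{align*}
&X_v\colon G\times A\xrightarrow{z_G\times v}\T G\times\T G\cong\T(G\times G)\xrightarrow{\T\m_G}\T G.
\end{align*}
Three things need to be verified, each by the same manipulations used in Lemma~\ref{lemma:correspondence-livf-global-elements}.
\begin{itemize}
\item[(a)] $X_v$ is a parametrized vector field. By naturality of $p$, $X_vp_G=(\id_G\times vp_G)\m_G=(\id_G\times!\e)\m_G=\pi_1$, using unitality.
\item[(b)] $X_v$ is left-invariant. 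Using $(z_G\times z_G)\T\m_G=\m_Gz_G$ and associativity of $\T\m_G$, one gets $(z_G\times X_v)\T\m_G=(\m_G\times\id_A)X_v$.
\item[(c)] $X_v$ recovers $v$ at the unit: $\<!\e,\id_A\>X_v=v$. By naturality of $z$ we have $\e z_G=\T\e$ (up to $\T\*\cong\*$), and then unitality of $\T\m_G$ gives the claim.
\end{itemize}
By the universal property of $\omega$, there is a unique $\hat X_v\colon A\to\g$ with $(\id_G\times\hat X_v)\omega=X_v$. Precomposing with $\<!\e,\id_A\>$ and using (c), we obtain $\hat X_v\iota_\e=\<!\e,\hat X_v\>\omega=\<!\e,\id_A\>X_v=v$. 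This gives existence of the factorization.

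For uniqueness, suppose $h\colon A\to\g$ satisfies $h\iota_\e=v$. Set $Y\=(\id_G\times h)\omega$. This is an $A$-\PLIVF, being $\g(h)[\omega]$, and it satisfies $\<!\e,\id_A\>Y=h\iota_\e=v$. The key step is then the rigidity statement that an $A$-\PLIVF $Y$ is determined by its restriction to the unit. Concretely, left-invariance gives
\begin{align*}
&Y=\<\id_G,!\e,\id_A\>(\m_G\times\id_A)Y=\<\id_G,!\e,\id_A\>(z_G\times Y)\T\m_G=(z_G\times\<!\e,\id_A\>Y)\T\m_G=X_v.
\end{align*}
Hence $Y=X_v$, and uniqueness in Proposition~\ref{proposition:universal-vector-field} forces $h=\hat X_v$.

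The main obstacle is bookkeeping rather than any conceptual step. Expanding $\<!\e,\id_A\>(z_G\times Y)$ correctly requires care with the diagonal on $A$ and with the identification $\T(G\times G)\cong\T G\times\T G$. If the paper also intends the pullback to be a tangent pullback, we would additionally apply the same argument to $\T^nG$, using $\T^n\omega$ (via $\tau$) as the universal \PLIVF. However, the statement as written only asks for a pullback, so the argument above suffices.
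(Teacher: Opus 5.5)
Your proof is correct and follows the paper's argument essentially step by step. You left-translate $v$ to the $A$-PLIVF $(z_G\times v)\T\m_G$ and use the universal property of $\omega$ for existence. For uniqueness you invoke left-invariance of $(\id_G\times h)\omega$, which is just the paper's use of left-invariance of $\omega$ followed by unitality; you also check that the square commutes, which the paper leaves implicit.

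One caveat concerns your closing aside about tangent pullbacks. Reusing the argument for $\T^nG$ with $\tau(\omega)$ as the universal PLIVF requires the extra hypothesis of tangent representability (cf.\ Proposition~\ref{proposition:representable-implies-lie}), which does not follow from mere representability. This does not affect the present lemma.
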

\begin{proof}
Consider a morphism $f\colon A\to\T G$, subject to the equation, $fp_G=!\e$. Let us construct the following map:
\begin{align*}
&X_f\colon G\times A\xrightarrow{z_G\times f}\T(G\times G)\xrightarrow{\T\m_G}\T G
\end{align*}
We want to prove that $X_f$ is an $A$-\PLIVF of $G$. First, let us compute the following:
\begin{align*}
X_f p_G&=~(z_G\times f)\T\m_G p_G          \\
&=~(z_G\times f)(p_G\times p_G)\m_G         \tag{Naturality of $p$}\\
&=~(z_Gp_G\times fp_G)\m_G                  \\
&=~(\id_G\times!\e)\m_G                     \Tag{z_Gp_G=\id_G, fp_G=!\e}\\
&=~\pi_1                                    \tag{Unitality}
\end{align*}
This proves that $X_f$ is an $A$-parametrized vector field. Now, let us prove that it is also left-invariant:
\begin{align*}
(\m_G\times\id_A)X_f&=~(\m_G\times\id_A)(z_G\times f)\T\m_G     \tag{Definition of $X_f$}\\
&=~(z_G\times z_G\times f)\T(\m_G\times\id_G)\T\m_G             \tag{Naturality of $z$}\\
&=~(z_G\times z_G\times f)\T(\id_G\times\m_G)\T\m_G             \tag{Associativity of $\m_G$}\\
&=~(z_G\times X_f)\T\m_G                                        \tag{Definition of $X_f$}
\end{align*}
This proves that $X_f$ is an $A$-\PLIVF of $G$, therefore, by invoking the universal property of $\omega$, there exists a unique morphism $\hat X_f\colon A\to\g$ such that, $(\id_G\times\hat X_f)\omega=X_f$. We want to prove that $\hat X_f\iota_\e=f$:
\begin{align*}
\hat X_f\iota_\e&=~\hat X_f\<!\e,\id_\g\>\omega      \tag{Definition of $\iota_\e$}\\
&=~\<!\e,\hat X_f\>\omega\\
&=~\<!\e,\id_\g\>(\id_G\times\hat X_f)\omega       \\
&=~\<!\e,\id_\g\>X_f                               \tag{Definition of $\hat X_f$}\\
&=~\<!\e,\id_\g\>(z_G\times f)\T\m_G                \tag{Definition of $X_f$}\\
&=~\<!\T\e,f\>\T\m_G                                \tag{Naturality of $z$}\\
&=f                                                 \tag{Unitality of $\m_G$}
\end{align*}
Now, suppose that $g\colon A\to\g$ satisfies the same equation, that is, $g\iota_\e=f$. This means that $f=g\<!\e,\id_\g\>\omega$. Therefore, we can compute:
\begin{align*}
X_f&=~(z_G\times f)\T\m_G                          \tag{Definition of $X_f$}\\
&=~(z_G\times g\<!\e,\id_\g\>\omega)\T\m_G          \Tag{g\iota_\e=f}\\
&=~(\id_G\times g\<!\e,\id_\g\>)(z_G\times\omega)\T\m_G\\
&=~(\id_G\times g)(\id_G\times\<!\e,\id_\g\>)(\m_G\times\id_\g)\omega    \tag{Left-invariance of $\omega$}\\
&=~(\id_G\times g)\omega                            \tag{Unitality of $\m_G$}
\end{align*}
However, $\hat X_f$ is the unique morphism satisfying $X_f=(\id_G\times\hat X_f)\omega$. Therefore, $g=\hat X_f$. This proves that $\g$ is the pullback of the tangent bundle $p_G$ along $\e$.
\end{proof}

The map $\iota_\e\colon\g\to\T G$ is defined by $\omega$. It is useful to see that also the converse is true, in that $\omega$ is fully determined by $\iota_\e$.

\begin{lemma}
\label{lemma:universal-vector-field-via-iota}
If $G$ is a Lie group and $\omega$ denotes its universal \PLIVF\!, the following diagram
\begin{equation*}
\begin{tikzcd}
{G\times\T_\e G} & {\T G} \\
{\T(G\times G)}
\arrow["\omega", from=1-1, to=1-2]
\arrow["{z_G\times\iota_\e}"', from=1-1, to=2-1]
\arrow["{\T\m_G}"', from=2-1, to=1-2]
\end{tikzcd}
\end{equation*}
\end{lemma}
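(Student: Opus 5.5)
The statement says that the universal \PLIVF $\omega$ factors as $(z_G\times\iota_\e)\T\m_G$, i.e.\ $\omega=\gamma_G$ once $\g$ is identified with $\T_\e G$. I will derive this from the left-invariance of $\omega$ evaluated at the unit in the second $G$-slot, mirroring the last computation in the proof of Lemma~\ref{lemma:correspondence-livf-global-elements}.

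First I fix conventions. By Lemma~\ref{lemma:representable-implies-pullback}, $\g$ is the pullback of $p_G$ along $\e_G$, with $\iota_\e=\<!\e,\id_\g\>\omega$. I therefore identify $\g$ with $\T_\e G$ through this $\iota_\e$. If one instead starts from the representation of Proposition~\ref{proposition:lie-functor-is-representable}, then $\omega=\varphi^{-1}(\id_{\T_\e G})=\gamma_G$ and the statement is immediate. So the substantive case is the intrinsic one, where $\iota_\e$ is defined from $\omega$.

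The key step is to precompose the left-invariance square of $\omega$ (as a $\g$-\PLIVF),
\begin{align*}
&(\m_G\times\id_\g)\omega=(z_G\times\omega)\T\m_G,
\end{align*}
with the morphism $\<\pi_1,!\e,\pi_2\>\colon G\times\g\to G\times G\times\g$. On the left-hand side, unitality of $\m_G$ gives $\<\pi_1,!\e\>\m_G=\pi_1$, so the left side becomes $\omega$. On the right-hand side we obtain $\<\pi_1z_G,\<!\e,\pi_2\>\omega\>\T\m_G$. Since $\<!\e,\pi_2\>\omega=\pi_2\<!\e,\id_\g\>\omega=\pi_2\iota_\e$, this equals $(z_G\times\iota_\e)\T\m_G$, which is the claim.

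The only delicate point is bookkeeping. The paper's left-invariance condition for $A$-\PLIVFs is written with $\m_G\times\id_A$ acting on $G\times G\times A$, and the order of factors in the products must be tracked consistently, because the paper occasionally writes $(f\times\id_G)$ where it means $(\id_G\times f)$. Once the precomposition is written with explicit pairings $\<-,-\>$, the argument is a two-line calculation using only left-invariance and unitality; I do not expect any real obstacle.
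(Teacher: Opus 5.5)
Your proof is correct and is essentially the paper's own argument. The paper likewise precomposes the left-invariance identity $(z_G\times\omega)\T\m_G=(\m_G\times\id_\g)\omega$ with $\id_G\times\<!\e,\id_\g\>$ (your $\<\pi_1,!\e,\pi_2\>$), rewrites $\<!\e,\id_\g\>\omega$ as $\iota_\e$, and concludes by unitality of $\m_G$.
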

\begin{proof}
To prove this, we use that $\iota_\e=\<!\e,\id_\g\>\omega$ together with the left invariance of $\omega$, that is, $(z_G\times\omega)\T\m_G=(\m_G\times\id_\g)\omega$:
\begin{align*}
&(z_G\times\iota_\e)\T\m_G=(z_G\times\<!\e,\id_\g\>\omega)\T\m_G=(\id_G\times\<!\e,\id_\g\>)(z_G\times\omega)\T\m_G=\\
&\qquad=(\id_G\times\<!\e,\id_\g\>)(\m_G\times\id_\g)\omega=\omega
\end{align*}
where in the last step, we used the unitality of $\m_G$.
\end{proof}

The representability of the Lie functor implies that $\g$ is the pullback of the tangent bundle along the unit map; however, it does not imply that the pullback is a tangent pullback, which is a necessary condition for a group to be a Lie group. To solve this difficulty, we shall require the compatibility between the representability of the Lie functor and the tangent structure established in Proposition~\ref{proposition:lie-functor-representable-but-tangent}. First, we characterize this compatibility in terms of the universal \PLIVF $\omega$.

If $G$ is a group object and $X$ is an $A$-\PLIVF of $G$, by plugging in $X$ into the Lie sequence of $G$:
\begin{align*}
&\g(A)\xrightarrow{\tau_A^G}(\T\g)(\T A)\xrightarrow{\tau_{\T A}^{\T G}}(\T^2\g)(\T^2A)\to{\dots}
\end{align*}
we can construct a sequence $\{X^n\}_n$ of \PLIVFs where, for each $n\geq 0$, $X^n$ is a $\T^nA$-\PLIVF of $\T^nG$.

\begin{proposition}
\label{proposition:tangent-universal-vector-field}
Let $G$ be a group object with negatives. The following conditions are equivalent:
\begin{enumerate}
\item\label{item:tangent-universal-vector-field-1} For each $n\geq0$, the Lie functor $(\T^n\g)(-)$ of each $\T^nG$ is representable, represented by $\T^n\g$, where $\g$ represents $\g(-)$, and such that the following diagram
\begin{equation*}
\begin{tikzcd}
{(\T^n\g)(A)} & {\X(A,\T^n\g)} \\
{(\T^{n+1}\g)(\T A)} & {\X(\T A,\T^{n+1}\g)}
\arrow["{\varphi_A^{\T^nG}}", from=1-1, to=1-2]
\arrow["{\tau_A^{\T^nG}}"', from=1-1, to=2-1]
\arrow["{\T(-)}", from=1-2, to=2-2]
\arrow["{\varphi_{\T A}^{\T^{n+1}G}}"', from=2-1, to=2-2]
\end{tikzcd}
\end{equation*}
commutes;

\item\label{item:tangent-universal-vector-field-2} There exists an object $\g$ and a $\g$-\PLIVF $\omega$ of $G$ such that for each $A$-\PLIVF $X\colon A\times\T^nG\to\T\T^nG$ of $\T^nG$ there exists a unique morphism $\hat X\colon A\to\T^n\g$ such that:
\begin{equation*}
\begin{tikzcd}
{\T^nG\times A} & {\T\T^nG} \\
{\T^nG\times\T^n\g}
\arrow["X", from=1-1, to=1-2]
\arrow["{\id_{\T^nG}\times\hat X}"', from=1-1, to=2-1]
\arrow["{\omega^n}"', from=2-1, to=1-2]
\end{tikzcd}
\end{equation*}
where $\{\omega^n\}$ is the associated sequence of \PLIVFs\!.
\end{enumerate}
\end{proposition}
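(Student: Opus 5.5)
The plan is to apply Proposition~\ref{proposition:universal-vector-field} to each group object $\T^nG$ separately, and to use the commutative squares of condition~\ref{item:tangent-universal-vector-field-1} to identify the universal \PLIVF of $\T^nG$ with $\omega^n$.

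The key observation is how $\varphi$ and $\tau$ act on the universal element. By the proof of Proposition~\ref{proposition:universal-vector-field}, the universal \PLIVF of $\T^nG$ is $\varphi^{-1}_{\T^n\g}(\id_{\T^n\g})$. Also $\omega^{n+1}=\tau^{\T^nG}_{\T^n\g}(\omega^n)$ by definition of the Lie sequence, and $\T(\id_{\T^n\g})=\id_{\T^{n+1}\g}$.

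For \ref{item:tangent-universal-vector-field-1}$\Rightarrow$\ref{item:tangent-universal-vector-field-2}, set $\omega\=\varphi^{-1}_\g(\id_\g)$. We show by induction on $n$ that $\omega^n=(\varphi^{\T^nG}_{\T^n\g})^{-1}(\id_{\T^n\g})$. The case $n=0$ is the definition. For the inductive step, apply the commutative square of condition~\ref{item:tangent-universal-vector-field-1} with $A=\T^n\g$ to the element $\omega^n$. Going right and then down sends $\omega^n$ to $\T(\id)=\id_{\T^{n+1}\g}$. Going down and then right sends it to $\varphi_{\T^{n+1}\g}(\tau(\omega^n))=\varphi(\omega^{n+1})$. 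Hence $\omega^{n+1}$ is the element corresponding to the identity. Proposition~\ref{proposition:universal-vector-field}, applied to $\T^nG$ represented by $\T^n\g$, then shows that $\omega^n$ satisfies the universal property required in~\ref{item:tangent-universal-vector-field-2}.

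For \ref{item:tangent-universal-vector-field-2}$\Rightarrow$\ref{item:tangent-universal-vector-field-1}, the reverse direction of Proposition~\ref{proposition:universal-vector-field} turns each universal $\omega^n$ into a representation $\varphi^{\T^nG}_A(X)=\hat X$, with inverse $f\mapsto(\id\times f)\omega^n$. It remains to check that the square commutes. Take $X\in(\T^n\g)(A)$ and write $X=(\id_{\T^nG}\times\hat X)\omega^n$. Since $\tau$ is natural (Lemma~\ref{lemma:tau-lie-algebra-homomorphism}),
\begin{align*}
\tau_A(X)=\tau_A\bigl((\T^n\g)(\hat X)[\omega^n]\bigr)=(\T^{n+1}\g)(\T\hat X)[\tau(\omega^n)]=(\id\times\T\hat X)\,\omega^{n+1}.
\end{align*}
By uniqueness in~\ref{item:tangent-universal-vector-field-2}, $\varphi_{\T A}(\tau_A(X))=\T\hat X=\T(\varphi_A(X))$, which is the required square.

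The step I expect to need the most care is bookkeeping rather than anything conceptual. First, the unpacked formula for $\tau$, namely $\T X$ composed with the flip $c$ after the isomorphism $\T(\T^nG\times A)\cong\T\T^nG\times\T A$, must agree with the naturality statement used above. Second, the order of the product factors ($A\times\T^nG$ versus $\T^nG\times A$) appears inconsistently in the statements and must be fixed once and for all. Lemma~\ref{lemma:tau-lie-algebra-homomorphism} already packages the needed naturality, so once these conventions are pinned down both directions are direct.
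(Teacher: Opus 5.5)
Your proof is correct and follows the paper's argument. For the forward direction you identify $\omega^{n+1}=\tau(\omega^n)$ with the universal element $\varphi^{-1}(\id_{\T^{n+1}\g})$ by chasing $\omega^n$ through the square at $A=\T^n\g$; the paper chases $\id$ through the square of inverses, which is the same thing. For the converse you use uniqueness for $\omega^{n+1}$ to get $\varphi(\tau(X))=\T\hat X$, the one cosmetic difference being that you derive $\tau(X)=(\id\times\T\hat X)\omega^{n+1}$ from naturality of $\tau$ (Lemma~\ref{lemma:tau-lie-algebra-homomorphism}), whereas the paper computes it directly from $\omega^{n+1}=\T\omega^n c_{\T^nG}$.
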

\begin{proof}
By Proposition~\ref{proposition:universal-vector-field}, we already know that the Lie functor $\g(-)$ is representable if and only if there exists an object $\g$ and a $\g$-\PLIVF $\omega$, satisfying the universal property of Condition~\ref{item:tangent-universal-vector-field-2}. Therefore, if Condition~\ref{item:tangent-universal-vector-field-1} holds, then there exists an object $\g$ and for each $n\geq0$ a $\T^n\g$-\PLIVF $\omega^n$, which satisfies the universal property of Condition~\ref{item:tangent-universal-vector-field-2}. So, to prove Condition~\ref{item:tangent-universal-vector-field-2}, it remains to show that the resulting sequence $\omega^n$ of \PLIVFs is entirely generated by $\omega$ via the Lie sequence. However, by assumption, the isomorphisms $\varphi_A^{\T^nG}$ commute with the natural transformations $\tau_A^{\T^nG}$. This implies that also the inverses commute with the $\tau$, as follows:
\begin{equation*}
\begin{tikzcd}
{\X(\T^n\g,\T^n\g)} & {(\T^n\g)(\T^n\g)} \\
{\X(\T\T^n\g,\T^{n+1}\g)} & {(\T^{n+1}\g)(\T\T^n\g)}
\arrow["{{\varphi_{\T^n\g}^{\T^nG}}^{-1}}", from=1-1, to=1-2]
\arrow["{\T(-)}"', from=1-1, to=2-1]
\arrow["{\tau_\T^n\g^{\T^nG}}", from=1-2, to=2-2]
\arrow["{{\varphi_{\T\T^n\g}^{\T^{n+1}G}}^{-1}}"', from=2-1, to=2-2]
\end{tikzcd}
\end{equation*}
On the one hand, the identity morphism over $\T^n\g$ is sent by $\varphi^{-1}$ to the universal \PLIVF $\omega^n$ of $\T^nG$, which is sent by $\tau$ to $\tau^{\T^nG}_{\T^n\g}(\omega^n)$. On the other hand, by functoriality, the identity over $\T^n\g$ is sent by $\T(-)$ to the identity, which is sent by $\varphi^{-1}$ to the universal \PLIVF $\omega^{n+1}$ of $\T^{n+1}G$. Therefore, $\omega^{n+1}=\tau^{\T^nG}_{\T^n\g}(\omega^n)$, for each $n\geq0$. By induction, we conclude that the sequence of universal \PLIVFs $\{\omega^n\}$ is fully generated by $\omega$ via the Lie sequence of $G$.

Conversely, suppose Condition~\ref{item:tangent-universal-vector-field-2} holds, and let us prove Condition~\ref{item:tangent-universal-vector-field-1}. By Proposition~\ref{proposition:universal-vector-field}, for every $n\geq0$, the Lie functor of $\T^nG$  is representable and is represented by $\T^n\g$. To conclude, we need to show that the isomorphisms $\varphi$ commute with the $\tau$. To this end, consider an $A$-\PLIVF $X\colon\T^nG\times A\to\T\T^nG$ of $\T^nG$. $X$ is sent by $\tau$ to the $\T A$-\PLIVF of $\T^{n+1}G$ so defined:
\begin{align*}
&\T\T^nG\times\T A\xrightarrow{\cong}\T(\T^nG\times A)\xrightarrow{\T X}\T\T\T^nG\xrightarrow{c_{\T^nG}}\T\T\T^nG
\end{align*}
However, since the universal vector field of $\T^{n+1}G$ is $\tau(\omega^n)=\T\omega^nc_{\T^nG}$, it follows that:
\begin{align*}
&(\id_{\T^{n+1}G}\times\T\hat X)\omega^{n+1}=(\id_{\T^{n+1}G}\times\T\hat X)\T\omega^nc_{\T^nG}=\T((\id_{\T^nG}\times\hat X)\omega^n)c_{\T^nG}=\T Xc_{\T^nG}=\tau(X)
\end{align*}
Therefore, by the universal property of $\omega^{n+1}$, $\T\hat X\colon\T A\to\T^{n+1}\g$ is the unique morphism, satisying $(\id_{\T^{n+1}G}\times\T\hat X)\omega^{n+1}=\tau(X)$. This precisely means that $\T(\varphi(X))=\T(\hat X)=\varphi(\tau(X))$.
\end{proof}

We will make use of the two equivalent conditions of Proposition~\ref{proposition:tangent-universal-vector-field} a few times; therefore, we decided to coin a useful expression.

\begin{definition}
\label{definition:tangently-representable}
The Lie functor of a group object with negatives is \textbf{tangently representable} provided that either one of the two equivalent conditions of Proposition~\ref{proposition:tangent-universal-vector-field} holds.
\end{definition}

With Lemma~\ref{lemma:representable-implies-pullback}, we proved that if the Lie functor of $G$ is representable, the representing object $\g$ is the \emph{pullback} of the tangent bundle of $G$ at the unit.

\begin{proposition}
\label{proposition:representable-implies-lie}
If the Lie functor of a group object with negatives $G$ is tangently representable, the pullback diagram of Lemma~\ref{lemma:representable-implies-pullback} becomes a tangent pullback.
\end{proposition}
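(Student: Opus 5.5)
We have to show that for every $n\geq0$ the square obtained by applying $\T^n$ to the pullback of Lemma~\ref{lemma:representable-implies-pullback},
\begin{align*}
&\T^n\g\xrightarrow{\T^n\iota_\e}\T^n\T G,\qquad \T^n\*\cong\*\xrightarrow{\T^n\e_G}\T^nG,
\end{align*}
over $\T^n p_G$, is again a pullback. The plan is to avoid handling $\T^n$ of a pullback directly. Instead, we apply Lemma~\ref{lemma:representable-implies-pullback} to the group object $\T^nG$. Condition~\ref{item:tangent-universal-vector-field-2} of Proposition~\ref{proposition:tangent-universal-vector-field} says that the Lie functor of $\T^nG$ is represented by $\T^n\g$, with universal \PLIVF $\omega^n$. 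Note that $\T^nG$ has negatives, since $\T$ preserves products and the negation is natural. Lemma~\ref{lemma:representable-implies-pullback} then gives that
\begin{align*}
&\iota^{(n)}_\e\colon\T^n\g\xrightarrow{\<!\T^n\e,\id\>}\T^nG\times\T^n\g\xrightarrow{\omega^n}\T\T^nG
\end{align*}
is a pullback of $p_{\T^nG}$ along $\T^n\e$. It remains to compare this square with the image of the original one under $\T^n$.

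First step: compute $\iota^{(n)}_\e$ in terms of $\T^n\iota_\e$. By induction, $\omega^{n+1}=\T\omega^n c_{\T^nG}$. Since $\T$ preserves the terminal object and products, $\T\<!\T^n\e,\id\>=\<!\T^{n+1}\e,\id\>$. Hence
\begin{align*}
&\iota^{(n+1)}_\e=\T\iota^{(n)}_\e\,c_{\T^nG}.
\end{align*}
Unwinding the induction, $\iota^{(n)}_\e=\T^n\iota_\e\,\kappa_n$, where $\kappa_n\colon\T^n\T G\to\T\T^nG$ is the composite of flips $\T^{n-1}c_G\cdots\T c_{\T^{n-2}G}\,c_{\T^{n-1}G}$, suitably ordered. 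Each $\kappa_n$ is an isomorphism because $c$ is involutive (\textbf{[TNG.5]}).

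Second step: check that $\kappa_n$ intertwines the two projections, $\kappa_n p_{\T^nG}=\T^np_G$. This follows inductively from $c_Mp_{\T M}=\T p_M$ (\textbf{[TNG.4]}) together with naturality of $p$. The square for $\T^nG$ and the $\T^n$-image of the original square therefore differ only by the isomorphism $\kappa_n$ on the corner $\T^n\T G$, compatibly with the maps down to $\T^nG$. Since pullbacks are invariant under such an isomorphism of the cospan, the $\T^n$-image is a pullback. As this holds for all $n$, the pullback of Lemma~\ref{lemma:representable-implies-pullback} is a tangent pullback, and hence $G$ is a Lie group.

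The main obstacle is the bookkeeping in the first step. We must pin down precisely which composite of flips $\kappa_n$ appears, and verify that it is compatible with $p$ at every level of the induction. The cleanest way to do this is to carry the induction on the pair of identities $\iota^{(n)}_\e=\T^n\iota_\e\kappa_n$ and $\kappa_np_{\T^nG}=\T^np_G$, taking $\kappa_{n+1}=\T\kappa_n\,c_{\T^nG}$. The second identity then follows from $\T\kappa_n\,c_{\T^nG}\,p_{\T\T^nG}=\T\kappa_n\,\T p_{\T^nG}=\T(\T^np_G)$. This is exactly the computation already performed, for $\iota_{\T\e}=\T\iota_\e c_G$, in the proof of Proposition~\ref{proposition:lie-functor-representable-but-tangent}.
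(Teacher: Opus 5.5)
Your proof is correct and takes essentially the same route as the paper. Both arguments rest on the identity $\iota_{\T\e}=\T\iota_\e c_G$, which comes from $\omega^{n+1}=\T\omega^n c_{\T^nG}$, and on applying Lemma~\ref{lemma:representable-implies-pullback} to $\T^nG$; the paper pastes the $\T$-image of the square for $\T^nG$ with the invertible square given by $c_{\T^nG}$, whereas you collect all the flips into the explicit isomorphism $\kappa_n$ satisfying $\kappa_n p_{\T^nG}=\T^np_G$, which makes precise the identification of $\T^{n+1}$ of the original square with $\T$ of the square for $\T^nG$ that the paper's induction leaves implicit.
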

\begin{proof}
The goal is to show that $\T^n$ preserves this pullback. For starters, let us apply $\T$ to the diagram. Notice that $\T\iota_\e$ becomes the following map:
\begin{align*}
&\T\iota_\e=\T(\<!\e_G,\id_\g\>\omega_G)=\<!\e_{\T G},\id_{\T\g}\>\T\omega_G
\end{align*}
where we used that $\T\e_G=\e_{\T G}$, by definition. Therefore, by postcomposing $\T\iota_\e$ by $c_G$, we obtain the map:
\begin{align*}
&\T\iota_\e c_G=\<!\e_{\T G},\id_{\T\g}\>\T\omega_Gc_G=\<!\e_{\T G},\id_{\T\g}\>\omega_{\T G}=\iota_{\T\e}
\end{align*}
where we used that, by Proposition~\ref{proposition:tangent-universal-vector-field}, the universal \PLIVF $\omega_{\T G}$ of $\T G$ is $\tau(\omega_G)=\T\omega c_G$.

Now, let us assume, by induction, that the pullback is preserved by $\T^n$ and let us prove that it is preserved by $\T^{n+1}$. For the sake of simplicity, let us denote $\T^nG$ by $H$ and let $\h$ denote $\T^n\g$. Let us consider the following diagram:
\begin{equation*}
\begin{tikzcd}
{\T\h} & {\T\T H} & {\T\T H} \\
{\*} & {\T H} & {\T H}
\arrow["{\T\iota_\e}", from=1-1, to=1-2]
\arrow["{\iota_{\T\e}}", curve={height=-24pt}, from=1-1, to=1-3]
\arrow[from=1-1, to=2-1]
\arrow["{c_H}", from=1-2, to=1-3]
\arrow["{\T p_H}", from=1-2, to=2-2]
\arrow["{p_{\T H}}", from=1-3, to=2-3]
\arrow["{\T\e_H}"', from=2-1, to=2-2]
\arrow["{\e_{\T H}}"', curve={height=24pt}, from=2-1, to=2-3]
\arrow[equals, nfold, from=2-2, to=2-3]
\end{tikzcd}
\end{equation*}
However, since $\T H$ is a Lie group, by Proposition~\ref{proposition:universal-vector-field}, the outer diagram is a pullback. However, the right diagram is also, trivially, a pullback, since $c_H$ and the identity are isomorphisms. Therefore, by the properties of pullbacks, so is the left diagram. This proves that if $\T^n$ preserves the pullback diagram of Proposition~\ref{proposition:universal-vector-field}, so does $\T^{n+1}$ and therefore, the diagram becomes a tangent pullback, by induction.
\end{proof}

If the representable Lie functor of a group object with negatives is tangently representable, the pullback of Lemma~\ref{lemma:representable-implies-pullback} turns into a tangent pullback. The converse is also true.

\begin{corollary}
\label{corollary:pullback-lie-groups}
A representable Lie functor of a group object with negatives $G$ is tangently representable if and only if the pullback diagram of Lemma~\ref{lemma:representable-implies-pullback} is a tangent pullback.
\end{corollary}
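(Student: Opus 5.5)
One direction is already in hand: Proposition~\ref{proposition:representable-implies-lie} shows that tangent representability makes the pullback of Lemma~\ref{lemma:representable-implies-pullback} a tangent pullback. So the plan is to prove the converse. Assume $\g(-)$ is representable, with universal \PLIVF $\omega$, and that the square with $\iota_\e=\<!\e,\id_\g\>\omega$ is a tangent pullback. Then $G$ is a Lie group with $\T_\e G=\g$, and we must show that the Lie functor is tangently representable.

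We will verify Condition~\ref{item:tangent-universal-vector-field-2} of Proposition~\ref{proposition:tangent-universal-vector-field}. That is, for each $n$, the $n$-th term $\omega^n$ of the Lie sequence generated by $\omega$ must be a universal $\T^n\g$-\PLIVF of $\T^nG$.

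\textbf{Step 1.} Since the pullback is preserved by every $\T^n$ and $\T$ preserves products, each $\T^nG$ is a Lie group with tangent space $\T^n\g$ at the unit. The inclusion is $\iota^{(n)}$, obtained from $\T^n\iota_\e$ by postcomposing with the appropriate flips. At $n=1$ this is the identity $\iota_{\T\e}=\T\iota_\e c_G$ already used in the proof of Proposition~\ref{proposition:representable-implies-lie}: the right-hand square there is a pullback because $c$ is an isomorphism, so pasting gives the claim. Iterating, with $H=\T^nG$ in place of $G$, handles all $n$.

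\textbf{Step 2.} By Proposition~\ref{proposition:lie-functor-is-representable}, applied to the Lie group $\T^nG$, its Lie functor is representable by $\T^n\g$. By Lemma~\ref{lemma:universal-vector-field-via-iota}, its universal \PLIVF is $\gamma_{\T^nG}=(z\times\iota^{(n)})\T\m$.

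\textbf{Step 3.} It remains to identify $\gamma_{\T^nG}$ with $\omega^n$. We argue by induction using $\omega^{n+1}=\T\omega^n c_{\T^nG}$. The computation in the proof of Proposition~\ref{proposition:lie-functor-representable-but-tangent} gives $\T\gamma_Hc_H=\gamma_{\T H}$ for any Lie group $H$, using naturality of $c$, $z_{\T H}=\T z_Hc_H$, and $\iota_{\T\e}=\T\iota_\e c_H$. Hence $\omega^n=\gamma_{\T^nG}$ for every $n$. Once this holds, Condition~\ref{item:tangent-universal-vector-field-2} is exactly the universality of $\gamma_{\T^nG}$, and the equivalence with Condition~\ref{item:tangent-universal-vector-field-1} finishes the argument.

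The main obstacle is bookkeeping: making sure the tangent pullback hypothesis, applied to $\T^n$, produces the pullback along $\e_{\T^nG}$ with the inclusion that is compatible with the Lie sequence. The flips $c$ are what make this work. Concretely, we must check that $\T^n$ of the pullback square, after composing with the appropriate composite of flips, is the defining tangent pullback for $\T^nG$, so that the $\iota^{(n)}$ appearing in $\gamma_{\T^nG}$ is the same map produced by iterating $\iota\mapsto\T\iota\,c$. We would handle this by the same induction as in Proposition~\ref{proposition:representable-implies-lie}, applied at level $H=\T^nG$.
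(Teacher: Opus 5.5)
Your proposal is correct and follows the paper's route: once the square is a tangent pullback, $G$ is a Lie group, and Propositions~\ref{proposition:lie-functor-is-representable} and~\ref{proposition:lie-functor-representable-but-tangent} then give tangent representability. Your Steps~2--3 spell out what the paper's two-line proof leaves implicit. By Lemma~\ref{lemma:universal-vector-field-via-iota}, the given universal \PLIVF{} $\omega$ equals $\gamma_G$. By induction via $\T\gamma_Hc_H=\gamma_{\T H}$, the Lie sequence $\omega^n$ then coincides with the universal \PLIVF{}s $\gamma_{\T^nG}$, so the tangent compatibility holds for the given representation.
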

\begin{proof}
If the Lie functor is tangently representable, by Proposition~\ref{proposition:representable-implies-lie}, the pullback diagram of Lemma~\ref{lemma:representable-implies-pullback} is a tangent pullback. Conversely, if the Lie functor of $G$ is representable and the pullback diagram of Lemma~\ref{lemma:representable-implies-pullback} is a tangent pullback, then $G$ is a Lie group. Therefore, by Proposition~\ref{proposition:lie-functor-is-representable}, the Lie functor is tangently representable.
\end{proof}

\subsection{Differential representability}
\label{subsection:differential-representability}
In the previous two sections, we showed that the Lie functor of a Lie group is tangently representable (Proposition~\ref{proposition:lie-functor-is-representable}) and that a group object with negatives whose Lie functor is tangently representable is necessarily a Lie group (by Proposition~\ref{proposition:representable-implies-lie}). In this section, we explore an alternative way to express the compatibility between the representability of the Lie functor and the tangent structure. We call this \emph{differential representability}. We begin with a technical lemma.

\begin{lemma}
\label{lemma:tangent-spaces}
Consider an object $M$ in a Cartesian tangent category and suppose that the pullback diagram
\begin{equation*}
\begin{tikzcd}
{\m_x} & {\T M} \\
{\*} & M
\arrow["{\iota_x}", from=1-1, to=1-2]
\arrow[from=1-1, to=2-1]
\arrow["\lrcorner"{anchor=center, pos=0.125}, draw=none, from=1-1, to=2-2]
\arrow["{p_M}", from=1-2, to=2-2]
\arrow["x"', from=2-1, to=2-2]
\end{tikzcd}
\end{equation*}
of the tangent bundle $p_M$ along a global element $x\colon\*\to M$ exists. Then, this diagram is a tangent pullback diagram if and only if $\mathfrak{m}_x$ is a differential object and the inclusion map $\iota_x\colon\mathfrak{m}_x\to\T M$ is linear.
\end{lemma}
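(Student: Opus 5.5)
For the forward direction, I will assume the pullback defining $\mathfrak{m}_x$ is a tangent pullback and invoke the result of Cockett and Cruttwell already recalled before Lemma~\ref{lemma:additive-structure-tangent-spaces}: tangent spaces are differential objects (\cite[Corollary~3.5]{cockett:differential-bundles}). Concretely, the zero is induced by $z_M$ via the pullback, $0\iota_x=xz_M$, and the sum by $s_M$, $+_{\mathfrak{m}_x}\iota_x=(\iota_x\times\iota_x)s_M$; the latter uses that $\mathfrak{m}_x\times\mathfrak{m}_x$ is the pullback of $\T_2M$ along $x$. The differential projection $\hat p\colon\T\mathfrak{m}_x\to\mathfrak{m}_x$ comes from the fact that $\T$ preserves the pullback. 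Thus $\T\mathfrak{m}_x$ is the pullback of $\T p_M$ along $\T x=z_\*\T x=xz_M$, with $\T\iota_x$ as its inclusion. Composing $\T\iota_x$ with the inverse of the universal map $\xi_M$ from \textbf{[TNG.7]} and projecting onto the second component lands in the fibre over $x$, hence factors through $\mathfrak{m}_x$. Linearity of $\iota_x$ should then be the statement $\T\iota_x\,\hat p_{\T M}$-type compatibility, namely $\hat p\,\iota_x=\T\iota_x\{-\}$, where $\{-\}$ is the vertical-lift extraction. I would phrase linearity through the vertical lift presentation as $\lambda_{\mathfrak{m}_x}\T\iota_x=\iota_x l_M$, and verify it by postcomposing with the pullback projections.

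For the converse, I will assume $\mathfrak{m}_x$ is a differential object and $\iota_x$ is linear, and show by induction on $n$ that $\T^n$ preserves the pullback. By DO.5, $\T\mathfrak{m}_x\cong\mathfrak{m}_x\times\mathfrak{m}_x$ via $\<p,\hat p\>$. By linearity, $\T\iota_x$ corresponds to $\<\iota_x,\iota_x\>$ followed by $\xi_M$, after restricting along $x$. So given $f\colon A\to\T\T M$ with $f\T p_M=!\,xz_M$, I would factor it by \textbf{[TNG.7]}. Since $f\T p_M$ factors through the zero, $f=\<f_1,f_2\>\xi_M$ for a unique $\<f_1,f_2\>\colon A\to\T_2M$, and both $f_1,f_2$ lie over $x$. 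They therefore factor uniquely through $\iota_x$, and the pair gives the required unique map $A\to\mathfrak{m}_x\times\mathfrak{m}_x\cong\T\mathfrak{m}_x$.

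For higher $n$, I will iterate. The objects $\T^n\mathfrak{m}_x\cong\mathfrak{m}_x^{2^n}$ are again differential objects, and the maps $\T^{n}\iota_x$ are linear for the induced structures. The same argument therefore applies with $\T^{n}M$ and the point $\T^n x$, using that \textbf{[TNG.7]} and the $n$-fold pullbacks are themselves preserved by $\T$.

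I expect the main obstacle to be the bookkeeping in the converse. The difficulty is identifying $\T\iota_x$ exactly with $(\iota_x\times\iota_x)\xi_M$ under the splitting $\hat\xi_{\mathfrak{m}_x}$. This requires that linearity, i.e.\ $\T\iota_x\circ$ compatibility with the vertical lifts $\lambda$ and $l$, together with naturality of $z$, gives $\hat\xi_{\mathfrak{m}_x}\T\iota_x=(\iota_x\times\iota_x)\xi_M$. I would check this by expanding $\hat\xi=(z\times\lambda)\T+$ and using $\iota_x$ additive, which follows from linearity as in Cockett–Cruttwell. The higher induction should then be formal.
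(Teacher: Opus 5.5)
Your proposal is correct and follows the paper's proof. The forward direction goes through Cockett--Cruttwell's Corollary~3.5, with the lift of $\mathfrak m_x$ induced from $l_M$. The converse splits $f$ through the pullback of \textbf{[TNG.7]}: your $f_1,f_2$ are exactly the paper's $fp_{\T M}$ and $\{f\}$, and you then factor both through $\iota_x$, reassemble with $\hat\xi_{\mathfrak m_x}$, use monicity for uniqueness, and induct on $n$. Your derivation of $\hat\xi_{\mathfrak m_x}\T\iota_x=(\iota_x\times\iota_x)\xi_M$ from additivity of linear bundle maps fills in a step the paper only asserts.
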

\begin{proof}
By~\cite[Corollary~3.5]{cockett:differential-bundles}, if the above diagram is a tangent pullback, then $\mathfrak{m}_x$ is a differential object. Moreover, since the vertical lift $\lambda_{\mathfrak{m}_x}$ of $\mathfrak{m}_x$ is constructed by pulling back $l_M\colon\T M\to\T\T M$ along $\iota_x$, $\iota_x$ becomes immediately linear. Conversely, suppose that the above diagram is a pullback, that $\mathfrak{m}_x$ is a differential object, and that $\iota_x$ is linear. We want to prove that the pullback is a tangent pullback. Consider a morphism $f\colon A\to\T\T M$ such that $f\T p_M=!\T x$. Thus:
\begin{align*}
fp_{\T M}p_Mz_M&=~f\T p_Mp_Mz_M     \tag{Naturality of $p$}\\
&=~!\T xp_Mz_M                      \Tag{f\T p_M=!\T x}\\
&=~!\T x                            \tag{Naturality of $pz$}\\
&=~f\T p_M                          \Tag{f\T p_M=!\T x}
\end{align*}
This means that $f\colon A\to\T\T M$ equalizes $\T p_M$ and $p_{\T M}p_Mz_M$. By the universal property of the vertical lift, there exists a unique morphism $\{f\}\colon A\to\T M$ such that $f=\<fp_{\T M}z_{\T M},\{f\}l_M\>\T s_M$ (see~\cite[Lemma~2.12]{cockett:tangent-cats}). Furthermore, by~\cite[Lemma~2.14]{cockett:tangent-cats}, $\{f\}p_M=f\T p_Mp_M=!\T xp_M=!x$. Therefore, by the universal property of the pullback diagram that defines $\mathfrak{m}_x$, there exists a unique morphism $f_2\colon A\to\mathfrak{m}_x$ that satisfies $f_2\iota_x=\{f\}$. Furthermore, using again the naturality of $p$, $fp_{\T M}p_M=f\T p_Mp_M=!\T xp_M=!x$. Therefore, again by the universal property of $\mathfrak{m}_x$, there also exists a unique morphism $f_1\colon A\to\mathfrak{m}_x$, such that, $f_1\iota_x=fp_{\T M}$. Now, using that $\mathfrak{m}_x$ is a differential object, there is an isomorphism, $\hat\xi_{\mathfrak{m}_x}\colon\mathfrak{m}_x\times\mathfrak{m}_x\cong\T\mathfrak{m}_x$ given by $(z_{\mathfrak{m}_x}\times\lambda_{\mathfrak{m}_x})\T+_{\mathfrak{m}_x}$. Therefore, we can construct the morphism $\<f_1,f_2\>\hat\xi_{\mathfrak{m}_x}\colon A\to\mathfrak{m}_x\times\mathfrak{m}_x\cong\T\mathfrak{m}_x$. However, since $\iota_x$ is linear, $\hat\xi_{\mathfrak{m}_x}\T\iota_x=(\iota_x\times\iota_x)\xi_M$, where $\xi_M=(z_{\T M}\times l_M)\T s_M$. Therefore, we compute:
\begin{align*}
&\<f_1,f_2\>\hat\xi_{\mathfrak{m}_x}\T\iota_x=\<f_1,f_2\>(\iota_x\times\iota_x)\xi_M=\<f_1\iota_x,f_2\iota_x\>\xi_M\\
&\qquad=\<fp_{\T M},\{f\}\>\xi_M=\<fp_{\T M}z_{\T M},\{f\}l_M\>\T s_M=f
\end{align*}
Now, suppose that $g\colon A\to\T\mathfrak{m}_x$ satisfies $g\T\iota_x=f$. However, since $\mathfrak{m}_x$ is a differential object, $\T\mathfrak{m}_x\cong\mathfrak{m}_x\times\mathfrak{m}_x$. Therefore, we can write $g$ as $\<g_1,g_2\>\hat\xi_{\mathfrak{m}_x}$. Thus:
\begin{align*}
&f=\<f_1\iota_x,f_2\iota_x\>\xi_M=g\T\iota_x=\<g_1,g_2\>\hat\xi_{\mathfrak{m}_x}\T\iota_x=\<g_1\iota_x,\g_2\iota_x\>\xi_M
\end{align*}
However, since $\xi_M$ is monic, implies that $g_1\iota_x=f_1\iota_x$ and that $g_2\iota_x=f_2\iota_x$. However, since $\iota_x$ is a pullback of a section, it is also monic; therefore, $g_1=f_1$ and $g_2=f_2$. Thus, $g=\<f_1,f_2\>\hat\xi_{\mathfrak{m}_x}$. This proves that the pullback is preserved by the tangent bundle functor. Finally, since $\T$ preserves differential objects and linear morphisms, by induction, the pullback turns into a tangent pullback.
\end{proof}

\begin{remark}
\label{remark:tangent-spaces-differential-objects}
Being a differential object is not a property but a structure in that an object can have more than one differential structure attached to it. Thus, a priori there could be multiple choices of the differential structure on $\mathfrak m_x$ in the previous lemma. However, the linearity of the inclusion map $\iota_x$ forces the differential structure to be unique and, in fact, entirely induced by the differential structure of the tangent bundle.
\end{remark}

By Corollary~\ref{corollary:pullback-lie-groups}, if the Lie functor of $G$ is representable, then it is tangently representable if and only if the pullback diagram of Lemma~\ref{lemma:representable-implies-pullback} is a tangent pullback. However, by Lemma~\ref {lemma:tangent-spaces}, this is equivalent to saying that the representing object $\g$ is a differential object and that the morphism $\iota_\e=\<!\e,\id_\g\>\omega$ is linear. This suggests an alternative way to characterize tangent representability.

We begin by reviewing multilinearity. Multilinearity was introduced in the context of Cartesian differential categories in~\cite{cruttwell:sector-forms-in-CDCs}. Differential objects of a Cartesian tangent category form a Cartesian differential category (see~\cite{cockett:tangent-cats}); therefore, we can consider multilinear maps from a list of differential objects to a single one. We adapt here this definition.

\begin{definition}[{\cite[Definition~2.9, modified]{cruttwell:sector-forms-in-CDCs}}]
\label{definition:multilinear-map}
Given a list of differential objects $A,A_1\,A_n$, a morphism $f\colon A_1\times\dots\times A_n\to A$ is \textbf{multilinear} provided that, for each index $k=1\,n$, $f$ is linear each $k$-th entry, that is, the following diagram
\begin{equation*}
\begin{tikzcd}
{\T(A_1\times\dots\times A_n)} & {\T A} \\
{A_1\times\dots\times A_n} & A
\arrow["{\T f}", from=1-1, to=1-2]
\arrow["{\lambda_k}", from=2-1, to=1-1]
\arrow["f"', from=2-1, to=2-2]
\arrow["{\lambda_A}"', from=2-2, to=1-2]
\end{tikzcd}
\end{equation*}
commutes for each $k=1\,n$, where
\begin{align*}
&\lambda_k\=z_{A_1}\times\dots\times z_{A_{k-1}}\times\lambda_{A_k}\times z_{A_{k+1}}\times\dots\times z_{A_n}
\end{align*}
and where $\lambda_{A_k}$ denotes the vertical lift of the differential object $A_k$, for $k=1\,n$. For $n=2$, we call a multilinear map \textbf{bilinear}, while for $n=1$, multilinearity is precisely linearity.
\end{definition}

Differential objects and multilinear maps form a multicategory.

\begin{lemma}
\label{lemma:inclusion-functor-preserves-differential-objects}
The inclusion functor $(\X,\TT)\to\Simple_A[\X,\TT]$ sends differential objects to differential objects.
\end{lemma}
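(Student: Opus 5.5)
The argument is functorial, relying only on Proposition~\ref{proposition:cokleisli-tangent-category}. There the inclusion functor $\iota\colon(\X,\TT)\to\Simple_A[\X,\TT]$ was shown to be a strict tangent morphism that preserves finite limits. Concretely, it sends $f\colon M\to N$ to the coKleisli map with $\SQ{\iota f}=\pi_1 f$. It also satisfies $\iota(\T M)=\T_{S_A}(\iota M)$ on objects, $\T_{S_A}(\iota f)=\iota(\T f)$ on morphisms, and it sends each structural transformation $p,z,s,l,c$ to the corresponding structural transformation of $\Simple_A[\X,\TT]$.

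Given a differential object $(D,0_D,+_D,\hat p_D)$ of $(\X,\TT)$, I will equip $D$ in $\Simple_A[\X,\TT]$ with the images $\iota 0_D$, $\iota(+_D)$ and $\iota\hat p_D$. Here I use the identifications $\iota\*=\*$ and $\iota(D\times D)=\iota D\times\iota D$, which hold because $\iota$ is a right adjoint and so preserves products. The same identifications apply to $\T(D\times D)$ and $\T_2D$.

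The checks then split into two kinds.

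\begin{enumerate}
\item The equational axioms \textbf{[DO.1]}--\textbf{[DO.4]}. Each is a commutative diagram built from the structure maps of $D$, structural transformations of $\TT$, $\T$ applied to these, and product and pullback pairings. Since $\iota$ is a functor, commutes strictly with $\T$, sends structural transformations to structural transformations, and preserves products and the tangent pullbacks $\T_nD$, it carries each diagram to the corresponding diagram in $\Simple_A[\X,\TT]$. So these axioms hold automatically.

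\item The universal axiom \textbf{[DO.5]}. It says that $p_D$ and $\hat p_D$ exhibit $\T D$ as a product $D\times D$. Equivalently, $\<p_D,\hat p_D\>\colon\T D\to D\times D$ is an isomorphism. Functors preserve isomorphisms, and $\iota$ preserves products, so $\iota\<p_D,\hat p_D\>=\<\iota p_D,\iota\hat p_D\>$ is again an isomorphism. Here $\iota p_D$ is exactly the projection of $\T_{S_A}$.
\end{enumerate}

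The one point that needs care is preservation of products, which is essential to the argument. It can be checked directly: a coKleisli map $X\times A\to M\times N$ is the same as a pair of maps $X\times A\to M$ and $X\times A\to N$. Likewise, $\T_{S_A}(\iota D)=\T D$ on objects, so no further identification is needed.

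The same argument also shows that $\iota$ sends linear morphisms to linear morphisms, which will be useful afterwards.
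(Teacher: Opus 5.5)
Your proof is correct. It rests on the same two facts as the paper's: the inclusion is a strict tangent morphism, and, being a right adjoint, it preserves limits. The packaging differs.

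\begin{itemize}
\item \textbf{The paper's route.} It cites Cockett and Cruttwell's result that such a morphism preserves differential bundles. It then uses their identification of differential objects with differential bundles over $\*$, which the inclusion preserves.
\item \textbf{Your route.} You verify \textbf{[DO.1]}--\textbf{[DO.5]} directly. You split them into the equational axioms and the single universal axiom. The equational ones transfer by functoriality, strict commutation with $\T$, and preservation of products and of the pullbacks $\T_nD$. The universal one is invertibility of $\langle p_D,\hat p_D\rangle$, preserved because the inclusion preserves products.
\end{itemize}

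What each buys: your argument is self-contained, and it makes explicit which limits must be preserved (the terminal object, binary products, and the pullbacks $\T_nD$). It also gives preservation of linear morphisms for free. The paper's argument is shorter given the citations and proves more, since all differential bundles are preserved. It does, however, depend on translating between the two presentations of differential objects, via the lift $\lambda$ versus the projection $\hat p$.
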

\begin{proof}
By Proposition~\ref{proposition:cokleisli-tangent-category}, the inclusion functor $(\X,\TT)\to\Simple_A[\X,\TT]$ is a strict tangent morphism, and moreover, it preserves limits since it is a right adjoint. Therefore, by~\cite[Proposition~4.22]{cockett:differential-bundles}, it sends differential bundles to differential bundles. However, since it preserves the terminal object $\*$ and since differential objects are differential bundles over $\*$ (\cite[Proposition~3.4]{cockett:differential-bundles}), it also sends differential objects to differential objects.
\end{proof}

This lemma allows us to introduce multilinearity for parametrized vector fields.

\begin{definition}
\label{definition:linear-PLIVF}
Given a list $A_1\,A_n$ of differential objects, a \textbf{multilinear $(A_1\,A_n)$-\PLIVF} of a group object $G$ consists of an $(A_1\times\dots\times A_n)$-\PLIVF $X\colon G\times A_1\times\dots\times A_n\to\T G$ which is multilinear in $\Simple_A[\X,\TT]$. 
A \textbf{linear $A$-\PLIVF} is a multilinear $(A)$-\PLIVF\!.
\end{definition}

Concretely, a multilinear \PLIVF is a \PLIVF $X$ satisfying the following condition
\begin{equation*}
\begin{tikzcd}
{\T(G\times A_1\times\dots\times A_k\times\dots\times A_n)} & {\T\T G} \\
{G\times A_1\times\dots\times A_k\times\dots\times A_n} & {\T G}
\arrow["{\T X}", from=1-1, to=1-2]
\arrow["{z_G\times\lambda_k}", from=2-1, to=1-1]
\arrow["X"', from=2-1, to=2-2]
\arrow["{l_G}"', from=2-2, to=1-2]
\end{tikzcd}
\end{equation*}
for each $k=1\,n$.

\begin{definition}
\label{definition:differentiably-representable}
The Lie functor $\g(-)$ of a group object with negatives $G$ is \textbf{differentiably representable} provided that $\g(-)$ is representable, the representing object $\g$ is a differential object and furthermore, the representing isomorphism
\begin{align*}
&\varphi_{A_1\times\dots\times A_n}^G\colon\g(A_1\times\dots\times A_n)\to\X(A_1\times\dots\times A_n,\g)
\end{align*}
preserves multilinearity, in that it sends each multilinear $(A_1\,A_n)$-\PLIVF $X$ of $G$ to a multilinear morphism $\hat X\colon A_1\times\dots\times A_n\to\g$ of differential objects. Furthermore, its inverse $\varphi^{-1}$ sends multilinear morphisms to multilinear \PLIVFs\!.
\end{definition}

Following the established tradition of unpacking the definitions, let us translate the differentiability condition of Definition~\ref{definition:differentiably-representable} into a condition on the universal vector field $\omega$.

\begin{theorem}
\label{theorem:differential-universal-vector-field}
Suppose that the Lie functor $\g(-)$ of a group object with negatives $G$ is representable. Then, $\g(-)$ is differentiably representable if and only if the representing object $\g$ of the Lie functor is a differential object and its universal \PLIVF $\omega$ is linear.
\end{theorem}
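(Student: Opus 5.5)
The forward direction is immediate. Suppose $\g(-)$ is differentiably representable. Then $\g$ is a differential object by definition. The identity $\id_\g\colon\g\to\g$ is a linear morphism of differential objects, that is, multilinear with $n=1$. By Proposition~\ref{proposition:universal-vector-field}, the universal \PLIVF is $\omega=\varphi_\g^{-1}(\id_\g)$. Since $\varphi^{-1}$ sends multilinear morphisms to multilinear \PLIVFs, $\omega$ is a linear $\g$-\PLIVF.

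For the converse, assume $\g$ is a differential object and $\omega$ is linear, i.e.
\begin{equation*}
(z_G\times\lambda_\g)\T\omega=\omega l_G .
\end{equation*}
We must show two things for any differential objects $A_1,\dots,A_n$ and any $X=(\id_G\times\hat X)\omega$:
\begin{enumerate}
\item if $\hat X$ is multilinear, then $X$ is a multilinear \PLIVF;
\item if $X$ is multilinear, then $\hat X$ is multilinear.
\end{enumerate}

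For the first implication, fix $k$ and compute
\begin{equation*}
(z_G\times\lambda_k)\T X=(z_G\times\lambda_k)(\id_{\T G}\times\T\hat X)\T\omega .
\end{equation*}
Multilinearity of $\hat X$ gives $\lambda_k\T\hat X=\hat X\lambda_\g$, so this equals $(\id_G\times\hat X)(z_G\times\lambda_\g)\T\omega$. Linearity of $\omega$ turns this into $(\id_G\times\hat X)\omega l_G=Xl_G$.

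For the second implication, I would use the universal property one tangent level up, applied to $\T G$. Note that $\g(-)$ need not be tangently representable here, so we cannot invoke $\omega^1$. Instead I would argue through the pullback of Lemma~\ref{lemma:representable-implies-pullback}, using $\hat X=\<!\e,\id\>X$ factored through $\iota_\e$.

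\begin{itemize}
\item First, linearity of $\omega$ gives linearity of $\iota_\e=\<!\e,\id_\g\>\omega$ in the sense $\lambda_\g\T\iota_\e=\iota_\e l_G$. To see this, precompose the linearity equation with $\<!\e,\id_\g\>$ and use $\T\<!\e,\id\>=\<!\T\e,\id\>$ together with $\e z_G=\T\e$, which holds by naturality.
\item Second, multilinearity of $X$ restricted along $\<!\e,\id\>$ gives
\begin{equation*}
\lambda_k\T\hat X\,\T\iota_\e=\hat X\iota_e l_G=\hat X\lambda_\g\T\iota_\e .
\end{equation*}
Thus $\lambda_k\T\hat X$ and $\hat X\lambda_\g$ agree after postcomposition with $\T\iota_\e$.
\end{itemize}

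The main obstacle is the last step: showing that $\T\iota_\e$ is monic so that we can cancel it. Here $\iota_\e$ is a section-pullback, hence monic, and $\T$ preserves monos arising from pullbacks of sections only if the pullback is tangent. We do not yet know that the pullback is tangent.

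My first attempt would be to bypass this using $\hat\xi_\g\colon\g\times\g\cong\T\g$. Decompose both sides into their $p_\g$ and $\hat p_\g$ components. The $p$-components agree trivially, both equal to $!0_\g$ by naturality of $p$. For the $\hat p$-components, I would transport the equation into $\T\T G$ through $\xi_G$, which is monic, and then cancel $\iota_\e$, which is monic, exactly as in the uniqueness part of the proof of Lemma~\ref{lemma:tangent-spaces}. That argument uses only that $\g$ is a differential object and that $\iota_\e$ is linear, both of which are now available. Note that $\T\iota_\e=\hat\xi_\g^{-1}(\iota_\e\times\iota_\e)\xi_G$.

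Alternatively, Lemma~\ref{lemma:tangent-spaces} applies directly: the pullback is tangent, so $G$ is a Lie group. Then Proposition~\ref{proposition:lie-functor-representable-but-tangent} provides the tangent-level universal property, which yields uniqueness and hence multilinearity of $\hat X$.
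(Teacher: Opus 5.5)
Your proposal is correct and is essentially the paper's argument: the forward direction and the implication from multilinear $\hat X$ to multilinear $X$ are the same computations. For the remaining implication, the paper also reduces to $\hat X\lambda_\g\,\iota_{\T\e}=\lambda_k\T\hat X\,\iota_{\T\e}$ with $\iota_{\T\e}=\T\iota_\e c_G$; it reaches this via a detour through $\omega_\T$ and its left-invariance, which your direct restriction along $\<!\e,\id\>$ avoids, and then cancels $\T\iota_\e$ after getting the tangent pullback from Lemma~\ref{lemma:tangent-spaces}, as in your alternative route.

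One slip to fix: the $p$-components do not agree trivially. By naturality, $\lambda_k\T\hat X\,p_\g=\lambda_k\,p_{A_1\times\dots\times A_n}\hat X$, which is $\hat X$ with its $k$-th argument set to $0$, and the vanishing of that is part of what is being proved. No componentwise split is needed, though: your factorization $\T\iota_\e=\hat\xi_\g^{-1}(\iota_\e\times\iota_\e)\xi_G$ (valid because linearity of $\iota_\e$ implies additivity) writes $\T\iota_\e$ as an isomorphism followed by monics, so you can cancel it outright.
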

\begin{proof}
If $G$ is a differential Lie group object, by definition, $\g$ is a differential object. Moreover, the natural isomorphism $\varphi^{-1}\colon\X(A,\g)\to\g(A)$ preserves multilinearity. However, since the identity of a differential object is always linear, $\omega=\varphi^{-1}(\id_\g)$ becomes a linear $\g$-\PLIVF\!. Conversely, assume that the Lie functor is representable, that $\g$ is a differential object, and that the universal \PLIVF $\omega$ is linear. If $A_1\,A_n$ is a list of differential objects and $f\colon A_1\times\dots\times A_n\to\g$ is a multilinear morphism, then, since $\omega$ is also linear, we compute:
\begin{align*}
&X_fl_M=(\id_G\times f)\omega l_M=(\id_G\times f)(z_G\times\lambda_\g)\T\omega=(z_G\times(f\lambda_\g))\T\omega=\\
&\qquad=(z_G\times\lambda_k)\T((\id_G\times f)\omega)=(z_G\times\lambda_k)\T X_f
\end{align*}
for each index $k=1\,n$, showing that the associated \PLIVF $X_f$ is multilinear. It is left to show that if $X\colon G\times A_1\times\dots\times A_n\to\T G$ is a multilinear \PLIVF\!, the corresponding morphism $\hat X\colon A_1\times\dots\times A_n\to\g$ is multilinear. Using the linearity of $\omega$, we can compute the following:
\begin{align*}
Xl_G&=~(\id_G\times\hat X)\omega l_G        \tag{Universal property of $\omega$}\\
&=~(\id_G\times\hat X)(z_G\times\lambda_\g)\T\omega \tag{Linearity of $\omega$}\\
&=~(\id_G\times(\hat X\lambda_\g))(z_G\times\id_{\T\g})\T\omega
\end{align*}
However, since $X$ is also multilinear, we can also write:
\begin{align*}
Xl_G&=~(z_G\times\lambda_k)\T X             \tag{Multilinearity of $X$}\\
&=~(z_G\times\lambda_k)\T((\id_G\times\hat X)\omega) \tag{Universal property of $\omega$}\\
&=~(\id_G\times(\lambda_k\T\hat X))(z_G\times\id_{\T\g})\T\omega
\end{align*}
Therefore, by postcomposing on the left and on the right by $c_G$, we have:
\begin{align*}
&(\id_G\times(\hat X\lambda_\g))(z_G\times\id_{\T\g})\omega_\T=(\id_G\times(\hat X\lambda_\g))(z_G\times\id_{\T\g})\T\omega c_G=\\
&\qquad=(\id_G\times(\lambda_k\T\hat X))(z_G\times\id_{\T\g})\T\omega c_G=(\id_G\times(\lambda_k\T\hat X))(z_G\times\id_{\T\g})\omega_\T
\end{align*}
However, we can use the left-invariance of $\omega_\T$ to rewrite $(z_G\times\id_{\T\g})\omega_\T$ as follows:
\begin{align*}
(z_G\times\id_{\T\g})\omega_\T&=~(\id_G\times\<!\e,\id_{\T\g}\>)(\m_G\times\id_{\T\g})(z_G\times\id_{\T\g})\omega_\T    \tag{Unitality}\\
&=~(\id_G\times\<!\e,\id_{\T\g}\>)(z_G\times z_G\times\id_{\T\g})(\T\m_G\times\id_{\T\g})\omega_\T  \tag{Naturality of $z$}\\
&=~(\id_G\times\<!\e,\id_{\T\g}\>)(z_G\times z_G\times\id_{\T\g})(z_{\T G}\times\omega_\T)\T\T\m_G      \tag{Left-invariance}\\
&=~((z_Gz_{\T G})\times(\<!\T\e,\id_{\T\g}\>\omega_\T))\T\T\m_G \tag{Naturality of $z$}\\
&=~((z_Gz_{\T G})\times\iota_{\T\e})\T\T\m_G      \Tag{\<!\T\e,\id_{\T\g}\>\omega_\T=\iota_{\T\e}}
\end{align*}
Thus, by precomposing by $\<!\e,\id_{\T\g}\>$ on both sides and by using the formula $(z_G\times\id_{\T\g})\omega_\T=((z_Gz_{\T G})\times\iota_{\T\e})\T\T\m_G$, we can compute:
\begin{align*}
\hat X\lambda_\g\iota_{\T\e}&=~X\lambda(\<!\e,\id_{\T\g}\>)((z_Gz_{\T\e})\times\iota_{\T\e})\T\T\m_G       \tag{Unitality}\\
&=~X\lambda_\g(\<!\e,\id_{\T\g}\>)(z_G\times\id_{\T\g})\omega_\T\\
&=~\<!\e,\id_{\T\g}\>(\id_G\times(X\lambda_\g))(z_G\times\id_{\T\g})\omega_\T\\
&=~\<!\e,\id_{\T\g}\>(\id_G\times(\lambda_k\T\hat X))(z_G\times\id_{\T\g})\omega_\T\\
&=~(\lambda_k\T\hat X)\<!\e,\id_{\T\g}\>(z_G\times\id_{\T\g})\omega_\T\\
&=~(\lambda_k\T\hat X)\<!\e,\id_{\T\g}\>((z_Gz_{\T G})\times\iota_{\T\e})\T\T\m_G\\
&=~\lambda_k\T\hat X\iota_{\T\e}                        \tag{Unitality}
\end{align*}
To conclude, we need to show that $\iota_{\T\e}$ is monic. If so, then $\hat X\lambda_\g=\lambda_k\T\hat X$, which proves that $\hat X$ is multilinear. To prove that $\iota_{\T\e}$ is monic, we invoke Lemma~\ref{lemma:tangent-spaces}. By Proposition~\ref{proposition:representable-implies-lie}, $\g$ is a pullback of $p_G$ along the unit map $\e\colon\*\to G$. Furthermore, by assumption $\g$ is a differential object and since $\omega$ is linear, $\iota=\<!\e,\id_\g\>\omega$ is also linear. This is an immediate consequence of the linearity of $\omega$. Therefore, the pullback of Proposition~\ref{proposition:representable-implies-lie} is a tangent pullback. Therefore, $\T\iota_\e$ is the pullback of a section, that is, $\T\e$, and therefore, $\T\iota_\e$ is monic. Finally, since $\iota_{\T\e}=\T\iota_\e c_G$, we conclude that $\iota_{\T\e}$ is monic.
\end{proof}

This result gives us the third characterization of Lie groups.

\begin{proposition}
\label{proposition:tangently-differentiably}
The representable Lie functor of a group object $G$ is tangently representable if and only if it is differentiably representable.
\end{proposition}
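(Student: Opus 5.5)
The plan is to route both directions through the pullback of Lemma~\ref{lemma:representable-implies-pullback}, using Corollary~\ref{corollary:pullback-lie-groups}, Lemma~\ref{lemma:tangent-spaces} and Theorem~\ref{theorem:differential-universal-vector-field}. Throughout, $G$ has negatives and its Lie functor $\g(-)$ is representable, represented by $\g$, with universal \PLIVF $\omega$ and $\iota_\e=\<!\e,\id_\g\>\omega$. By Lemma~\ref{lemma:representable-implies-pullback}, $\iota_\e$ exhibits $\g$ as the pullback of $p_G$ along $\e_G$.

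\emph{Tangently $\Rightarrow$ differentiably.} By Corollary~\ref{corollary:pullback-lie-groups}, this pullback is a tangent pullback. Lemma~\ref{lemma:tangent-spaces} then makes $\g$ a differential object with $\iota_\e$ linear, the structure being induced from $\T G$ as in Remark~\ref{remark:tangent-spaces-differential-objects}. By Theorem~\ref{theorem:differential-universal-vector-field}, it remains to show that $\omega$ is linear, i.e.\ $\omega l_G=(z_G\times\lambda_\g)\T\omega$.

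Lemma~\ref{lemma:universal-vector-field-via-iota} gives $\omega=(z_G\times\iota_\e)\T\m_G$. I would then compute
\begin{align*}
\omega l_G=(z_G\times\iota_\e)\T\m_G l_G=(z_G\times\iota_\e)l_{G\times G}\T\T\m_G
\end{align*}
by naturality of $l$. Since $\T$ preserves products, $l_{G\times G}=l_G\times l_G$, and $z_Gl_G=z_G\T z_G$ by \textbf{[TNG.3]}. Linearity of $\iota_\e$ gives $\iota_\e l_G=\lambda_\g\T\iota_\e$. Substituting,
\begin{align*}
\omega l_G=(z_G\T z_G\times\lambda_\g\T\iota_\e)\T\T\m_G=(z_G\times\lambda_\g)\T\big((z_G\times\iota_\e)\T\m_G\big)=(z_G\times\lambda_\g)\T\omega .
\end{align*}
This proves linearity of $\omega$, hence differential representability.

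\emph{Differentiably $\Rightarrow$ tangently.} By Theorem~\ref{theorem:differential-universal-vector-field}, $\g$ is a differential object and $\omega$ is linear. The step that needs checking is that $\iota_\e=\<!\e,\id_\g\>\omega$ is linear. Precompose the linearity square of $\omega$ with $\<!\e,\id_\g\>$ and use naturality of $z$: on $\*$, $z_\*\cong\id$, so $\<!\e,\id_\g\>(z_G\times\lambda_\g)=\lambda_\g\<!\T\e,\id_{\T\g}\>=\lambda_\g\T\<!\e,\id_\g\>$. This yields $\iota_\e l_G=\lambda_\g\T\iota_\e$, which is linearity of $\iota_\e$ for the vertical-lift presentation of differential objects.

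Lemma~\ref{lemma:tangent-spaces} then shows the pullback of Lemma~\ref{lemma:representable-implies-pullback} is a tangent pullback, and Corollary~\ref{corollary:pullback-lie-groups} gives tangent representability.

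The main obstacle is bookkeeping rather than a conceptual step. First, linearity of $\iota_\e$ is stated via $\hat p$ in Lemma~\ref{lemma:tangent-spaces} but used here in the $\lambda$ form. These agree by the uniqueness of $\lambda$, i.e.\ the standard equivalence of presentations in~\cite[Proposition~3.4]{cockett:differential-bundles}. Second, in the first direction one has to match the differential structure on $\g$ from Lemma~\ref{lemma:tangent-spaces} with the one assumed in Definition~\ref{definition:differentiably-representable}; Remark~\ref{remark:tangent-spaces-differential-objects} resolves this.
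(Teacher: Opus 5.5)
Your proposal is correct and follows the paper's proof essentially step for step. For differentiable $\Rightarrow$ tangent, you precompose the linearity square of $\omega$ with $\<!\e,\id_\g\>$ to get $\iota_\e$ linear, then apply Lemma~\ref{lemma:tangent-spaces} and Corollary~\ref{corollary:pullback-lie-groups}; for the converse, you get $\g$ differential with $\iota_\e$ linear from Lemma~\ref{lemma:tangent-spaces} and transfer linearity to $\omega=(z_G\times\iota_\e)\T\m_G$ via Lemma~\ref{lemma:universal-vector-field-via-iota}. Your explicit computation of $\omega l_G$ is just the unpacked form of the paper's one-line remark that morphisms of the form $\T f$ are linear, and citing Corollary~\ref{corollary:pullback-lie-groups} for the tangent pullback is, if anything, more accurate than the paper's citation at that step.
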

\begin{proof}
Thanks to Theorem~\ref{theorem:differential-universal-vector-field}, if the Lie functor $\g(-)$ is differentiably representable, then the universal \PLIVF $\omega$ is linear, therefore, we can compute:
\begin{align*}
&\iota_\e l_G=\<!\e,\id_\g\>\omega l_G=\<!\e,\id_\g\>(z_G\times\lambda_\g)\T\omega=\<!\T\e,\lambda_\g\>\T\omega=\lambda_\g\T(\<!\e,\id_\g\>\omega)=\lambda_\g\T\iota
\end{align*}
Therefore, $\iota_\e$ becomes linear. Thus, by Lemma~\ref{lemma:tangent-spaces}, the pullback of Lemma~\ref{lemma:representable-implies-pullback} becomes a tangent pullback and, thus, by Corollary~\ref{corollary:pullback-lie-groups}, the Lie functor becomes tangently representable. Conversely, if the Lie functor is tangently representable, then, by Proposition~\ref{proposition:lie-functor-is-representable}, the pullback of Lemma~\ref{lemma:representable-implies-pullback} is a tangent pullback and therefore, by Lemma~\ref{lemma:tangent-spaces}, $\g$ is a differential object and $\iota_\e$ is linear. However, by Lemma~\ref{lemma:universal-vector-field-via-iota}, $\omega=(z_G\times\iota_\e)\T\m_G$. By the naturality of the vertical lift, each morphism of type $\T f$ is always linear. Therefore, $\omega$ becomes a linear \PLIVF\!.
\end{proof}

Theorem~\ref{theorem:trivial-tangent-bundle-groups} shows that the tangent bundle functor of a Lie group object is trivial. It turns out that, for objects with at least a global element, the triviality of the tangent bundle implies the existence of a tangent space at that element. We start with a definition.

\begin{definition}[Cruttwell, Ikonicoff, Lemay, Van Der Linden\footnote{This definition has been folklore in the tangent category community for some time. It was mentioned in informal conversations by Geoff Cruttwell and was studied in unpublished work by Sacha Ikonicoff, JS Lemay, and Tim Van Der Linden.}]
\label{definition:parallelizable-object}
A \textbf{parallelizable object} in a Cartesian tangent category consists of an object $M$ whose tangent bundle functor $\T M$ is isomorphic to the Cartesian product $M\times\mathfrak{m}$, where $\mathfrak{m}$ is a differential object and the isomorphism $\nu_M\colon\T M\cong M\times\mathfrak{m}$ is linear, that is, $\nu_M(z_M\times\lambda_M)=l_M\T\nu_M$, where $\lambda_M$ denotes the vertical lift of $\mathfrak{m}$.
\end{definition}

\begin{lemma}
\label{lemma:parallelizability}
If a parallelizable object $M$ comes equipped with a global element $x\colon\*\to M$, then $M$ admits a tangent space at $x$. In particular, the following diagram
\begin{equation*}
\begin{tikzcd}
{\mathfrak m} & {\T M} \\
{\*} & M
\arrow["{\iota_x}", from=1-1, to=1-2]
\arrow[from=1-1, to=2-1]
\arrow["{p_M}", from=1-2, to=2-2]
\arrow["x"', from=2-1, to=2-2]
\end{tikzcd}
\end{equation*}
is a tangent pullback, where:
\begin{align*}
&\iota_x\colon\mathfrak{m}\xrightarrow{\<!x,\id_{\mathfrak{m}}\>}M\times\mathfrak m\xrightarrow{\nu_M^{-1}}\T M
\end{align*}
\end{lemma}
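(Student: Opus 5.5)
The plan is to split the argument into two parts: first that the square is a pullback, then that it is a tangent pullback. For the second part I will not check preservation by every $\T^n$ by hand. Instead I will invoke Lemma~\ref{lemma:tangent-spaces}: once the square is a pullback, it is a tangent pullback as soon as $\mathfrak m$ is a differential object and $\iota_x$ is linear. Since $\mathfrak m$ is a differential object by the definition of parallelizability, everything reduces to showing the pullback property and the linearity of $\iota_x$.

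\textbf{Pullback property.} First, commutativity needs $\nu_M^{-1}p_M=\pi_1$, i.e.\ $p_M=\nu_M\pi_1$. I read the definition of parallelizable as asking that $\nu_M$ be an isomorphism over $M$, so $p_M=\nu_M\pi_1$. If this is not literally part of the definition, I will extract it from the linearity equation $l_M\T\nu_M=\nu_M(z_M\times\lambda_M)$ by postcomposing with $\T\pi_1$ and $p$; I expect this to be the delicate point. Granting it, we get $\iota_xp_M=\<!x,\id\>\pi_1=!x$. Now take $f\colon A\to\T M$ with $fp_M=!x$, and set $\bar f\=f\nu_M\pi_2\colon A\to\mathfrak m$. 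Then
\[
f\nu_M=\<fp_M,\bar f\>=\<!x,\bar f\>=\bar f\<!x,\id\>,
\]
so $\bar f\iota_x=f$. Uniqueness holds because $\iota_x$ is a split monomorphism, with retraction $\nu_M\pi_2$.

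\textbf{Linearity of $\iota_x$.} We need $\iota_xl_M=\lambda_{\mathfrak m}\T\iota_x$. Using the linearity of $\nu_M$ in the inverted form $\nu_M^{-1}l_M=(z_M\times\lambda_{\mathfrak m})\T\nu_M^{-1}$, we compute
\[
\iota_xl_M=\<!x,\id\>\nu_M^{-1}l_M=\<!x,\id\>(z_M\times\lambda_{\mathfrak m})\T\nu_M^{-1}=\<!xz_M,\lambda_{\mathfrak m}\>\T\nu_M^{-1}.
\]
Since $\T$ preserves products, $!xz_M=!\T x$, and $\T\<!x,\id\>=\<!\T x,\id_{\T\mathfrak m}\>$. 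Hence
\[
\<!\T x,\lambda_{\mathfrak m}\>=\lambda_{\mathfrak m}\T\<!x,\id\>,
\]
which gives $\iota_xl_M=\lambda_{\mathfrak m}\T\<!x,\id\>\T\nu_M^{-1}=\lambda_{\mathfrak m}\T\iota_x$.

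Lemma~\ref{lemma:tangent-spaces} then concludes that the square is a tangent pullback, so $M$ admits a tangent space at $x$. The main obstacle I foresee is the compatibility of $\nu_M$ with the projections, $\nu_M\pi_1=p_M$. If the definition does not include it, I would derive it by composing the linearity square with the projections and using $l_M\T p_M=p_Mz_M$ together with $p_{\T M}$-compatibility.
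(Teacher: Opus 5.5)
Your proposal is correct and follows the paper's proof essentially step for step. Like the paper, you take $\nu_M$ to preserve the projection, factor $f$ through $\iota_x$ via $f\nu_M\pi_2$, derive the linearity of $\iota_x$ from the inverted linearity equation of $\nu_M$, and conclude with Lemma~\ref{lemma:tangent-spaces}.

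One caution about your fallback: $\nu_M\pi_1=p_M$ cannot be extracted from the linearity equation alone. Replacing $\nu_M$ by $\nu_M(\phi\times\id_{\mathfrak m})$, for any automorphism $\phi$ of $M$, keeps it linear by naturality of $z$ but destroys compatibility with $p_M$. So this compatibility has to be read as part of the definition of parallelizability, which is exactly what the paper does.
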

\begin{proof}
For starters, $\nu\colon\T M\cong M\times\mathfrak m$ preserves the projection, thus, $\iota_xp_M=\<!x,\id_{\mathfrak{m}}\>\nu_M^{-1}p_M=\<!x,\id_{\mathfrak{m}}\>\pi_1=!x$. Therefore, the diagram commutes. Now consider a morphism $f\colon A\to\T M$ such that $fp_M=!x$. Thus, we can define a morphism $\tilde f\=f\nu\pi_2\colon A\to\mathfrak{m}$. Therefore,
\begin{align*}
&\tilde f\iota_x=\tilde f\<!x,\id_\mathfrak{m}\>\nu_M^{-1}=\<!x,f\nu\pi_2\>\nu_M^{-1}=\<f\nu\pi_1,f\nu\pi_2\>\nu_M^{-1}=f\nu(\pi_1\times\pi_2)\nu_M^{-1}=f
\end{align*}
where we used that $!x=fp_M=f\nu\pi_1$ and that $\pi_1\times\pi_2=\id_{M\times\mathfrak{m}}$. Now, suppose that $g\colon A\to\mathfrak{m}$ satisfies the same equation, that is, $g\iota_x=f$. Thus, $f=g\<!x,\id_{\mathfrak{m}}\>\nu_M^{-1}=\<!x,g\>\nu_M^{-1}$. Therefore:
\begin{align*}
&\tilde f=f\nu\pi_2=\<!x,g\>\nu_M^{-1}\nu\pi_2=\<!x,g\>\pi_2=g
\end{align*}
This proves that the diagram is a pullback. Furthermore, we can show that $\iota_x$ is linear:
\begin{align*}
&\iota_xl_M=\<!x,\id_{\mathfrak{m}}\>\nu_M^{-1}l_M=\<!x,\id_{\mathfrak{m}}\>(z_M\times\lambda_\mathfrak{m})\T\nu_M^{-1}\<!\T x,\lambda_\mathfrak{m}\>\T\nu_M^{-1}=\lambda_\mathfrak{m}\T(\<!x,\id_{\mathfrak{m}}\>\nu_M^{-1})=\lambda_\mathfrak{m}\T\iota_x
\end{align*}
Therefore, by applying Lemma~\ref{lemma:tangent-spaces}, we conclude that the diagram is a tangent pullback.
\end{proof}

\subsection{A full characterization of Lie groups}
\label{subsection:first-fundamental-lie}
We can finally put everything together in a general theorem.

\begin{theorem}
\label{theorem:first-fundamental-lie}
In a Cartesian tangent category, given a group object $G$, the following statements are equivalent:
\begin{enumerate}
\item $G$ is a Lie group;

\item $G$ admits negatives and its Lie functor is tangently representable;

\item $G$ admits negatives and its Lie functor is differentiably representable;

\item $G$ is parallelizable.
\end{enumerate}
\end{theorem}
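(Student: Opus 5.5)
The plan is to prove the equivalences by chaining the results already established, arranging them as $1\Rightarrow2\Rightarrow1$, $2\Leftrightarrow3$, and $1\Leftrightarrow4$.

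\emph{$1\Rightarrow2$ and $2\Rightarrow1$.} If $G$ is a Lie group, Lemma~\ref{lemma:additive-structure-tangent-spaces} gives negatives. Proposition~\ref{proposition:lie-functor-is-representable} and Proposition~\ref{proposition:lie-functor-representable-but-tangent} give exactly condition~1 of Proposition~\ref{proposition:tangent-universal-vector-field}, with representing objects $\T^n\T_\e G$. Conversely, suppose $G$ has negatives and a tangently representable Lie functor. Lemma~\ref{lemma:representable-implies-pullback} shows that $\g$ is the pullback of $p_G$ along $\e$. Proposition~\ref{proposition:representable-implies-lie} then shows this pullback is a tangent pullback, so $G$ admits a tangent space at the unit. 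That is Definition~\ref{definition:lie-group}.

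\emph{$2\Leftrightarrow3$.} Both conditions assume negatives and representability, so this is exactly Proposition~\ref{proposition:tangently-differentiably}.

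\emph{$1\Leftrightarrow4$.} For $4\Rightarrow1$, a group object always has the global element $\e$. Lemma~\ref{lemma:parallelizability} applied with $x=\e$ gives that $G$ admits the tangent space $\mathfrak m\cong\T_\e G$ as a tangent pullback, so $G$ is a Lie group.

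For $1\Rightarrow4$, take $\mathfrak m=\T_\e G$. This is a differential object by \cite[Corollary~3.5]{cockett:differential-bundles}, or by Lemma~\ref{lemma:tangent-spaces}. Let $\nu_G=\<p_G,\mc_G\>$, which is an isomorphism by Theorem~\ref{theorem:trivial-tangent-bundle-groups}; its inverse is $\gamma_G=(z_G\times\iota_\e)\T\m_G$. It remains to check linearity, i.e.\ $\nu_G(z_G\times\lambda)=l_G\T\nu_G$. Since $\nu_G$ is an isomorphism, it is equivalent to show that $\gamma_G$ is linear, i.e.\ $(z_G\times\lambda_{\T_\e G})\T\gamma_G=\gamma_G l_G$.

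The linearity check is the step I expect to need the most care, and I would reduce it to earlier results rather than compute from scratch. Lemma~\ref{lemma:universal-vector-field-via-iota} identifies $\gamma_G$ with the universal \PLIVF $\omega$. The direction $2\Rightarrow3$ of Proposition~\ref{proposition:tangently-differentiably}, combined with Theorem~\ref{theorem:differential-universal-vector-field}, gives that $\omega$ is a linear \PLIVF. Unwinding Definition~\ref{definition:linear-PLIVF}, this says $\omega l_G=(z_G\times\lambda)\T\omega$, which is precisely the linearity of $\gamma_G$.

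If one prefers a direct argument, the computation runs as follows:
\[
\gamma_G l_G=(z_G\times\iota_\e)\T\m_G\, l_G=(z_G\times\iota_\e)\, l_{G\times G}\,\T\T\m_G .
\]
Now $l_{G\times G}=l_G\times l_G$ up to the product isomorphism, $z_G l_G=z_G\T z_G$, and $\iota_\e l_G=\lambda\T\iota_\e$ by Lemma~\ref{lemma:tangent-spaces}. Substituting these, the right-hand side becomes $(z_G\times\lambda)\T\gamma_G$, which completes the verification.
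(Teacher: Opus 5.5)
Your proposal is correct and follows essentially the same route as the paper. The paper gives no separate proof of Theorem~\ref{theorem:first-fundamental-lie}: it simply assembles Propositions~\ref{proposition:lie-functor-is-representable}, \ref{proposition:lie-functor-representable-but-tangent}, \ref{proposition:representable-implies-lie} and~\ref{proposition:tangently-differentiably}, Lemma~\ref{lemma:parallelizability}, and the Cockett--Schwarz trivialization (Theorem~\ref{theorem:trivial-tangent-bundle-groups}), exactly as you do, while your explicit check that $\gamma_G=(z_G\times\iota_\e)\T\m_G$ is linear (via naturality of $l$, $z_Gl_G=z_G\T z_G$ and $\iota_\e l_G=\lambda\T\iota_\e$) makes precise the $1\Rightarrow4$ step that the paper leaves implicit, and it is the same computation that closes the proof of Proposition~\ref{proposition:tangently-differentiably}.
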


In most of the cases of interest, the existence of the tangent space at the unit is an immediate consequence of the nice properties of the ambient tangent category. This is the case of well-displayed tangent categories (\cite[Definition~2.12]{cruttwell:tangent-display-maps}), which are tangent categories whose tangent bundles $p_M\colon\T M\to M$ are tangent display maps, in that they satisfy the following condition. For every $n\geq0$, $\T^np_M$ admits all tangent pullbacks along arbitrary maps.

\begin{lemma}
\label{lemma:tangent-display-lie}
In a well-displayed Cartesian tangent category, every group object is a Lie group object.
\end{lemma}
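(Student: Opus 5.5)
The plan is to unwind the definitions: a Lie group is, by Definition~\ref{definition:lie-group}, nothing more than a group object $G$ for which the pullback of $p_G\colon\T G\to G$ along $\e_G\colon\*\to G$ exists and is a tangent pullback. So it suffices to produce this pullback and check that every iterate $\T^n$ preserves it.

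By the well-displayed hypothesis (taking $n=0$ in the definition recalled just before the statement), $p_G$ is a tangent display map and therefore admits tangent pullbacks along arbitrary maps. In particular, the pullback of $p_G$ along the global element $\e_G\colon\*\to G$ exists. By definition of tangent display map, it is a tangent pullback, i.e.\ it is preserved by every $\T^n$. This is exactly the statement that $G$ admits the tangent space $\T_\e G$ at the unit in the sense of Definition~\ref{definition:tangent-space}, so $G$ is a Lie group.

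The only step needing care is matching the notion of ``tangent pullback'' in~\cite[Definition~2.12]{cruttwell:tangent-display-maps} with the one in Definition~\ref{definition:tangent-space}. Both require that the pullback be preserved by all iterates $\T^n$. If the display-map definition instead only guaranteed preservation by $\T$ together with the display property of each $\T^np_M$, I would close the gap as follows. Use that $\T^n p_G$ again admits a pullback along $\T^n\e_G$, and that $\T^n\*\cong\*$ in a Cartesian tangent category. Then argue by induction on $n$, as in the proof of Proposition~\ref{proposition:representable-implies-lie}, that $\T^n$ applied to the pullback square is again a pullback.

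As a consequence, Theorem~\ref{theorem:first-fundamental-lie} applies. Every group object in such a category admits negatives (Lemma~\ref{lemma:additive-structure-tangent-spaces}), is parallelizable, and has a tangently representable Lie functor.
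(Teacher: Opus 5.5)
Your proposal is correct and follows the paper's proof: well-displayedness gives that $p_G$ admits a tangent pullback along $\e_G\colon\*\to G$, which is by definition the tangent space $\T_\e G$, so $G$ is a Lie group. Your fallback induction is harmless but unnecessary, because in the paper's usage (Definition~\ref{definition:tangent-space}) a tangent pullback is already one preserved by every iterate $\T^n$.
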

\begin{proof}
Since tangent bundles are tangent display, the tangent pullback of $p_G\colon\T G\to G$ along $\e\colon\*\to G$ exists. Therefore, $G$ has a tangent space at the unit.
\end{proof}

Thanks to Theorem~\ref{theorem:first-fundamental-lie}, since the objects of a GCDC are parallelizable, every group object in a GCDC is a Lie group.

\begin{corollary}
\label{corollary:GCDC-lie}
In a GCDC, every group object is a Lie group object.
\end{corollary}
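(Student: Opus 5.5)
The statement is Corollary~\ref{corollary:GCDC-lie}, and I plan to reduce it to Theorem~\ref{theorem:first-fundamental-lie} via its implication from (4) to (1). Concretely, I will show that every object of a GCDC is parallelizable in the sense of Definition~\ref{definition:parallelizable-object}. I will then apply Lemma~\ref{lemma:parallelizability} to the global element $\e_G\colon\*\to G$, which gives a tangent pullback of $p_G$ along $\e_G$. By Definition~\ref{definition:lie-group}, this is exactly what it means for $G$ to be a Lie group. Going through Lemma~\ref{lemma:parallelizability} directly avoids needing negatives as a hypothesis.

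The main step is to extract the parallelization from the GCDC structure. In a GCDC (Remark~\ref{remark:linear-GCDC}, following Cruttwell), each object $M$ comes with an associated differential object $\L M$. The tangent bundle is given by $\T M=M\times\L M$, with $p_M=\pi_1$.

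I will take $\mathfrak m\=\L M$ and $\nu_M\=\id_{M\times\L M}$. The remaining axiom is linearity, $\nu_M(z_M\times\lambda_{\L M})=l_M\T\nu_M$, which here reduces to $l_M=z_M\times\lambda_{\L M}$ under the identification of $\T(M\times\L M)$ with $\T M\times\T\L M$. This is how the vertical lift of a GCDC is defined. On the first factor it is the zero, and on the $\L M$ factor it is the vertical lift of the differential object $\L M$, in the form $\lambda_{\L M}=\<0,\id\>$. For the linear GCDC case of Equation~\eqref{equation:linear-GCDC}, one also has to check that $\L M$, with its monoid structure and $\nu_M$, is a differential object whose lift corresponds to $\nu$. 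I expect this to follow from $\T\L M=\L M\times\L\L M\cong\L M\times\L M$ via $\nu$.

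The main obstacle is this bookkeeping: matching the GCDC's vertical lift with the product $z_M\times\lambda_{\L M}$ under the canonical isomorphism $\T(M\times\L M)\cong\T M\times\T\L M$. I will handle it componentwise, composing with the projections and using that $\T$ preserves products. Once that is settled, Lemma~\ref{lemma:parallelizability} with $x=\e_G$ finishes the proof.
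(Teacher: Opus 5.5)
Your proposal is correct and follows essentially the same route as the paper. The paper observes that every object of a GCDC is parallelizable and then invokes the implication from parallelizability to being a Lie group in Theorem~\ref{theorem:first-fundamental-lie}, which is Lemma~\ref{lemma:parallelizability} applied at $\e_G$. Your proposal does the same, and it also supplies the check, omitted in the paper, that $\mathfrak m=\L M$ with $\nu_M=\id$ is a linear parallelization.
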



\section{The differential Lie algebra of a Lie group}
\label{section:lie-algebras}
The main objective of this paper is to internalize the \emph{external} Lie algebra of left-invariant vector fields. In this section, we accomplish this goal by invoking the representability of the Lie functor $\g(-)$ associated to such a group. We start by introducing Lie algebra objects in tangent categories. First, we recall the usual notion of a Lie algebra object in an arbitrary Cartesian category; then we introduce a stronger notion, which is compatible with the tangent structure. We call this notion \emph{differential} Lie algebras. Finally, we prove that every Lie group has an associated differential Lie algebra.

\subsection{Lie algebra objects}
\label{subsection:lie-algebra-objects}
We start by recalling the notion of a Lie algebra in an arbitrary Cartesian category. A Lie algebra object in $\X$ consists of an object $\g$ together with three maps
\begin{align*}
&0\colon\*\to\g         &&+\colon\g\times\g\to\g        &&[,]\colon\g\times\g\to\g
\end{align*}
such that $(\g,0,+)$ is a commutative monoid, the binary operation $[,]$ is additive in the first variable, that is,
\begin{align}
\label{equation:antisymmetry-jacobi}
&\<0,\id_\g\>[,]=0      &&(+\times\id_\g)[,]=\<\pi_1,\pi_2,\pi_3,\pi_2\>([,]\times[,])+
\end{align}
and it satisfies the following two equations:
\begin{align*}
&[,]+\tau[,]=0          &&[[,],]+\sigma[[,],]+\sigma^2[[,],]=0
\end{align*}
where, for two parallel morphisms $f,g\colon A\to\g$ we used the notation $f+g\=\<f,g\>+$ and $0\colon A\to\g$ denotes $0\=!0$. Moreover, $\tau\colon\g\times\g\to\g\times\g$ denotes the symmetry and $\sigma\colon\g\times\g\times\g\to\g\times\g\times\g$ is the ciclic permutation $(1\quad2\quad3)$. The former equation establishes that $[,]$ is antisymmetric, while the latter is known as the \textbf{Jacobi identity} and can be interpreted as a special kind of associativity. The binary operation $[,]$ is known as the Lie bracket.

In a Cartesian tangent category, one would like the Lie algebra structure of a Lie algebra object to be compatible with the tangent structure in a suitable sense. 

\begin{definition}
\label{definition:differential-lie-algebra}
A \textbf{differential Lie algebra} in a Cartesian tangent category, consists of a differential object $\g$ equipped with a bilinear map $[,]\colon\g\times\g\to\g$ such that $(\g,0,+,[,])$ is a Lie algebra object, where $0$ and $+$ are the zero and the sum of the differential object $\g$.
\end{definition}

Since linearity implies additivity and similarly multilinearity implies additivity in each variable, to define a differential Lie algebra is only required to have a differential object $\g$ equipped with a bilinear map $[,]\colon\g\times\g\to\g$ which satisfies antilinearity and the Jacobi identity.

\begin{lemma}
\label{lemma:differential-lie-algebras}
A bilinear morphism $[,]\colon\g\times\g\to\g$ on a differential object $\g$ makes $\g$ into a differential Lie algebra if and only if it satisfies Equations~\eqref{equation:antisymmetry-jacobi} with respect to the additive structure of $\g$.
\end{lemma}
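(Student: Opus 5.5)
The claim is that, for a bilinear $[,]$ on a differential object $\g$, being a differential Lie algebra is the same as satisfying the two displayed equations of~\eqref{equation:antisymmetry-jacobi} with respect to the additive structure $(0_\g,+_\g)$ of $\g$. The forward direction holds by definition: a differential Lie algebra is a Lie algebra object whose zero and sum are those of the differential object, so those equations hold. For the converse we must recover the remaining Lie algebra axioms, namely that $(\g,0_\g,+_\g)$ is a commutative monoid and that $[,]$ is additive in the first variable. The first is \textbf{[DO.1]}. So the whole proof reduces to one lemma: a linear morphism of differential objects preserves zero and sum. Applied in the first variable of the bilinear map $[,]$, this yields the additivity equations.

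To prove that lemma I would use the vertical-lift presentation of differential objects recalled before~\eqref{equation:lambda-hat-p}. Here $\hat p$ is determined by the lift via $\lambda_A\hat p_A=\id_A$. Combining this with the linearity square $\lambda_A\T f=f\lambda_B$ gives
\begin{align*}
&f=\lambda_A\T f\hat p_B.
\end{align*}
From \textbf{[DO.2]} and \textbf{[DO.3]}, $\hat p_B$ sends $\T 0_B$ to $0_B$ and $\T+_B$ to $+_B$. Precomposing, we get
\begin{align*}
&(f\times f)+_B=(\lambda_A\times\lambda_A)(\T f\times\T f)(\hat p_B\times\hat p_B)+_B=(\lambda_A\times\lambda_A)\T(f\times f)\T+_B\hat p_B.
\end{align*}
It then remains to identify $(\lambda_A\times\lambda_A)\T(+_A)$ with $+_A\lambda_A$ up to composing with $\hat p_A$, i.e.\ to show that $\lambda_A$ is additive. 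This is part of the recalled statement that $\lambda_A$ is ``compatible with the additive structure''. Granting it, we get $\T(+_A)\T f=\T((f\times f)+_B)$ after composing with $\hat p_B$, and a short rearrangement gives $+_Af=(f\times f)+_B$.

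The zero case is the same argument with $0$ in place of $+$. Alternatively, it follows because $!_A 0_A\lambda_A=!_A0_A z_A$ (since $\lambda$ preserves zero) and $z$ is natural.

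For the bilinear map, linearity in the first entry means $(\lambda_\g\times z_\g)\T[,]=[,]\lambda_\g$. I would rerun the same computation with the second factor carried along by $z_\g$. This uses that $z$ is natural and that $\hat p$ kills the zero direction in the sense $\T\pi_1$-compatibility, i.e.\ the product $\T(\g\times\g)\cong\T\g\times\T\g$. The result is $(+\times\id_\g)[,]=\<\pi_1,\pi_3\>[,]+\<\pi_2,\pi_3\>[,]$ and $\<0,\id_\g\>[,]=0$.

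I expect the main obstacle to be this careful bookkeeping of the product decomposition of $\T(\g\times\g)$ together with the multilinearity diagram of Definition~\ref{definition:multilinear-map}. If it gets messy, I would fall back on citing the known fact from Cartesian differential categories that linear maps are additive, and apply it in the differential-object CDC $\DO(\X,\TT)$ to the partial maps in each variable.
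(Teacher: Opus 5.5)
Your reduction is the same as the paper's. Everything except additivity of $[,]$ in the first variable is either a hypothesis or \textbf{[DO.1]}, and that additivity should come from linearity; the paper gives no proof beyond the sentence preceding the lemma. You also rightly read the hypotheses as antisymmetry and Jacobi, as that sentence intends, even though the paper's equation label sits on the additivity display.

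The gap is in your proof that a linear $f\colon A\to B$ preserves $+$: as written it is circular. You have two identities. The first is $(f\times f)+_B=(\lambda_A\times\lambda_A)\T((f\times f)+_B)\hat p_B$. The second, obtained from the compatibility of $\lambda_A$ with $\T+_A$, is $+_Af=(\lambda_A\times\lambda_A)\T(+_Af)\hat p_B$. To match them you must show $(\lambda_A\times\lambda_A)\T(+_Af)\hat p_B=(\lambda_A\times\lambda_A)\T((f\times f)+_B)\hat p_B$. But using $\T f\hat p_B=\hat p_Af$ and \textbf{[DO.3]}, these two composites unwind back to $+_Af$ and $(f\times f)+_B$. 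So the ``short rearrangement'' assumes the claim. No bookkeeping fixes this: \textbf{[DO.3]} and the $\T(+)$-compatibility of the lift relate $\T(+)$ to $+$ only functorially, so they cannot transport additivity onto a map not already known to be additive. Your bilinear ``rerun'' inherits the same circularity.

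The missing ingredient is the vertical sum $s$, which is natural in all morphisms. Since $\lambda_Ap_A=!_A0_A$, the pair $\<\pi_1\lambda_A,\pi_2\lambda_A\>$ lands in $\T_2A$. Comparing $p_A$- and $\hat p_A$-components via \textbf{[DO.2]} and \textbf{[DO.5]} gives $+_A\lambda_A=\<\pi_1\lambda_A,\pi_2\lambda_A\>s_A$. Then, by naturality of $s$ and \textbf{[DO.2]},
\begin{align*}
+_Af&=+_A\lambda_A\T f\hat p_B=\<\pi_1\lambda_A,\pi_2\lambda_A\>s_A\T f\hat p_B=\<\pi_1\lambda_A\T f,\pi_2\lambda_A\T f\>s_B\hat p_B\\
&=\<\pi_1\lambda_A\T f\hat p_B,\pi_2\lambda_A\T f\hat p_B\>+_B=(f\times f)+_B.
\end{align*}
For the bracket, proceed as follows:
\begin{itemize}
\item Start from $[,]=(\lambda_\g\times z_\g)\T[,]\hat p_\g$.
\item Write $\<\<\pi_1,\pi_2\>+_\g\lambda_\g,\pi_3z_\g\>$ as the $s_{\g\times\g}$-sum of $\<\pi_1\lambda_\g,\pi_3z_\g\>$ and $\<\pi_2\lambda_\g,\pi_3z_\g\>$, using $\<z_\g,z_\g\>s_\g=z_\g$ and $s_{\g\times\g}\cong s_\g\times s_\g$.
\item Apply naturality of $s$ and \textbf{[DO.2]} to obtain $\<\<\pi_1,\pi_3\>[,],\<\pi_2,\pi_3\>[,]\>+_\g$.
\end{itemize}
So the real issue is which sum you use, not the product bookkeeping.

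Your zero case via $0_A\lambda_A=0_Az_A$ and $z_B\hat p_B=!_B0_B$ is fine. Your fallback of citing additivity of linear maps in the Cartesian differential category of differential objects would also close the gap; that is essentially all the paper itself does.
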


\subsection{The Lie algebra of a Lie group}
\label{subsection:lie-algebra-of-lie-groups}
In this section, we construct the Lie algebra object of a Lie group in a Cartesian tangent category. Consider a group object $G$. For each $A$, $\g(A)$ is a Lie algebra in $\Set$. Moreover, each morphism $f\colon A\to B$ corresponds to a Lie algebra morphism $\g(f)\colon\g(B)\to\g(A)$. This implies that the Lie algebra structure on each $\g(-)$ is natural. This translates into the following lemma.

\begin{lemma}
\label{lemma:lie-algebra-in-presheaf}
The Lie functor $\g(-)\colon\X^\op\to\Set$ of a group object $G$ is a Lie algebra object in the presheaf category $\Cat(\X^\op,\Set)$. 
\end{lemma}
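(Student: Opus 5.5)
We will use the standard fact that a Lie algebra object in the presheaf category $\Cat(\X^\op,\Set)$ is precisely a presheaf $F$ together with a Lie algebra structure on each set $F(A)$ such that every restriction map $F(f)\colon F(B)\to F(A)$ is a Lie algebra homomorphism. The reason is that finite products in a presheaf category are computed pointwise: the terminal presheaf is constantly $\{\*\}$ and $(F\times F)(A)=F(A)\times F(A)$. Hence a morphism $F\times F\to F$ is the same as a family of functions $F(A)\times F(A)\to F(A)$ natural in $A$, and an equation between such morphisms holds exactly when it holds componentwise.

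First, we produce the structure maps $0\colon\*\to\g(-)$, $+\colon\g(-)\times\g(-)\to\g(-)$ and $[,]\colon\g(-)\times\g(-)\to\g(-)$. Their components at $A$ are the zero, the sum and the Lie bracket of the Lie algebra $\g(A)$ of $A$-\PLIVFs. That Lie algebra is supplied by Proposition~\ref{proposition:left-invariant-vector-fields}, applied in the tangent category $\Simple_A[\X,\TT]$ to the group object $G$ of Lemma~\ref{lemma:group-objects-in-cokleisli}. Naturality of these components in $A$ is exactly the statement that each $\g(f)$ is a Lie algebra homomorphism, which is part of Lemma~\ref{lemma:lie-functor}.

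Second, the axioms of a Lie algebra object hold in the presheaf category because they hold componentwise in each $\g(A)$. These axioms are commutative monoid, additivity of the bracket in the first variable, antisymmetry and the Jacobi identity. Each is an equation between composites of the structure maps with product projections and symmetries, and those projections and symmetries are also computed pointwise.

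The only step that needs genuine care is checking that $\g(f)$ really preserves the bracket, and not merely the additive structure. This should come from the explicit description of $\g(f)$ as precomposition with $\id_G\times f$. That precomposition is the action of the lax tangent morphism $\Simple_B[\X,\TT]\to\Simple_A[\X,\TT]$ induced by $f$, which strictly preserves the tangent structure on objects. Such a morphism preserves the vertical-lift universal property used to define $\{-\}$, and hence preserves the bracket $\{X\T Y-Y\T Xc\}$. This is already encoded in Lemma~\ref{lemma:lie-functor} together with Proposition~\ref{proposition:left-invariant-vector-fields-tangent-morphisms}, so we simply invoke those results.
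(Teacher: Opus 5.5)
Your proposal is correct and takes the same approach as the paper. The paper's entire justification is the remark preceding the lemma: each $\g(A)$ is a Lie algebra and each $\g(f)$ is a Lie algebra homomorphism (Lemma~\ref{lemma:lie-functor}, via Proposition~\ref{proposition:left-invariant-vector-fields-tangent-morphisms}), so the structure is natural in $A$ and, because products of presheaves are computed pointwise, it assembles into a Lie algebra object of $\Cat(\X^\op,\Set)$. As in the paper, $G$ must be assumed to have negatives for $\g(-)$ to be defined at all.
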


If now $G$ is a Lie group object, so that $\g(-)$ becomes a representable functor, by Yoneda's lemma, we immediately conclude that the representing object $\g$ becomes a Lie algebra object \emph{internal} to the Cartesian tangent category. However, we also would like to have a concrete description of the Lie algebra structure of $\g$. For starters, by Lemma~\ref{lemma:additive-structure-tangent-spaces}, $z_G\colon G\to\T G$ is always a left-invariant vector field of $G$, so in particular, it is a $\*$-\PLIVF of $G$. Therefore, under the representability of the Lie functor, it corresponds to a morphism $0\colon\*\to\g$. This will be the zero of the Lie algebra.

To construct the sum and the Lie bracket, we first consider the two projections, $\pi_1,\pi_2\colon\g\times\g\to\g$. Using that $\g(-)$ is contravariant, we obtain two Lie algebra homomorphisms $\g(\pi_k)\colon\g(\g)\to\g(\g\times\g)$, for $k=1,2$. In particular, the universal vector field $\omega\in\g(\g)$ of $G$ is sent to two $(\g\times\g)$-\PLIVF of $G$, respectively denoted by $\omega_1,\omega_2\colon G\times\g\times\g\to\T G$. Since $\g(\g\times\g)$ is a Lie algebra, we can take the sum of $\omega_1$ and $\omega_2$ and define a new $(\g\times\g)$-\PLIVF $\omega_1+\omega_2$. However, by representability, this will correspond to a morphism $+\colon\g\times\g\to\g$.

Similarly, we can construct the Lie bracket by a similar argument: the $(\g\times\g)$-\PLIVF $[\omega_1,\omega_2]$ corresponds to a morphism $[,]\colon\g\times\g\to\g$.

\begin{theorem}
\label{theorem:lie-algebra-of-lie-group}
The representing object $\g$ of the Lie functor of a Lie group object $G$ is a Lie algebra object internal to the Cartesian tangent category, whose structural morphisms are defined by the following correspondence:
\begin{prooftree}
\AxiomC{$z_G\colon G\to\T G$}
\LeftLabel{\normalfont\textbf{[\:$0$\:]}}
\UnaryInfC{$0\colon\*\to\g$}
\DisplayProof\qquad
\AxiomC{$\omega_1+\omega_2\colon G\times\g\times\g\to\T G$}
\LeftLabel{\normalfont\textbf{[\:$+$\:]}}
\UnaryInfC{$+\colon\g\times\g\to\g$}
\DisplayProof\qquad
\AxiomC{$[\omega_1,\omega_2]\colon G\times\g\times\g\to\T G$}
\LeftLabel{\normalfont\textbf{[\:$[,]$\:]}}
\UnaryInfC{$[,]\colon\g\times\g\to\g$}
\end{prooftree}
\end{theorem}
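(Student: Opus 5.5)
The plan is to transport the Lie algebra structure of the presheaf $\g(-)$ to $\g$ through the Yoneda lemma, and then to check that the transported maps are the ones in the three correspondences of the statement. By Lemma~\ref{lemma:lie-algebra-in-presheaf}, $\g(-)$ is a Lie algebra object in $\Cat(\X^\op,\Set)$. Its structure consists of natural transformations
\begin{align*}
&0\colon\1\to\g(-)  &&+\colon\g(-)\times\g(-)\to\g(-)  &&[,]\colon\g(-)\times\g(-)\to\g(-)
\end{align*}
The unit sends the point to the zero \PLIVF $\pi_1z_G$; the sum and bracket act pointwise in each $\g(A)$. These are natural because each $\g(f)$ is a Lie algebra homomorphism, by Proposition~\ref{proposition:left-invariant-vector-fields-tangent-morphisms} and Lemma~\ref{lemma:lie-functor}.

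By Proposition~\ref{proposition:lie-functor-is-representable}, $\varphi\colon\g(-)\cong\X(-,\g)$ is a natural isomorphism. Since the Yoneda embedding preserves finite products, $\X(-,\g)\times\X(-,\g)\cong\X(-,\g\times\g)$ and $\1\cong\X(-,\*)$. The Yoneda embedding is also fully faithful, so each transported structure map is $\X(-,u)$ for a unique morphism $u$ of $\X$. This gives morphisms $0\colon\*\to\g$, $+\colon\g\times\g\to\g$ and $[,]\colon\g\times\g\to\g$. Every Lie algebra axiom, namely the commutative monoid laws, additivity of the bracket, antisymmetry and Jacobi, is an equation between natural transformations built from products, projections and the structure maps. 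Since the Yoneda embedding is faithful and product-preserving, these equations reflect to equations in $\X$. Hence $(\g,0,+,[,])$ is a Lie algebra object.

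The remaining work is to identify these morphisms. By Yoneda, a natural transformation $\X(-,\g\times\g)\to\X(-,\g)$ is determined by the image of $\id_{\g\times\g}$. Under $\varphi^{-1}$ and the product decomposition, $\id_{\g\times\g}$ corresponds to the pair $(\g(\pi_1)[\omega],\g(\pi_2)[\omega])=(\omega_1,\omega_2)$. Here we use $\varphi^{-1}(f)=\g(f)[\omega]$, as in the proof of Proposition~\ref{proposition:universal-vector-field}. The transported sum therefore sends $\id_{\g\times\g}$ to $\varphi_{\g\times\g}(\omega_1+\omega_2)$. So $+$ is the unique morphism $\g\times\g\to\g$ classified by $\omega_1+\omega_2$, which by Proposition~\ref{proposition:universal-vector-field} means $(\id_G\times+)\omega=\omega_1+\omega_2$. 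The same argument gives $(\id_G\times[,])\omega=[\omega_1,\omega_2]$. For the unit, evaluating at $\*$ yields the $\*$-\PLIVF $z_G$ (after $G\times\*\cong G$), so $0=\varphi_\*(z_G)$.

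I expect the only delicate step to be bookkeeping: making sure the product comparison $\g(A)\times\g(A)\cong\X(A,\g\times\g)$ through $\varphi$ matches the pairs $(\g(\pi_1)X,\g(\pi_2)X)$, so that $\omega_1,\omega_2$ really are the projections of the generic element. This follows from naturality of $\varphi$ applied to $\pi_1,\pi_2$. The statement does not claim that $0$ and $+$ agree with the differential object structure of $\T_\e G$, so no further argument is needed here.
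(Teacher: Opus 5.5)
Your proposal is correct and follows the paper's route. You transport the Lie algebra structure of the presheaf $\g(-)$ (Lemma~\ref{lemma:lie-algebra-in-presheaf}) along $\varphi\colon\g(-)\cong\X(-,\g)$, then use that the Yoneda embedding is fully faithful and preserves finite products; you also spell out, via $\varphi^{-1}(f)=\g(f)[\omega]$, why the resulting structure maps are exactly those classified by $z_G$, $\omega_1+\omega_2$ and $[\omega_1,\omega_2]$, which the paper only sketches in the text preceding the theorem.
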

\begin{proof}
The proof that $(\g,0,+,[,])$ defines a Lie algebra is a direct consequence of the full faithfulness of the Yoneda embedding and that the Hom-functor preserves Cartesian products in the second argument. 
\end{proof}

\begin{definition}
\label{definition:lie-algebra-of-lie-group}
The \textbf{Lie algebra of a Lie group} $G$ in a Cartesian tangent category is the Lie algebra object of Theorem~\ref{theorem:lie-algebra-of-lie-group}.
\end{definition}

\subsection{Lie bracket and the adjoint representation}
\label{subsection:adjoint-representation}
So far, we have constructed a Lie algebra object $\g$ associated to $G$ by invoking the representability of the Lie functor of a Lie group. We will soon prove that such a Lie algebra object becomes a differential Lie algebra. Before doing so, we would like to have a more concrete understanding of the Lie bracket.

By the correspondence of Theorem~\ref{theorem:first-fundamental-lie}, the underlying object of $\g$ is the tangent space $\T_\e G$ at the unit of $G$. In this section, we provide an \emph{intrinsic formula} for the Lie bracket of $\g$, which is directly derived from the group structure. This extends a well-known formula of classical Lie group theory.

Unfortunately, the classical proof of this formula involves pointwise derivatives and smooth paths, all techniques that do not fit within the abstract world of tangent categories. Despite the challenge, the formula still holds.

We start by recalling the definition of the adjoint representation of a group. Throughout this section, we shall assume that $G$ is a Lie group object and we will identify the tangent space $\T_\e G$ with $\g$.

We shall denote by $\AD\colon G\times G\to G$ the action of $G$ on iteself defined by the following formula:
\begin{align*}
&\AD\colon G\times G\xrightarrow{\Delta\times\id_G}G\times G\times G\xrightarrow{\id_{G\times G}\times\i_G}G\times G\times G\xrightarrow{\id_G\times\m^\op_G}G\times G\xrightarrow{\m_G}G
\end{align*}
where we denoted by $\m^\op_G\=\tau\m_G$ the opposite of the multiplication of $G$. Concretely, when $G$ is a concrete group in the category of sets, $\AD$ is the action that takes a pair $(g,h)$ of elements of $G$ and sends it to $ghg^{-1}$.

With the action $\AD$, we define the \textbf{adjoint representation}\footnote{Cockett and Schwarz already introduced it in~\cite{cockett:lie-groups-tangent-cats}.} of $G$ as follows:
\begin{equation*}
\begin{tikzcd}
{G\times\g} && {\T(G\times G)} \\
& \g & {\T G} \\
& {\*} & G
\arrow["{z_G\times\iota_\e}", from=1-1, to=1-3]
\arrow["\Ad", dashed, from=1-1, to=2-2]
\arrow[curve={height=24pt}, from=1-1, to=3-2]
\arrow["{\T\AD}", from=1-3, to=2-3]
\arrow["\iota_\e", from=2-2, to=2-3]
\arrow[from=2-2, to=3-2]
\arrow["\lrcorner"{anchor=center, pos=0.125}, draw=none, from=2-2, to=3-3]
\arrow["{p_G}", from=2-3, to=3-3]
\arrow["{\e_G}"', from=3-2, to=3-3]
\end{tikzcd}
\end{equation*}
The outer diagram commutes, since $(z_G\times\iota_\e)\T\AD p_G=(z_Gp_G\times\iota_\e p_G)\AD=(\id_G\times!\e_G)\AD=!\e_G$. Therefore, the morphism $\Ad\colon G\times\g\to\g$ is well-defined.

One can differentiate the adjoint representation $\Ad$ in its first variable and obtain the following morphism:
\begin{align*}
&\ad\colon\g\times\g\xrightarrow{\iota_\e\times z_\g}\T(G\times\g)\xrightarrow{\T\Ad}\T\g\xrightarrow{\hat p_\g}\g
\end{align*}
where $\hat p_\g\colon\T\g\to\g$ is the differential projection of $\g$, as a differential object.

Our goal is to show that the Lie bracket $[,]$ of $\g$ is precisely $\ad$. We begin with a few technical lemmas.

\begin{lemma}
\label{lemma:adjoint-formula-1}
The following formula holds:
\begin{align*}
&[\omega_1,\omega_2]_\KL=\left\{\kappa(\id_{\T\T G}\times\T\T\m_G)\T\T\m_G+\kappa\left(\id_{\T\T G}\times(n_{\T G}\times n_{\T G})\T\T\m^\op_G\right)\T\T\m_G\right\}
\end{align*}
where:
\begin{align*}
&\kappa\=(z_Gz_{\T G}\times\iota_\e\T z_G\times\iota_\e z_{\T G})
\end{align*}
\end{lemma}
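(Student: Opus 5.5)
The plan is to unwind the coKleisli definition of the Lie bracket $[X,Y]=\{X\T Y-Y\T Xc\}$ inside $\Simple_{\g\times\g}[\X,\TT]$, and then simplify each of the two summands separately until it has the stated shape.

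First I make $\omega_1$ and $\omega_2$ explicit. By Lemma~\ref{lemma:universal-vector-field-via-iota}, $\omega=(z_G\times\iota_\e)\T\m_G$. Since $\omega_k=\g(\pi_k)[\omega]$, this gives
\begin{align*}
&\omega_k=(z_G\times\pi_k\iota_\e)\T\m_G\colon G\times\g\times\g\to\T G.
\end{align*}
Next I write out the coKleisli composite $\omega_1\T_S\omega_2$. By Proposition~\ref{proposition:cokleisli-tangent-category} and Lemma~\ref{lemma:simple-comonad}, the distributive law is $\id\times z_{\g\times\g}$. Hence the composite is
\begin{align*}
&G\times\g\times\g\xrightarrow{\<\omega_1,\,\pi_2\times\pi_3\>}\T G\times\g\times\g\xrightarrow{\id\times z\times z}\T(G\times\g\times\g)\xrightarrow{\T\omega_2}\T\T G.
\end{align*}
I expand $\T\omega_2$ as $(\T z_G\times\T\iota_\e)\T\T\m_G$ on the relevant components. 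The second parameter then contributes $z_\g\T\iota_\e=\iota_\e z_{\T G}$, by naturality of $z$. The first factor contributes $\omega_1\T z_G$. By naturality of $z$ (namely $\T\m_G\,\T z_G=\T z_G\,\T\T\m_G$ applied to $\T z_G=\T(z_G)$), this equals $(z_Gz_{\T G}\times\iota_\e\T z_G)\T\T\m_G$ evaluated on $(g,a_1)$. Associativity of $\T\T\m_G$ then merges the two multiplications. The result is exactly $\kappa(\id_{\T\T G}\times\T\T\m_G)\T\T\m_G$, where $\kappa=(z_Gz_{\T G}\times\iota_\e\T z_G\times\iota_\e z_{\T G})$. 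This is the first summand.

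For the second summand I compute $\omega_2\T_S\omega_1$ in the same way. This gives the same expression with the roles of $a_1$ and $a_2$ exchanged: $\iota_\e\T z_G$ now sits on $a_2$ and $\iota_\e z_{\T G}$ on $a_1$, with $a_2$ multiplied to the left of $a_1$. I then postcompose with $c_G$ and push $c$ through $\T\T\m_G$ using naturality of $c$ and Cartesianness. TNG.4 and TNG.6 give $z_{\T G}c=\T z_G$ and $\T z_Gc=z_{\T G}$, so the two lifts are swapped back to their positions in $\kappa$. Meanwhile $z_Gz_{\T G}c=z_Gz_{\T G}$, because $c$ swaps $z_{\T G}z_{\T G}$-type double zeros trivially. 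The reversed multiplication order of the two parameter components is then absorbed by replacing $\T\T\m_G$ with $\T\T\m_G^\op=\T\T(\tau\m_G)$ on the inner pair. Finally, the negation $-Y=Yn$ of the bracket formula is pushed inside using naturality of $n_{\T G}$ with respect to $\T\T\m_G$. Since the base component $z_Gz_{\T G}(g)$ is a zero, the negation is fixed on it, so it lands only on the two parameter components as $n_{\T G}\times n_{\T G}$. This yields the second term $\kappa(\id\times(n_{\T G}\times n_{\T G})\T\T\m_G^\op)\T\T\m_G$.

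The main obstacle I expect is bookkeeping of which additive structure is being used. The sum and the negation inside $\{-\}$ are taken in the bundle $p_{\T G}$ (the one for which $\{f\}$ is defined), while the naturality moves require $n_{\T G}$ rather than $\T n_G$. I must check that the negation in $[X,Y]$ in the coKleisli category, transported through $c$, is indeed $n_{\T G}$ on the parameter components. I will also confirm that $n$ commutes past $\T\T\m_G$ componentwise via naturality applied to the product $\T(G\times G)\cong\T G\times\T G$. If this works with $\T n_G$ instead, I would use $c$ to convert between the two, via $c\,\T n=n_{\T}\,c$.
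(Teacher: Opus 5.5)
Your proposal is correct and follows essentially the same route as the paper. The paper also unwinds the coKleisli bracket as $\{(\omega\times z_\g)\T\omega+(\id_G\times\tau)(\omega\times z_\g)\T\omega\, c_Gn_{\T G}\}$, which is your $\omega_1\T_S\omega_2$ and $\omega_2\T_S\omega_1$; it then expands $\omega$ via Lemma~\ref{lemma:universal-vector-field-via-iota}, handles the first term by naturality of $z$ and associativity, and handles the second by pushing $c_G$ and $n_{\T G}$ through $\T\T\m_G$, swapping the two zeros and absorbing the reversed order into $\m^\op_G$.

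One small correction. The identity $\T z_G c_G=z_{\T G}$ comes from TNG.4 together with involutivity of $c$ (TNG.5), not from TNG.6. For the base component, the precise justification is $z_Gz_{\T G}c_G=z_G\T z_G=z_Gz_{\T G}$.
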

\begin{proof}
Using the defining formula of the Lie bracket of two vector fields, by direct inspection, it is not hard to see that $[\omega_1,\omega_2]_\KL$ is equal to the following expression:
\begin{align*}
&\left\{(\omega\times z_\g)\T\omega+(\id_G\times\tau)(\omega\times z_\g)\T\omega c_Gn_{\T G}\right\}
\end{align*}
We start by computing the first term:
\begin{align*}
(\omega\times z_\g)\T\omega&=~((z_G\times\iota_\e)\T\m_G\times z_\g)(\T z_G\times\T\iota_\e)\T\T\m_G   \tag{Lemma~\ref{lemma:universal-vector-field-via-iota}}\\
&=~(z_G\times\iota_\e\times z_\g\T\iota_\e)(\T\m_G\T z_G\times\id_{\T\T G})\T\T\m_G\\
&=~(z_Gz_{\T G}\times\iota_\e\T z_G\times z_\g\T\iota_\e)(\T\T\m_G\times\id_{\T\T G})\T\T\m_G       \tag{Naturality of $\T z$}\\
&=~(z_Gz_{\T G}\times\iota_\e\T z_G\times\iota_\e z_{\T G})(\id_{\T\T G}\times\T\T\m_G)\T\T\m_G       \tag{Associativity}\\
&=~\kappa(\id_{\T\T G}\times\T\T\m_G)\T\T\m_G                       \tag{Definition of $\kappa$}
\end{align*}
From this formula, we can also compute the second term:
\begin{align*}
(\id_G\times\tau)&(\omega\times z_\g)\T\omega c_Gn_{\T G}=~(\id_G\times\tau)(z_Gz_{\T G}\times\iota_\e\T z_G\times\iota_\e z_{\T G})(\id_{\T\T G}\times\T\T\m_G)\T\T\m_G c_{G}n_{\T G}\\
&=~(\id_G\times\tau)(z_Gz_{\T G}c_Gn_{\T G}\times\iota_\e\T z_Gc_G\times\iota_\e z_{\T G}c_G)(\id_{\T\T G}\times(n_{\T G}\times n_{\T G})\T\T\m_G)\T\T\m_G \tag{Naturality of $c$ and $n$}\\
&=~(\id_G\times\tau)(z_Gz_{\T G}\times\iota_\e z_{\T G}\times\iota_\e \T z_G)(\id_{\T\T G}\times(n_{\T G}\times n_{\T G})\T\T\m_G)\T\T\m_G    \tag{$z_{\T G}c_G=\T z_G$, $z_Gn_G=z_G$}\\
&=~(z_Gz_{\T G}\times\iota_\e\T z_G\times\iota_\e z_{\T G})(\id_{\T\T G}\times(n_{\T G}\times n_{\T G})\T\T\m_G^\op)\T\T\m_G  \tag{Exchanging $\tau$}\\
&=~\kappa(\id_{\T\T G}\times(n_{\T G}\times n_{\T G})\T\T\m_G^\op)\T\T\m_G \tag{Definition of $\kappa$}
\end{align*}
By putting the two terms together, we obtain the desired formula.
\end{proof}

\begin{lemma}
\label{lemma:AD}
The following formula for the action $\AD$ of any group object holds:
\begin{align*}
&\AD=\<\m_G,\pi_2,(\i_G\times\i_G)\m^\op_G\>(\id_G\times\m_G)\m_G
\end{align*}
\end{lemma}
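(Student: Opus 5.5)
The plan is a direct equational computation in the Cartesian category, using only the group axioms: associativity, unitality and the inverse axiom. I will argue with generalized elements. A pair of maps $g,h\colon X\to G$ stands for $\<g,h\>\colon X\to G\times G$, and $gh$ abbreviates $\<g,h\>\m_G$. Since everything is natural in $X$, it suffices to take $X=G\times G$, $g=\pi_1$, $h=\pi_2$. Recall that $\m^\op_G=\tau\m_G$, so $\m^\op_G$ applied to $(a,b)$ is $ba$.

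\textbf{Step 1: normal form of the left-hand side.} I will unfold the definition of $\AD$ and aim for
\begin{align*}
&\AD=\<\pi_1,\<\pi_2,\pi_1\i_G\>\m_G\>\m_G,
\end{align*}
that is, $\AD(g,h)=g(hg^{-1})$. The diagonal $\Delta\times\id_G$ duplicates $g$, the map $\id_{G\times G}\times\i_G$ inverts one component, and $\id_G\times\m^\op_G$ multiplies the remaining two in reversed order. The only care needed is bookkeeping of which copy of $g$ is inverted and how $\tau$ inside $\m^\op_G$ reorders the factors. The paper intends the action $ghg^{-1}$, and I will match the components to that reading. Associativity then gives the equivalent form $\AD=\<\m_G,\pi_1\i_G\>\m_G$, i.e.\ $(gh)g^{-1}$.

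\textbf{Step 2: normal form of the right-hand side.} Write the triple as $\<\m_G,\pi_2,(\i_G\times\i_G)\m^\op_G\>$. Its third component is $\m^\op_G$ applied to $(g^{-1},h^{-1})$, which is $h^{-1}g^{-1}$. Composing with $\id_G\times\m_G$ gives the pair $(gh,\;h(h^{-1}g^{-1}))$. The key sub-claim is
\begin{align*}
&\<\pi_2,(\i_G\times\i_G)\m^\op_G\>\m_G=\pi_1\i_G .
\end{align*}
I will prove it in three moves. First, associativity rewrites $h(h^{-1}g^{-1})$ as $(hh^{-1})g^{-1}$, i.e.\ $\<\<\pi_2,\pi_2\i_G\>\m_G,\pi_1\i_G\>\m_G$. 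Second, the inverse axiom $\<\id_G,\i_G\>\m_G=!\e_G$, precomposed with $\pi_2$, replaces the inner product by $!\e_G$. Third, left unitality $\<!\e_G,\id_G\>\m_G=\id_G$ leaves $\pi_1\i_G$. Hence the right-hand side equals $\<\m_G,\pi_1\i_G\>\m_G$, which is the form reached in Step 1.

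\textbf{Main obstacle.} The group-theoretic content is trivial; the real risk is the diagrammatic-order bookkeeping. The issues are the exact placement of $\i_G$ and $\tau$ in the definition of $\AD$, and writing each associativity instance as an honest equation of composites of pairings and projections, so that every use of the axioms is a precomposition by an explicit map into $G\times G\times G$. I would first write out the composite $G\times G\to G\times G\times G$ for each side explicitly. After that, each step is a single axiom precomposed by a pairing.
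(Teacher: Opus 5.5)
Your proposal is correct and is essentially the paper's own argument: the paper simply evaluates the right-hand side on generic elements, $(gh)(h(h^{-1}g^{-1}))=ghg^{-1}$, and your generalized-element computation (associativity, then $\<\pi_2,\pi_2\i_G\>\m_G=!\e_G$, then left unitality, landing on $\<\m_G,\pi_1\i_G\>\m_G$) is the rigorous version of exactly that calculation. One caution for Step~1: the composite defining $\AD$ as printed has $\id_{G\times G}\times\i_G$, which inverts the third factor $h$, so it evaluates to $gh^{-1}g$; your target $\<\pi_1,\<\pi_2,\pi_1\i_G\>\m_G\>\m_G$ only comes out if that slot is corrected to $\id_G\times\i_G\times\id_G$, which gives the reading $ghg^{-1}$ that the paper's proof also tacitly uses.
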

\begin{proof}
Instead of giving a formal proof, let us see this by reasoning element-wise. From this, it is not hard to see that the formula holds in full generality. By evaluating the right hand side on two generic elements $g,h$ of $G$, we obtain the expression:
\begin{align*}
&(gh)((h)(h^{-1}g^{-1}))=ghhh^{-1}g^{-1}=ghg^{-1}
\end{align*}
where we removed the brackets by associativity. However, this is the evaluation of the $\AD$ action on $g$ and $h$.
\end{proof}

\begin{theorem}
\label{theorem:adjoint-formula}
The Lie bracket $[,]\colon\g\times\g\to\g$ of the Lie algebra object $\g$ of a Lie group $G$ coincides with $\ad$, that is, the following identity holds:
\begin{align*}
&[,]=\ad
\end{align*}
\end{theorem}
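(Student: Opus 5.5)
Since $\g=\T_\e G$ is the base of a tangent pullback, $\iota_\e$ is monic, and so is $\lambda_\g$. The plan is therefore to show $[,]\,\lambda_\g\,\iota_{\T\e}=\ad\,\lambda_\g\,\iota_{\T\e}$, or more directly $[,]\iota_\e=\ad\,\iota_\e$, as maps $\g\times\g\to\T G$. Here $\iota_{\T\e}=\T\iota_\e c_G$ is monic, as in the proof of Theorem~\ref{theorem:differential-universal-vector-field}. By construction $[,]$ corresponds under $\varphi$ to the $(\g\times\g)$-\PLIVF $[\omega_1,\omega_2]_\KL$, so $[,]\iota_\e=\<!\e,\id_{\g\times\g}\>[\omega_1,\omega_2]_\KL$. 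The first step is to restrict the formula of Lemma~\ref{lemma:adjoint-formula-1} to the unit. Because $\{-\}$ is natural under precomposition (uniqueness in the universal property of the vertical lift), this gives
\[
[,]\iota_\e=\left\{\<!\e,\id\>\kappa(\id\times\T\T\m_G)\T\T\m_G+\<!\e,\id\>\kappa\left(\id\times(n_{\T G}\times n_{\T G})\T\T\m^\op_G\right)\T\T\m_G\right\}.
\]
Unitality of $\m_G$ kills the leftmost factor $z_Gz_{\T G}\e$. What remains is a map $\g\times\g\to\T\T G$ built from $\T\T\m_G$ applied to $(\iota_\e\T z_G,\iota_\e z_{\T G})$, which heuristically is $XY$, and to the negated, flipped pair, which heuristically is $-Y-X$ in the opposite order.

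The second step is to unwind $\ad$ into the same shape. Since $\iota_\e$ is linear (Lemma~\ref{lemma:tangent-spaces}), we have $\hat p_\g\iota_\e=\{\T\iota_\e\}$-type relations; concretely I would use $\lambda_\g\T\iota_\e=\iota_\e l_G$ together with Equation~\eqref{equation:lambda-hat-p} to write
\[
\ad\,\iota_\e=\{(\iota_\e\times z_\g)\T((z_G\times\iota_\e)\T\AD)\},
\]
that is, a bracket-extraction of $(\iota_\e\T z_G\times\iota_\e z_{\T G})\T\T\AD$ up to the flip $c_G$. The bracket here is $\{-\}$, the universal property of the vertical lift at $\T G$. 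Then substitute the factorization $\AD=\<\m_G,\pi_2,(\i_G\times\i_G)\m^\op_G\>(\id_G\times\m_G)\m_G$ from Lemma~\ref{lemma:AD}. Applying $\T\T$ turns this into $\T\T\m_G$ of three terms. These are the product of the two vectors, the second vector, and the inverses multiplied in opposite order, where $\T\T\i_G$ on the relevant zero-lifted components becomes $n$ by Lemma~\ref{lemma:additive-structure-tangent-spaces} (inverse on $\T_\e G$ is negation).

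The third step is to match the two expressions. The key tool is the standard tangent-category identity: for maps into $\T\T G$ over the same base, a composite $\T\T\m_G(a,b)$ where one factor is vertical decomposes as the sum $\T s$ of the contributions. This is the "product rule" coming from $\T\m_G$ restricted to $z\times\mathrm{vertical}$ being additive, as in the proof of Theorem~\ref{theorem:trivial-tangent-bundle-groups}. Using it, I would split the $\AD$ expression into the two summands appearing in Lemma~\ref{lemma:adjoint-formula-1}, with the middle term $\pi_2$ contributing only an element in the image of $z$ that vanishes under $\{-\}$.

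The main obstacle is this last matching: keeping track of which of the two tangent directions each factor lives in, and making the flips $c_G$ and negations $n_{\T G}$ line up so that the sum inside $\{\cdot\}$ is taken in the correct bundle structure ($p_{\T G}$ versus $\T p_G$). I would handle it by first checking both sides on the free/GCDC case $G\times\g$ via Theorem~\ref{theorem:trivial-tangent-bundle-groups}, which trivializes $\T\T G\cong G\times\g^3$. There the identity reduces to a bilinear computation, and I would then transport it back along $\gamma_G$.
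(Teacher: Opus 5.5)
Your first two steps are correct and are in substance the paper's own computations. The one difference is a sensible simplification: you evaluate at the unit, using that $\iota_\e$ is monic, instead of carrying the $G$-parameter through $\kappa$. Put $u\=\pi_1\iota_\e\T z_G$, $v\=\pi_2\iota_\e z_{\T G}$, $F_1\=\<u,v\>\T\T\m_G$ and $F_2\=\<un_{\T G},vn_{\T G}\>\T\T\m^\op_G=\<v,un_{\T G}\>\T\T\m_G$. Then:

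\begin{itemize}
\item Step 1 gives $[,]\iota_\e=\{\<F_1,F_2\>s_{\T G}\}$.
\item Step 2 gives $\ad\,\iota_\e=(\iota_\e\times z_\g)\T\Ad\{\T\iota_\e\}=\{\<u,v\>\T\T\AD\}$. No flip is involved.
\end{itemize}

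Everything therefore hinges on the identity $\<u,v\>\T\T\AD=\<F_1,F_2\>s_{\T G}$. Your third step does not prove it, and the mechanisms it names do not work as stated.

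\begin{itemize}
\item The addition in Lemma~\ref{lemma:adjoint-formula-1}, the one compatible with $\{-\}$, is $s_{\T G}$ (fibres of $p_{\T G}$), not $\T s_G$. $F_1$ and $F_2$ share the $p_{\T G}$-projection $\pi_2\iota_\e$ but have $\T p_G$-projections $\pi_1\iota_\e$ and $\pi_1\iota_\e n_G$. So a $\T s_G$-sum of them is not even defined.
\item With the factorization $ghg^{-1}=(gh)\,h\,(gh)^{-1}$ of Lemma~\ref{lemma:AD}, the three factors do not lie in a common $p_{\T G}$-fibre, so they cannot simply be added.
\item The middle factor is not a disposable zero. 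Without it you get $\T\T$ of the constant map $(gh)(gh)^{-1}$.
\item The fallback reduces nothing. $\gamma_G$ is an isomorphism of objects, not of groups. Under $\T\T G\cong G\times\g^3$ the last component of $\T\T\m_G$ is not additive: it has mixed terms coupling the $p_{\T G}$-direction of one factor with the $\T p_G$-direction of the other. An abstract tangent category has no coordinates that would turn this into a \emph{bilinear computation}.
\end{itemize}

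The paper does this matching componentwise in $\T\T G\cong G\times\g^3$, keeping the $G$-parameter throughout.

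The step can be closed rigorously along your route. Use $\AD=\<\m_G,\pi_1\i_G\>\m_G$ (that is, $ghg^{-1}=(gh)g^{-1}$) instead of Lemma~\ref{lemma:AD}. Combine it with the Leibniz rule for $\T\T\m_G$ with respect to $s_{\T G}$, which follows from naturality of $s$ at $\T\m_G$ and unitality of $s_{\T G}$ in each factor. For $X,Y\colon A\to\T\T G$ it reads
\[
\<X,Y\>\T\T\m_G=\left\<\<X,Yp_{\T G}z_{\T G}\>\T\T\m_G,\<Xp_{\T G}z_{\T G},Y\>\T\T\m_G\right\>s_{\T G}.
\]
Take $X=F_1$ and $Y=u\T\T\i_G$.

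\begin{itemize}
\item Since $z_G\T\i_G=\i_Gz_G$, $\iota_\e\T\i_G=\iota_\e n_G$ (Lemma~\ref{lemma:additive-structure-tangent-spaces}) and $n_G\T z_G=\T z_Gn_{\T G}$, you get $Y=un_{\T G}$.
\item The $p_{\T G}$-projection of $Y$ is $!\e z_G$, so the first summand is $F_1$ by unitality of $\T\T\m_G$.
\item Since $F_1p_{\T G}=\pi_2\iota_\e$, you get $Xp_{\T G}z_{\T G}=v$. The second summand is therefore $\<v,un_{\T G}\>\T\T\m_G=F_2$.
\end{itemize}

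Thus $\<u,v\>\T\T\AD=\<F_1,F_2\>s_{\T G}$ exactly. Applying $\{-\}$ gives $[,]\iota_\e=\ad\,\iota_\e$, and monicity of $\iota_\e$ gives $[,]=\ad$. This completion needs no trivialization of $\T\T G$, which is the part of the paper's argument that does the heavy lifting.
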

\begin{proof}
We start by recalling that, by the universal property of $\omega$, $[,]\colon\g\times\g\to\g$ is the unique morphism satisfying the following equation:
\begin{align*}
&(\id_G\times[,])\omega=[\omega_1,\omega_2]
\end{align*}
Our goal is to prove that $[\omega_1,\omega_2]=(\id_G\times\ad)\omega$. From there, by the universal property of $\omega$, we conclude the desired identity. To prove this equality, we also need to invoke the universal property of the vertical lift. In fact, the Lie bracket of two vector fields uses the $\{-\}$ operation. Recall that, for a morphism $f\colon X\to\T\T M$ which equalizes $\T p_M$ and $p_{\T M}p_Mz_M$, there exists a unique morphism $\{f\}\colon A\to\T M$ such that $f=\<fp_{\T M}z_{\T M},\{f\}l_M\>\T s_M$. Therefore, by Lemma~\ref{lemma:adjoint-formula-1}, we reduce the problem to proving the following equality:
\begin{align*}
&\LHS=\left\<\LHS p_{\T G}z_{\T G},(\id_G\times\ad)\omega l_G\right\>\T s_M
\end{align*}
where we denoted by $\LHS$ the left-hand side term:
\begin{align*}
&\LHS\=\kappa(\id_{\T\T G}\times\T\T\m_G)\T\T\m_G+\kappa\left(\id_{\T\T G}\times(n_{\T G}\times n_{\T G})\T\T\m^\op_G\right)\T\T\m_G
\end{align*}
where, by Lemma~\ref{lemma:adjoint-formula-1}, $\{\LHS\}=[\omega_1,\omega_2]_\KL$. In the following, we will also denote the right-hand term of the equation by:
\begin{align*}
&\RHS\=\left\<\LHS p_{\T G}z_{\T G},(\id_G\times\ad)\omega l_G\right\>\T s_M
\end{align*}
Let us begin by expanding the term $(\id_G\times\ad)\omega l_G$:
\begin{align*}
(\id_G\times\ad)\omega l_G&=~(\id_G\times\ad)(z_G\times\iota_\e)\T\m_G l_G    \tag{Lemma~\ref{lemma:universal-vector-field-via-iota}}\\
&=~(z_Gl_G\times\ad\iota_\e l_G)\T\T\m_G       \tag{Naturality of $l$}\\
&=~(z_Gz_{\T G}\times\ad\iota_\e l_G)\T\T \m_G \tag{$z_Gl_G=z_Gz_{\T G}$}\\
&=~(z_Gz_{\T G}\times(z_\g\times\iota_\e)\T\Ad\hat p_\g\iota_\e l_G)\T\T \m_G \tag{Definition of $\ad$}\\
&=~\kappa'(\id_{\T\T G}\times\T\Ad\hat p_\g\iota_\e l_G)\T\T\m_G
\end{align*}
where:
\begin{align*}
&\kappa'\=(z_Gz_{\T G}\times z_\g\times\iota_\e)
\end{align*}
We can also expand the term $\LHS p_{\T G}z_{\T G}$:
\begin{align*}
\LHS p_{\T G}z_{\T G}&=~\left(\kappa(\id_{\T\T G}\times\T\T\m_G)\T\T\m_G+\kappa\left(\id_{\T\T G}\times(n_{\T G}\times n_{\T G})\T\T\m^\op_G\right)\T\T\m_G\right)p_{\T G}z_{\T G}\\
&=~\kappa(\id_{\T\T G}\times\T\T\m_G)\T\T\m_Gp_{\T G}z_{\T G}       \tag{$s_Gp_G=\pi_1p_G$}\\
&=~(z_Gz_{\T G}\times\iota_\e\T z_G\times\iota_\e z_{\T G})(\id_{\T\T G}\times\T\T\m_G)\T\T\m_Gp_{\T G}z_{\T G}       \tag{Definition of $\kappa$}\\
&=~(z_Gz_{\T G}\times\iota_\e p_Gz_Gz_{\T G}\times\iota_\e z_{\T G})(\id_{\T\T G}\times\T\T\m_G)\T\T\m_G      \tag{Naturality of $pz$ and $z_Gp_G=\id_G$}\\
&=~(z_Gz_{\T G}\times!\T\T\e\times\iota_\e z_{\T G})(\id_{\T\T G}\times\T\T\m_G)\T\T\m_G       \tag{$\iota_\e p_G=!\e$}\\
&=~(z_Gz_{\T G}\times\pi_2\iota_\e z_{\T G})\T\T\m_G                       \tag{Unitality}
\end{align*}
However, we can also compute:
\begin{align*}
(\iota_\e\times z_\g)\T\Ad\T\iota_\e p_{\T G}&=~(\iota_\e p_G\times z_Gp_G)\Ad\iota_\e       \tag{Naturality of $p$}\\
&=~(!\e\times\id_G)\Ad\iota_\e                                                     \tag{$\iota_\e p_G=!\e$}\\
&=~\pi_2\iota_\e                                                                   \tag{Unitality of the action $\Ad$}
\end{align*}
Therefore:
\begin{align*}
\LHS p_{\T G}z_{\T G}&=~(z_Gz_{\T G}\times\pi_2\iota_\e z_{\T G})\T\T\m_G\\
&=~(z_Gz_{\T G}\times(\iota_\e\times z_\g)\T\Ad\T\iota_\e p_{\T G}z_{\T G})\T\T\m_G\\
&=~\kappa'(\id_{\T\T G}\times\T\Ad\iota_\e p_{\T G}z_{\T G})\T\T m_G
\end{align*}
Thus, we can rewrite the $\RHS$ term as follows:
\begin{align*}
\RHS&=~\left\<\kappa'(\id_{\T\T G}\times\T\Ad\T\iota_\e p_{\T G}z_{\T G})\T\T\m_G,\kappa'(\id_{\T\T G}\times\T\Ad\hat p_\g\iota_\e l_G)\T\T\m_G\right\>\T s_G\\
&=~\kappa'(\id_{\T\T G}\times\T\Ad)\left\<\T\iota_\e p_{\T G}z_{\T G},\hat p_\g\iota_\e l_G\right\>\T s_G\T\T\m_G \tag{Naturality of $s$}\\
&=~\kappa'(\id_{\T\T G}\times\T\Ad\T\iota_\e)\T\T\m_G             \tag{See below}\\
&=~\kappa'(\id_{\T\T G}\times(\T\iota_\e\times\T z_G)\T\T\AD)\T\T\m_G      \tag{Definition of $\Ad$}\\
&=~(z_Gz_{\T G}\times \iota_\e z_{\T G}\times\iota_\e\T z_G)(\id_{\T\T G}\times\T\T\AD)\T\T\m_G \tag{Definition of $\kappa'$}\\
&=~\kappa(\id_{\T\T G}\times\T\T\AD)\T\T\m_G                \tag{Definition of $\kappa$}
\end{align*}
where we used that:
\begin{align*}
\left\<\T\iota_\e p_{\T G}z_{\T G},\hat p_\g\iota_\e l_G\right\>\T s_G&=~\left\<\T\iota_\e p_{\T G}z_{\T G},\{\T\iota_\e\}l_G\right\>\T s_G
\tag{By linearity, $\{\T\iota_\e\}=\hat p_\g\iota_\e$}\\
&=~\T\iota_\e      \tag{Universal property of $l$}
\end{align*}
Thanks to Lemma~\ref{lemma:AD}, we can rewrite $\RHS$ as follows:
\begin{align*}
&\RHS=~\kappa(\id_{\T\T G}\times\T\T\AD)\T\T\m_G\\
&=~\kappa(\id_{\T\T G}\times\<\T\T\m_G,\pi_2,(\T\T\i_G\times\T\T\i_G)\T\T\m_G^\op\>(\id_{\T\T G}\times\T\T\m_G)\T\T\m_G)\T\T\m_G \tag{Lemma~\ref{lemma:AD}}\\
&=~(z_{G}z_{\T G}\times(\iota_\e\T z_G\times\iota_\e z_{\T G})\<\T\T\m_G,\pi_2,(\T\T\i_G\times\T\T\i_G)\T\T\m_G^\op\>(\id_{\T\T G}\times\T\T\m_G)\T\T\m_G)\T\T\m_G
\end{align*}
$\LHS$ and $\RHS$ are morphisms with codomain $\T\T G$, however, $\T\T G\cong G\times\g\times\g\times\g$. Under this isomorphism, $p_{\T G}$ and $\T p_G$ correspond to the projections $\pi_1\times\pi_2$ and $\pi_1\times\pi_3$. It is not hard to see that $\LHS$ and $\RHS$ coincide once postcomposed by $p_{\T G}$ and $\T p_G$. More precisely, we obtain that:
\begin{align*}
&\LHS p_{\T G}=\omega_2=\RHS p_{\T G}           &&\LHS\T p_G=\pi_1z_G=\RHS\T p_G
\end{align*}
So it remains to show that $\LHS$ and $\RHS$ coincide in the last component $\g$ of $\T\T G$. Let us isolate the following term in the expression we found for $\RHS$:
\begin{align*}
&(\iota_\e\T z_G\times\iota_\e z_{\T G})\<\T\T\m_G,\pi_2,(\T\T\i_G\times\T\T\i_G)\T\T\m_G^\op\>(\id_{\T\T G}\times\T\T\m_G)\T\T\m_G\\
&\qquad=~\left\<(\iota_\e\T z_G\times\iota_\e z_{\T G})\T\T\m_G,\pi_2\iota_\e z_{\T G},(\iota_\e\T z_G\times\iota_\e z_{\T G})(\T\T\i_G\times\T\T\i_G)\T\T\m_G^\o\right\>\T\T\m_G
\end{align*}
However, by evaluating $\RHS$ only the last component $\g$ of $\T\T G$, the last term $\T\T\m_G$ becomes $\T\T_\e\m_G=\T+_\g$. However, in the last component, $\T+_\g$ also coincides with $s_{\T G}$. Therefore, by the unitality of $s_{\T G}$ the term $(\iota_\e\times\iota_\e)\pi_2z_{\T G}$ cancels out. Furthermore, since the negation $n_\g$ of $\g$ is induced by the inverse map $\i_G$ of $G$, $\T\T\i_G$, evaluated onto $\g$, becomes $\T\T_\e\i_G=\T n_G$, which, however, on the last component is also equal to $n_{\T G}$. So, we obtain that this term, evaluated on the last component $\g$ of $\T\T G$, becomes:
\begin{align*}
&\left\<(\iota_\e\T z_G\times\iota_\e z_{\T G})\T\T\m_G,\pi_2\iota_\e z_{\T G},(\iota_\e\T z_G\times\iota_\e z_{\T G})(\T\T\i_G\times\T\T\i_G)\T\T\m_G^\op\right\>s_{\T G}\\
&\qquad=~(\iota_\e\T z_G\times\iota_\e z_{\T G})\T\T\m_G+(\iota_\e\T z_Gn_{\T G}\times\iota_\e z_{\T G}n_{\T G})\T\T\m_G^\op
\end{align*}
By putting all things together, the $\RHS$ term, evaluated on the last $\g$, becomes:
\begin{align*}
\RHS_\g&=~\left(z_{G}z_{\T G}\times\left((\iota_\e\T z_G\times\iota_\e z_{\T G})\T\T\m_G+(\iota_\e\T z_Gn_{\T G}\times\iota_\e z_{\T G}n_{\T G})\T\T\m_G^\op\right)\right)\T\T\m_G\\
&=~(z_Gz_{\T G}\times\iota_\e\T z_G\times\iota_\e z_{\T G})\left(((\id_{\T\T G}\times\T\T\m_G)\T\T\m_G+(\id_{\T\T G}\times\T\T\m^\op_Gn_{\T G})\T\T\m_G\right)\\
&=~\LHS_\g
\end{align*}
where we used the naturality of $n$ and of $s$.
\end{proof}

This theorem has an immediate result.

\begin{corollary}
\label{corollary:abelian-lie-groups-lie-algebra}
The Lie algebra object of an Abelian Lie group is trivial, in that $[,]=!0_\g$.
\end{corollary}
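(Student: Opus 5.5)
The plan is to go through Theorem~\ref{theorem:adjoint-formula}, which gives $[,]=\ad$, where $\ad=(\iota_\e\times z_\g)\T\Ad\hat p_\g$. It then suffices to show that when $G$ is Abelian the adjoint representation $\Ad\colon G\times\g\to\g$ is the second projection $\pi_2$. Granting this, we compute
\begin{align*}
\ad=(\iota_\e\times z_\g)\T\pi_2\hat p_\g=\pi_2z_\g\hat p_\g=\pi_2!_\g0_\g=!0_\g,
\end{align*}
where the identity $z_\g\hat p_\g=!_\g0_\g$ is axiom \textbf{[DO.2]} for the differential object $\g$.

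The main step is to show that $\AD=\pi_2$ whenever $\m_G$ is commutative, that is, $\tau\m_G=\m_G$. We use Lemma~\ref{lemma:AD}: the element-wise reasoning there gives $ghg^{-1}$, which collapses to $h$ once $g$ and $h$ commute. In diagrammatic terms, we use commutativity to rewrite $\m^\op_G=\tau\m_G$ as $\m_G$. Then associativity and commutativity reduce $\AD$ to $\<\pi_1,\pi_2,\pi_1\i_G\>$ followed by a multiplication that brings $\pi_1$ next to $\pi_1\i_G$. The inverse axiom turns that pair into $!\e$, and unitality leaves $\pi_2$. This is routine equational reasoning in the group object, and I expect it to be the only place needing care, because the bracketing must be arranged so that $g$ and $g^{-1}$ become adjacent.

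Next, $\Ad$ is by definition the unique map with $\Ad\iota_\e=(z_G\times\iota_\e)\T\AD$. With $\AD=\pi_2$ this becomes $(z_G\times\iota_\e)\T\pi_2=\pi_2\iota_\e$, since $\T$ preserves products. The universal property of the pullback defining $\g=\T_\e G$, equivalently the fact that $\iota_\e$ is monic, then gives $\Ad=\pi_2$. Substituting this into the formula above gives $[,]=!0_\g$.
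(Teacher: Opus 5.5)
Your proposal is correct and follows the paper's proof essentially step for step. You show $\AD=\pi_2$ for Abelian $G$, deduce $\Ad=\pi_2$ from $\Ad\iota_\e=\pi_2\iota_\e$ and the monicity of $\iota_\e$, and then substitute into $[,]=\ad$ using $z_\g\hat p_\g=!0_\g$ from \textbf{[DO.2]}; your sketch of the equational proof of $\AD=\pi_2$ is only a bit more explicit than the paper's element-wise argument.
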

\begin{proof}
For an Abelian group object, $\AD$ becomes the second projection, that is, $\AD=\pi_2$. To see this, we use an element-wise argument and leave it to the reader to prove it more formally using the definition. $\AD(g,h)=ghg^{-1}=gg^{-1}h=h$. This implies that also $\Ad$ becomes the second projection $\pi_2\colon G\times\g\to\g$. In fact, $\Ad$ is defined as the unique morphism such that $\Ad\iota_\e=(z_G\times\iota_\e)\T\AD$. However, when $\AD=\pi_2$, this implies that $\Ad\iota_\e=\pi_2\iota_\e$. Therefore, since $\iota_\e$ is monic, $\Ad=\pi_2$. Finally, using the formula of Theorem~\ref{theorem:adjoint-formula}, we can compute:
\begin{align*}
&[,]=\ad=(\iota_\e\times z_G)\T\Ad\hat p_\g=(\iota_\e\times z_G)\pi_2\hat p_\g=\pi_2z_G\hat p_\g=\pi_2!0_\g=!0_\g
\end{align*}
where we used that $z_G\hat p_\g=!0_\g$, which is a property of the differential projection $\hat p_\g$.
\end{proof}

\subsection{The differential Lie algebra of a Lie group}
\label{subsection:differential-lie-algebra-of-tangent-lie-groups}
So far, we proved that every Lie group admits a Lie algebra object internal to the tangent category. Now, we want to show that the Lie algebra is in fact a differential Lie algebra.

We begin with a technical lemma.

\begin{lemma}
\label{lemma:additivity-and-linearity-lie-algebra}
For a differential Lie group $G$, the \PLIVFs $z_G\colon G\to\T G$ and $\omega_1+\omega_2$ are linear.
\end{lemma}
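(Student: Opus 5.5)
The plan is to verify directly the linearity condition $Xl_G=(z_G\times\lambda_A)\T X$ for each of the two \PLIVFs, using the linearity of the universal \PLIVF $\omega$ together with the additivity axioms \textbf{[TNG.3]} of the vertical lift. Throughout, $G$ is a Lie group, so by Theorem~\ref{theorem:first-fundamental-lie} and Proposition~\ref{proposition:tangently-differentiably} its Lie functor is differentiably representable. Hence, by Theorem~\ref{theorem:differential-universal-vector-field}, $\g=\T_\e G$ is a differential object and $\omega$ is linear: $\omega l_G=(z_G\times\lambda_\g)\T\omega$. Equivalently, this follows from Lemma~\ref{lemma:universal-vector-field-via-iota}, $\omega=(z_G\times\iota_\e)\T\m_G$, combined with the linearity of $\iota_\e$ (Lemma~\ref{lemma:tangent-spaces}).

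\emph{The zero.} Regard $z_G$ as a $\*$-\PLIVF. Since $\T\*\cong\*$, the vertical lift of the terminal differential object is the identity. The linearity condition therefore reduces to $z_Gl_G=z_G\T z_G$, which is precisely the second diagram of \textbf{[TNG.3]}. This settles the first claim.

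\emph{The sum.} Write $\omega_k=(\id_G\times\pi_k)\omega$ for $k=1,2$, and $\omega_1+\omega_2=\<\omega_1,\omega_2\>s_G$; the pairing lands in $\T_2G$ because both $\omega_k$ lie over $\pi_1$.

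First, each $\omega_k$ is linear as a $(\g\times\g)$-\PLIVF. The vertical lift of $\g\times\g$ is $\lambda_\g\times\lambda_\g$, and the projections are linear, so $\pi_k\lambda_\g=(\lambda_\g\times\lambda_\g)\T\pi_k$. Therefore
\begin{align*}
\omega_kl_G&=(\id_G\times\pi_k)(z_G\times\lambda_\g)\T\omega\\
&=(z_G\times\lambda_\g\times\lambda_\g)\T\bigl((\id_G\times\pi_k)\omega\bigr).
\end{align*}

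Second, the third diagram of \textbf{[TNG.3]} gives $s_Gl_G=(l_G\times_{z_G}l_G)\T s_G$. Hence
\begin{align*}
\<\omega_1,\omega_2\>s_Gl_G&=\<\omega_1l_G,\omega_2l_G\>\T s_G\\
&=(z_G\times\lambda_{\g\times\g})\<\T\omega_1,\T\omega_2\>\T s_G,
\end{align*}
and the last expression equals $(z_G\times\lambda_{\g\times\g})\T(\<\omega_1,\omega_2\>s_G)$.

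The one point requiring care, and the main obstacle, is the identification of the pairing $\<\T\omega_1,\T\omega_2\>$ into $\T\T_2G$ with $\T\<\omega_1,\omega_2\>$. This holds because $\T_2G$ is a tangent pullback (\textbf{[TNG.1]}), so $\T$ preserves it, and because $\T\omega_1$ and $\T\omega_2$ agree after $\T p_G$.

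As a cross-check, one could alternatively observe that, by the construction of $+$ in Theorem~\ref{theorem:lie-algebra-of-lie-group}, $\omega_1+\omega_2=(\id_G\times+)\omega$. Since $+$ is linear (\textbf{[DO.3]}) and $\omega$ is linear, the same computation as for $\omega_k$ then gives linearity at once, provided one accepts that the Yoneda-induced sum coincides with the differential sum (Lemma~\ref{lemma:additive-structure-tangent-spaces}).
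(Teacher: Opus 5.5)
Your proposal is correct and follows the paper's proof essentially step for step. For $z_G$ you use \textbf{[TNG.3]} against the trivial lift $\lambda_\*\colon\*\cong\T\*$, and for $\omega_1+\omega_2$ you combine the linearity of $\omega$ with $s_Gl_G=(l_G\times_{z_G}l_G)\T s_G$; your appeal to \textbf{[TNG.1]} to identify $\<\T\omega_1,\T\omega_2\>$ with $\T\<\omega_1,\omega_2\>$ makes explicit a step the paper passes over silently.

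Do not lean on your closing cross-check as stated, though. In the paper, the coincidence of the Yoneda-induced $+$ with the differential sum is deduced \emph{from} this lemma, and Lemma~\ref{lemma:additive-structure-tangent-spaces} concerns $\T_\e\m_G$, not the Yoneda sum. The cross-check therefore needs an independent justification, such as $+\iota_\e=\<\pi_1\iota_\e,\pi_2\iota_\e\>s_G$.
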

\begin{proof}
To prove that $z_G$ is linear, we only need to realize that the lift of $\*$, as a differential object, coincides with the unique isomorphism $\lambda_\*\colon\*\cong\T\*$, since $\T$ preserves the terminal object. Thus, $z_Gl_G=z_G\T z_G=(z_G\times\lambda_\*)\T z_G$. Therefore, $z_G$ is linear. Now, let us show that also $\omega_1+\omega_2$ is linear. Since $G$ is a differential Lie group, by Theorem~\ref{theorem:differential-universal-vector-field}, $\omega$ is a linear \PLIVF\!. Therefore, we can compute:
\begin{align*}
(\omega_1+\omega_2)l_G&=~\<\omega_1,\omega_2\>s_Gl_G        \tag{Definition of $u+v$}\\
&=~\<\omega_1,\omega_2\>(l_G\times l_G)\T s_G               \tag{Linearity of $s$}\\
&=~\<\omega_1l_G,\omega_2l_G\>\T s_G\\
&=~\<(\id_G\times\pi_1)\omega l_G,(\id_G\times\pi_2)\omega l_G\>\T s_G      \tag{Definition of $\omega_1$ and $\omega_2$}\\
&=~\<(\id_G\times\pi_1)(z_G\times\lambda_\g)\T\omega,(\id_G\times\pi_2)(z_G\times\lambda_\g)\T\omega\>\T s_G    \tag{Linearity of $\omega$}\\
&=~\<(z_G\times\lambda_{\g\times\g})\T(\id_G\times\pi_1)\T\omega,(z_G\times\lambda_\g)\T(\id_G\times\pi_2)\T\omega\>\T s_G\\
&=~(z_G\times\lambda_{\g\times\g})\T\<\omega_1,\omega_2\>\T s_G                       \tag{Definition of $\omega_1$ and $\omega_2$}\\
&=~(z_G\times\lambda_{\g\times\g})\T(\omega_1+\omega_2)         \tag{Definition of $u+v$}
\end{align*}
This proves that $\omega_1+\omega_2$ is linear.
\end{proof}

\begin{remark}
\label{remark:linearity-of-sum}
Note that $\omega_1+\omega_2$ is \emph{linear} and not bilinear.
\end{remark}

\begin{theorem}
\label{theorem:second-fundamental-lie}
The Lie algebra of a Lie group is a differential Lie algebra.
\end{theorem}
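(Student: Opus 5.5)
By Lemma~\ref{lemma:differential-lie-algebras}, and since Theorem~\ref{theorem:lie-algebra-of-lie-group} already shows that $(\g,0,+,[,])$ satisfies antisymmetry and the Jacobi identity, two things remain. First, the zero $0$ and sum $+$ obtained from the Lie functor must coincide with the zero $0_\g$ and sum $+_\g$ of the differential object $\g=\T_\e G$. Second, the bracket $[,]$ must be bilinear.

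\textbf{Step 1: the additive structure.} By Theorem~\ref{theorem:first-fundamental-lie}, $G$ is a Lie group, so the Lie functor is differentiably representable. Hence $\varphi$ sends linear \PLIVFs to linear morphisms. By Lemma~\ref{lemma:additivity-and-linearity-lie-algebra}, $\omega_1+\omega_2$ is a linear $(\g\times\g)$-\PLIVF, so $+\colon\g\times\g\to\g$ is linear. A linear map out of a differential object preserves zero and sum, so $+$ is additive. The element $\<\id,0\>$ then shows that $+=+_\g$ via the standard Eckmann--Hilton argument, because $+$ and $+_\g$ share a unit.

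To see that the units agree, I will compute the unit directly. $0$ is characterized by $(\id_G\times 0)\omega=z_G$. Precomposing with $\<!\e,\id\>$ gives $0\,\iota_\e=\e z_G=!\e z_G$. On the other hand, $0_\g\iota_\e=\e z_G$ because $\iota_\e$ is linear and therefore preserves zero. Since $\iota_\e$ is monic, $0=0_\g$.

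As an alternative route, I can use Lemma~\ref{lemma:universal-vector-field-via-iota}: $\omega_1+\omega_2=(z_G\times\iota_\e\times\iota_\e)(\ldots)$ summed with $s_G$. Pulling back along the unit shows $+\,\iota_\e=(\iota_\e\times\iota_\e)s_G$ restricted to the fibre. This is exactly the sum of the tangent space as a differential object, by~\cite[Corollary~3.5]{cockett:differential-bundles}.

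\textbf{Step 2: bilinearity of the bracket.} I will use Theorem~\ref{theorem:adjoint-formula}, which gives $[,]=\ad=(\iota_\e\times z_\g)\T\Ad\,\hat p_\g$. The plan has three parts.

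\begin{enumerate}
\item \emph{Linearity in the second variable.} $\Ad\colon G\times\g\to\g$ is linear in its second argument. Indeed, $\Ad\,\iota_\e=(z_G\times\iota_\e)\T\AD$; here $\T\AD$ is of the form $\T f$ and hence linear, and $\iota_\e$ is linear and monic. So $(z_G\times\lambda_\g)\T\Ad=\Ad\,\lambda_\g$. Applying $\T$ and the flip $c$ transfers this to $\T\Ad$.

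\item \emph{Linearity in the first variable.} For any morphism $h$, the map $v\mapsto\T h(v)\hat p$ is linear in the tangent direction. This follows from Equation~\eqref{equation:lambda-hat-p} together with DO.4, using the naturality of $l$ and the identity $l c=l$ from TNG.6.

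\item \emph{Assembling the two.} Combining the previous two parts expresses the derivative of a map linear in its second variable as bilinear.
\end{enumerate}

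Concretely, for each $k=1,2$ I must check $\lambda_k\T\ad=\ad\,\lambda_\g$. This becomes an identity in $\T\T(G\times\g)$, to be handled with the axioms TNG.5--6 on $l$ and $c$.

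\textbf{Fallback for Step 2.} A slicker route avoids the adjoint formula. Show directly that $[\omega_1,\omega_2]$ is a bilinear $(\g,\g)$-\PLIVF. Then differentiable representability (Definition~\ref{definition:differentiably-representable}) yields that $[,]$ is bilinear. Bilinearity of $[\omega_1,\omega_2]$ would follow from linearity of $\omega$ (Theorem~\ref{theorem:differential-universal-vector-field}) and the fact that the Lie bracket of vector fields is a strong tangent-morphism-invariant operation, compatible with $l$ in each slot.

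\textbf{Main obstacle.} I expect the bilinearity in Step 2 to be the hardest part, specifically commuting the lift $\lambda_k$ through $\T\T$-level flips. My first attempt will be the $\ad$ route, reducing everything to the linearity of $\iota_\e$ and $\hat p_\g$.
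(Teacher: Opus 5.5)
Your proposal is correct. Step~1 matches the paper, and Step~2 differs from it only in how you obtain linearity in the second slot.

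\textbf{Step 1.} This is essentially the paper's argument: $z_G$ and $\omega_1+\omega_2$ are linear, $\varphi$ preserves linearity, and Eckmann--Hilton finishes. Your direct unit check and your alternative $+\,\iota_\e=\<\pi_1\iota_\e,\pi_2\iota_\e\>s_G$ are both sound. The latter even avoids Eckmann--Hilton, since the sum on the tangent space is by construction the one induced by $s_G$ and $\iota_\e$ is monic.

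\textbf{Step 2, first slot.} You also go through $[,]=\ad$, and your first-slot argument is exactly the paper's computation: $(\lambda_\g\times z_\g)\T\ad=(\iota_\e\times z_\g)l_{G\times\g}\T\T\Ad\,\T\hat p_\g=(\iota_\e\times z_\g)\T\Ad\,l_\g\T\hat p_\g=\ad\,\lambda_\g$. This uses only the linearity of $\iota_\e$, the identity $z_\g l_\g=z_\g\T z_\g$, naturality of $l$, and $l_\g\T\hat p_\g=\hat p_\g\lambda_\g$ (DO.4 with DO.5). The identity $lc=l$ is not needed.

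\textbf{Step 2, second slot.} Here you genuinely differ. The paper gets this slot from antisymmetry, tacitly using that $n_\g$ is linear, which follows from DO.3. You prove it directly, and that works with two corrections.

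First, you establish $(z_G\times\lambda_\g)\T\Ad=\Ad\,\lambda_\g$ by showing both sides agree after composing with $\T\iota_\e$, via $(z_G l_G\times\iota_\e l_G)\T\T\AD=(z_G\times\iota_\e)\T\AD\,l_G$. The cancellation needs $\T\iota_\e$ to be monic, not merely $\iota_\e$. This holds because the pullback defining $\g$ is a tangent pullback, so $\T\iota_\e$ is a pullback of the split mono $\T\e$.

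Second, the transfer to $\ad$ needs TNG.3--4 and naturality, not TNG.5--6. From $z_\g\T\iota_\e=\iota_\e\T z_G c_G$ and $\lambda_\g\T z_\g=z_\g\T\lambda_\g c_\g$ you get $(z_\g\times\lambda_\g)\T\ad=(\iota_\e\times z_\g)\T\big((z_G\times\lambda_\g)\T\Ad\big)c_\g\T\hat p_\g=(\iota_\e\times z_\g)\T\Ad\,\T\lambda_\g c_\g\T\hat p_\g$. You then close with $\T\lambda_\g c_\g\T\hat p_\g=\hat p_\g\lambda_\g$, which is the intended content of Equation~\eqref{equation:lambda-hat-p}.

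\textbf{Comparison.} Your route is longer and needs the monicity of $\T\iota_\e$. In exchange, it proves bilinearity of $\ad$ without any Lie algebra axiom: the derivative at the unit of an action linear in the representation variable is bilinear. The paper's route is shorter because antisymmetry is already available from the Yoneda construction.

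Your fallback is the route the paper mentions in its remark after the theorem. As you state it, the justification is only heuristic, so it should not be relied on without the full computation.
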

\begin{proof}
By Lemma~\ref{lemma:additivity-and-linearity-lie-algebra}, $z_G$ and $\omega_1+\omega_2$ are linear \PLIVF\!. However, by Theorem~\ref{theorem:first-fundamental-lie}, the isomorphisms $\varphi\colon\g(A_1\times\dots\times A_n)\to\X(A_1\times\dots\times A_n,\g)$ and their inverses preserve multilinearity. It immediately follows that $0\colon\*\to\g$ and $+\colon\g\times\g\to\g$ are linear morphisms of differential objects. The linearity of $0$ and $+$ implies that they commute with the zero and the sum morphisms of $\g$ as a differential object. Therefore, by an Eckmann-Hilton argument, they must coincide with the zero and the sum of $\g$ as a differential object. To conclude, it remains to show that $[,]$ is bilinear. Since it satisfies antisymmetry, it suffices to show that it is linear in the first argument. However, using the formula of Theorem~\ref{theorem:adjoint-formula} we can compute:
\begin{align*}
(\lambda_\g\times z_G)\T\ad&=~(\lambda_\g\times z_G)(\T\iota_\e\times\T z_G)\T\T\Ad\T\hat p_\g   \tag{Definition of $\ad$}\\
&=~(\iota_\e l_G\times z_Gl_G)\T\T\Ad\T\hat p_\g                                                 \tag{Linearity of $\iota_\e$ and $z_Gl_G=z_G\T z_G$}\\
&=~(\iota_\e\times z_G)\T\Ad l_G\T\hat p_\g            \tag{Naturality of $l$}\\
&=~(\iota_\e\times z_G)\T\Ad\hat p_\g l_\g              \tag{Compatibility between $l$ and $\hat p_\g$}\\
&=~\ad\lambda_\g                                          \tag{Definition of $\ad$}
\end{align*}
Therefore, by Lemma~\ref{lemma:differential-lie-algebras}, $\g$ becomes a differential Lie algebra.
\end{proof}

\begin{remark}
\label{remark:differential-lie-algebra-proof}
An alternative proof of Theorem~\ref{theorem:second-fundamental-lie} consists of showing that the $(\g,\g)$-\PLIVF $[\omega_1,\omega_2]$ is bilinear and then using the preservation of multilinearity by the isomorphisms $\varphi$. However, this proof requires long and tedious computations.
\end{remark}


\section{The Lie group-Lie algebra correspondence}
\label{section:functoriality}
In this section, we show that the assignment for each Lie group $G$ of its differential Lie algebra extends to a functor $\Lie$. We use this functor to introduce the notion of a \emph{Lie tangent category}, which is a Cartesian tangent category in which to each differential Lie algebra corresponds a Lie group. This generalizes the classic Lie group-Lie algebra correspondence of differential geometry.

We then consider three main examples of Cartesian tangent categories and compute the Lie algebras of the Lie groups in these categories. In particular, we show that our construction fully generalizes both the classic Lie correspondences of differential and algebraic geometry. We conclude with a concrete example on how to compute the Lie algebra of the Hopf algebra $\SL_2$ of Example~\ref{example:affine-group-schemes}.

\subsection{The Lie correspondence}
\label{subsection:functoriality}
In this section we extend the correspondence between Lie groups and differential Lie algebras to a functor $\Lie\colon\LieGr(\X,\TT)\to\LieAlg(\X,\TT)$. We also prove that this functor preserves tangent limits.

\begin{lemma}
\label{lemma:functoriality}
If $f\colon G\to H$ is a group homomorphism between two Lie groups, $\T_\e f\colon\g\to\h$ becomes a Lie algebra homomorphism.
\end{lemma}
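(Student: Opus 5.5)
We have to show that $\T_\e f$ preserves the zero, the sum and the bracket. Zero and sum are quick. $\T_\e f$ is the morphism $\T_\e$ applied in $\LieGr(\X,\TT)$, so by Lemma~\ref{lemma:tangent-space-linear-assignment} it is a group homomorphism between the group objects $\T_\e G$ and $\T_{\e_H}H$. By Lemma~\ref{lemma:additive-structure-tangent-spaces} and Theorem~\ref{theorem:second-fundamental-lie}, the group structure on $\g$ coincides with the zero and sum of the differential Lie algebra, and likewise for $\h$. Hence $\T_\e f$ commutes with $0$ and $+$.

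For the bracket I would use the adjoint formula of Theorem~\ref{theorem:adjoint-formula}, writing $[,]_\g=\ad_\g$ and $[,]_\h=\ad_\h$.

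\textbf{Step 1: $f$ intertwines $\AD$.} Since $f$ is a group homomorphism, it preserves $\m$, $\i$, $\Delta$ and $\tau$, so
\begin{align*}
(f\times f)\AD_H=\AD_Gf.
\end{align*}

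\textbf{Step 2: $\T_\e f$ intertwines $\Ad$.} I want
\begin{align*}
\Ad_G\T_\e f=(f\times\T_\e f)\Ad_H.
\end{align*}
Both sides land in $\h$, so since $\iota_\e$ is monic it suffices to compare them after postcomposing with $\iota_\e$. Using $\T_\e f\iota_\e=\iota_\e\T f$, the definition of $\Ad$, naturality of $z$ (so $z_G\T f=fz_H$) and Step~1, both sides reduce to
\begin{align*}
(z_G\times\iota_\e)\T(\AD_Gf)=(z_G\times\iota_\e)\T((f\times f)\AD_H).
\end{align*}

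\textbf{Step 3: differentiate.} Apply $\T$ to Step~2 and precompose with $\iota_\e\times z_\g$. Naturality of $z$ gives $z_\g\T\T_\e f=\T_\e fz_\h$, and $\iota_\e\T f=\T_\e f\iota_\e$, so
\begin{align*}
(\iota_\e\times z_\g)\T\Ad_G\T\T_\e f=(\T_\e f\times\T_\e f)(\iota_\e\times z_\h)\T\Ad_H.
\end{align*}
Postcomposing with $\hat p_\h$ yields $\ad_\g\T_\e f=(\T_\e f\times\T_\e f)\ad_\h$, provided $\T\T_\e f\,\hat p_\h=\hat p_\g\T_\e f$, i.e.\ that $\T_\e f$ is linear.

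\textbf{Main obstacle: linearity of $\T_\e f$.} I would prove it via Lemma~\ref{lemma:tangent-spaces}. The inclusions $\iota_\e$ are linear and $\T f$ commutes with vertical lifts by naturality of $l$. Hence
\begin{align*}
\T_\e f\lambda_\h\T\iota_\e=\T_\e f\iota_\e l_H=\iota_\e\T f l_H=\iota_\e l_G\T\T f=\lambda_\g\T\iota_\e\T\T f=\lambda_\g\T\T_\e f\T\iota_\e.
\end{align*}
Since $\T\iota_\e$ is monic (the pullback is a tangent pullback), this gives $\T_\e f\lambda_\h=\lambda_\g\T\T_\e f$. Commuting with $\lambda$ is equivalent to commuting with $\hat p$, via Equation~\eqref{equation:lambda-hat-p} or the characterisation $\lambda\hat p=\id$, $\lambda p=!0$ together with DO.5; this step needs the most care. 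Linearity also recovers additivity directly, giving an alternative to the first paragraph.
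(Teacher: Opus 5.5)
Your proposal is correct and follows the paper's proof essentially step for step. You show $\AD_Gf=(f\times f)\AD_H$, use monicity of $\iota_\e$ to get $\Ad_G\T_\e f=(f\times\T_\e f)\Ad_H$, and then apply $\T$, precompose with $\iota_\e\times z_\g$ and use linearity of $\T_\e f$ to obtain $\ad_\g\T_\e f=(\T_\e f\times\T_\e f)\ad_\h$, which is bracket preservation by Theorem~\ref{theorem:adjoint-formula}.

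The only difference is that the paper simply takes linearity of $\T_\e f$ as already known, whereas you derive $\T_\e f\lambda_\h=\lambda_\g\T\T_\e f$ from the linearity of both inclusions $\iota_\e$ and monicity of $\T\iota_\e$. To close the step you flagged, precompose both $\T\T_\e f\,\hat p_\h$ and $\hat p_\g\T_\e f$ with the isomorphism $\hat\xi_\g=(z_\g\times\lambda_\g)\T+_\g$. Using the additivity from your first paragraph together with $z\hat p=!0$, $\lambda\hat p=\id$ and DO.3, both composites reduce to $\pi_2\T_\e f$.
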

\begin{proof}
We already know that $\T_\e f\colon\g\to\h$ is linear and therefore is additive. To conclude, it is left to prove that $\T_\e f$ preserves the Lie bracket. Thankfully, by Theorem~\ref{theorem:adjoint-formula}, we have a concrete description of the Lie bracket in terms of the adjoint representation. We start by showing that $f$ preserves $\Ad$. Since $\AD$ is entirely defined using the multiplication of the group, we leave it to the reader to show that $\AD_Gf=(f\times f)\AD_H$. We shall compute:
\begin{align*}
\Ad_G\T_\e f\iota_\e&=~\Ad_G\iota_\e\T f              \tag{Definition of $\T_\e f$}\\
&=~(z_G\times\iota_\e)\T\AD_G\T f                \tag{Definition of $\Ad$}\\
&=~(z_G\times\iota_\e)(\T f\times\T f)\T\AD_H    \tag{$f$ preserves $\AD$}\\
&=~(fz_H\times\T_\e f\iota_\e)\T\AD_H            \tag{Naturality of $z$ and def'n of $\T_\e f$}\\
&=~(f\times\T_\e f)\Ad_H\iota_\e                 \tag{Definition of $\Ad$}
\end{align*}
Using that $\iota_\e$ is monic, we conclude that $f$ preserves the adjoint representation. Therefore:
\begin{align*}
\ad_\g\T_\e f&=~(\iota_\e\times z_\g)\T\Ad_G\hat p_\g\T_\e f       \tag{Definition of $\ad$}\\
&=~(\iota_\e\times z_\g)\T\Ad_G\T f\hat p_\h                       \tag{Linearity of $\T_\e f$}\\
&=~(\iota_\e\T f\times z_\g\T\T_\e f)\T\Ad_H\hat p_\h              \tag{$f$ preserves $\Ad$}\\
&=~(\T_\e f\times\T_\e f)(\iota_\e\times z_\h)\T\Ad_H\hat p_\h     \tag{Naturality of $z$ and def'n of $\T_\e f$}\\
&=~(\T_\e f\times\T_\e f)\ad_\h                                    \tag{Definition of $\ad$}
\end{align*}
This proves that $\T_\e f$ preserves $\ad$ and therefore that $\T_\e f$ is a Lie algebra homomorphism.
\end{proof}

\begin{theorem}
\label{theorem:functoriality}
There is a functor
\begin{align*}
&\Lie\colon\LieGr(\X,\TT)\to\LieAlg(\X,\TT)
\end{align*}
that sends each Lie group $G$ to its differential Lie algebra $\g$. Furthermore, $\Lie$ is the underlying functor of a strong tangent morphism and preserves tangent limits.
\end{theorem}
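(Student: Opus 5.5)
The functor is defined on objects by $\Lie(G)\=\g=\T_\e G$, with the differential Lie algebra structure of Theorem~\ref{theorem:second-fundamental-lie}, and on a group homomorphism $f\colon G\to H$ by $\Lie(f)\=\T_\e f$, the unique map with $\T_\e f\iota_\e=\iota_\e\T f$ from Lemma~\ref{lemma:tangent-space-linear-assignment}. Lemma~\ref{lemma:functoriality} shows that $\T_\e f$ is a morphism of differential Lie algebras. It is linear because it is the restriction of the linear map $\T f$ along the linear monic $\iota_\e$ of Lemma~\ref{lemma:tangent-spaces}, and it preserves the bracket by Theorem~\ref{theorem:adjoint-formula}. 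Preservation of identities and composition follows from the uniqueness in the pullback defining $\T_\e$, exactly as in Lemma~\ref{lemma:tangent-space-linear-assignment}. This gives the functor.

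For the tangent part, I would first equip $\LieAlg(\X,\TT)$ with a tangent structure. Since $\T$ preserves products, differential objects, and multilinear maps, it sends a differential Lie algebra $\g$ to a differential Lie algebra $\T\g$ with bracket $\T[,]$ (up to the iso $\T(\g\times\g)\cong\T\g\times\T\g$). The structural transformations $p,z,s,l,c$ are componentwise bracket-preserving by naturality. The universal properties are then reflected as in Lemma~\ref{lemma:groups-to-base-creates-limits}, because the forgetful functor to $\X$ reflects limits.

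Next I would build the strong tangent morphism structure. Lemma~\ref{lemma:group-objects-form-tangent-category} and Proposition~\ref{proposition:lie-functor-representable-but-tangent} already identify the Lie algebra of $\T G$ with underlying object $\T\T_\e G$, via the comparison $\alpha_G\colon\T\T_\e G\cong\T_{\T\e}\T G$ characterized by $\alpha_G\iota_{\T\e}=\T\iota_\e c_G$. The key check is that $\alpha_G$ is a Lie algebra isomorphism $\T(\Lie G)\cong\Lie(\T G)$. Here Proposition~\ref{proposition:tangent-universal-vector-field} does the work: the universal \PLIVF of $\T G$ is $\tau(\omega)=\T\omega c_G$, and $\tau$ is a Lie algebra homomorphism (Lemma~\ref{lemma:tau-lie-algebra-homomorphism}). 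Hence the bracket of $\T G$, characterized by $[\omega^1_1,\omega^1_2]$, equals $\T$ applied to the defining equation of $[,]_\g$, post-composed with $c$. So $\T[,]_\g$ corresponds to $[,]_{\T\g}$ under $\alpha$.

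Naturality of $\alpha$ and its compatibility with $p,z,s,l,c$ reduce, via monicity of $\iota_{\T\e}$, to the naturality and the tangent axioms for $c$ (e.g.\ $c_G\T p_G=p_{\T G}$, and TNG.5–6 for $l$ and $c$). I expect this bookkeeping of flips, especially for $l$ and $c$ on $\T\T$, to be the main obstacle.

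Finally, for tangent limits, let $G=\lim G_i$ be a tangent limit in $\LieGr(\X,\TT)$; by Lemma~\ref{lemma:groups-to-base-creates-limits} it is computed in $\X$. Then $\T_\e G$ is the pullback of $\T G=\lim\T G_i$ along $\e$. Limits commute with limits, so $\T_\e G\cong\lim\T_\e G_i$, and this is again preserved by every $\T^n$ since all pullbacks involved are tangent. The bracket is inherited componentwise because each projection $\T_\e\pi_i$ is a Lie algebra map and limits in $\LieAlg(\X,\TT)$ are reflected from $\X$.
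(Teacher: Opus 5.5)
Your proposal is correct, and its outline matches the paper's. The functor is $\T_\e$ together with Lemma~\ref{lemma:functoriality}. $\LieAlg(\X,\TT)$ becomes a tangent category by applying $\T$ to the bracket. Limits are handled by commuting the tangent pullback defining $\T_\e$ with other tangent limits and lifting along the forgetful functor.

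Where you genuinely differ is the strong tangent morphism. The paper obtains the comparison from Theorem~\ref{theorem:Grp-linear-GCDC}: since $\LieGr(\X,\TT)$ is a linear GCDC with assignment $\T_\e$, one has $\T_\e\T G\cong\T_\e G\times\T_\e G\cong\T\T_\e G$, the tangent bundle of $\g$ in $\DO(\X,\TT)$. It then only appeals to Cartesian functors preserving Lie algebra objects. It never checks that this isomorphism carries the bracket of $\Lie(\T G)$ to $\T[,]$.

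You instead define $\alpha_G$ directly by $\alpha_G\iota_{\T\e}=\T\iota_\e c_G$ and verify the brackets agree. Apply the Lie algebra homomorphism $\tau_{\g\times\g}$ to $(\id_G\times[,])\omega=[\omega_1,\omega_2]$ and use $\omega^1=\T\omega c_G$. This gives $(\id_{\T G}\times\T[,])\omega^1=[\omega^1_1,\omega^1_2]$, so uniqueness in the universal property of $\omega^1$ forces the bracket of $\T G$ to be $\T[,]$. This supplies the one non-formal verification that the paper leaves implicit.

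What remains unproved in your sketch, as in the paper, is the coherence of $\alpha$ with $p,z,s,l,c$. This is routine: by monicity of $\iota_{\T\e}$ it reduces to naturality of $c$, $c_G\T p_G=p_{\T G}$, and TNG.5--6.

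One small correction in your last step. Lemma~\ref{lemma:groups-to-base-creates-limits} only says the forgetful functor \emph{reflects} limits, so it does not show that a limit formed in $\LieGr(\X,\TT)$ is computed in $\X$. You should read ``tangent limit'' as a tangent limit of the underlying diagram in $(\X,\TT)$ whose cone consists of homomorphisms; that cone then lifts by the lemma. This is also the reading the paper's proof tacitly uses.
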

\begin{proof}
Functoriality is immediate since $\T_\e$ is functorial. Furthermore, since by Theorem~\ref{theorem:Grp-linear-GCDC}, the tangent category $\LieGr(\X,\TT)$ is a linear GCDC, its tangent structure is entirely determined by the linear assignment $\T_\e$. Therefore, $\T\T_\e G\cong\T_\e G\times\T_\e G$, which is precisely the tangent bundle of $\g$ in $\DO(\X,\TT)$. Finally, since Cartesian functors preserve Lie algebra objects, $\LieAlg(\X,\TT)$ becomes a tangent category and $\T_\e$ a strong tangent morphism. To prove that $\T_\e$ preserves tangent limits, notice that, by construction, $\T_\e G$ is the tangent pullback of $p_G$ along $\e_G$. However, tangent pullbacks commute with other tangent limits. Therefore, it is not hard to see that the functor $\T_\e\colon\LieGr(\X,\TT)\to\DO(\X,\TT)$ preserves tangent limits. To conclude, notice that the forgetful functor $\LieAlg(\X,\TT)\to\DO(\X,\TT)$, which forgets the Lie algebra structure, creates tangent limits. To see this, consider a cone $\lambda_c\colon\g\to D(c)$ in $\LieAlg(\X,\TT)$ of a given diagram $D$, which is made into a limit cone by the forgetful functor. Now, consider a second cone $\theta_c\colon\h\to D(c)$ of $D$. Since $\lambda_c$ is a limit cone in $\DO(\X,\TT)$, there exists a unique morphism of cones $\gamma\colon\h\to\g$ in $\DO(\X,\TT)$. We need to prove that $\gamma$ lifts to $\LieAlg(\X,\TT)$. However, $\gamma$ is already additive, so we only need to show that it preserves the Lie bracket. However, since $\lambda_c$ and $\theta_c$ are cones in $\LieAlg(\X,\TT)$ and that $\gamma$ is a morphism of cones, it follows immediately that $(\gamma\times\gamma)[,]_\g\lambda_c=[,]_\h\gamma\lambda_c$, which, by the universal property of $\lambda_c$ implies that $\gamma$ is a Lie algebra homomorphism. Similarly, one can show that if $\lambda_c$ is a tangent limit cone in $\DO(\X,\TT)$, then it is in $\LieAlg(\X,\TT)$. This proves that $\Lie$ preserves tangent limits.
\end{proof}

Lie's Third Theorem (see~\cite[Theorem~9.13]{etingof:lie-groups}) establishes that every finitely-dimensional Lie algebra $\g$ is the Lie algebra of a simply connected Lie group $\Gamma(\g)$. This construction extends to a functor $\Gamma\colon\LieAlg_\text{fdim}\to\LieGr_\b$ from the category of finitely-dimensional Lie algebras over $\R$ to the category of connected and locally connected Lie groups. Furthermore, $\Gamma$ defines a left adjoint to the $\Lie$ functor. To see this, use Lie's Second Theorem (see~\cite[Theorem~9.12]{etingof:lie-groups}), which establishes that
\begin{align*}
&\Lie\colon\LieGr(G,H)\to\LieAlg(\Lie(G),\Lie(H))
\end{align*}
becomes a bijection if $G$ is simply connected. By replacing $G$ by $\Gamma(\g)$ for a Lie algebra $\g$, we obtain a bijection $\LieGr(\Gamma(\g),H)\cong\LieAlg(\g,\Lie(H))$, establishing that $\Gamma$ is left adjoint to $\Lie$.

The tangent bundle functor of $\SMan$ preserves connected and locally connected manifolds. Therefore, the tangent structure on $\SMan$ restricts to a tangent structure on the subcategory $\SMan_\b$ of connected and locally connected manifolds. Furthermore, the functor $\Lie$ of $\SMan_\b$ is simply a restriction of $\Lie$ of $\SMan$ and the internal Lie algebra objects of $\SMan_\b$ are finite-dimensional Lie algebras over $\R$. Therefore, $\Gamma$ can be defined as the left adjoint of the functor $\Lie$ of $\SMan_\b$.

This suggests the following definition.

\begin{definition}
\label{definition:lie-tangent-category}
A \textbf{Lie tangent category} consists of a Cartesian tangent category whose functor $\Lie$ of Theorem~\ref{theorem:functoriality} admits a left adjoint.
\end{definition}

Therefore, the classic Lie group-Lie algebra correspondence of differential geometry establishes that $\SMan_\b$ is a Lie tangent category.

We now look at some examples of tangent categories and compute the Lie algebras of their Lie groups.

\subsection{The trivial case}
\label{subsection:example-trivial}
We begin by considering the trivial tangent structure on a Cartesian category (Example~\ref{example:tangent-categories-trivial}). Since the projection is simply the identity, every object $M$ admits a tangent space at each point $x\colon\*\to M$, the tangent space $\T_xM$ being simply the terminal object $\*$. Therefore, by Theorem~\ref{theorem:first-fundamental-lie}, every group object is a Lie group whose differential Lie algebra is simply the terminal object equipped with the uniquely defined Lie bracket.

In particular, the functor $\Lie$ becomes the constant functor to the trivial Lie algebra $\*$. This example shows that, in general, we should not expect $\Lie$ to be either a right adjoint or fully faithful.

\subsection{Lie group theory in differential geometry: a tangent-categorical perspective}
\label{subsection:example-sman}
Consider now the Cartesian tangent category $(\SMan,\TT)$ (Example~\ref{example:tangent-categories-sman}). This is a well-displayed tangent category, that is, the projection $p_M\colon\T M\to M$ of every object $M$ is a tangent display map. This is because $p_M$ is a submersion (in fact a fibre bundle) and that, by~\cite[Theorem~2.31]{cruttwell:tangent-display-maps}, submersions in $\SMan$ are equivalent to tangent display maps. Therefore, by Lemma~\ref{lemma:tangent-display-lie}, every group object of $\SMan$ is a Lie group object. This means that our notion of Lie group objects coincides with the classic notion of a Lie group (group internal to $\SMan$).

The differential Lie algebra of a Lie group in $\SMan$ also coincides with the usual construction of the Lie algebra of left-invariant vector fields of a Lie group. This can be easily seen directly from the definition of the Lie functor $\g(-)$ of a group $G$ or by recalling that the Lie algebra of a Lie group can also be defined as the tangent space at the unit, that is, $\T_\e G$, whose Lie bracket is given by $\ad$, as in Theorem~\ref{theorem:adjoint-formula}.

\subsection{Lie group theory in algebraic geometry: a tangent-categorical perspective}
\label{subsection:affine-group-schemes}
Now, consider the Cartesian tangent category $(\Aff_R,\TT^\Omega)$ (Example~\ref{example:tangent-categories-affine}). We have already established in Example~\ref{example:affine-group-schemes} that group objects of $\Aff_R$ are affine group schemes, which are dual to commutative Hopf algebras. Furthermore, the category $\Aff_R$ is complete, and the tangent bundle functor is representable; thus, it preserves all limits. Therefore, $\Aff_R$ is well-displayed, and by Lemma~\ref{lemma:tangent-display-lie}, every group object in $\Aff_R$ becomes a Lie group. Thus, according to Theorem~\ref{theorem:first-fundamental-lie}, every affine group scheme is a Lie group object of $\Aff_R$.

In the literature, the \emph{Lie algebra} of an affine group scheme $H$ is defined as the $R$-module of left-invariant derivations of $H$, which consist of $R$-linear morphisms $\delta\colon H\to H$ satisfying the following two equations:
\begin{align*}
&\delta(xy)=x\delta(y)+y\delta(x)       &&\Delta(\delta(x))=(\id_H\x\delta)(\Delta(x))
\end{align*}
where $\Delta$ is the comultiplication of $H$. \cite[The first theorem of Section~12.2]{milne:algebraic-groups} establishes that left-invariant derivations are in bijection with $\epsilon$-derivations, which consist of $R$-linear morphisms $\delta\colon H\to R$, satisfying the following Leibniz condition:
\begin{align*}
&\delta(xy)=\epsilon(x)\delta(y)+\epsilon(y)\delta(x)
\end{align*}
where $\epsilon$ denotes the counit of $H$. We may denote the Lie algebra of left-invariant derivations of $H$ by $\LIDer(H)$.

Our construction produces for each affine group scheme $H$ a differential Lie algebra internal to the category of affine schemes. Differential objects in $\Aff_R$ are equivalent to $R$-modules (Example~\ref{example:differential-objects-affine}). In particular, there is an equivalence of categories between $\DO(\Aff_R)$ and the opposite category $\Mod_R^\op$ of $R$-modules.

Therefore, the functor $\Lie$ of Theorem~\ref{theorem:functoriality} sends each affine group scheme $H\in\Gr(\Aff_R)$ to a Lie algebra internal to $\Mod_R^\op$, that is, a coLie $R$-algebra $\h$. We now prove that the dual of $\h$, that is, $\Mod_R(\h,R)$, coincides with the Lie algebra of left-invariant derivations of $H$.

\begin{theorem}
\label{theorem:lie-algebra-of-affine-group-schemes}
Consider an affine group scheme $H$. Under the equivalence $\DO(\Aff_R)\simeq\Mod_R^\op$, the dual of the internal Lie algebra $\Lie(H)\in\DO(\Aff_R)$ of $H$ corresponds to the Lie algebra $\LIDer(H)$.
\end{theorem}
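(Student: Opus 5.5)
The plan is to compute the tangent space $\T_\e H$ in $\Aff_R$ explicitly as an algebra and identify its dual module with $\epsilon$-derivations. Then we transport the bracket through the representability of the Lie functor and compare it with the commutator of left-invariant derivations, using Milne's bijection $\LIDer(H)\cong\{\epsilon\text{-derivations}\}$.

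\textbf{Step 1: the underlying module.} In $\cAlg_R$ the pullback defining $\T_\e H$ becomes the pushout $\T^\Omega H\x_H R$ along $\epsilon$. This is $\Sym_R(\Omega_H\x_H R)=\Sym_R(I/I^2)$, where $I=\ker\epsilon$. Under $\DO(\Aff_R)\simeq\Mod_R^\op$ the corresponding module is $\h=\Omega_H\x_{H,\epsilon}R$. Then $\Mod_R(\h,R)$ is the module of $\epsilon$-derivations $H\to R$, by the universal property of Kähler differentials. Equivalently, a global element $\*\to\T_\e H$, i.e.\ an algebra map $\Sym_R\h\to R$, is exactly an $\epsilon$-derivation. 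Lemma~\ref{lemma:correspondence-livf-global-elements} then sends it to a left-invariant vector field $H\to\T^\Omega H$, that is, an algebra map $\Sym_H\Omega_H\to H$, which is a derivation $\delta\colon H\to H$. Unwinding left-invariance (Definition~\ref{definition:left-invariant-vector-fields}) dualizes to $\Delta\delta=(\id_H\x\delta)\Delta$. So the bijection of Lemma~\ref{lemma:correspondence-livf-global-elements} is exactly Milne's bijection $\delta\mapsto\epsilon\delta$, with inverse $d\mapsto(\id\x d)\Delta$. Additivity is automatic, since the sum on $\h$ is the differential-object sum (Theorem~\ref{theorem:second-fundamental-lie}), which dualizes to the sum of derivations.

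\textbf{Step 2: the bracket.} By Theorem~\ref{theorem:lie-algebra-of-lie-group} the bracket on global elements is the bracket of the corresponding $\*$-\PLIVFs, i.e.\ of left-invariant vector fields. Each global element $\*\to\T_\e H$ defines a natural operation $\X(A,\g)\times\X(A,\g)\to\X(A,\g)$ through the representing isomorphism. Taking $A=\*$ therefore gives the induced bracket on $\Mod_R(\h,R)$ as the vector-field bracket of the left-invariant fields $X_x,X_y$. The remaining task is to check that the tangent-category bracket $\{X\T Y-Y\T Xc\}$ of two vector fields on $\mathsf{Spec}\,H$ dualizes to the commutator $\delta_X\delta_Y-\delta_Y\delta_X$ of derivations. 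Unwinding the description of $\T^\Omega$ in Example~\ref{example:tangent-categories-affine}: $X\T Y$ corresponds to the algebra map $\T^\Omega\T^\Omega H\to H$ sending $a\mapsto a$, $\d a\mapsto\delta_X a$, $\d' a\mapsto\delta_Y a$, and $\d'\d a\mapsto\delta_Y\delta_X a$ (up to order conventions). The flip swaps the roles, and the $\{-\}$ operation extracts the $\d'\d$ coefficient, since $l^\Omega$ sends $\d'\d a\mapsto\d a$ and kills the rest. This yields the commutator, up to a global sign fixed by the conventions. The fact that $\Lie(H)$ is an internal Lie algebra in $\Mod_R^\op$, i.e.\ a coLie algebra, is then just the dual statement.

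\textbf{Main obstacle.} The delicate point is Step 2's passage from the internal bracket to one on the dual module. Global elements of $\h$ in $\Aff_R$ only see $\Mod_R(\h,R)$, and the bracket $[,]\colon\h\times\h\to\h$ is a cobracket $\h\to\h\x\h$. Evaluating on pairs of functionals requires a pairing $\Mod_R(\h,R)\x\Mod_R(\h,R)\to\Mod_R(\h\x\h,R)$, which needs care when $\h$ is not finitely generated projective. It holds here because global elements of $\h\times\h$ in $\Aff_R$ are pairs of global elements, since products in $\Aff_R$ correspond to tensor products of algebras. Accordingly, I would phrase the comparison entirely through global elements and Yoneda at $A=\*$, avoiding any duality isomorphism. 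A secondary nuisance is the sign and order convention in the diagrammatic composition, which must be tracked to land on $[\delta_1,\delta_2]$ rather than its negative.
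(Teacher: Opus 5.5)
Your proposal is correct and follows essentially the same route as the paper: compute $\T_\e H$ as the pushout $\T^\Omega H\x_H R\cong\Sym_R(\Omega_H\x_H R)$, identify $\Mod_R(\Omega_H\x_H R,R)$ with $\epsilon$-derivations, use representability at the terminal object to pass to left-invariant vector fields on $\mathsf{Spec}\,H$, and transport the bracket through the correspondence between vector fields and derivations, landing in $\LIDer(H)$ via Milne's bijection. The differences are minor. You verify by hand two facts the paper only cites: that the bijection of Lemma~\ref{lemma:correspondence-livf-global-elements} is Milne's bijection, and that $\{X\T Y-Y\T Xc\}$ dualizes to the commutator of derivations (with the paper's conventions this comes out with the correct sign); the paper instead appeals to Milne and to Ikonicoff et al.\ for the latter. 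Also, your ``main obstacle'' is milder than you fear: the map $\Mod_R(\h,R)\x\Mod_R(\h,R)\to\Mod_R(\h\x\h,R)$ always exists, and only its invertibility would require finiteness.
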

\begin{proof}
The internal Lie algebra of an affine group scheme $H$ is the tangent space $\T_\e^\Omega H$ at the unit. Since $\Aff_R$ is dual to the category $\cAlg_R$ of commutative $R$-algebras, the pullback that defines $\T_\e^\Omega H$ becomes a pushout in $\cAlg_R$:
\begin{equation*}
\begin{tikzcd}
H & R \\
{\T^\Omega H} & {\T_\e^\Omega H}
\arrow["\epsilon", from=1-1, to=1-2]
\arrow["{p_H}"', from=1-1, to=2-1]
\arrow[from=1-2, to=2-2]
\arrow[from=2-1, to=2-2]
\arrow["\lrcorner"{anchor=center, pos=0.125, rotate=180}, draw=none, from=2-2, to=1-1]
\end{tikzcd}
\end{equation*}
where $\epsilon$ is the counit of $H$.

\cite[Theorem~4.19]{cruttwell:algebraic-geometry} establishes an equivalence $\DB(\Aff_R)\simeq\Mod^\op$ between the category of all differential bundles of $\Aff_R$ and the opposite category of modules over arbitrary $R$-algebras. Thus, under this equivalence, this pushout diagram corresponds to a pushout diagram of $\Mod$ of the following form:
\begin{equation*}
\begin{tikzcd}
H & R \\
{\Omega_H} & M
\arrow["\epsilon", from=1-1, to=1-2]
\arrow["\d"', from=1-1, to=2-1]
\arrow[from=1-2, to=2-2]
\arrow[from=2-1, to=2-2]
\arrow["\lrcorner"{anchor=center, pos=0.125, rotate=180}, draw=none, from=2-2, to=1-1]
\end{tikzcd}
\end{equation*}
where $\Omega_H$ denotes the modules of K\"ahler differentials of $H$ and $\d$ is the universal derivation. Therefore, the $R$-module $M$ that corresponds to $\T_\e^\Omega H$ is the pushout of $\Omega_H$ along the counit map, that is:
\begin{align*}
&M=\Omega_H\x_HR
\end{align*}
where $R$ is regarded as an $H$-module via the counit. We now show that the map $\d\colon H\to\Omega_H\x_HR$ which sends each $x$ to $\d x\x1$ is the $\epsilon$-universal derivation, in that it classifies $\epsilon$-derivations of $H$. To this end, consider an $\epsilon$-derivation $\delta\colon H\to R$ and let us define $\hat\delta\colon\Omega_H\x_HR\to R$ as the map that sends each generator $\d x$ to $\delta(x)$. This map is well-defined, since $\delta$ is an $\epsilon$-derivation:
\begin{align*}
&\d(xy)=\delta(xy)=\epsilon(x)\delta(y)+\epsilon(y)\delta(x)=\delta(\epsilon(x)y+\epsilon(y)x)\\
&\qquad=\d(\epsilon(x)y+\epsilon(y)x)=\epsilon(x)\d(y)+\epsilon(y)\d x=x\.\d y+y\.\d x
\end{align*}
where in the last step we used the identification between the action of $H$ onto $\Omega_H$ and the action of $H$ on $R$ induced by the counit in $\Omega_H\x_HR$. It is also $R$-linear since $\delta$ and $\d$ are. Furthermore, by definition, $\hat\delta$ splits $\delta$ along $\d\colon H\to\Omega_H\x_HR$. Finally, if $\hat\delta'\colon\Omega_H\x_HR$ was another map of $R$-modules that split $\delta$ along $\d$, then, on generators, $\hat\delta'(\d x)=\delta(x)=\hat\delta(x)$. Therefore, as expected the $R$-module $\Omega_H\x_HR$ classifies $\epsilon$-derivations. In particular, this means that the dual of $\Omega_H\x_HR$, that is, the $R$-module of $R$-linear maps $\Omega_H\x_HR\to R$ is isomorphic to the $R$-module $\Der_\epsilon(H)$ of $\epsilon$-derivations.

Now, consider the Lie algebra $\h(R)$, where $\h(-)$ denotes the Lie functor of the affine group scheme $H$. Since $\h$ is representable, $\h(R)\cong\Aff_R(R,\h)$, where $\h=\Sym_\h(M)$ is the internal Lie algebra of $H$, with $M=\Omega_H\x_HR$. However, since $\Aff_R$ is the opposite category of $\cAlg_R$, then $\h(R)\cong\cAlg_R^\op(R,\h)=\cAlg(\h,R)$. However, since $\h=\Sym_\h(M)$, a morphism of rings $\Sym_\h(M)\to R$ corresponds to a morphism of $R$-modules $M\to R$. Therefore, the underlying $R$-module of $\h(R)$ corresponds to $M^\*$. Furthermore, as we proved before, $M^\*$ is also isomorphic, as an $R$-module, to the $R$-module $\LIDer(H)$ of left-invariant derivations of $H$. The resulting isomorphism $\h(R)\to\LIDer(H)$ of $R$-modules sends a left-invariant vector field $X\colon H\x R\to\T H$ in $\Aff_R$, which, as an algebra morphism is $\tilde X\colon\T H\to H$, to the left-invariant derivation $\delta_X\colon H\to H$ defined as $\delta_X(x)\=\tilde X(\d x)$. However, this is a special case of the general correspondence between vector fields $X\colon\T A\to A$ of $\Aff_R$ and derivations over the underlying algebra $A$. This classification follows from~\cite[Corollary~4.5.3]{ikonicoff:operadic-algebras-tangent-cats} as shown in~\cite[Example~4.5.5]{ikonicoff:operadic-algebras-tangent-cats}. In particular, this correspondence preserves the Lie algebra structure. This proves that the isomorphism $\h(R)\to\LIDer(H)$ is a Lie algebra isomorphism. Finally, the Lie algebra $M^\*$ is isomorphic, as a Lie algebra, to $\h(R)$.
\end{proof}

Since the category $\Gr(\Aff_R)$ is dual to the category of commutative Hopf algebras, we obtain the following corollary.

\begin{corollary}
\label{corollary:affine-group-schemes}
The opposite of the category of commutative Hopf $R$-algebras carries the structure of a linear GCDC. The linear assignment sends each commutative Hopf algebra $H$ to the $R$-module $\Omega_H\x_HR$, whose dual is the $R$-module $\LIDer(H)$ of left-invariant derivations of $H$.
\end{corollary}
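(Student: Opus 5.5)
The corollary follows by dualizing Theorem~\ref{theorem:Grp-linear-GCDC} and then identifying the linear assignment through the proof of Theorem~\ref{theorem:lie-algebra-of-affine-group-schemes}.

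\textbf{Step 1: the category.} By Example~\ref{example:affine-group-schemes}, $\Gr(\Aff_R)$ is equivalent to the opposite of the category of commutative Hopf $R$-algebras. Section~\ref{subsection:affine-group-schemes} shows that $(\Aff_R,\TT^\Omega)$ is well-displayed. Hence Lemma~\ref{lemma:tangent-display-lie} gives that every group object is a Lie group object, so
\[
\LieGr(\Aff_R,\TT^\Omega)=\Gr(\Aff_R,\TT^\Omega).
\]
Theorem~\ref{theorem:Grp-linear-GCDC} then says this Cartesian tangent category is a linear GCDC, with linear assignment the Cartesian endofunctor $\T_\e$ of Lemma~\ref{lemma:tangent-space-linear-assignment}. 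Its monoid structure is the additive one of Lemma~\ref{lemma:additive-structure-tangent-spaces}, and $\nu$ comes from $\T\T_\e G\cong\T_\e G\times\T_\e G$. Transporting along the equivalence gives a linear GCDC structure on the opposite of commutative Hopf algebras.

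\textbf{Step 2: identifying the assignment.} Here I would reuse the computation in the proof of Theorem~\ref{theorem:lie-algebra-of-affine-group-schemes}. In $\cAlg_R$, $\T^\Omega_\e H$ is the pushout of $p_H\colon H\to\Sym_H\Omega_H$ along the counit $\epsilon$. Under $\DO(\Aff_R)\simeq\Mod_R^\op$ (Example~\ref{example:differential-objects-affine}), this corresponds to the $R$-module $\Omega_H\x_HR$, the algebra being $\Sym_R(\Omega_H\x_HR)$.

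Strictly speaking, $\T_\e H$ is again a Hopf algebra: the group structure of Lemma~\ref{lemma:tangent-space-linear-assignment} coincides with the additive one, so it is $\Sym_R(\Omega_H\x_HR)$ with the primitive coproduct. The statement ``sends $H$ to $\Omega_H\x_HR$'' should therefore be read through this identification of abelian Hopf algebras of the form $\Sym_R(M)$ with modules. I would make that identification explicit.

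\textbf{Step 3: the dual.} The same proof shows that $\Omega_H\x_HR$ classifies $\epsilon$-derivations, so $\Mod_R(\Omega_H\x_HR,R)\cong\Der_\epsilon(H)$. Milne's result quoted in Section~\ref{subsection:affine-group-schemes} then gives $\Der_\epsilon(H)\cong\LIDer(H)$.

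\textbf{Main obstacle.} The only delicate point is functoriality and naturality of the identification in Step 2. A Hopf morphism $f\colon H\to K$ acts on $\T_\e$ by the map induced on pushouts. I would check on generators that it sends $\d x\x1$ to $\d f(x)\x1$, which matches the functorial map $\Omega_H\x_HR\to\Omega_K\x_KR$. The monoid structure and $\nu$ then transport automatically, since both are determined by universal properties preserved by the equivalence.
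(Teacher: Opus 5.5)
Your proposal is correct and is essentially the paper's argument, which the paper leaves implicit. That argument combines four ingredients: the duality between $\Gr(\Aff_R)$ and commutative Hopf $R$-algebras; the fact that every affine group scheme is a Lie group (well-displayedness plus Lemma~\ref{lemma:tangent-display-lie}); Theorem~\ref{theorem:Grp-linear-GCDC}; and the identification $\T^\Omega_\e H\leftrightarrow\Omega_H\x_HR$, with dual $\LIDer(H)$, from the proof of Theorem~\ref{theorem:lie-algebra-of-affine-group-schemes}. Your observation that the assignment strictly lands on the Hopf algebra $\Sym_R(\Omega_H\x_HR)$ with primitive coproduct, and your functoriality check on generators, are sensible clarifications that the paper does not spell out.
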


We conclude with a concrete example\footnote{We thank Edmund Heng for suggesting this example to us.}.

\begin{example}
\label{example:special-linear-algebra}
In Example~\ref{example:affine-group-schemes}, we mentioned the commutative Hopf algebra $\SL_2$, which is the $R$-algebra so defined:
\begin{align*}
&\SL_2=\frac{R[x_{1,1},x_{1,2},x_{2,1},x_{2,2}]}{(\det(x_{i,j})-1)}
\end{align*}
Now, we want to compute the internal Lie algebra of $\SL_2$. We now know that the Lie algebra is the tangent space at the unit. This means that, in the category $\cAlg_R$, the Lie algebra is realized as the pushout of $p_{\SL_2}\colon\SL_2\to\T^\Omega\SL_2$ with $\e\colon\SL_2\to R$. For starters, we shall unwrap the definition of the tangent bundle of $\SL_2$. Since the tangent bundle functor $\T^\Omega$ is a left adjoint in $\cAlg_R$, it preserves quotients. Furthermore, it also sends a freely generated algebra $R[X]$ to $R[X,\d X]$. So, we have:
\begin{align*}
&\T^\Omega\SL_2=\T^\Omega\left(\frac{R[x_{1,1},x_{1,2},x_{2,1},x_{2,2}]}{(\det(x_{i,j})-1)}\right)=\frac{R[x_{1,1},x_{1,2},x_{2,1},x_{2,2},\d x_{1,1},\d x_{1,2},\d x_{2,1},\d x_{2,2}]}{(\det(x_{i,j})-1,\d(\det(x_{i,j})-1))}
\end{align*}
By the Leibniz rule, we can rewrite $\d(\det(x_{i,j})-1)$ as follows:
\begin{align*}
&\d(\det(x_{i,j})-1)=\d(x_{1,1}x_{2,2}-x_{1,2}x_{2,1}-1)=x_{1,1}\d x_{2,2}+x_{2,2}\d x_{1,1}-x_{1,2}\d x_{2,1}-x_{2,1}\d x_{1,2}
\end{align*}
Therefore, we have:
\begin{align*}
&\T^\Omega\SL_2=\frac{R[x_{1,1},x_{1,2},x_{2,1},x_{2,2},\d x_{1,1},\d x_{1,2},\d x_{2,1},\d x_{2,2}]}{(\det(x_{i,j})-1,x_{1,1}\d x_{2,2}+x_{2,2}\d x_{1,1}-x_{1,2}\d x_{2,1}-x_{2,1}\d x_{1,2})}
\end{align*}
Now, to obtain the Lie algebra, we need to pushout $\T^\Omega\SL_2$ along the counit map $\e\colon\SL_2\to R$. However, $\e$ sends $x_{1,1}$ and $x_{2,2}$ to $1$ and $x_{1,2}$ and $x_{2,1}$ to $0$, so we obtain that:
\begin{align*}
&\T_\e^\Omega\SL_2=\frac{R[\d x_{1,1},\d x_{1,2},\d x_{2,1},\d x_{2,2}]}{(\d x_{2,2}+\d x_{1,1})}=\frac{R[\d x_{1,1},\d x_{1,2},\d x_{2,1},\d x_{2,2}]}{(\tr(x_{i,j}))}
\end{align*}

Recall that $\SL_2$ is the representing object of the functor $\SL_2(-)$ (see Example~\ref{example:affine-group-schemes}). If the base ring $R$ is the ring of real numbers $\R$, then $\SL_2(\R)$ becomes the Lie special linear group of differential geometry, whose Lie algebra is $\sl_2(\R)$, which is the Lie algebra of $(2\times 2)$-matrices $M$ whose trace vanishes, that is, $\tr(M)=0$.

Let us define the functor $\sl_2(-)\colon\cAlg_R^\op\to\Mod_R$ which sends each algebra $A$ to the special linear Lie algebra $\sl_2(A)$ with coefficients in $A$. As per $\SL_2(-)$, also $\sl_2(-)$ is representable and it is represented by $\sl_2$:
\begin{align*}
&\sl_2=\frac{R[x_{1,1},x_{1,2},x_{2,1},x_{2,2}]}{(\tr(x_{i,j}))}
\end{align*}
where $\tr(x_{i,j})=x_{1,1}+x_{2,2}$. In fact, an algebra morphism $M\colon\sl_2\to A$ is fully determined by its value on the variables, that is, $M(x_{i,j})$. However, $\tr(M(x_{i,j}))=0$ since $M$ is a algebra morphism. However, $\sl_2$ coincides precisely with $\T_\e^\Omega\SL_2$. Finally, to compute the Lie bracket, we know that the Lie bracket of $\T_\e G$ will coincide with the one defined in the literature of algebraic geometry. However, for each $A\in\cAlg_R$, the Lie bracket of $\sl_2(A)$ is defined as the commutator between two matrices $M$ and $N$ with vanishing trace. We can work out explicitly the commutator:
\begin{align*}
&[M,N]=MN-NM=\begin{bmatrix}
x_{1,1}&x_{1,2}\\
x_{2,1}&x_{2,2}
\end{bmatrix}
\begin{bmatrix}
y_{1,1}&y_{1,2}\\
y_{2,1}&y_{2,2}
\end{bmatrix}
-\begin{bmatrix}
y_{1,1}&y_{1,2}\\
y_{2,1}&y_{2,2}
\end{bmatrix}
\begin{bmatrix}
x_{1,1}&x_{1,2}\\
x_{2,1}&x_{2,2}
\end{bmatrix}\\
&\qquad=
\begin{bmatrix}
x_{1,2}y_{2,1}-x_{2,1}y_{1,2}&x_{1,2}(y_{2,2}-y_{1,1})-y_{1,2}(x_{2,2}-x_{1,1})\\
x_{2,1}(y_{1,1}-y_{2,2})-y_{2,1}(x_{1,1}-x_{2,2})&x_{2,1}y_{1,2}-x_{1,2}y_{2,1}
\end{bmatrix}
\end{align*}
From this formula, we conclude that the internal Lie bracket is the algebra morphism $[,]\colon\sl_2\to\sl_2\x\sl_2$ fully defined by:
\begin{align*}
&[,](x_{1,1})=x_{1,2}y_{2,1}-x_{2,1}y_{1,2}     &&[,](x_{1,2})=x_{1,2}(y_{2,2}-y_{1,1})-y_{1,2}(x_{2,2}-x_{1,1})\\
&[,](x_{2,1})=x_{2,1}(y_{1,1}-y_{2,2})-y_{2,1}(x_{1,1}-x_{2,2})     &&[,](x_{2,2})=x_{2,1}y_{1,2}-x_{1,2}y_{2,1}
\end{align*}
\end{example}


\printbibliography

\end{document}